\author{WOJCIECH DZIK} \address{Institute of  Mathematics, Silesian University, Bankowa 14,  Katowice 40-007, Poland; wdzik@wdzik.pl}
\author{S{\L}AWOMIR KOST} \address{Institute of  Computer Science, University of Opole, Oleska 48, Opole 45-052, Poland;  skost@uni.opole.pl}
\author{PIOTR WOJTYLAK} \address{Institute of  Computer Science, University of Opole, Oleska 48, Opole 45-052, Poland; ;  pwojtylak@uni.opole.pl} }
\begin{document}
\begin{paper}
\begin{abstract} Following a characterization \cite{dkw} of  locally tabular logics with finitary (or unitary) unification by their Kripke models we determine  the unification types of some intermediate logics (extensions of  {\sf INT}).  There are exactly four maximal logics with nullary unification ${\mathsf L}(\mathfrak R_{2}+)$, \ ${\mathsf L}(\mathfrak R_{2})\cap{\mathsf L}(\mathfrak F_{2})$, \ ${\mathsf L}(\mathfrak G_{3})$ \ and \ ${\mathsf L}(\mathfrak G_{3}+)$ and they are tabular. There are only two minimal logics with  hereditary finitary unification: {\sf L}($\mathbf F_{un}$), the least logic with hereditary unitary unification, and  {\sf L}( $\mathbf F_{pr}$)  the least logic with hereditary  projective approximation; they are locally tabular. Unitary and non-projective logics need additional variables for mgu's of some unifiable formulas, and unitary logics with projective approximation are exactly projective. None of locally tabular intermediate logics has infinitary unification.  Logics with finitary, but not hereditary finitary, unification are rare and scattered among the majority of those with nullary unification, see the example of  $\mathsf H_3\mathsf B_2$ and its extensions.

\end{abstract}
\Keywords{unification types, intermediate logics,  locally tabular logics, Kripke models.}

\section{Introduction.}\label{Intro}
Unification, in general, is concerned with finding a substitution that makes two terms equal.
Unification  in logic is the study of substitutions under which a formula becomes provable in a a given logic {\sf L}. In this case the substitutions are called the unifiers of the formula in {\sf L} ({\sf L}-unifiers). If an {\sf L}-unifier for a formula $A$ exists,  $A$ is called unifiable in {\sf L}. An {\sf L}-unifier $\sigma$  for $A$ can be more general than the other {\sf L}-unifier $\tau$, in symbols $\sigma \preccurlyeq \tau$; the pre-order $\preccurlyeq$ of substitutions gives rise to four unification types:  $1$,  $\omega$,   $\infty$,  and $0$,  from the ''best'' to the ''worst'',  see \cite{BaSny,BaGhi}. Unification is unitary, or it has the type $1$,  if there is a most general unifier (mgu)  for every unifiable formula. Unification is finitary or infinitary if, for every unifiable formula, there is a (finite or infinite) basis of unifiers. Nullary unification means that no such basis of unifiers exists at all.

Silvio Ghilardi  introduced unification in propositional (intuitionistic \cite{Ghi2} and modal \cite{Ghi3}) logic.  In \cite{Ghi2} he showed  that unification in {\sf INT} is finitary,   but in {\sf KC}  it is unitary and any  intermediate logic with unitary unification contains {\sf KC}.  Dzik \cite{dzSpl} uses the particular splitting of the lattice of intermediate logics by the pair ({\sf L}($\mathfrak{F}_{2}$),{\sf KC}), where {\sf L}($\mathfrak{F}_{2}$) is the logic determined by the `2-fork frame' $\mathfrak {F}_{2}$ depicted  in Figure \ref{8fames}, to give location of logics with finitary but not unitary unification: they all  are included in {\sf L}($\mathfrak{F}_{2})$. In Wro$\acute{\rm n}$ski \cite{Wro1,Wro2}, see also \cite{dw1}, it is shown  that unification in any intermediate logic {\sf L} is projective iff {\sf L} is an extension of {\sf  LC} (that is it is one of G\"{o}del-Dummett logics); projective implies unitary unification.  In Ghilardi \cite{Ghi5} first examples of intermediate logics with nullary unification are given. Iemhoff \cite{IemRoz} contains a proof-theoretic account of unification in fragments of intuitionistic logics.
 Many papers concern unification in modal logics, see e.g. \cite{Ghi3,Jer,Balb1,dw2,Kost}, and also in intuitionistic predicate logic, see \cite{dw4}. No (modal or intermediate) logic with infinitary unification has been found so far and it is expected that no such logic exists.

Generally,  similar results on unification types in transitive modal logics and corresponding  intermediate logics are given in  \cite{dkw}.

In \cite{Ghi5}  Ghilardi  studied unification in intermediate logics of finite slices (or finite depths). He applied his method,  based on Category Theory, of finitely presented projective objects (see \cite{Ghi1}) and duality, and characterized injective objects in finite posets.   He gave some positive and negative criteria for unification to be finitary. From these criteria it follows, for instance, that  bounded  depth  axioms $\mathsf{H_n }$ plus bounded width axioms $\mathsf{B_k }$ keep unification finitary. It also follows that there are  logics without finitary unification.\footnote{Ghilardi's original notation of  frames, as well as our notation of frames in \cite{dkw}, was quite different. All frames depicted  in this paper represent finite po-sets.} He considered, among others, the following frames:

 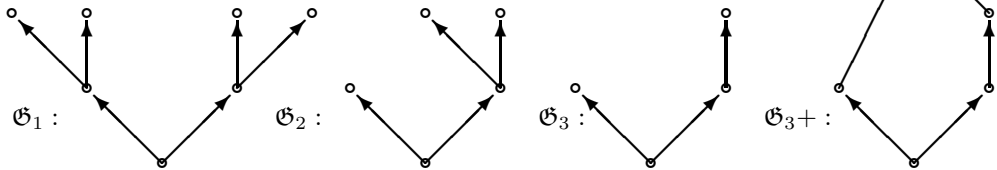
\begin{figure}[H]
\unitlength1cm
\begin{picture}(0,2.2)
\thicklines
\put(0,0.5){$\mathfrak{ G}_1:$}
\put(2,0){\vector(-1,1){0.9}}
\put(2,0){\vector(1,1){0.9}}
\put(1,1){\vector(-1,1){0.9}}
\put(1,1){\vector(0,1){0.9}}
\put(3,1){\vector(1,1){0.9}}
\put(3,1){\vector(0,1){0.9}}

\put(0,2){\circle{0.1}}
\put(1,2){\circle{0.1}}
\put(3,2){\circle{0.1}}
\put(4,2){\circle{0.1}}
\put(1,1){\circle{0.1}}
\put(3,1){\circle{0.1}}
\put(2,0){\circle{0.1}}

\put(3.5,0.5){$\mathfrak{G }_2:$}
\put(5.5,0){\vector(-1,1){0.9}}
\put(5.5,0){\vector(1,1){0.9}}
\put(6.5,1){\vector(0,1){0.9}}
\put(6.5,1){\vector(-1,1){0.9}}
\put(4.5,1){\circle{0.1}}
\put(6.5,2){\circle{0.1}}
\put(5.5,2){\circle{0.1}}
\put(5.5,0){\circle{0.1}}
\put(6.5,1){\circle{0.1}}

\put(7,0.5){$\mathfrak{G}_3:$}
\put(8.5,0){\vector(-1,1){0.9}}
\put(8.5,0){\vector(1,1){0.9}}
\put(9.5,1){\vector(0,1){0.9}}
\put(7.5,1){\circle{0.1}}
\put(9.5,2){\circle{0.1}}
\put(8.5,0){\circle{0.1}}
\put(9.5,1){\circle{0.1}}

\put(10,0.5){${\mathfrak{G}_{3}}+:$}
\put(12,3){\circle{0.1}}
\put(13,2){\circle{0.1}}
\put(11,1){\circle{0.1}}
\put(13,1){\circle{0.1}}
\put(12,0){\circle{0.1}}
\put(13,2){\vector(-1,1){0.9}}
\put(13,1){\vector(0,1){0.9}}
\put(11,1){\vector(1,2){0.9}}
\put(12,0){\vector(1,1){0.9}}
\put(12,0){\vector(-1,1){0.9}}
\end{picture}\\
\caption{Ghilardi's Frames} \label{GF}
\end{figure}
\noindent Since $\mathsf L(\mathfrak{G}_1)$, the logic of $\mathfrak{G}_1$,  coincides with  $\mathsf{H}_3\mathsf{B}_2$, it has finitary unification by  \cite{Ghi5}.
Theorem 9, p.112 of \cite{Ghi5}) says that, if $\mathfrak{G}_3$ is a frame of any  intermediate logic  with finitary unification, then  $\mathfrak{G}_2$ is a frame of this logic, as well. It means, in particular, that $\mathsf L(\mathfrak{G}_3)$ has not finitary unification.
 (the unification type of $\mathsf L(\mathfrak{G}_2)$ and  $\mathsf L(\mathfrak{G}_3)$ was not determined). Ghilardi announced that `attaching a final point everywhere' provide examples in which unification is nullary. Thus, $\mathsf L({\mathfrak{G}_3}+)$ has nullary unification.\footnote{The frame received from  $\mathfrak{F}$, by adding a top (=final) element is denoted by ${\mathfrak F}+$.} He also showed that replacing one of maximal elements  in $\mathfrak{G}_3$ with any finite (rooted) po-set $\mathfrak P$, gives a frame of logic without finitary unification,  see Figure \ref{NU}.
 \begin{figure}[H]
\unitlength1cm
\begin{picture}(0,2)
\thicklines

\put(4,0.5){$\mathfrak{G}_{3\mathfrak P}:$}
\put(6.5,0){\vector(-1,1){0.9}}
\put(6.5,0){\vector(1,1){0.9}}
\put(7.5,1){\vector(0,1){0.9}}
\put(5.4,1.1){$\mathfrak P$}
\put(7.5,2){\circle{0.1}}
\put(6.5,0){\circle{0.1}}
\put(7.5,1){\circle{0.1}}
\put(5.5,1.2){\circle{0.7}}

\end{picture}\\
\caption{Frames of Logics with Nullary Unification} \label{NU}
\end{figure}
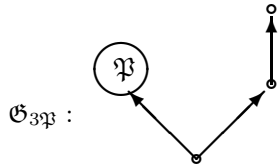
Hence, there are infinitely many intermediate logics without finitary (by \cite{dkw}: with nullary) unification.
In \cite{dkw} we gave necessary and sufficient conditions for finitary (or unitary) unification  in locally tabular logics  solely in terms of mappings between (bounded) Kripke models. Our approach was entirely different from that in \cite{Ghi5}. A simpler variant of the conditions characterizes logics with projective approximation.  Then we applied the conditions to determine the unification types of logics (intermediate or modal) given by relatively simple frames. In particular,
  we studied tabular modal  and intermediate logics determined by the frames in Figure \ref{8fames}.

\begin{figure}[H]
\unitlength1cm
\begin{picture}(3,2)
\thicklines
\put(0,0.5){$\mathfrak L_1:$}
\put(1,0){\circle{0.1}}
\put(2.5,0.5){$\mathfrak L_2:$}
\put(3.5,0){\circle{0.1}}
\put(3.5,0){\line(0,1){0.9}}
\put(3.5,1){\circle{0.1}}
\put(3.5,0){\vector(0,1){0.9}}

\put(5,0.5){$\mathfrak L_3:$}
\put(6,0){\vector(0,1){0.9}}
\put(6,1){\vector(0,1){0.9}}
\put(6,1){\circle{0.1}}
\put(6,2){\circle{0.1}}
\put(6,0){\circle{0.1}}

\put(7,0.5){$\mathfrak{F}_{2}:$}
\put(8,1){\circle{0.1}}
\put(9,0){\circle{0.1}}
\put(10,1){\circle{0.1}}
\put(9,0){\vector(1,1){0.9}}
\put(9,0){\vector(-1,1){0.9}}

\put(10.5,0.5){${\mathfrak{R}_{2}}:$}
\put(12,0){\vector(-1,1){0.9}}
\put(12,0){\vector(1,1){0.9}}
\put(13,1){\vector(-1,1){0.9}}
\put(11,1){\circle{0.1}}
\put(12,2){\circle{0.1}}
\put(12,0){\circle{0.1}}
\put(13,1){\circle{0.1}}
\put(11,1){\vector(1,1){0.9}}

\end{picture}\\

\unitlength1cm
\begin{picture}(5,3)
\thicklines

\put(0,0.5){$\mathfrak{G}_3:$}
\put(2,0){\vector(-1,1){0.9}}
\put(2,0){\vector(1,1){0.9}}
\put(3,1){\vector(0,1){0.9}}
\put(1,1){\circle{0.1}}
\put(3.1,2){\circle{0.1}}
\put(2,0){\circle{0.1}}
\put(3,1){\circle{0.1}}

\put(3.5,0.5){${\mathfrak{G}_{3}}+:$}
\put(5.5,3){\circle{0.1}}
\put(6.5,2){\circle{0.1}}
\put(4.5,1){\circle{0.1}}
\put(6.5,1){\circle{0.1}}
\put(5.5,0){\circle{0.1}}
\put(6.5,2){\vector(-1,1){0.9}}
\put(6.5,1){\vector(0,1){0.9}}
\put(4.5,1){\vector(1,2){0.9}}
\put(5.5,0){\vector(1,1){0.9}}
\put(5.5,0){\vector(-1,1){0.9}}

\put(7.2,0.5){$\mathfrak{F}_{3}:$}
\put(8,1){\circle{0.1}}
\put(9,0){\circle{0.1}}
\put(10,1){\circle{0.1}}
\put(9,1){\circle{0.1}}
\put(9,0){\vector(1,1){0.9}}
\put(9,0){\vector(-1,1){0.9}}
\put(9,0){\vector(0,1){0.9}}

\put(10.4,0.2){${\mathfrak{R}_{3}}:$}
\put(11,1){\circle{0.1}}
\put(12,0){\circle{0.1}}
\put(12,2){\circle{0.1}}
\put(13,1){\circle{0.1}}
\put(12,1){\circle{0.1}}
\put(12,0){\vector(1,1){0.9}}
\put(12,0){\vector(-1,1){0.9}}
\put(12,0){\vector(0,1){0.9}}
\put(11,1){\vector(1,1){0.9}}
\put(12,1){\vector(0,1){0.9}}
\put(13,1){\vector(-1,1){0.9}}

\end{picture}\\

\caption{Frames of \cite{dkw}} \label{8fames}
\end{figure}
\noindent We proved  that unification  in the modal (as well as intermediate) logics of the frames $\mathfrak L_1, \mathfrak L_2, \mathfrak L_3,{\mathfrak{R}_{2}}$ and ${\mathfrak{R}_{3}}$ is unitary,   in (the logic of) $\mathfrak{F}_{2}$ and $\mathfrak{F}_{3}$ it is finitary and in $\mathfrak{G}_3$ and $\mathfrak{G}_{3}+$ it is nullary. We have also considered $n$-forks ${\mathfrak{F}_{n}}$ and $n$-rhombuses ${\mathfrak{R}_{n}}$, for any $n\geq 2$, see Figure  \ref{FRF}. We showed that the logic of any fork (including the infinite `fork frame' ${\mathfrak{F}_{\infty}}$) has projective approximation, and hance it has finitary unification.  The logic of any rhombus (including  ${\mathfrak{R}_{\infty}}$) has unitary unification.

 \begin{figure}[H]
\unitlength1cm
\begin{picture}(3,2)
\thicklines
\put(2,0){${\mathfrak{F}_{n}}:$}
\put(2,1){\circle{0.1}}
\put(5,1){\circle{0.1}}
\put(4,1){\circle{0.1}}
\put(6,1){\circle{0.1}}
\put(3,1){\circle{0.1}}

\put(4,0){\vector(1,1){0.9}}
\put(4,0){\vector(-1,1){0.9}}
\put(4,0){\vector(0,1){0.9}}
\put(4,0){\vector(2,1){1.9}}
\put(4,0){\vector(-2,1){1.9}}

\put(1,1){\circle{0.1}}
\put(4,0){\circle{0.1}}
\put(2,1){\circle{0.1}}
\put(1.5,1){\circle{0.1}}
\put(7,1){\circle{0.1}}
\put(6.5,1){\circle{0.1}}
\put(7.5,1){\circle{0.1}}

\put(7,0){$\mathfrak{R}_n={\mathfrak{F}_{n}}+:$}
\put(8,1){\circle{0.1}}
\put(11,1){\circle{0.1}}
\put(10,1){\circle{0.1}}
\put(12,1){\circle{0.1}}
\put(9,1){\circle{0.1}}

\put(10,0){\vector(1,1){0.9}}
\put(10,0){\vector(-1,1){0.9}}
\put(10,0){\vector(0,1){0.9}}
\put(10,0){\vector(2,1){1.9}}
\put(10,0){\vector(-2,1){1.9}}

\put(10,0){\circle{0.1}}
\put(12.5,1){\circle{0.1}}
\put(13,1){\circle{0.1}}
\put(10,2){\circle{0.1}}

\put(9,1){\vector(1,1){0.9}}
\put(11,1){\vector(-1,1){0.9}}
\put(10,1){\vector(0,1){0.9}}
\put(8,1){\vector(2,1){1.9}}
\put(12,1){\vector(-2,1){1.9}}

\end{picture}
\caption{$n$-Fork and $n$-Rhombus Frames, for $n\geq 1$.}\label{FRF}
\end{figure}
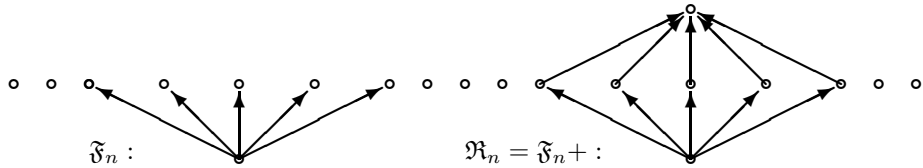
\noindent

Still many questions about unification of intermediate logics and location of particular types remained open. Here is a summary  of the results  in the present paper.\\
 1) We give another proof  that our conditions (see Theorem \ref{main}) are necessary and sufficient  for finitary\slash unitary unification, as well as for projective approximation  (Theorem \ref{retraction}) in locally tabular intermediate logics.  Variants of the  frames in Figure \ref{8fames} are considered and we determine the unification types of their logics. In particular,
 we prove that unification in $\mathsf L(\mathfrak{G}_2)$ is finitary and  though (we know that) it is also finitary in $\mathsf L(\mathfrak{F}_{3})$, it is nullary in  their intersection  $\mathsf L(\mathfrak{G}_2)\cap\mathsf L(\mathfrak{F}_{3})$ .\\
2) It turns out that intermediate logics with unitary unification  are either projective (hence they are extensions of {\sf LC}) or they need new variables for mgu's of some unifiable formulas. It means that any (non-projective) logic with unitary unification has a unifiable formula $A(x_1,\dots,x_n)$ which do not have any mgu in $n$-variables (but its mgu's must introduce additional variables -- like in filtering unification). The same result for transitive modal logics is proved  in \cite{dkw}.\\
3) We prove that a locally tabular intermediate logic with infinitary unification does not exist and  we think that no intermediate logic has infinitary unification.\\
4) We claim (and give some evidences) that 'most of' intermediate logics have nullary unification. For instance, logics of the following frames are nullary:
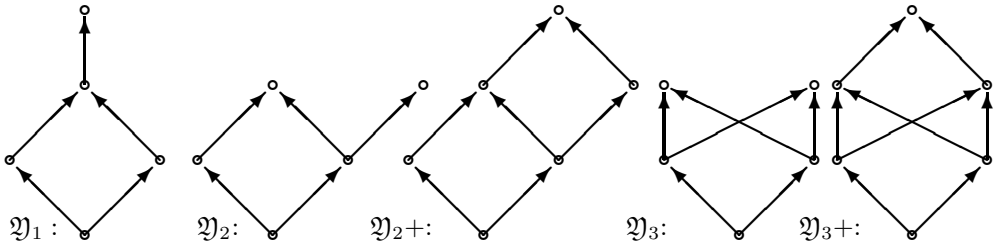
\begin{figure}[H]
\unitlength1cm
\thicklines
\begin{picture}(0,3)
\put(0,0){$\mathfrak Y_{1}:$}
\put(1,0){\vector(-1,1){0.9}}
\put(1,0){\vector(1,1){0.9}}
\put(2,1){\vector(-1,1){0.9}}
\put(0,1){\circle{0.1}}
\put(1,2){\circle{0.1}}
\put(1,0){\circle{0.1}}
\put(2,1){\circle{0.1}}
\put(0,1){\vector(1,1){0.9}}
\put(1,3){\circle{0.1}}
\put(1,2){\vector(0,1){0.9}}

\put(2.5,0){$\mathfrak{Y}_2$:}
\put(3.5,0){\vector(-1,1){0.9}}
\put(3.5,0){\vector(1,1){0.9}}
\put(4.5,1){\vector(1,1){0.9}}
\put(2.5,1){\circle{0.1}}
\put(3.5,2){\circle{0.1}}
\put(3.5,0){\circle{0.1}}
\put(4.5,1){\circle{0.1}}
\put(5.5,2){\circle{0.1}}
\put(2.5,1){\vector(1,1){0.9}}
\put(4.5,1){\vector(-1,1){0.9}}

\put(4.8,0){$\mathfrak{Y}_2+$:}
\put(6.3,0){\vector(-1,1){0.9}}
\put(6.3,0){\vector(1,1){0.9}}
\put(7.3,1){\vector(1,1){0.9}}
\put(5.3,1){\circle{0.1}}
\put(6.3,2){\circle{0.1}}
\put(6.3,0){\circle{0.1}}
\put(7.3,1){\circle{0.1}}
\put(8.3,2){\circle{0.1}}
\put(5.3,1){\vector(1,1){0.9}}
\put(7.3,1){\vector(-1,1){0.9}}
\put(7.3,3){\circle{0.1}}
\put(6.3,2){\vector(1,1){0.9}}
\put(8.3,2){\vector(-1,1){0.9}}

\put(8.2,0){$\mathfrak{Y}_3$:}
\put(8.7,2){\circle{0.1}}
\put(10.7,2){\circle{0.1}}
\put(8.7,1){\circle{0.1}}
\put(10.7,1){\circle{0.1}}
\put(9.7,0){\circle{0.1}}
\put(8.7,1){\vector(0,1){0.9}}
\put(10.7,1){\vector(0,1){0.9}}
\put(8.7,1){\vector(2,1){1.9}}
\put(10.7,1){\vector(-2,1){1.9}}
\put(9.7,0){\vector(1,1){0.9}}
\put(9.7,0){\vector(-1,1){0.9}}

\put(10.5,0){${\mathfrak{Y}_3}+$:}
\put(11,2){\circle{0.1}}
\put(13,2){\circle{0.1}}
\put(11,1){\circle{0.1}}
\put(13,1){\circle{0.1}}
\put(12,0){\circle{0.1}}
\put(12,3){\circle{0.1}}
\put(11,1){\vector(0,1){0.9}}
\put(13,1){\vector(0,1){0.9}}
\put(11,1){\vector(2,1){1.9}}
\put(13,1){\vector(-2,1){1.9}}
\put(12,0){\vector(1,1){0.9}}
\put(12,0){\vector(-1,1){0.9}}
\put(11,2){\vector(1,1){0.9}}
\put(13,2){\vector(-1,1){0.9}}

\end{picture}
\caption{Frames of Logics with Nullary Unification}\label{MNU}
\end{figure}

Intermediate logics with nullary unification  can be found 'almost everywhere'.
Extensions of finitary\slash unitary logics may have nullary unification, intersections of finitary logics may be nullary. We cannot put apart logics with finitary\slash unitary unification from those with the nullary one.

5) In structurally complete logics \footnote{We consider  rules  $r\!\!:\!\!{A}\slash{B}$, where $A, B$ play the role of formula schemata, i.e. $r$ enables us to derive $\varepsilon(B)$ from $\varepsilon(A)$, for any substitution $\varepsilon$.
The rule is said to be {\it admissible} in an intermediate logic {\sf L} (or  {\sf L}-admissible),  if $\vdash_{\sf L} \varepsilon(A)$ implies  $\vdash_{\sf L} \varepsilon(B)$, for any substitution
$\varepsilon$, that is any {\sf L}-unifier for $A$ must be an {\sf L}-unifier for $B$. The rule  is {\it {\sf L}-derivable} if $A\vdash_{\sf L}B$.
A logic {\sf L}  is {\it structurally complete} if every its admissible rule is derivable (the reverse inclusion always holds). {\it Hereditary structural completeness} of {\sf L} means that any extension of {\sf L} is structurally complete.}
the situation is somehow similar. A.Citkin (see Tzitkin \cite{Tsitkin})   characterized  hereditary structurally complete logics (instead of structurally complete) 
and showed that a  logic {\sf L} is hereditary structurally complete iff {\sf L} omits (i.e. {\sf L} is falsified in)  the following frames:
\begin{figure}[H]
\unitlength1cm
\thicklines
\begin{picture}(0,2.2)
\put(0,0){$\mathfrak C_{1}:$}
\put(0,1){\circle{0.1}}
\put(1,0){\circle{0.1}}
\put(2,1){\circle{0.1}}
\put(1,1){\circle{0.1}}
\put(1,0){\vector(1,1){0.9}}
\put(1,0){\vector(-1,1){0.9}}
\put(1,0){\vector(0,1){0.9}}

\put(2.5,0){$\mathfrak C_{2}:$}
\put(2.5,1){\circle{0.1}}
\put(3.5,0){\circle{0.1}}
\put(4.5,1){\circle{0.1}}
\put(3.5,1){\circle{0.1}}
\put(3.5,0){\vector(1,1){0.9}}
\put(3.5,0){\vector(-1,1){0.9}}
\put(3.5,0){\vector(0,1){0.9}}
\put(3.5,2){\circle{0.1}}
\put(2.5,1){\vector(1,1){0.9}}
\put(3.5,1){\vector(0,1){0.9}}
\put(4.5,1){\vector(-1,1){0.9}}

\put(5,0){$\mathfrak C_{3}:$}
\put(6,0){\vector(-1,1){0.9}}
\put(6,0){\vector(1,1){0.9}}
\put(7,1){\vector(0,1){0.9}}
\put(5,1){\circle{0.1}}
\put(7,2){\circle{0.1}}
\put(6,0){\circle{0.1}}
\put(7,1){\circle{0.1}}

\put(7.5,0){$\mathfrak C_{4}:$}
\put(8.5,0){\line(-1,1){0.9}}
\put(8.5,0){\vector(-1,1){0.9}}
\put(8.5,0){\vector(1,1){0.9}}
\put(9.5,1){\vector(0,1){0.9}}
\put(7.5,1){\circle{0.1}}
\put(9.5,2){\circle{0.1}}
\put(8.5,0){\circle{0.1}}
\put(9.5,1){\circle{0.1}}
\put(8.5,3){\circle{0.1}}
\put(9.5,2){\vector(-1,1){0.9}}
\put(7.5,1){\vector(1,2){0.9}}

\put(10.5,0){$\mathfrak C_{5}:$}
\put(11.5,0){\vector(-1,1){0.9}}
\put(11.5,0){\vector(1,1){0.9}}
\put(12.5,1){\vector(-1,1){0.9}}
\put(10.5,1){\circle{0.1}}
\put(11.5,2){\circle{0.1}}
\put(11.5,0){\circle{0.1}}
\put(12.5,1){\circle{0.1}}
\put(10.5,1){\vector(1,1){0.9}}
\put(10.5,2){\circle{0.1}}
\put(12.5,2){\circle{0.1}}
\put(10.5,1){\vector(0,1){0.9}}
\put(12.5,1){\vector(0,1){0.9}}

\end{picture}
\caption{Citkin's Frames}\label{TF}
\end{figure}
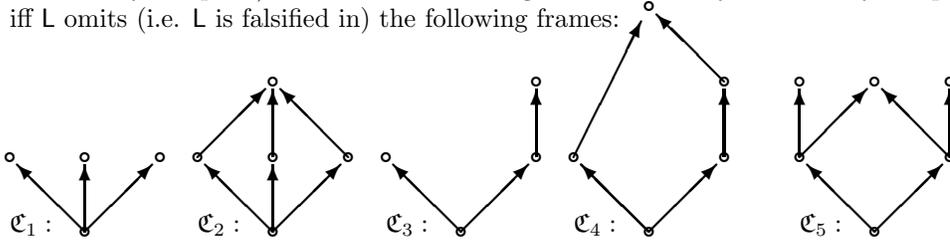
We consider logics with {\it hereditary finitary unification} that is logics  all their extensions have either finitary or unitary unification. We prove that  there are exactly four maximal logics with nullary unification:  $\mathsf L (\mathfrak Y_1)$,  $\mathsf L(\mathfrak R_2)\cap \mathsf L(\mathfrak F_2)$, $\mathsf L(\mathfrak G_3)$ and  $ \mathsf L(\mathfrak G_3+)$. Thus, an intermediate logic has hereditary finitary unification if it omits  $\mathfrak Y_1$, $\mathfrak G_3$, $\mathfrak G_3+$ and one of the frames  $\{\mathfrak R_2,\mathfrak F_2\}$. This characterization is not optimal as, for instance, omitting $\mathfrak F_2$ the logic  omits $\mathfrak G_3$; omitting $\mathfrak R_2$ it omits $\mathfrak G_3+$ and $\mathfrak Y_1$.

There is no correlation between structural completeness and finitary unification. In particular, since $\mathfrak C_1 = \mathfrak F_3$  the logic of $\mathfrak C_1$ has projective approximation (and therefore it is finitary), since $\mathfrak C_2 = \mathfrak R_3$, $\mathfrak C_2$ is unitary and we will  show that the fifth $\mathsf L(\mathfrak C_5)$ is finitary  but  not hereditary finitary. The remaining frames {$\mathfrak C_{3}$} and {$\mathfrak C_{4}$} coincide with ${\mathfrak{G}_{\sf 3}}$ and ${\mathfrak{G}_{\sf 3}}+$ and their logics have nullary unification.\\
6) Two additional classes of logics emerge here: logics with {\it hereditary unitary unification} and logics with {\it hereditary projective approximation}. We show that an intermediate logic {\sf L} has hereditary unitary unification iff {\sf L} omits the frames $\mathfrak Y_1$, $\mathfrak F_2$  and    $\mathfrak G_3+$. A logic {\sf L} has hereditary projective approximation iff {\sf L} omits the frames $\mathfrak R_2$ and $\mathfrak G_3$. Thus, {\sf L} has hereditary finitary unification iff either {\sf L} has hereditary unitary unification or {\sf L} has hereditary projective characterization.
  Logics with hereditary projective approximation can be characterized by  frames $\mathfrak L_d+\mathfrak F_n$, for any $d,n\geq 0$ (that is forks on chains), whereas logics with hereditary unitary unification by $\mathfrak L_d+\mathfrak R_n$, for any $d,n\geq 0$ (that is rhombuses on chains); see Figure \ref{hpa}.

 \begin{figure}[H]
\unitlength1cm
\begin{picture}(3,3.5)
\thicklines
\put(0,1){$\mathbf{H}_{pa}$:}
\put(0,3){\circle{0.1}}
\put(3,3){\circle{0.1}}
\put(2,3){\circle{0.1}}
\put(4,3){\circle{0.1}}
\put(1,3){\circle{0.1}}
\put(2,3){\circle{0.1}}
\put(2,2){\vector(1,1){0.9}}
\put(2,2){\vector(-1,1){0.9}}
\put(2,2){\vector(0,1){0.9}}
\put(2,2){\vector(2,1){1.9}}
\put(2,2){\vector(-2,1){1.9}}
\put(2,2){\circle{0.1}}
\put(2,1.5){\circle{0.1}}
\put(2,1){\circle{0.1}}
\put(2,1.25){\circle{0.1}}
\put(2,1.75){\circle{0.1}}
\put(2,0){\vector(0,1){0.9}}
\put(2,0){\circle{0.1}}

\put(8,1){$\mathbf{H}_{un}$:}
\put(8,3){\circle{0.1}}
\put(11,3){\circle{0.1}}
\put(10,3){\circle{0.1}}
\put(12,3){\circle{0.1}}
\put(9,3){\circle{0.1}}

\put(10,2){\vector(1,1){0.9}}
\put(10,2){\vector(-1,1){0.9}}
\put(10,2){\vector(0,1){0.9}}
\put(10,2){\vector(2,1){1.9}}
\put(10,2){\vector(-2,1){1.9}}

\put(10,2){\circle{0.1}}
\put(10,1.75){\circle{0.1}}
\put(10,1.5){\circle{0.1}}
\put(10,1.25){\circle{0.1}}
\put(10,1){\circle{0.1}}
\put(10,0){\circle{0.1}}

\put(10,0){\vector(0,1){0.9}}

\put(10,1){\circle{0.1}}
\put(10,0){\circle{0.1}}
\put(10,4){\circle{0.1}}
\put(9,3){\vector(1,1){0.9}}
\put(11,3){\vector(-1,1){0.9}}
\put(10,3){\vector(0,1){0.9}}
\put(8,3){\vector(2,1){1.9}}
\put(12,3){\vector(-2,1){1.9}}
\end{picture}
\caption{Frames of Logics with Hereditary Finitary Unification.}\label{hpa}
\end{figure}
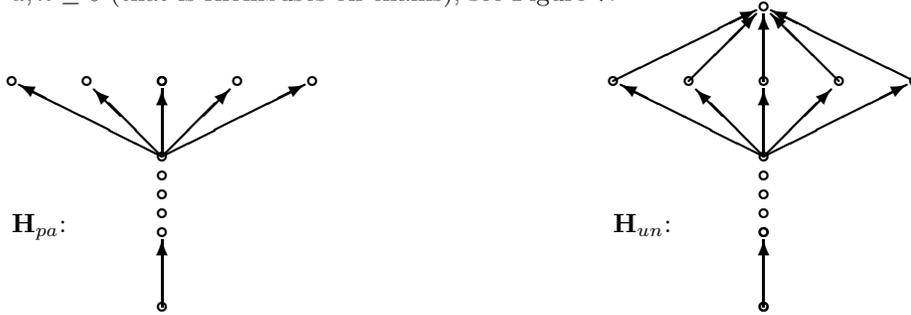
\noindent
$\mathsf L(\mathbf{H}_{pa})$  is the least intermediate logic with hereditary projective approximation and  $\mathsf L(\mathbf{H}_{un})$ is the least logic with hereditary unitary unification. The logics $\mathsf L(\mathbf{H}_{pa})$ and $\mathsf L(\mathbf{H}_{un})$  are locally tabular and  they are (the only) minimal logics with hereditary finitary unification. We have $\mathsf L(\mathsf L(\mathbf{H}_{pa})\cup\mathsf L(\mathbf{H}_{un}))=\mathsf{LC}$ as, it is  proved that, any unitary intermediate logic with projective approximation is projective.

\section{Basic Concepts.}\label{BC}

\subsection{Intermediate Logics.}\label{IL}
We consider the standard language of intuitionistic propositional logic $\{\rightarrow,\lor,\land,\bot\}$ where $\leftrightarrow,\neg,\top$ are defined in the usual way.
 Let $\mathsf{Var}=\{x_1,x_2,\dots\}$ be the set of propositional variables and  $\mathsf{Fm}$ be the set of (intuitionistic) formulas,  denoted by  $A,B,C,\dots$
 For any $n\geq 0$, let $\mathsf{Fm^n}$, be the set of formulas in the variables $\{x_1,\dots,x_n\}$, that is $A\in \mathsf{Fm^n}\Leftrightarrow  \mathsf{Var}(A)\subseteq\{x_1,\dots,x_n\}\Leftrightarrow A=A(x_1,\dots,x_n).$

Substitutions $\alpha,\beta,\dots$  are  finite mappings;  for each $\alpha$ there are $k,n\geq 0$ such that $\alpha\colon\{x_1,\dots,x_n\}\to \mathsf{Fm}^k$. The extension of  $\alpha$ to an endomorphism of   $\mathsf{Fm}$ is also denoted by $\alpha$. Thus, $\alpha(A)$ means the substitution of a formula $A$. Let $\alpha\circ\tau$ be the composition of the substitutions, that is a substitution such that $\alpha\circ\tau(A)=\alpha(\tau(A))$, for any $A$.

 An {\it intermediate logic} {\sf L} is any set of  formulas  containing the intuitionistic  logic {\sf INT}, closed under the modus ponens rule MP and closed under substitutions.\footnote{Intermediate logics may be regarded as fragments of  transitive modal logics (or extensions of {\sf S4}, or {\sf Grz}); the intuitionistic variable $x_i$ is meant as $\Box^+ x_i$ and $A\rightarrow B=\Box^+(\neg A\lor B)$.}   All intermediate logics form, under inclusion, a (complete distributive) lattice where inf$\{\mathsf L_i\}_{i\in I}=\bigcap_{i\in I}\mathsf L_i$. Let $\mathsf L(X)$, for any set $X$ of formulas, mean the least intermediate logic containing $X$. Given two intermediate logics {\sf L} and {\sf L'}, we say {\sf L'} is {\it an extension of} {\sf L} if $\mathsf L\subseteq\mathsf L'$.
The least intermediate logic is  {\sf INT}. Consistent logics are proper subsets of $\mathsf{Fm}$.
We will refer to the following  list of formulas\slash logics:
\begin{figure}[H]
$$\begin{array}{ll} \mathsf{ LC}: (x_1\rightarrow  x_2)\lor (x_2\rightarrow  x_1); \qquad \qquad \mathsf{ KC}:  \neg x \lor \neg \neg x;& \\
\mathsf{ SL}:  (( \neg \neg x\rightarrow  x)\rightarrow (\neg x \lor \neg\neg x)) \rightarrow  (\neg x \lor \neg \neg x):  &\mathsf{ } \\
\mathsf{PWL}: (x_2\to x_1)\lor\bigl(((x_1\to x_2)\to x_1)\to x_1\bigr);&{}\\
\mathsf{H_n } : \ \mathsf{H}_1 = x_1 \lor \neg x_1,\qquad  \mathsf{H}_{n+1} = x_{n+1} \lor (x_{n+1} \rightarrow  \mathsf{H}_n);  &\mathsf{ }\\
\mathsf{B_n}: \bigwedge_{i=1}^{n+1}\Bigl(\bigl(x_i\rightarrow\bigvee_{j\not=i}x_j\bigr)\rightarrow\bigvee_{j\not=i}x_j\Bigr)\rightarrow \bigvee_{i=1}^{n+1}x_i.&\mathsf{ }
\end{array}$$\caption{Intermediate Logics.}\label{ILs}
\end{figure}

{\sf KC}  is  called the logic of weak excluded middle or Jankov logic or de~Morgan logic (see \cite{Ghi2}). {\sf SL} is  Scott logic and {PWL} is the logic of weak law of Peirce, see \cite{Esakia}.

We define the {\it consequence relation} $\vdash_{\mathsf L}$, for any given  intermediate   logic $\mathsf L$, admitting  only the  rule $\mathsf{MP}$  in derivations. Then we prove the {\it deduction theorem} $$X,A\vdash_{\mathsf L}B \quad\Leftrightarrow\quad X\vdash_{\mathsf L}A\rightarrow B.\leqno{(DT)}$$
The relation  of  $\mathsf L-equivalent$ formulas,
$$ A=_{\mathsf L} B \qquad \Leftrightarrow\qquad \vdash_{\mathsf L} A\leftrightarrow B,$$
leads to  the standard {\it Lindenbaum-Tarski algebra}.  The relation
$=_{\mathsf L}$ extends to substitutions,
$ \varepsilon=_{\mathsf L} \mu$ means that  $\varepsilon(A)=_{\mathsf L}   \mu(A)$,  for each formula $A$.  We define  a {\it pre-order} (that is a  reflexive and transitive relation) on the set of substitutions:
$$ \varepsilon\preccurlyeq_{\mathsf L} \mu \qquad \Leftrightarrow \qquad
\bigl(\alpha\circ\varepsilon=_{\mathsf L}   \mu, \mbox{ for some  $\alpha$}\bigr).\footnote{Sometimes the reverse pre-order is used; in this case $\mu  \preccurlyeq \varepsilon\Leftrightarrow
(\alpha\circ\varepsilon=_{\mathsf L}   \mu, \mbox{ for some  $\alpha$})$.}$$

 Note that  $\varepsilon\preccurlyeq_{\mathsf L} \mu \land \mu\preccurlyeq_{\mathsf L} \varepsilon$ does not yield $\varepsilon=_{\mathsf L} \mu$.  If $\varepsilon\preccurlyeq_{\mathsf L} \mu$, we say that $\varepsilon$ is  {\it more general}  than $\mu$. If it is not misleading, we
omit the subscript $_{\mathsf L}$ and write $=$ and $\preccurlyeq$, instead of $=_{\mathsf L}$ and $\preccurlyeq_{\mathsf L}$, correspondingly.\\

A {\it frame} $\mathfrak F=(W,R,w_0)$ consists of a non-empty set $W$, a pre-order $R$ on $W$ and a {\it root}  $w_0\in W$ such that $w_0Rw$, for any $w\in W.$ {For any set $U$, let $P(U)=\{V:V\subseteq U\}$.} Let $n$ be a natural number.
Any $n$-{\it model} $\mathfrak{M}^n=(W,R,w_0,V^n)$, over the frame $(W,R,w_0)$, contains a valuation $V^n:W\to P(\{x_1,\dots,x_n\})$ which is monotone:
$$u R w\quad \Rightarrow\quad V^n(u)\subseteq V^n(w), \quad \mbox{for each } u,w\in W.$$

Thus, $n$-models, are (bounded) variants of  usual Kripke models $\mathfrak{M}=(W,R,w_0,V)$ where all variables are valuated; $V:W\to P(\mathsf{Var})$. Given $\mathfrak{M}^n$ and $\mathfrak{M}^k$ (for $n\not=k$),   we do not assume that $\mathfrak{M}^n$ and $\mathfrak{M}^k$ have anything in common.  In particular, we do not assume that there is any model $\mathfrak{M}$ such that $\mathfrak{M}^n$ and $\mathfrak{M}^k$ are  its fragments. If $\mathfrak{M}^k=(W,R,w_0,V^k)$ and $n\leq k$, then $\mathfrak{M}^k\!\!\upharpoonright_n$ is the restriction of $\mathfrak{M}^k$  to the $n$-model. Thus, $\mathfrak{M}^k\!\!\upharpoonright_n=(W,R,w_0,V^n)$ is the $n$-model over the same frame as $\mathfrak{M}^k$ in which  $V^n(w)=V^k(w)\cap\{x_1,\dots,x_n\}$, for each $w\in W$. We say $(W,R,w_0)$ is a po-frame, and $(W,R,w_0,V^n)$ is a po-model, if the relation $R$ is a partial order.

Let $\mathfrak{F}=(W,\leq,w_0)$ be a finite po-frame. We define {the {\it depth}, $d_{\mathfrak F}(w)$, of any element $w\in W$ in  $\mathfrak F$}. We let $d_{\mathfrak F}(w)=1$ if $w$ is a $\leq$-maximal element ($\leq$-maximal elements are also called end elements) and  $d_{\mathfrak F}(w)=i+1$ if all elements in $\{u\in W\colon w<u\}$ are of the depth at most $i$ and there is  at least one element $u>w$  of the depth $i$. The depth of the root, $d_{\mathfrak F}(w_0)$, is  the   depth of  the frame $\mathfrak F$ (or any $n$-model over $\mathfrak F$).

 Let $\mathfrak F=(W,\leq_W,w_0)$ and $\mathfrak G=(U,\leq_U,u_0)$ be two disjoint (that is $W\cap U=\emptyset$) po-frames. The join $\mathfrak F +\mathfrak G$ of the frames is the frame $(W\cup U,\leq,w_0)$ where
$$x\leq y\qquad \Leftrightarrow \qquad x\leq_W y \quad \mbox{or} \quad x\leq_U y\quad  \mbox{or} \quad (x\in W\land y\in U).$$
If $\mathfrak F$ and $\mathfrak G$ are not disjoint, we take their disjoint isomorphic copies and the join of the copies is  called the join of $\mathfrak F$ and $\mathfrak G$ (it is also denoted by $\mathfrak F +\mathfrak G$). Thus, the join of frames is defined up to an isomorphism.
The join is associative (up to an isomorphism) and it is not commutative. Instead of $\mathfrak F +\mathfrak L_1$ and $\mathfrak L_1 +\mathfrak G$, where $\mathfrak L_1$ is one-element frame (see Figure \ref{8fames}), we write $\mathfrak F+$ and $+\mathfrak G$, correspondingly.

Let $(W,R,w_0,V^n)$ be any $n$-model.
The subsets   $\{V^n(w)\}_{w\in W}$ of $\{x_1,\dots,x_n\}$ are usually  given  by their characteristic functions $\mathfrak{f}_w^n\colon\{x_1,\dots,x_n\}\to \{0,1\}$ or binary strings $\mathfrak{f}_w^n=i_1\dots i_n$, where $i_k\in\{0,1\}$. Thus, $n$-models may also appear in the   form $(W,R,w_0,\{V^n(w)\}_{w\in W})$, or  $(W,R,w_0,\{\mathfrak{f}_w^n\}_{w\in W})$.  $n$-Models are usually depicted as graphs whose nodes are labeled with binary strings.

The forcing relation $\mathfrak{M}^n\Vdash_wA$, for any $w\in W$ and  $A\in \mathsf{Fm}^n$, is defined as usual
$$\mathfrak{M}^n\Vdash_wx_i\quad\Leftrightarrow\quad x_i\in V^n(w),\qquad \mbox{ for any } i\leq n;\qquad\qquad\qquad\qquad\qquad\qquad\qquad\qquad\qquad$$
$$\mathfrak{M}^n\Vdash_w\bot,\quad \mbox{for none } w\in W;\qquad\qquad\qquad\qquad\qquad\qquad\qquad\qquad\qquad\qquad\qquad\qquad$$
$$\mathfrak{M}^n\Vdash_w(A\rightarrow B)\quad\Leftrightarrow\quad \forall_{u}\bigl(wRu\quad\mbox{and}\quad \mathfrak{M}^n\Vdash_uA\quad\Rightarrow\quad\mathfrak{M}^n\Vdash_uB\bigr);\qquad\qquad\qquad\qquad\qquad\qquad$$
$$\mathfrak{M}^n\Vdash_w(A\lor B)\quad\Leftrightarrow\quad \bigl(\mathfrak{M}^n\Vdash_wA\quad \mbox{or}\quad\mathfrak{M}^n\Vdash_wB\bigr);\qquad\qquad\qquad\qquad\qquad\qquad\qquad\qquad\qquad\qquad$$
$$\mathfrak{M}^n\Vdash_w(A\land B)\quad\Leftrightarrow\quad \bigl(\mathfrak{M}^n\Vdash_wA\quad \mbox{and}\quad\mathfrak{M}^n\Vdash_wB\bigr).\qquad\qquad\qquad\qquad\qquad\qquad\qquad\qquad\qquad\qquad$$
\begin{lemma}\label{pMm}
If $u R w$ and $\mathfrak{M}^n\Vdash_u A$, then $\mathfrak{M}^n\Vdash_w A, \quad \mbox{for any } u,w\in W \mbox{ and any} A\in \mathsf{Fm}^n.$\end{lemma}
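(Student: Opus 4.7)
The plan is a straightforward induction on the complexity of the formula $A$, which is the standard persistence (or heredity) argument for Kripke semantics of intuitionistic logic. Fix $u,w \in W$ with $uRw$.

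For the base cases, if $A = x_i$ with $i\leq n$, then $\mathfrak{M}^n \Vdash_u x_i$ means $x_i \in V^n(u)$, and the monotonicity condition $V^n(u)\subseteq V^n(w)$ built into the definition of an $n$-model gives $x_i\in V^n(w)$, i.e.\ $\mathfrak{M}^n\Vdash_w x_i$. If $A = \bot$ the statement is vacuous since $\mathfrak{M}^n\not\Vdash_u \bot$.

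For the inductive step, the cases $A = B\land C$ and $A = B\lor C$ follow immediately from the inductive hypotheses applied to $B$ and $C$, together with the clauses defining forcing of $\land$ and $\lor$ (these clauses are local at the point $u$, so no use of $R$ is needed beyond the IH). The one case requiring care is $A = B\to C$. Assume $\mathfrak{M}^n\Vdash_u B\to C$; we must show $\mathfrak{M}^n \Vdash_w B\to C$. Take any $v$ with $wRv$ and $\mathfrak{M}^n\Vdash_v B$. Since $R$ is a pre-order it is transitive, so $uRw$ and $wRv$ yield $uRv$. Then the forcing clause for $\to$ applied at $u$ gives $\mathfrak{M}^n\Vdash_v C$, as required. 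Note that this is the only step where the assumption that $R$ is a pre-order (specifically transitivity) is used; the valuation monotonicity is used only in the atomic base case.

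I do not expect any real obstacle here: the lemma is a textbook fact, and the only subtlety is remembering to invoke transitivity of $R$ in the implication case in order to re-enter the hypothesis at $u$. The induction is well-founded because $\mathsf{Fm}^n$ is built inductively from $\{x_1,\dots,x_n,\bot\}$ by the connectives $\to,\lor,\land$.
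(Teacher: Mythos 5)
Your proof is correct and is the standard persistence argument by induction on formula complexity, with monotonicity of $V^n$ handling the atomic case and transitivity of the pre-order $R$ handling the implication case; the paper itself states this lemma without proof, treating it as a well-known fact. Nothing is missing.
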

 Let   $(W)_w=\{u\in W\colon wRu\}$, for any $w\in W$. The subframe of $(W,R,w_0)$ {\it generated by $w$} is $((W)_w,R\upharpoonright_{(W)_w},w)$; the submodel of $\mathfrak{M}^n$ {\it generated by $w$} is
$$(\mathfrak{M}^n)_w=((W)_w,R\!\upharpoonright_{(W)_w},w,V^n\!\!\upharpoonright_{(W)_w}).$$
We write $\mathfrak{M}^n\Vdash A$ if $\mathfrak{M}^n\Vdash_{w_0}A$ and we  obviously have $(\mathfrak{M}^n)_w\Vdash A\Leftrightarrow\mathfrak{M}^n\Vdash_wA$.  For any $n$-model, we put $\mathsf{Th}(\mathfrak{M}^n)=\{A\in \mathsf{Fm}^n\colon\mathfrak{M}^n\Vdash A\}$. Given two $n$-models $\mathfrak{M}^n$ and $\mathfrak{N}^n$, we say they are  {\it  equivalent}, in symbols $\mathfrak M^n\thicksim\mathfrak N^n$, if $\mathsf{Th}(\mathfrak{M}^n)=\mathsf{Th}(\mathfrak{N}^n).$

Let $(W,\leq,w_0,V^n)$ and  $(W,\leq,w_0,V'^{\ n})$ be $n$-models over the same po-frame, we say  they are (mutual) {\it variants} if $V(w)=V'(w)$ for each $w\not=w_0$.

Let $\mathbf{F}$ be a class of frames and $\mathbf{M}^n(\mathbf{F})$, for any $n\geq 0$,    be the class of $n$-models  over the frames $\mathbf{F}$; we write  $\mathbf{M}^n$, instead of $\mathbf{M}^n(\mathbf{F})$, if there is no danger of confusion. The intermediate logic determined by $\mathbf{F}$ is denoted by $\mathsf{L}(\mathbf{F})$. Thus, if $A\in\mathsf{Fm^n}$, then
$$A\in\mathsf{L}(\mathbf{F}) \quad \Leftrightarrow \quad (\mathfrak{M}^n\Vdash A, \mbox{ for every } \mathfrak{M}^n\in \mathbf{M}^n). $$
We say that  {\bf F} are {\it frames of an intermediate logic {\sf L}} if $\mathsf{L}\subseteq\mathsf{L}(\mathbf{F})$  and {\sf L} {\it omits a frame}  $\mathfrak{F}$ if $\mathfrak{F}$ is not a frame of {\sf L}. A logic {\sf L} is  {\it Kripke complete} if $\mathsf{L}=\mathsf{L}(\mathbf{F})$ for some  $\mathbf{F}$.
 The logic $\mathsf{L}(\mathbf{F})$ is said to be {\it tabular} if $\mathbf{F}$ is a finite family  of finite frames.
 {\sf L} is {\it Halld\'{e}n complete} ({\it H}-complete) if for any formulas $A,B$ with $\mathsf{Var}(A)\cap\mathsf{Var}(B)=\emptyset$ we have
$$ \mathsf{L}\vdash A\lor B \quad \Rightarrow \quad \mathsf{L}\vdash A \quad\mbox{ or }\quad  \mathsf{L}\vdash B.$$
\begin{theorem}\label{hcl}  Let {\bf F} be  finite. Then  $\mathsf{L}(\mathbf F)$ is {\it H}-complete iff $\mathsf{L}(\mathbf F)=\mathsf{L}(\mathfrak F)$ for some $\mathfrak F\in \mathbf F$.
\end{theorem}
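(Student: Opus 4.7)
The plan is to prove both implications by a direct semantic argument, using only the rooted nature of frames, closure of logics under substitution, and the definition of validity (forcing at the root for every valuation).

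For the easy direction ($\Leftarrow$), suppose $\mathsf L(\mathbf F)=\mathsf L(\mathfrak F)$ for some rooted $\mathfrak F=(W,R,w_0)\in\mathbf F$, and assume $A\vee B\in\mathsf L(\mathfrak F)$ with $\mathsf{Var}(A)\cap\mathsf{Var}(B)=\emptyset$. If $A,B\notin\mathsf L(\mathfrak F)$, pick valuations $V_A,V_B\colon W\to P(\mathsf{Var})$ with $(\mathfrak F,V_A)\not\Vdash_{w_0}A$ and $(\mathfrak F,V_B)\not\Vdash_{w_0}B$. Because $\mathsf{Var}(A)$ and $\mathsf{Var}(B)$ are disjoint I may splice the two valuations: define $V(w)=(V_A(w)\cap\mathsf{Var}(A))\cup(V_B(w)\cap\mathsf{Var}(B))$, still monotone on the shared variables because each component is. Then $(\mathfrak F,V)\not\Vdash_{w_0}A$ and $(\mathfrak F,V)\not\Vdash_{w_0}B$, so $(\mathfrak F,V)\not\Vdash_{w_0}A\vee B$, contradicting $A\vee B\in\mathsf L(\mathfrak F)$.

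For the hard direction ($\Rightarrow$), write $\mathbf F=\{\mathfrak F_1,\dots,\mathfrak F_k\}$ and note $\mathsf L(\mathbf F)=\bigcap_i\mathsf L(\mathfrak F_i)$. Assume $\mathsf L(\mathbf F)\neq\mathsf L(\mathfrak F_i)$ for every $i$; since the inclusion $\mathsf L(\mathbf F)\subseteq\mathsf L(\mathfrak F_i)$ is automatic, for each $i$ choose a witness $A_i\in\mathsf L(\mathfrak F_i)\setminus\mathsf L(\mathbf F)$. Using that intermediate logics are closed under substitution, rename variables so that $\mathsf{Var}(A_i)\cap\mathsf{Var}(A_j)=\emptyset$ whenever $i\neq j$; the renamed $A_i$ still lies in $\mathsf L(\mathfrak F_i)$ and, because a renaming is invertible, still lies outside $\mathsf L(\mathbf F)$. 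Consider $A_1\vee\cdots\vee A_k$: for every $j$ and every model over $\mathfrak F_j$, the disjunct $A_j$ is forced at the root, hence so is the whole disjunction; therefore $A_1\vee\cdots\vee A_k\in\mathsf L(\mathbf F)$.

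Now iterate H-completeness on $A_1\vee(A_2\vee\cdots\vee A_k)$: the variables of $A_1$ are disjoint from those of $A_2\vee\cdots\vee A_k$, so one of them is in $\mathsf L(\mathbf F)$; since $A_1\notin\mathsf L(\mathbf F)$ the tail disjunction is in $\mathsf L(\mathbf F)$, and by $k-1$ further applications of H-completeness some $A_i$ lands in $\mathsf L(\mathbf F)$, contradicting the choice of $A_i$. This closes the argument. The only subtlety I foresee is the invariance of ``$\notin\mathsf L(\mathbf F)$'' under renaming, which is handled by the fact that a bijective renaming of variables has an inverse substitution and $\mathsf L(\mathbf F)$ is substitution-closed; finiteness of $\mathbf F$ is essential because one needs the disjunction to have finitely many disjuncts so that iterated H-completeness terminates.
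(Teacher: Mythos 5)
Your proof is correct. The paper states Theorem \ref{hcl} without proof, and your argument is the standard one for this folklore fact: the splicing of valuations over a single rooted frame (legitimate because forcing of a formula depends only on the valuation of its own variables, and the disjunction is evaluated at the common root) gives one direction, and the disjoint-variable renaming of witnesses $A_i\in\mathsf{L}(\mathfrak F_i)\setminus\mathsf{L}(\mathbf F)$ followed by iterated Halld\'en completeness gives the other. Both directions are handled carefully, including the two points that actually need attention — monotonicity of the spliced valuation and invertibility of the renaming substitution — so there is nothing to add.
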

A  logic {\sf L} is  {\it locally tabular} if  \ {\sf Fm$^n\slash\!\!=_{\mathsf L}$} is finite, for each $n\geq 0$. Tabular logics are locally tabular but not vice versa.  For each locally tabular logic {\sf L} there exists a family $\mathbf{F}$ of finite frames such that  $\mathsf{L}=\mathsf{L}(\mathbf{F})$. Thus, locally tabular logics have the {\it finite model property} but, again, the converse is false. A logic {\sf L} is said to be in the $n$-{\it slice} if $\mathsf{L}=\mathsf{L}(\mathbf{F})$ for a family {\bf F} of finite po-frames such that $d(\mathfrak{F})\leq n$, for any $\mathfrak{F}\in\mathbf F$.
\begin{theorem}\label{lf6}  Suppose that the family {\bf F} consists of finite frames. Then  $\mathsf{L}(\mathbf F)$ is locally tabular iff $\mathbf{M}^n\slash\!\!\thicksim$ is finite, for each $n$.
\end{theorem}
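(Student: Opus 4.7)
The plan is to exhibit a tight correspondence between the two finite sets via the forcing relation, in both directions. The key observation I would set up first is that for any $\mathfrak{M}^n\in\mathbf{M}^n(\mathbf{F})$ and any $A,B\in\mathsf{Fm}^n$, if $A=_{\mathsf L}B$ then $\mathfrak{M}^n\Vdash A\iff\mathfrak{M}^n\Vdash B$; this holds because $\mathsf L\subseteq\mathsf L(\mathbf F)$ and forcing of a formula $C\in\mathsf{Fm}^n$ at a model depends only on $V^n$, so $\vdash_{\mathsf L}A\leftrightarrow B$ implies $\mathfrak M^n\Vdash A\leftrightarrow B$. Thus $\mathsf{Th}(\mathfrak{M}^n)$ is a union of $=_{\mathsf L}$-classes, and so it is determined by a subset of $\mathsf{Fm}^n\slash\!\!=_{\mathsf L}$.

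For the ($\Rightarrow$) direction, I would use this observation directly. If $\mathsf{Fm}^n\slash\!\!=_{\mathsf L}$ is finite, it has only finitely many subsets, so there are only finitely many possible theories $\mathsf{Th}(\mathfrak{M}^n)$ as $\mathfrak{M}^n$ ranges over $\mathbf{M}^n$. Since $\mathfrak{M}^n\thicksim\mathfrak{N}^n$ is defined exactly by equality of theories, $\mathbf{M}^n\slash\!\!\thicksim$ is finite.

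For the ($\Leftarrow$) direction, I would associate to every $A\in\mathsf{Fm}^n$ the characteristic map
$$\chi_A\colon\mathbf{M}^n\slash\!\!\thicksim\,\to\{0,1\},\qquad \chi_A([\mathfrak{M}^n])=1\iff \mathfrak{M}^n\Vdash A,$$
which is well-defined because $\thicksim$-equivalent models force the same $\mathsf{Fm}^n$-formulas. If $\chi_A=\chi_B$, then $\mathfrak{M}^n\Vdash A\leftrightarrow B$ holds for every $\mathfrak{M}^n\in\mathbf{M}^n$; because forcing of $A\leftrightarrow B$ at an arbitrary Kripke model over $\mathbf{F}$ depends only on the restriction to $\{x_1,\dots,x_n\}$, this yields $A\leftrightarrow B\in\mathsf{L}(\mathbf F)=\mathsf L$, i.e.\ $A=_{\mathsf L}B$. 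Hence $A\mapsto\chi_A$ factors through an injection of $\mathsf{Fm}^n\slash\!\!=_{\mathsf L}$ into the finite set $\{0,1\}^{\mathbf{M}^n\slash\thicksim}$, giving finiteness of $\mathsf{Fm}^n\slash\!\!=_{\mathsf L}$.

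There is no real obstacle: the argument is essentially bookkeeping around the identification $\mathsf L=\mathsf L(\mathbf F)$ and the fact that $\mathsf{Fm}^n$-formulas are insensitive to valuations of variables beyond $x_1,\dots,x_n$. The only subtle point worth flagging explicitly is the use of $\mathsf L=\mathsf L(\mathbf F)$ (completeness with respect to $\mathbf F$) in the ($\Leftarrow$) step, since this is what lets one pass from ``forced in all $n$-models over $\mathbf F$'' to ``$\mathsf L$-equivalent''; without it one would only get equivalence in $\mathsf L(\mathbf F)$.
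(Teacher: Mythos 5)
Your proof is correct and follows essentially the same route as the paper, which merely sketches the forward direction ("finitely many formulas cannot distinguish infinitely many models") and calls the converse obvious; you have simply written out both directions in full, including the use of completeness $\mathsf L=\mathsf L(\mathbf F)$ in the ($\Leftarrow$) step.
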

\begin{proof} $(\Rightarrow)$ Using finitely many (up to equivalence) formulas we do not distinguish infinitely many models. $(\Leftarrow)$ is obvious.
\end{proof}
\begin{corollary}\label{fp} (i) If {\sf L} and {\sf L'} are locally tabular intermediate logics, then their intersection $\mathsf L\cap\mathsf L'$ is also a locally tabular intermediate logic;\\
(ii) any extension of any locally tabular intermediate logic is locally tabular.
\end{corollary}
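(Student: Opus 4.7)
The plan is to prove both parts directly from the definition of local tabularity in terms of the quotient \textsf{Fm}$^n/\!\!=_\mathsf{L}$, and for (i) optionally cross-check via Theorem~\ref{lf6}. The key observation is that $A =_{\mathsf L\cap\mathsf L'} B$ iff $A\leftrightarrow B \in \mathsf L \cap \mathsf L'$ iff ($A =_\mathsf L B$ and $A =_{\mathsf L'} B$), so the relation $=_{\mathsf L\cap\mathsf L'}$ is exactly the intersection of the two component relations, and $A \leq_{\mathsf L'} B$ is coarser than $=_\mathsf L$ whenever $\mathsf L \subseteq \mathsf L'$. From these two facts both claims fall out quickly.

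For (i), I would first verify the routine fact that $\mathsf L\cap\mathsf L'$ is an intermediate logic: it contains \textsf{INT} (as both do), and is closed under MP and substitutions (as the intersection of two sets with these closure properties). Next, fix $n$ and note that the map
\[
\mathsf{Fm}^n/\!\!=_{\mathsf L\cap\mathsf L'} \; \longrightarrow \; \mathsf{Fm}^n/\!\!=_\mathsf{L} \;\times\; \mathsf{Fm}^n/\!\!=_{\mathsf L'}, \qquad [A] \;\longmapsto\; ([A]_\mathsf L,[A]_{\mathsf L'})
\]
is well-defined and injective by the equivalence displayed above. Since both factors on the right are finite by hypothesis, so is the left side, which proves (i). Alternatively, using Theorem~\ref{lf6}, one may pick families of finite frames $\mathbf F,\mathbf F'$ with $\mathsf L = \mathsf L(\mathbf F)$ and $\mathsf L' = \mathsf L(\mathbf F')$ such that $\mathbf M^n(\mathbf F)/\!\thicksim$ and $\mathbf M^n(\mathbf F')/\!\thicksim$ are finite; one then observes $\mathsf L\cap\mathsf L' = \mathsf L(\mathbf F\cup\mathbf F')$ and that the class $\mathbf M^n(\mathbf F\cup\mathbf F')/\!\thicksim$ is the union of the two (since the equivalence $\thicksim$ is intrinsic to a model, not logic-dependent), hence finite.

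For (ii), let $\mathsf L \subseteq \mathsf L'$ with $\mathsf L$ locally tabular. Then $A =_\mathsf L B$ entails $A\leftrightarrow B \in \mathsf L \subseteq \mathsf L'$, so $A =_{\mathsf L'} B$; that is, $=_\mathsf L$ refines $=_{\mathsf L'}$. Consequently each $=_{\mathsf L'}$-class in $\mathsf{Fm}^n$ is a union of $=_\mathsf L$-classes, and the natural surjection
\[
\mathsf{Fm}^n/\!\!=_\mathsf{L} \;\twoheadrightarrow\; \mathsf{Fm}^n/\!\!=_{\mathsf L'}
\]
exhibits the right-hand side as a quotient of a finite set, hence finite.

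There is essentially no obstacle here: both parts are straightforward consequences of the definition. The only small subtlety worth stating explicitly is the equivalence $A =_{\mathsf L\cap\mathsf L'} B \Leftrightarrow (A =_\mathsf L B \text{ and } A =_{\mathsf L'} B)$, which uses that membership of $A\leftrightarrow B$ in the intersection is membership in both. Everything else is bookkeeping, so the proof is best written as the two short paragraphs indicated above.
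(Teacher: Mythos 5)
Your proof is correct, and your primary argument for (i) takes a genuinely different (and more elementary) route than the paper. The paper proves (i) semantically: it writes $\mathsf L=\mathsf L(\mathbf F)$ and $\mathsf L'=\mathsf L(\mathbf G)$ for classes of finite frames, observes $\mathsf L\cap\mathsf L'=\mathsf L(\mathbf F\cup\mathbf G)$ and $\mathbf M^n(\mathbf F\cup\mathbf G)=\mathbf M^n(\mathbf F)\cup\mathbf M^n(\mathbf G)$, and concludes via Theorem~\ref{lf6} --- exactly your ``alternative'' cross-check. Your main argument instead works purely syntactically with the quotients $\mathsf{Fm}^n/\!\!=_{\mathsf L}$: the injection $[A]\mapsto([A]_{\mathsf L},[A]_{\mathsf L'})$ into the finite product settles (i) without invoking the fact that every locally tabular logic is determined by a family of finite frames, which the paper's proof quietly presupposes. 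What the syntactic route buys is independence from that completeness fact; what the paper's route buys is that it stays inside the model-theoretic framework ($\mathbf M^n/\!\!\thicksim$) that the rest of the paper actually uses. For (ii) the paper just says ``obvious''; your surjection $\mathsf{Fm}^n/\!\!=_{\mathsf L}\twoheadrightarrow\mathsf{Fm}^n/\!\!=_{\mathsf L'}$ is the intended one-line justification. No gaps.
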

\begin{proof}
(i) Let {\sf L=$\mathsf L({\mathbf F})$} and {\sf L'=L({\bf G})} for some classes {\bf F,G} of finite frames. Then $\mathsf{ L}\cap\mathsf{L'}=\mathsf{L}(\mathbf{F}\cup\mathbf{G})$ and  $\mathbf{M}^n(\mathbf{F}\cup\mathbf{G}) =
\mathbf{M}^n(\mathbf{F}) \ \cup \ \mathbf{M}^n(\mathbf{G})$ .Thus,  $\mathbf{M}^n(\mathbf{F}\cup\mathbf{G})\slash\!\!\thicksim$ \ is finite if \ $\mathbf{M}^n(\mathbf{F})\slash\!\!\thicksim$ \ and $\mathbf{M}^n(\mathbf{G})\slash\!\!\thicksim$ \ are finite. \
(ii) is obvious.\end{proof}
Let us characterize po-frames of the logics in Figure \ref{ILs}. Thus,
{\sf LC}-frames are chains and we let  $\mathfrak L_d$, for any natural number $d\geq 1$, be the chain on $\{1,2,\dots ,d\}$ with the reverse (natural) ordering $\geq$, where $d$ is the root and $1$ is the top (=greatest) element. Finite {\sf KC}-frames have top elements. $\mathsf H_n$-Frames are of the depth $\leq n$ and $\mathsf H_n\mathsf B_m$-frames have (additionally) $m$-bounded branching, that is each point has at most $m$ immediate successors.
To get {\sf PWL}-frames we need  unrooted frames; {\sf PWL}-frames are
$$\mathfrak F_n+ \mathfrak I_{n_1}+\cdots+\mathfrak I_{n_k} \footnote{$\mathfrak F_n+ \mathfrak I_{n_1}+\cdots+\mathfrak I_{n_k}$ denotes the vertical union with $\mathfrak F_n$  on the top and $\mathfrak I_{n_k}$ on the bottom},\quad \mbox{where $n\geq 0$ and $n_1,\dots,n_k\geq 1$;} $$  where $\mathfrak I_n$ is the frame with the identity relation on an $n$-element set (and we agree that $\mathfrak F_0=\mathfrak L_1$ and $\mathfrak F_1=\mathfrak L_2$). Note that the frames in Figure \ref{hpa} are {\sf PWL}-frames and hence $\mathsf L({\mathbf H}_{pa})$ and $\mathsf L({\mathbf H}_{un})$ are extensions of {\sf PWL}.

There are three pretabular intermediate logics, see \cite{Maks72}: {\sf LC} of G\"odel and Dummett, given by all chains $\mathfrak L_n$, {\sf LJ} of Jankov, given by all $n$-forks $\mathfrak F_n$, and  {\sf LH} of Hosoi, given by all rhombuses $\mathfrak R_n$; see Figure \ref{FRF}.

A pair of logics  $(\mathsf L_1,\mathsf L_2)$ is a {\it splitting pair} of the lattice of (intermediate) logics if $\mathsf L_2\not\subseteq \mathsf L_1$ and, for any intermediate logic $\mathsf L$, either  $\mathsf L \subseteq\mathsf L_1$, or $\mathsf L_2 \subseteq\mathsf L$.\footnote{In the same way, one can define a splitting pair in any complete lattice.} Then   we say $\mathsf L_1$  splits the lattice and $\mathsf L_2$ is the splitting (logic) of the lattice, see  \cite{ZWC}.
 Jankov \cite{Jankov} {\it characteristic formula} of a finite rooted frame $\mathfrak F$ is denoted by $\chi (\mathfrak F)$. \footnote{Jankov originally defined $\chi (\mathfrak F)$ for any subdirectly irreducible finite Heyting algebra. By duality, finite rooted frames are tantamount to finite s.i. algebras and hence we proceed as if $\chi (\mathfrak F)$ were defined for frames.}
\begin{theorem}\label{Jankov} The pair $(\mathsf L(\mathfrak F),\mathsf L(\chi (\mathfrak F))$ is a splitting pair, for any finite  frame $\mathfrak F$.  Thus, for any intermediate logic {\sf L} and any finite frame $\mathfrak F$, the logic {\sf L} omits  $\mathfrak F$ iff $\chi (\mathfrak F) \in \mathsf L$.
\end{theorem}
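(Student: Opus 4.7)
The plan is to prove the splitting property by exploiting the defining universal property of Jankov's characteristic formula $\chi(\mathfrak F)$, namely that for any Heyting algebra $\mathbf B$ one has $\mathbf B \not\models \chi(\mathfrak F)$ if and only if the finite subdirectly irreducible algebra $\mathfrak F^{*}$ dual to $\mathfrak F$ is a homomorphic image of a subalgebra of $\mathbf B$. I would take this property as known from Jankov's original construction; via the Esakia duality between finite rooted po-frames and finite s.i. Heyting algebras (which the excerpt explicitly endorses), it translates into the frame-theoretic statement that $\mathfrak F$ is a p-morphic image of a generated subframe of the refuting frame.

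First I would verify $\mathsf L_{2}\not\subseteq \mathsf L_{1}$, i.e.\ $\chi(\mathfrak F)\notin \mathsf L(\mathfrak F)$. Apply the property to $\mathbf B = \mathfrak F^{*}$, which is trivially a subalgebra (and homomorphic image) of itself, to obtain $\mathfrak F^{*}\not\models \chi(\mathfrak F)$; this is just $\mathfrak F\not\Vdash \chi(\mathfrak F)$.

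Second, I would establish the splitting dichotomy by contraposition. Let $\mathsf L$ be an arbitrary intermediate logic and suppose $\chi(\mathfrak F)\notin \mathsf L$; I want to deduce $\mathsf L\subseteq \mathsf L(\mathfrak F)$. By algebraic completeness (every intermediate logic is determined by the class of Heyting algebras validating it, via the Lindenbaum--Tarski construction), there exists a Heyting algebra $\mathbf B$ with $\mathbf B\models \mathsf L$ and $\mathbf B\not\models \chi(\mathfrak F)$. Jankov's property then yields a subalgebra $\mathbf C\subseteq \mathbf B$ and a surjective homomorphism $\mathbf C\twoheadrightarrow \mathfrak F^{*}$. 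Since equational validity of Heyting algebras is preserved under subalgebras and homomorphic images, $\mathfrak F^{*}\models \mathsf L$, hence $\mathsf L\subseteq \mathsf{Th}(\mathfrak F^{*})= \mathsf L(\mathfrak F)$, as required. The final clause of the theorem is then immediate: $\mathsf L$ omits $\mathfrak F$ means $\mathsf L\not\subseteq \mathsf L(\mathfrak F)$, and by the splitting together with $\mathsf L_{2}\not\subseteq \mathsf L_{1}$ (which excludes both alternatives holding simultaneously) this is equivalent to $\chi(\mathfrak F)\in \mathsf L$.

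The main obstacle is the defining property of $\chi(\mathfrak F)$ itself, which is the genuine content of Jankov's original result and which I am using as a black box. A self-contained derivation would require writing $\chi(\mathfrak F)$ explicitly (one variable per element of $\mathfrak F$, conjuncts encoding the $\wedge,\vee,\to$ and $\bot$ operations on the upset algebra $\mathfrak F^{*}$, and a final implication pivoted at the unique second-largest element guaranteed by subdirect irreducibility) and then verifying the refutation lemma by reading off, from a refuting valuation in $\mathbf B$, the subalgebra $\mathbf C$ generated by the values of the variables and the congruence collapsing its second-greatest element to the top. Everything else in the argument --- algebraic completeness, preservation of validity under subalgebras and homomorphic images, and the frame/algebra translation --- is standard and is only invoked.
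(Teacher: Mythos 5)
Your argument is correct and is exactly the standard derivation that the paper implicitly relies on: the paper states Theorem \ref{Jankov} without proof as Jankov's classical result, and its footnote (defining $\chi(\mathfrak F)$ for the dual s.i.\ algebra) points to precisely the algebraic route you take, namely the refutation lemma ``$\mathbf B\not\models\chi(\mathfrak F)$ iff $\mathfrak F^{*}$ is a homomorphic image of a subalgebra of $\mathbf B$'' combined with algebraic completeness and closure of validity under $\mathbf H$ and $\mathbf S$. Your handling of the final ``iff'' clause, using $\chi(\mathfrak F)\notin\mathsf L(\mathfrak F)$ to rule out both disjuncts of the splitting holding at once, is also the right closing step.
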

For instance  $\mathsf {KC} =\mathsf {L}(\{\chi ({\mathfrak F_2}) \}$ is the splitting logic.
 If $\{\mathsf L_i\}_{i\in  I}$ is a family of splitting logics, then $\mathsf L(\bigcup_{i\in I}\mathsf L_i)$ is called {\it a union splitting}.  For instance,   $\mathsf {LC} =\mathsf {L}(\{\chi ({\mathfrak F_2}),  \chi ({\mathfrak R_2}) \})$ is a union splitting but not a splitting.
\begin{corollary}\label{Jankov2} If $\{(\mathsf L'_{i},\mathsf L_{i})\}_{i\in I}$ is a family of splitting pairs  and $\mathsf L=\mathsf L(\bigcup_{i\in I}\mathsf L_i)$, then $\mathsf L$ is a union splitting and, for any intermediate logic $\mathsf L'$, either  $\mathsf L' \subseteq\mathsf L'_i$ for some $i\in I$, or $\mathsf L \subseteq\mathsf L'$.\end{corollary}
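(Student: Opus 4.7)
The plan is to unfold the definition of a splitting pair and apply it indexwise, then use minimality of $\mathsf L(\bigcup_i \mathsf L_i)$ to collapse the conclusion.

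First, note that by the very definition given just before the statement, once $\mathsf L = \mathsf L(\bigcup_{i\in I}\mathsf L_i)$ arises from a family of splitting logics $\mathsf L_i$, the logic $\mathsf L$ is a union splitting; that part requires no argument beyond citing the definition. So the content lies in proving the disjunction: for any intermediate $\mathsf L'$, either $\mathsf L'\subseteq \mathsf L'_i$ for some $i$, or $\mathsf L\subseteq \mathsf L'$.

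To prove this, I would fix an arbitrary intermediate logic $\mathsf L'$ and split into two cases. \emph{Case 1:} There exists $i\in I$ with $\mathsf L'\subseteq \mathsf L'_i$; then the first disjunct of the conclusion holds and there is nothing more to do. \emph{Case 2:} For every $i\in I$ we have $\mathsf L'\not\subseteq \mathsf L'_i$. Since each $(\mathsf L'_i,\mathsf L_i)$ is a splitting pair, the defining dichotomy forces $\mathsf L_i\subseteq \mathsf L'$ for every $i\in I$. Taking the union, $\bigcup_{i\in I}\mathsf L_i\subseteq \mathsf L'$. But $\mathsf L'$ is itself an intermediate logic, hence it contains the \emph{least} intermediate logic over this union, which is exactly $\mathsf L=\mathsf L(\bigcup_{i\in I}\mathsf L_i)$. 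Thus $\mathsf L\subseteq \mathsf L'$, and the second disjunct holds.

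There is no real obstacle here: the argument is a one-step application of the splitting property at each index followed by the universal property of $\mathsf L(\cdot)$. The only subtlety worth flagging is that the two disjuncts in the conclusion need not be mutually exclusive, and that the existence clause ``for some $i\in I$'' is essential (one cannot in general choose the same index uniformly over all $\mathsf L'$). No appeal to Kripke semantics or to Jankov's Theorem \ref{Jankov} is needed at this level of generality; the corollary is a purely lattice-theoretic consequence of Theorem \ref{Jankov} applied to each pair separately.
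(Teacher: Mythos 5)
Your proof is correct and is exactly the standard argument the paper intends (the paper states this corollary without proof, treating it as immediate from the definition of a splitting pair together with the universal property of $\mathsf L(\cdot)$). Both the case split at each index and the final appeal to minimality of $\mathsf L(\bigcup_{i\in I}\mathsf L_i)$ are sound, and your remark that the disjuncts need not be exclusive is a fair observation.
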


\subsection{The Problem of Unification.}\label{UP}
 A  substitution $\varepsilon$  is a  \emph{unifier} for a formula $A$ in a logic $\mathsf L$ (an  $\mathsf L$-\emph{unifier} for $A$) if   $ \varepsilon(A)\in\mathsf{L}$.
In any intermediate logic, the set of unifiable formulas  coincides with the set of consistent formulas.
A set $\Sigma$ of {\sf L}-unifiers for $A$ is said to be {\it complete}, if for each {\sf L}-unifier $\mu$ of $A$, there is a unifier $\varepsilon\in \Sigma$ such that $\varepsilon\preccurlyeq_{\sf L}\mu$.

The unification type of {\sf L} is $1$ (in other words, unification in {\sf L} is {\it unitary}) if the set of unifiers of any unifiable formula $A$ contains a least, with respect to $\preccurlyeq_L$, element called  {\it a most general unifier} of $A$, (an mgu of $A$). In other words, unification in {\sf L} is unitary if each unifiable formula has a one-element complete set of unifiers.
The unification type of {\sf L} is $\omega$ (unification in {\sf L} is {\it finitary}), if it is not $1$ and each unifiable formula has a finite complete set of unifiers.
 The unification type of {\sf L} is $\infty$ (unification in {\sf L} is {\it  infinitary})  if it is not $1$, nor $ \omega$, and each unifiable formula has a minimal (with respect to inclusion) complete set of unifiers.
The unification type of {\sf L} is $0$ (unification in {\sf L} is {\it nullary}) if there is a unifiable formula which has no minimal complete set
of unifiers.
In a similar way one  defines the unification type of any {\sf L}-unifiable formula. The unification type of the logic is the worst unification type of its unifiable formulas.\\

 Ghilardi \cite{Ghi2} introduced projective unifiers and formulas; an  $\mathsf L$-{unifier} $\varepsilon$ for $A$  is called \emph{ projective} if $A \vdash_{\mathsf L}\varepsilon(x) \leftrightarrow x$, for each variable $x$ (and consequently $A \vdash_{\mathsf L}\varepsilon(B) \leftrightarrow B$, for each $B$).
A formula $A$ is said to be {\it projective} in $\mathsf L$ (or $\mathsf L$-projective) if it has a projective  unifier in $\mathsf L$. It is said that a logic $\mathsf L$ enjoys {\it projective unification} if each {\sf L}-unifiable formula is $\mathsf L$-projective. An $\mathsf L$-projective formula may have many non-equivalent in $\mathsf L$-projective unifiers and  each {\sf L}-projective unifier is its  mgu:
\begin{lemma}\label{proj}
If $\varepsilon$ is an {\sf L}-projective unifier for $A$  and $\sigma$ is any {\sf L}-unifier for $A$, then $\sigma\circ\varepsilon=_\mathsf{L}\sigma$.
\end{lemma}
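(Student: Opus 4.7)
The plan is to unwind the definition of projective unifier and use the fact that both $\vdash_{\mathsf L}$ and $\mathsf L$ itself are closed under substitutions. By the parenthetical remark in the definition of projective unifier, the hypothesis that $\varepsilon$ is {\sf L}-projective for $A$ gives
$$A \vdash_{\mathsf L} \varepsilon(B) \leftrightarrow B$$
for every formula $B$; this follows from $A \vdash_{\mathsf L}\varepsilon(x)\leftrightarrow x$ for each variable by a straightforward induction on the complexity of $B$ (the only rule in $\vdash_{\mathsf L}$ derivations is MP).

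Now I would apply the substitution $\sigma$ to this deduction. Since a proof using only MP from premise $A$ yields, after applying $\sigma$ term-wise, a proof using only MP from premise $\sigma(A)$ (this is exactly where closure of $\mathsf L$ under substitution is needed, so that axioms of $\mathsf L$ remain axioms), we obtain
$$\sigma(A) \vdash_{\mathsf L} \sigma(\varepsilon(B)) \leftrightarrow \sigma(B).$$
The hypothesis that $\sigma$ is an {\sf L}-unifier for $A$ means $\sigma(A)\in\mathsf L$, i.e.\ $\vdash_{\mathsf L}\sigma(A)$. Hence a single application of MP yields $\vdash_{\mathsf L} \sigma(\varepsilon(B))\leftrightarrow \sigma(B)$, that is $\sigma(\varepsilon(B)) =_{\mathsf L}\sigma(B)$.

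Since $\sigma\circ\varepsilon$ is by definition the substitution with $(\sigma\circ\varepsilon)(B)=\sigma(\varepsilon(B))$ and $B$ was arbitrary, this is exactly $\sigma\circ\varepsilon=_{\mathsf L}\sigma$. There is no real obstacle; the only thing to be careful about is the implicit use of (i) the deduction theorem $(DT)$ to pass from the biconditional to MP-style deduction and back, and (ii) the substitution-invariance of $\vdash_{\mathsf L}$, both of which are set up in Section~\ref{BC}. In particular the lemma says nothing about $\sigma$ being more general than $\varepsilon$ in the $\preccurlyeq_{\mathsf L}$-sense and does not require any structural property of $\mathsf L$ beyond being an intermediate logic.
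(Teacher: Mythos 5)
Your argument is correct and is the standard one: the paper states Lemma \ref{proj} without proof, and your derivation --- passing from $A\vdash_{\mathsf L}\varepsilon(B)\leftrightarrow B$ to $\sigma(A)\vdash_{\mathsf L}\sigma(\varepsilon(B))\leftrightarrow\sigma(B)$ by substitution-invariance of $\vdash_{\mathsf L}$ (or equivalently via $(DT)$ and closure of $\mathsf L$ under substitution), then discharging $\sigma(A)\in\mathsf L$ by MP --- is exactly the intended justification. The remark that $=_{\mathsf L}$ on substitutions only needs to be checked on all $B$, which you do directly, closes the proof; no gap.
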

Thus,  projective unification implies unitary unification. If $A\in \mathsf{Fm^n}$ is {\sf L}-projective, then $A$ has a projective unifier $\varepsilon\colon\{x_1,\dots,x_n\}\to \mathsf{Fm}^n$ that is a mgu {\it preserving the variables of $A$} (which is not always the case with unitary unification). In contrast to unitary unification, projective unification is also monotone:
\begin{lemma}\label{mon}
If $A$ is $\mathsf L$-projective and $\mathsf L\subseteq \mathsf L'$, then $A$ is $\mathsf L'$-projective.
\end{lemma}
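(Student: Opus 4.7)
The plan is to show that the \emph{same} projective unifier witnessing $\mathsf L$-projectivity of $A$ is also a projective unifier for $A$ in $\mathsf L'$. Everything reduces to the monotonicity of the consequence relation under the inclusion $\mathsf L\subseteq \mathsf L'$.

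First, I would unpack the definition of $\mathsf L$-projectivity: by hypothesis there is a substitution $\varepsilon$ with $\varepsilon(A)\in \mathsf L$ and $A\vdash_{\mathsf L}\varepsilon(x)\leftrightarrow x$ for every variable $x$. The first conclusion that $\varepsilon$ is an $\mathsf L'$-unifier for $A$ is immediate: since $\varepsilon(A)\in\mathsf L$ and $\mathsf L\subseteq \mathsf L'$, we get $\varepsilon(A)\in\mathsf L'$.

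Next, for the projectivity condition itself, I would apply the deduction theorem (DT) in $\mathsf L$ to rewrite $A\vdash_{\mathsf L}\varepsilon(x)\leftrightarrow x$ as $\vdash_{\mathsf L} A\rightarrow(\varepsilon(x)\leftrightarrow x)$. From $\mathsf L\subseteq \mathsf L'$ this formula also belongs to $\mathsf L'$, and a second application of (DT) (this time in $\mathsf L'$) yields $A\vdash_{\mathsf L'}\varepsilon(x)\leftrightarrow x$ for every variable $x$. Together with the first observation, this exhibits $\varepsilon$ as a projective $\mathsf L'$-unifier of $A$, so $A$ is $\mathsf L'$-projective.

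There is essentially no obstacle here: the entire argument is a transfer through $\mathsf L\subseteq\mathsf L'$, made legal by the deduction theorem on both sides. The only point worth flagging explicitly is that the set of substitutions does not depend on the logic, so the \emph{same} $\varepsilon$ can be reused without modification; contrast this with unitary unification (Lemma~\ref{proj}'s surrounding discussion), where the corresponding monotonicity need not hold because an mgu in $\mathsf L$ need not remain minimal among $\mathsf L'$-unifiers.
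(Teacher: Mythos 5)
Your proof is correct and is exactly the routine argument the paper has in mind (the paper states Lemma~\ref{mon} without proof): the same $\varepsilon$ works, since $\varepsilon(A)\in\mathsf L\subseteq\mathsf L'$ and the derivability $A\vdash_{\mathsf L}\varepsilon(x)\leftrightarrow x$ transfers to $\mathsf L'$ via (DT) (or directly by monotonicity of $\vdash$ in the underlying logic). Nothing is missing.
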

Ghilardi \cite{Ghi2} gives a semantical characterization of projective formulas. The condition (ii) is called {\it the extension property}.\footnote{More specifically, the theorem says that the class of models of a projective formula enjoys extension property.}:
\begin{theorem}\label{niu2}  Let $\mathbf{F}$ be a class of finite po-frames and $\mathsf{L}=\mathsf{L}(\mathbf{F})$. The followings   are equivalent:\\
(i) $ A$ is {\sf L}-projective;\\
(ii) for every $n$-model $\mathfrak{M}^n=(W,\leq,w_0,V^n)$ over a po-frame $(W,\leq,w_0)$ of the logic {\sf L}:\\ if
$(\mathfrak{M}^n)_w\Vdash A$  for each $w\not=w_0$,
then  $\mathfrak{N}^n\Vdash A$ for some variant $\mathfrak{N}^n$ of $\mathfrak{M}^n$.
\end{theorem}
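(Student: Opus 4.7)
\medskip

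\noindent\textbf{Proof proposal.} For the direction (i)$\,\Rightarrow\,$(ii), suppose $A$ has a projective $\mathsf L$-unifier $\varepsilon$. Given $\mathfrak M^n=(W,\leq,w_0,V^n)$ over an $\mathsf L$-frame with $(\mathfrak M^n)_w\Vdash A$ for every $w\neq w_0$, I would define the candidate variant $\mathfrak N^n=(W,\leq,w_0,V'^n)$ by putting
\[
V'^n(w)\;=\;\{x_i\colon \mathfrak M^n\Vdash_w\varepsilon(x_i)\}, \qquad w\in W.
\]
A routine induction on the complexity of $B\in\mathsf{Fm}^n$ gives the substitution lemma $\mathfrak N^n\Vdash_w B\Leftrightarrow\mathfrak M^n\Vdash_w\varepsilon(B)$. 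Because $\varepsilon$ is a unifier for $A$ we have $\varepsilon(A)\in\mathsf L$, and since the underlying frame is an $\mathsf L$-frame this forces $\mathfrak M^n\Vdash\varepsilon(A)$, hence $\mathfrak N^n\Vdash A$. To see that $\mathfrak N^n$ really is a variant of $\mathfrak M^n$, I invoke projectivity: $A\vdash_{\mathsf L}\varepsilon(x_i)\leftrightarrow x_i$, so whenever $(\mathfrak M^n)_w\Vdash A$ (which is the hypothesis for all $w\neq w_0$) we get $\mathfrak M^n\Vdash_w \varepsilon(x_i)\leftrightarrow x_i$, yielding $V'^n(w)=V^n(w)$ for $w\neq w_0$. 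Monotonicity of $V'^n$ also follows from the monotonicity of $\Vdash$. This step is essentially mechanical once the substitution lemma is in place.

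For the harder direction (ii)$\,\Rightarrow\,$(i), the plan is to manufacture a projective unifier $\varepsilon\colon\{x_1,\dots,x_n\}\to\mathsf{Fm}^n$ by using the finite model property of $\mathsf L=\mathsf L(\mathbf F)$. Consider the class $\mathbf M_A^n$ of finite $n$-models $\mathfrak M^n$ over $\mathbf F$-frames such that $(\mathfrak M^n)_w\Vdash A$ for every $w\neq w_0$ (including the case $w_0\Vdash A$). Property (ii) attaches to each such $\mathfrak M^n$ a variant $\mathfrak N^n\Vdash A$; equivalently, it furnishes an admissible root-valuation $U(\mathfrak M^n)\subseteq\{x_1,\dots,x_n\}$ making $A$ hold at the root. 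The substitution $\varepsilon(x_i)$ must be a formula that, read at the root of any such $\mathfrak M^n$, computes the bit ``$x_i\in U(\mathfrak M^n)$''. I would build it out of the (finitely many, up to equivalence) characteristic $n$-formulas of pointed finite models — concretely, as a finite disjunction of such characteristic formulas of the members of $\mathbf M_A^n$ in whose chosen variant $x_i$ belongs to the root.

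To verify this $\varepsilon$ has the required properties I proceed on two fronts. First, $A\vdash_{\mathsf L}\varepsilon(x_i)\leftrightarrow x_i$: by FMP it suffices to argue in an arbitrary finite $\mathsf L$-model $\mathfrak M^n$ at any world $w$ with $\mathfrak M^n\Vdash_w A$; monotonicity gives $(\mathfrak M^n)_w\in\mathbf M_A^n$, whose root already satisfies $A$, so the canonical variant selected by (ii) can be taken to be $(\mathfrak M^n)_w$ itself (any variant forcing $A$ must have the \emph{same} root valuation as $V^n$, since at $w_0$ the valuation is determined by the already-true $A$ together with the successor data — this is the key uniqueness claim to nail down). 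Hence $\varepsilon(x_i)$ and $x_i$ get the same truth value at $w$. Second, $\varepsilon(A)\in\mathsf L$: again by FMP, one checks that in any $\mathfrak M^n$ over an $\mathbf F$-frame the substitution lemma $\mathfrak M^n\Vdash_w\varepsilon(B)\Leftrightarrow\mathfrak N^n\Vdash_w B$ holds for $\mathfrak N^n$ the model whose valuation is built from the root-choices of all generated submodels; by construction $\mathfrak N^n\Vdash A$, so $\mathfrak M^n\Vdash\varepsilon(A)$.

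The principal obstacle, and the real content of the argument, is the uniqueness step: showing that whenever $A$ is already forced at the root of $\mathfrak M^n\in\mathbf M_A^n$, every variant that forces $A$ agrees with $V^n$ at $w_0$. Without this, the selection $U(\mathfrak M^n)$ is not well-defined up to $\mathsf L$-equivalence, and $\varepsilon(x_i)\leftrightarrow x_i$ fails to follow from $A$. This uniqueness is forced by the extension property applied to the trivial case, combined with the fact that the root valuation of an $n$-model is recoverable from the truth values of $x_1,\dots,x_n$ at $w_0$; wrapping this into a coherent syntactic formula via characteristic formulas is where the proof requires the most care.
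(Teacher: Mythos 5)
Your (i)$\Rightarrow$(ii) direction is essentially correct: pulling the valuation back along a projective unifier and using $A\vdash_{\sf L}\varepsilon(x_i)\leftrightarrow x_i$ at the non-root points is the standard argument. (You should note explicitly that an $\sf L$-projective $A\in\mathsf{Fm}^n$ may be assumed to have a projective unifier $\varepsilon\colon\{x_1,\dots,x_n\}\to\mathsf{Fm}^n$; otherwise $\mathfrak M^n\Vdash_w\varepsilon(x_i)$ is not even defined on an $n$-model.) The paper does not prove this theorem but cites Ghilardi, so the comparison below is with his argument.

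The (ii)$\Rightarrow$(i) direction has two genuine gaps. First, your $\varepsilon(x_i)$ is a finite disjunction of characteristic formulas of the members of $\mathbf M_A^n$; this presupposes that there are only finitely many such models up to equivalence, i.e.\ local tabularity, which is not among the hypotheses: taking $\mathbf F$ to be all finite po-frames gives $\mathsf L=\mathsf{INT}$, which the theorem is meant to cover and where $\mathbf M_A^n$ has infinitely many non-equivalent members. Second, and more seriously, a single application of $\varepsilon$ need not produce a model of $A$. Write $\varepsilon(\mathfrak M^n)$ for the model whose valuation at each $w$ is the chosen root-valuation $U((\mathfrak M^n)_w)$. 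In the inductive step at $w$ you know that all proper successors of $w$ in $\varepsilon(\mathfrak M^n)$ force $A$, so (ii) supplies a good root-valuation for the \emph{modified} model $\varepsilon((\mathfrak M^n)_w)$; but the valuation you actually installed at $w$ is $U((\mathfrak M^n)_w)$, the good choice for the \emph{original} submodel $(\mathfrak M^n)_w$, whose non-root valuations are different. Nothing forces these two choices to coincide, so ``$\mathfrak N^n\Vdash A$ by construction'' does not go through. This mismatch is precisely why Ghilardi's proof does not build the unifier in one pass: he uses the substitutions $\theta_S$ with $\theta_S(x_i)=A\to x_i$ for $x_i\in S$ and $\theta_S(x_i)=A\wedge x_i$ otherwise, composes them over all $S\subseteq\{x_1,\dots,x_n\}$, iterates the composition, and controls the effect with bounded bisimulations. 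Finally, the ``uniqueness'' claim you single out as the crux is both false (for $A=\top$ every variant works) and unnecessary: one simply stipulates that the selection returns the model itself whenever it already forces $A$ at the root.
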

 Wro\'{n}ski \cite{Wro1,Wro2} proved that
\begin{theorem}\label{projj}
 An intermediate logic  {\sf L} has projective unification iff \
{\sf LC} $\subseteq$ {\sf L}.
\end{theorem}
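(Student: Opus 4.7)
The plan is to establish both implications separately, invoking Theorem~\ref{niu2} (the extension-property characterization of projectivity) for the easy direction and using the G\"odel--Dummett axiom as a witness for the hard one.

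For $(\Leftarrow)$, assume $\mathsf{LC}\subseteq\mathsf{L}$ and let $A\in\mathsf{Fm}^n$ be $\mathsf{L}$-unifiable. I will verify the extension property and appeal to Theorem~\ref{niu2}. Every $\mathsf{L}$-frame is a chain, so in any $n$-model $\mathfrak{M}^n=(W,\leq,w_0,V^n)$ over an $\mathsf{L}$-frame with $A$ forced at every proper successor of $w_0$, two cases arise. If $w_0$ has an immediate successor $w_1$, set the variant valuation $V'^n(w_0):=V^n(w_1)$ (and leave all other values untouched); a routine induction on formula complexity, whose implication step exploits the chain structure to note that every strict successor of $w_0$ lies above $w_1$, yields $w_0\Vdash B\Leftrightarrow w_1\Vdash B$ for every $B\in\mathsf{Fm}^n$, so $A$ descends to $w_0$. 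If $W=\{w_0\}$, pick any $\mathsf{L}$-unifier $\varepsilon$ of $A$; the theorem $\varepsilon(A)$ holds at $w_0$ under any chosen valuation $V'$, and the substitution lemma transfers this into a variant with $V^n(w_0):=\{x_i : i\leq n,\ w_0\Vdash_{V'}\varepsilon(x_i)\}$ at which $A$ holds.

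For $(\Rightarrow)$, assume $\mathsf{L}$ has projective unification and put $A:=(p\to q)\lor(q\to p)$. The identification $p,q\mapsto p$ is an $\mathsf{L}$-unifier, so $A$ is $\mathsf{L}$-unifiable and admits a projective unifier $\varepsilon$ with $\varepsilon(A)\in\mathsf{L}$ and $A\vdash_\mathsf{L}\varepsilon(x)\leftrightarrow x$ for $x\in\{p,q\}$. I claim $A\in\mathsf{L}$, which is the axiom of $\mathsf{LC}$. Suppose otherwise; then, by the finite model property built into the framework of Theorem~\ref{niu2}, fix a finite $\mathsf{L}$-model with $A$ refuted at some root $w_0$. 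The set of points forcing $p\land\neg q$ is upward-closed, so it contains a maximal representative $u_1\geq w_0$; symmetrically take a maximal $u_2\geq w_0$ with $u_2\Vdash q\land\neg p$. Maximality forces $u_1\Vdash A$ (the disjunct $q\to p$ holds vacuously) and likewise $u_2\Vdash A$. Applying $A\vdash_\mathsf{L}\varepsilon(x)\leftrightarrow x$ at $u_1$ and $u_2$ gives $u_1\Vdash\varepsilon(p)\land\neg\varepsilon(q)$ and $u_2\Vdash\varepsilon(q)\land\neg\varepsilon(p)$. But $\varepsilon(A)\in\mathsf{L}$ forces either $w_0\Vdash\varepsilon(p)\to\varepsilon(q)$ or $w_0\Vdash\varepsilon(q)\to\varepsilon(p)$; the former is refuted by ascending to $u_1$, the latter by ascending to $u_2$, a contradiction.

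The main obstacle is the $(\Rightarrow)$ direction, specifically the passage to leaf witnesses. Without maximality of $u_1,u_2$ the disjuncts of $A$ need not hold at the witness points, and then the conditional equivalence $A\vdash_\mathsf{L}\varepsilon(x)\leftrightarrow x$ supplies no usable information there; the maximality trick is precisely what converts the hypotheses on the projective unifier into a local clash above $w_0$.
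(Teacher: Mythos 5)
The paper itself only cites Wro\'nski for this theorem, so there is no in-text proof to compare against; I judge your argument on its own. Your $(\Leftarrow)$ direction is sound: a consistent extension of $\mathsf{LC}$ is a G\"odel logic, hence determined by finite chains, and your two-case verification of the extension property (copying the valuation of the root's immediate successor down to the root, resp.\ using a unifier on the one-point frame) correctly discharges condition (ii) of Theorem~\ref{niu2}. The genuine gap is in $(\Rightarrow)$: you fix ``a finite $\mathsf{L}$-model with $A$ refuted at some root'', justified by ``the finite model property built into the framework of Theorem~\ref{niu2}''. But that theorem \emph{assumes} $\mathsf{L}=\mathsf{L}(\mathbf F)$ for a class of finite po-frames, whereas here $\mathsf{L}$ is an arbitrary intermediate logic with projective unification; nothing gives you Kripke completeness, let alone the finite model property (intermediate logics lacking both exist). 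The finite countermodel your argument hinges on may simply not be available.

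There is also a local error even granting a finite countermodel: the set of points forcing $p\land\neg q$ can be empty when $A$ fails at $w_0$. In the diamond $w_0<u,v<t$ with $p$ true at $\{u,t\}$ and $q$ true at $\{v,t\}$, both disjuncts of $A$ fail at $w_0$, yet no point forces $p\land\neg q$ (every point forcing $p$ sees $t\Vdash q$). The repair is to take $u_1$ maximal in $\{u\ge w_0\colon u\Vdash p,\ u\not\Vdash q\}$: every strict successor of $u_1$ then forces $q$ and, by persistence, $p$, so $u_1\Vdash q\to p$, hence $u_1\Vdash A$, and $u_1\Vdash\varepsilon(p)$, $u_1\not\Vdash\varepsilon(q)$ still produce your clash (the negations are not needed). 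Better still, the entire semantic detour is avoidable and with it the completeness gap: since $\vdash_{\mathsf{INT}}p\to(q\to p)$ gives $p\vdash_{\mathsf L}A$, under the assumptions $p$ and $\varepsilon(p)\to\varepsilon(q)$ the equivalences $\varepsilon(p)\leftrightarrow p$ and $\varepsilon(q)\leftrightarrow q$ become derivable and yield $q$; by $(DT)$ this gives $\varepsilon(p)\to\varepsilon(q)\vdash_{\mathsf L}p\to q\vdash_{\mathsf L}A$, symmetrically $\varepsilon(q)\to\varepsilon(p)\vdash_{\mathsf L}A$, and $\lor$-elimination applied to $\vdash_{\mathsf L}\varepsilon(A)$ gives $\vdash_{\mathsf L}A$, i.e.\ $\mathsf{LC}\subseteq\mathsf L$, for an arbitrary intermediate logic.
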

There are unitary logics  which are not projective. Following Ghilardi  and Sachetti \cite{Ghisac},  unification in {\sf L} is said to be \emph{filtering} if given two unifiers, for any formula $A$, one can find a unifier that is more general than both of them. Unitary unification is filtering.  If unification is filtering, then every unifiable formula either has an mgu  or no basis of
unifiers exists (unification is nullary). It is known, see e.g. \cite{dzSpl}, that
\begin{theorem}\label{fil}
 Unification in any intermediate logic {\sf L} is filtering iff \
{\sf KC} $\subseteq$ {\sf L}.
\end{theorem}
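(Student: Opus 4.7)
My plan is to handle the two implications separately, with the harder converse resting on Jankov splitting and a semantic argument over the fork frame $\mathfrak{F}_2$.

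For ${\sf KC}\subseteq{\sf L}\Rightarrow$ filtering, given two $\sf L$-unifiers $\sigma_1,\sigma_2$ of a formula $A$, I would pick a fresh variable $p$ and define
$$\tau(x)=\bigl(\neg\neg p\to\sigma_1(x)\bigr)\land\bigl(\neg p\to\sigma_2(x)\bigr)$$
on the variables of $A$ (identity elsewhere). The substitutions $p\mapsto\top$ and $p\mapsto\bot$ collapse $\tau$ onto $\sigma_1$ and $\sigma_2$ respectively, so $\tau\preccurlyeq\sigma_1,\sigma_2$. To see $\tau(A)\in{\sf L}$, I would prove by induction on subformulas $B$ of $A$ that $\neg\neg p\vdash_{\sf L}\tau(B)\leftrightarrow\sigma_1(B)$ and $\neg p\vdash_{\sf L}\tau(B)\leftrightarrow\sigma_2(B)$ (the base case relies on $\neg\neg p,\neg p\vdash_{\sf L}\bot$). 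Applied to $B=A$, this yields both $\neg\neg p\to\tau(A)$ and $\neg p\to\tau(A)$ in $\sf L$, and the ${\sf KC}$-axiom $\neg p\lor\neg\neg p$ then delivers $\tau(A)\in{\sf L}$.

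For the converse I argue contrapositively: if ${\sf KC}\not\subseteq{\sf L}$, Theorem~\ref{Jankov} makes $\mathfrak{F}_2$ a frame of ${\sf L}$, and I exhibit a formula refuting filtering. Take $A=\neg x\lor\neg\neg x$ with its evident $\sf L$-unifiers $\sigma_1(x)=\top$, $\sigma_2(x)=\bot$; suppose a common generalisation $\tau$ exists, with witnesses $\alpha\circ\tau=_{\sf L}\sigma_1$, $\beta\circ\tau=_{\sf L}\sigma_2$. Writing $\phi=\tau(x)$, one has $\neg\phi\lor\neg\neg\phi\in{\sf L}$, $\alpha(\phi)=_{\sf L}\top$ and $\beta(\phi)=_{\sf L}\bot$. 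Picking yet another fresh variable $w$, I apply the substitution $\gamma(z)=(w\to\alpha(z))\land(\neg w\to\beta(z))$ to every variable $z$ of $\phi$ and set $\psi=\gamma(\phi)$. By substitution closure $\neg\psi\lor\neg\neg\psi\in{\sf L}$, while the displayed definition forces $\psi[w{:=}\top]=_{\sf L}\alpha(\phi)=_{\sf L}\top$ and $\psi[w{:=}\bot]=_{\sf L}\beta(\phi)=_{\sf L}\bot$.

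I then evaluate $\psi$ on $\mathfrak{F}_2$ (root $r$, two tops $u,v$) with $w\in V(u)\setminus V(v)$, $w\notin V(r)$, and the remaining variables valued monotonically in any way. Because $u,v$ are terminal, intuitionistic forcing there reduces to classical truth, so $\psi$ at $u$ agrees with the theorem $\psi[w{:=}\top]$ (hence holds) and $\psi$ at $v$ agrees with the antitheorem $\psi[w{:=}\bot]$ (hence fails). But then $\neg\psi$ fails at $r$ (witnessed by $u$) and $\neg\neg\psi$ fails at $r$ (since $\neg\psi$ holds at the terminal $v$), contradicting $\neg\psi\lor\neg\neg\psi\in{\sf L}$. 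The main obstacle is precisely this last step: an arbitrary common generalisation $\tau$ may involve auxiliary variables in complicated ways, so one cannot directly read ${\sf KC}$ off $\neg\tau(x)\lor\neg\neg\tau(x)\in{\sf L}$. The device of $\gamma$ encodes both witnesses $\alpha,\beta$ into a single $w$-parametrised formula, which lets the two maximal points of $\mathfrak{F}_2$ simultaneously realise the roles of ``$\top$'' and ``$\bot$'' and produce the refutation.
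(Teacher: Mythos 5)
Your proof is correct. Note that the paper does not actually prove this statement: it is quoted as a known result (with a citation to the literature), and the only proof-like content supplied is the display of the filtering unifier $\mu(x_i)=(\varepsilon(x_i)\land\neg y)\lor(\sigma(x_i)\land\neg\neg y)$ for the forward direction. Your forward direction is essentially that same construction in dual form (a conjunction of implications guarded by $\neg\neg p$ and $\neg p$ instead of a disjunction of conjunctions), verified carefully via the induction $\neg\neg p\vdash_{\sf L}\tau(B)\leftrightarrow\sigma_1(B)$ and its twin, which is exactly what is needed to combine the two cases with the {\sf KC} axiom. The converse is the part the paper leaves entirely to the reference, and your argument for it is sound: the splitting $(\mathsf L(\mathfrak F_2),\mathsf{KC})$ (Theorems \ref{Jankov} and \ref{kc}) reduces the problem to refuting filtering over $\mathfrak F_2$, and the key move --- packaging the two witnessing substitutions $\alpha,\beta$ into the single $w$-parametrised $\gamma$ so that $\psi[w{:=}\top]$ is a theorem and $\psi[w{:=}\bot]$ an antitheorem --- correctly circumvents the fact that $\tau$ may introduce auxiliary variables; the evaluation at the two terminal points of $\mathfrak F_2$ then kills $\neg\psi\lor\neg\neg\psi$ at the root. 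The one informality worth flagging is the phrase ``identity elsewhere'': since the paper's substitutions are finite maps, one should fix the domains so that $\alpha\circ\tau$ and $\sigma_i$ are compared only on the variables of $A$, but this is routine bookkeeping and does not affect the argument.
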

If  $\varepsilon,\sigma\colon\{x_1,\dots,x_n\}\to\mathsf{Fm}^k$ are unifiers of a formula $A(x_1,\dots,x_n)$ in (any extension of) {\sf KC}, then, as a more general unifier for $\varepsilon,\sigma$ the following substitution $\mu$ can be taken (where $y$ is a fresh variable, i.e.$y\not\in\mathsf{Fm^k}$):
$$\mu(x_i)\qquad=\qquad(\varepsilon(x_i)\land \neg y) \quad \lor \quad (\sigma(x_i)\land \neg\neg y),\qquad \mbox{for $i=1,\dots,n$.}$$
Thus, unifiers in filtering unification {\it introduce new variables}.  We have, see  \cite{dzSpl, Ghi2},
\begin{theorem}\label{kc}  {\sf KC}  is the least intermediate logic  with unitary unification. All extensions of  {\sf KC} have nullary or unitary unification. All intermediate logics with finitary unification are included in  {\sf L}($\mathfrak{F}_{2}$), the logic determined by the `fork frame' $\mathfrak {F}_{2}$ see Figure \ref{8fames}.  ({\sf L}($\mathfrak{F}_{2}$),{\sf KC})  is a splitting pair of the lattice of intermediate logics. \end{theorem}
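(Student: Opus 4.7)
The plan is to bundle all four assertions from three ingredients already at hand: Jankov's splitting theorem \ref{Jankov}, the filtering characterization of Theorem \ref{fil}, and Ghilardi's result (\cite{Ghi2}) that $\mathsf{KC}$ itself has unitary unification. The splitting clause is immediate: Theorem \ref{Jankov} applied to $\mathfrak F_2$ states that $(\mathsf L(\mathfrak F_2),\mathsf L(\chi(\mathfrak F_2)))$ is a splitting pair, and the remark just after that theorem identifies $\mathsf L(\chi(\mathfrak F_2))=\mathsf{KC}$. This settles the last clause of the theorem and, more importantly, delivers the dichotomy used below: for every intermediate logic $\mathsf L$, either $\mathsf L\subseteq\mathsf L(\mathfrak F_2)$ or $\mathsf{KC}\subseteq\mathsf L$.

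Next I would prove the middle clause. Suppose $\mathsf{KC}\subseteq\mathsf L$; by Theorem \ref{fil}, $\mathsf L$-unification is filtering, with the explicit witness being the substitution $\mu(x_i)=(\varepsilon(x_i)\wedge\neg y)\vee(\sigma(x_i)\wedge\neg\neg y)$ displayed just above the statement of the theorem. I then argue that filtering excludes types $\omega$ and $\infty$. Fix a unifiable $A$ and suppose $\Sigma$ is a minimal complete set of $\mathsf L$-unifiers for $A$ with $|\Sigma|\geq 2$; pick distinct $\varepsilon,\sigma\in\Sigma$, filter them into a common more-general unifier $\mu$, and use completeness of $\Sigma$ to produce $\tau\in\Sigma$ with $\tau\preccurlyeq\mu$. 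Since then $\tau\preccurlyeq\varepsilon$ and $\tau\preccurlyeq\sigma$, either $\Sigma\setminus\{\varepsilon\}$ (if $\tau\not=\varepsilon$) or $\Sigma\setminus\{\sigma\}$ (if $\tau\not=\sigma$) is still complete, contradicting minimality. Hence every minimal complete set for $A$ is a singleton — an mgu — or fails to exist altogether, so $\mathsf L$ has type $0$ or $1$.

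The remaining two clauses now fall out. The third is the contrapositive of the dichotomy combined with the middle clause: if $\mathsf L\not\subseteq\mathsf L(\mathfrak F_2)$, then $\mathsf{KC}\subseteq\mathsf L$, so the type of $\mathsf L$ is $0$ or $1$ and in particular not $\omega$. For the first clause, unitary is a fortiori filtering, so Theorem \ref{fil} forces any unitary $\mathsf L$ to contain $\mathsf{KC}$; Ghilardi's construction in \cite{Ghi2} of a mgu for every $\mathsf{KC}$-unifiable formula (which I would cite rather than reprove) shows that $\mathsf{KC}$ itself attains type $1$, establishing minimality.

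The genuinely delicate point is the ``$|\Sigma|\geq 2$ contradicts minimality'' step: when $\tau$ happens to coincide with $\varepsilon$ (or $\sigma$), one still has $\varepsilon\preccurlyeq\sigma$ (or $\sigma\preccurlyeq\varepsilon$), so the other element is redundant in $\Sigma$; the asymmetry $\varepsilon\preccurlyeq\sigma\land\sigma\preccurlyeq\varepsilon\not\Rightarrow\varepsilon=_{\mathsf L}\sigma$ highlighted in the text causes no trouble because completeness requires only $\preccurlyeq$-domination, not equivalence. The only deeper external input is Ghilardi's existence proof of mgu's in $\mathsf{KC}$, which is substantial but standard and would be invoked from \cite{Ghi2}.
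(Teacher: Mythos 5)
Your proposal is correct; the paper itself states this theorem without proof (citing \cite{dzSpl, Ghi2}), and your derivation assembles it from exactly the ingredients the surrounding text supplies: the Jankov splitting with the remark that $\mathsf{KC}=\mathsf{L}(\chi(\mathfrak F_2))$, Theorem \ref{fil} together with the displayed filtering substitution $\mu$, and Ghilardi's unitarity of $\mathsf{KC}$ as the one external input. The minimality argument (including the $\tau=\varepsilon$ case and the reduction of types $\omega$ and $\infty$ to the existence of a minimal complete set) is handled correctly.
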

Logics with finitary and unitary unification are  separated by the splitting ({\sf L}($\mathfrak{F}_{2}$),{\sf KC}). Let us agree that having  {\it good unification}  means either unitary, or finitary one. Given a logic {\sf L} with good unification,  it has  unitary or finitary unification depending only on that if {\sf L} contains {\sf KC} or not. Our aim would be to distinguish logics with good unification  from those with nullary one. We show in later that locally tabular intermediate logics with infinitary unification do not exist at all.
 Let us notice that the splitting generated by ({\sf L}($\mathfrak{F}_{2}$),{\sf KC}) is irrelevant for logics with nullary unification; there are extensions of {\sf KC}, as well as sublogics of {\sf L}($\mathfrak{F}_{2}$), that have nullary unification.

A logic {\sf L} is said to have {\it projective approximation} if, for each formula $A$ one can find a finite set $\Pi(A)$ of {\sf L}-projective formulas such that:\\
(i) \ $ \mathsf{Var}(B)\subseteq \mathsf{Var}(A)$ and $B\vdash_\mathsf{L}A$, for each $B\in \Pi(A)$;\\
(ii) each {\sf L}-unifier of $A$ is an {\sf L}-unifier of some $B\in\Pi(A)$.\footnote{Ghilardi \cite{Ghi1,Ghi2}, instead of assuming $\Pi(A)$ is finite, postulates   $deg(B)\leq deg(A)$, for each $B\in \Pi(A)$, from which it follows that $\Pi(A)$ is finite. The condition $deg(B)\leq deg(A)$ is relevant for logics with disjunction property, like {\sf INT}, but is irrelevant for  locally tabular logics where $\mathsf{Var}(B) \subseteq \mathsf{Var}(A)$ is sufficient. We decided, therefore, to modify slightly Ghilardi's formulations preserving,  we hope, his ideas.  }

If a finite $\Pi(A)$ exists we can assume that all $B\in\Pi(A)$ are maximal (with respect to $\vdash_{\sf L}$) {\sf L}-projective formulas fulfilling (i). But, even if there is  finitely many maximal {\sf L}-projective formulas  fulfilling (i), we cannot be sure  (ii) is fulfilled.

\begin{theorem}\label{praprox} Each  logic with projective approximation has finitary (or unitary) unification.
\end{theorem}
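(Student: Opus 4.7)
The plan is to show directly that if {\sf L} has projective approximation, then every unifiable formula admits a finite complete set of unifiers, built from projective unifiers of the approximants.

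Fix a unifiable formula $A$ and let $\Pi(A) = \{B_1, \dots, B_k\}$ be the finite set of {\sf L}-projective formulas witnessing projective approximation of $A$. For each $B_i$, the {\sf L}-projectivity of $B_i$ gives a projective {\sf L}-unifier $\varepsilon_i$ for $B_i$. The first step is to verify that each $\varepsilon_i$ is actually an {\sf L}-unifier of $A$: since $B_i \vdash_{\sf L} A$ and substitutions commute with $\vdash_{\sf L}$, we get $\varepsilon_i(B_i) \vdash_{\sf L} \varepsilon_i(A)$, and as $\varepsilon_i(B_i) \in {\sf L}$ we conclude $\varepsilon_i(A) \in {\sf L}$.

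Next I would show that $\Sigma = \{\varepsilon_1, \dots, \varepsilon_k\}$ is complete. Take any {\sf L}-unifier $\mu$ of $A$. By condition (ii) of projective approximation, $\mu$ is an {\sf L}-unifier of some $B_i \in \Pi(A)$. Since $\varepsilon_i$ is an {\sf L}-projective unifier of $B_i$, Lemma \ref{proj} yields $\mu \circ \varepsilon_i =_{\sf L} \mu$. Setting $\alpha = \mu$, this is exactly the statement $\varepsilon_i \preccurlyeq_{\sf L} \mu$. Hence $\Sigma$ is a finite complete set of unifiers for $A$.

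Since $A$ was an arbitrary unifiable formula, every unifiable formula has a finite complete set of unifiers, which is precisely the definition of unification type $\leq \omega$; thus {\sf L} has finitary or (if each $\Sigma$ happens to reduce to a single mgu) unitary unification. There is essentially no obstacle here: the argument is a direct combination of Lemma \ref{proj} with the two clauses of the definition of projective approximation. The one point worth checking carefully is that approximants are required only to satisfy $B \vdash_{\sf L} A$ and $\mathsf{Var}(B) \subseteq \mathsf{Var}(A)$, and neither clause is needed beyond the uses above — the variable condition plays no role in this particular argument (though it matters for subsequent results about preserving variables of $A$).
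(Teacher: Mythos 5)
Your proof is correct: the paper states Theorem \ref{praprox} without proof, and your argument is exactly the standard one it leaves implicit — each approximant's projective unifier unifies $A$ via $B_i\vdash_{\mathsf L}A$ and closure under substitution, and Lemma \ref{proj} applied to $B_i$ gives $\mu\circ\varepsilon_i=_{\mathsf L}\mu$, hence $\varepsilon_i\preccurlyeq_{\mathsf L}\mu$, so $\{\varepsilon_1,\dots,\varepsilon_k\}$ is a finite complete set of unifiers. No gaps.
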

Logics with projective approximation play a similar role for finitary unification as projective logics do for  unitary unification, even though projective approximation is not monotone.
 Ghilardi  \cite{Ghi2} proved that
\begin{theorem}\label{int} Intuitionistic propositional logic {\sf INT} enjoys projective approximation and hence unification in {\sf INT} is finitary.
\end{theorem}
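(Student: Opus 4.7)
The plan is to establish projective approximation for $\mathsf{INT}$ via Ghilardi's semantic characterization of projective formulas (Theorem \ref{niu2}); finitary unification then follows by Theorem \ref{praprox}. Since $\mathsf{LC} \not\subseteq \mathsf{INT}$, Theorem \ref{projj} rules out projective (hence unitary) unification, so the type will come out to be exactly $\omega$. The whole argument is semantic, built on the finite model property of $\mathsf{INT}$, so throughout I work with finite rooted $n$-po-models and their generated submodels.

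First, I would associate to every finite rooted $n$-po-model $\mathfrak{M}$ a de Jongh--style characteristic formula $\beta_{\mathfrak{M}} \in \mathsf{Fm}^n$, defined inductively on the depth of worlds, that describes $\mathfrak{M}$ up to p-morphism: any rooted $n$-po-model $\mathfrak{N}$ forces $\beta_{\mathfrak{M}}$ at a world $u$ iff $\mathfrak{M}$ is the p-morphic image of $(\mathfrak{N})_u$. For $A(x_1,\ldots,x_n)$, I would then define $\Pi(A)$ to consist of those $\beta_{\mathfrak{M}}$ for which $\mathfrak{M}\Vdash A$ and, moreover, the class of models of $\beta_{\mathfrak{M}}$ has the extension property of Theorem \ref{niu2}. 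By Theorem \ref{niu2}, each $\beta_{\mathfrak{M}} \in \Pi(A)$ is $\mathsf{INT}$-projective; by construction $\mathsf{Var}(\beta_{\mathfrak{M}}) \subseteq \{x_1,\ldots,x_n\}$ and $\beta_{\mathfrak{M}} \vdash_{\mathsf{INT}} A$ (any model of $\beta_{\mathfrak{M}}$ admits $\mathfrak{M}$ as a p-morphic image, and $\mathfrak{M}\Vdash A$).

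The main obstacles are (a) the finiteness of $\Pi(A)$ up to $\mathsf{INT}$-equivalence, and (b) the completeness condition (ii) of projective approximation. For (a), I would bound the depth and branching of the $\mathfrak{M}$'s that must be considered in terms of the subformulas of $A$ via a filtration-style argument, identifying only finitely many non-equivalent $\beta_{\mathfrak{M}}$'s. For (b)---the harder step and the technical heart of Ghilardi's proof---given a unifier $\sigma$ of $A$, I would start from a finite rooted po-model witnessing $\sigma(A) \in \mathsf{INT}$ and perform model surgery: inductively saturate every generated submodel by exploiting the extension property at its root, and collapse by a filtration to stay within the bound from (a). This produces an $A$-saturated $\mathfrak{M}$ such that $\sigma$ factors through $\mathfrak{M}$ via a p-morphism, whence $\sigma$ unifies $\beta_{\mathfrak{M}}$. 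The delicate point is that the saturation at a root requires simultaneous choice of a valuation that (i) makes $A$ hold locally and (ii) is compatible with $\sigma$; here one uses that in $\mathsf{INT}$ one may take, as a candidate valuation at the root, the intersection of the valuations at the immediate successors, and verify by induction on the structure of $A$ that this yields a model of $A$ whenever all successor-submodels do. Once projective approximation is in hand, Theorem \ref{praprox} delivers the finitary unification.
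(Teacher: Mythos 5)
The paper itself offers no proof of this theorem (it is quoted from Ghilardi \cite{Ghi2}), so your reconstruction must stand on its own, and it breaks at exactly the point you call the technical heart. You assert that in $\mathsf{INT}$ one may take, as the valuation at a root, the intersection of the valuations at its immediate successors, and then ``verify by induction on the structure of $A$'' that $A$ holds at the root whenever it holds at all successor-generated submodels. This is false for general $A$: that statement \emph{is} the extension property of Theorem \ref{niu2}, which characterizes the projective formulas and fails for non-projective ones. Concretely, take $A=\neg x\lor\neg\neg x$ and the $1$-model over the fork $\mathfrak F_2$ with $x$ true at one top point and false at the other: both point-generated submodels satisfy $A$, yet no valuation at the root (in particular not the intersection, which is empty, and monotonicity forbids anything larger) makes $A$ true there. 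If your induction went through for every $A$, every consistent formula would have the extension property, hence be projective, and $\mathsf{INT}$ would enjoy projective unification --- contradicting Theorem \ref{projj}, since $\mathsf{LC}\not\subseteq\mathsf{INT}$. So the saturation step cannot be carried out as described, and the role of the extension property in a correct proof is to \emph{select} the approximants, not to be verified for arbitrary $A$.

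There is a second, independent obstruction: you take the approximants to be characteristic formulas $\beta_{\mathfrak M}$ of single finite models. Whatever precise convention one adopts (the paper's $\Delta(\mathfrak M)$, or your p-morphism version), the finite models of such a formula fall into only finitely many equivalence classes (essentially the point-generated submodels of $\mathfrak M$, by Theorem \ref{pat}). But by Lemma \ref{n1i} a substitution $\sigma$ unifies $B$ only if \emph{every} $\sigma$-model is a $B$-model, and over $\mathsf{INT}$ the $\sigma$-models of a unifier typically form infinitely many pairwise non-equivalent $n$-models of unbounded depth; they cannot all be models of any single $\beta_{\mathfrak M}$, so condition (ii) of projective approximation fails for your $\Pi(A)$. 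Ghilardi's actual argument avoids both problems: the members of $\Pi(A)$ arise by iterating an extension operator on the whole class of $A$-models of bounded implicational degree until stabilization (finiteness up to bounded bisimulation guarantees both termination and the finiteness of $\Pi(A)$); the stable classes have the extension property by construction, hence define projective formulas; and the class of $\sigma$-models survives every iteration because it itself is closed under the relevant extension step, which gives completeness. A small further slip: non-projectivity alone does not rule out unitary unification (one needs Theorem \ref{kc} and $\mathsf{KC}\not\subseteq\mathsf{INT}$ for that), though this is peripheral to the main gap.
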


\section{Intuitionistic Kripke $n$-Models.}\label{km}

\subsection{p-Morphisms.}\label{pM}
Let $(W,R,w_0,V^n)$ and $(U,S,u_0,V'^n)$ be $n$-models. A mapping $p\colon W{\to} U$, from $W$ \underline{onto} $U$, is said to be a {\it p-morphisms of their frames},
$p\colon (W,R,w_0)\to (U,S,u_0), \mbox{if}$\\
\indent(i) $wRv\Rightarrow p(w)Sp(v), \quad\mbox{for any } w,v\in W$;\\
\indent (ii) $p(w)Sa\Rightarrow \exists_{v\in W}\bigl(wRv\land p(v)=a\bigr), \quad \mbox{for any }w\in W \ \mbox{and } \ a\in U$;\\
\indent (iii) $p(w_0)=u_0$.\\
 {\it A p-morphism of $n$-models}, $p\colon (W,R,w_0,V^n)\to (U,S,u_0,V'^n)$ fulfills (additionally)\\
\indent (iv) $V^n(w)=V'^n(p(w))$, for any $w\in W$.

\noindent If $p\colon\mathfrak{M}^n\to\mathfrak{N}^n$ is a p-morphism, then $\mathfrak{N}^n$ is called a p-morphic image (or reduct, see \cite{ZWC}) of $\mathfrak{M}^n$ and we write $p(\mathfrak{M}^n)=\mathfrak{N}^n$. Reducing $\mathfrak{M}^n$ (by a p-morphism), we  preserve its logical properties. In particular, $p(\mathfrak{M}^n)\thicksim\mathfrak{M}^n$ as
\begin{lemma}\label{pM0}
If $p\colon \mathfrak{M}^n\to \mathfrak{N}^n$ is a p-morphism of $n$-models, $w\in W$ and  $A\in\mathsf{Fm}^n$, then
$$\mathfrak{M}^n\Vdash_{w}A\quad\Leftrightarrow\quad p(\mathfrak{M}^n)\Vdash_{p(w)}A.$$
\end{lemma}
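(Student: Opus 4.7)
The plan is to prove this by structural induction on the formula $A$. The statement is the classical fact that p-morphisms of intuitionistic Kripke models preserve forcing pointwise, so the induction follows a well-trodden path; the only subtle clause is the conditional, where the two ``zig-zag'' conditions (i) and (ii) of the p-morphism definition must be used in the two directions of the equivalence.

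For the base cases I would treat $A = \bot$ trivially (it is forced nowhere), and $A = x_i$ by appealing directly to clause (iv) of the definition of a p-morphism of $n$-models: $\mathfrak{M}^n\Vdash_w x_i$ iff $x_i\in V^n(w)$ iff $x_i\in V'^n(p(w))$ iff $p(\mathfrak{M}^n)\Vdash_{p(w)} x_i$. The conjunction and disjunction cases are immediate from the inductive hypothesis, since the forcing clauses are local at $w$ (resp.\ $p(w)$) and do not involve $R$ or $S$.

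The main work, and the only genuine obstacle, is the inductive step for implication. Suppose the claim holds for $A$ and $B$, and consider $A\to B$. For $(\Rightarrow)$, assume $\mathfrak{M}^n\Vdash_w A\to B$ and fix $a\in U$ with $p(w)\,S\,a$ and $p(\mathfrak{M}^n)\Vdash_a A$. Here I invoke condition (ii) to obtain $v\in W$ with $wRv$ and $p(v)=a$; by the inductive hypothesis applied at $v$, $\mathfrak{M}^n\Vdash_v A$, hence $\mathfrak{M}^n\Vdash_v B$, and finally by the inductive hypothesis $p(\mathfrak{M}^n)\Vdash_{p(v)} B$, i.e.\ $p(\mathfrak{M}^n)\Vdash_a B$. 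For $(\Leftarrow)$, assume $p(\mathfrak{M}^n)\Vdash_{p(w)} A\to B$ and fix $v\in W$ with $wRv$ and $\mathfrak{M}^n\Vdash_v A$. Here I invoke condition (i) to get $p(w)\,S\,p(v)$, and the inductive hypothesis gives $p(\mathfrak{M}^n)\Vdash_{p(v)} A$, hence $p(\mathfrak{M}^n)\Vdash_{p(v)} B$, and again by the inductive hypothesis $\mathfrak{M}^n\Vdash_v B$, as required.

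Condition (iii) (rootedness) plays no role in the pointwise equivalence itself; it only matters for the corollary $\mathfrak{M}^n\Vdash A\Leftrightarrow p(\mathfrak{M}^n)\Vdash A$, stated in the text as $p(\mathfrak{M}^n)\thicksim\mathfrak{M}^n$, which then follows by specialising $w=w_0$ and using $p(w_0)=u_0$. The monotonicity Lemma~\ref{pMm} is not needed here; the induction is self-contained and uses nothing beyond clauses (i), (ii), (iv) of the p-morphism definition and the Kripke semantics.
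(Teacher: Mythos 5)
Your proof is correct and is exactly the standard induction on formula complexity that the paper leaves implicit (the lemma is stated without proof as a well-known fact); the implication case correctly deploys the back condition (ii) for the left-to-right direction and the forth condition (i) for the converse. Your closing remarks — that condition (iii) is only needed for the corollary $p(\mathfrak{M}^n)\thicksim\mathfrak{M}^n$ and that monotonicity of $V^n$ is not used — are also consistent with the paper's own observation that the lemma holds without assuming $R$ is a pre-order or $V^n$ is monotone.
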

 p-Morphisms are also used in modal logic. The above property is generally valid which means it also holds for modal models and modal formulas and {it can be shown without assuming that $R$ is a pre-order  and  $V^n$ is monotone.}
 \begin{example}\label{pMe} Let $\mathfrak{M}^n=(W,R,w_0,V^n)$ be an $n$-model  in which the pre-order $R$ is not a partial order. Let $w\thickapprox v\Leftrightarrow wRv\land vRw$, for any $w,v\in W$. Then $\thickapprox$ is an equivalence relation on $W$ and one can easily show that the canonical mapping $p(w)=[w]_\thickapprox$, for any $w\in W$, is a p-morphism from $\mathfrak{M}^n$ onto the quotient model
 $$\mathfrak{M}^n\slash\!\!\thickapprox\quad =\quad \bigl(W\slash\!\!\thickapprox,R\slash\!\!\thickapprox,[w_0]_\thickapprox,V^n\!\!\slash\!\thickapprox\bigr).$$
Reducing all $R$-clusters  to single points, we receive an equivalent $n$-model over a po-set;
and hence po-sets (not pre-orders)  are  often taken as intuitionistic frames. \hfill\qed
 \end{example}

If a p-morphism $p\colon\mathfrak{M}^n\to\mathfrak{N}^n$ is one-to-one, then  $w R v\Leftrightarrow p(w) S p(v),$ for any $w,v\in W$ which means $p$ is {\it an isomorphism} and, if there is an isomorphism between the $n$-models, we write $\mathfrak{M}^n\equiv\mathfrak{N}^n$. It is usual to identify isomorphic objects.

\subsection{Bisimulations.}\label{biss}

Bisimulations (between Kripke frames) were introduced by K.Fine \cite{fine}, by imitating Ehrenfeucht games.
They found many applications. In particular, S.Ghilardi \cite{Ghi2} used  bounded bisimulation to characterize projective formulas. We  show that bisimulations are closely related to p-morphisms. In our approach  we follow A.Patterson \cite{Pat}.

 A binary relation $B$ on $W$ is {\it a bisimulation of the frame}  $(W,R,w_0)$  if
$$wBv\Rightarrow\forall_{w'}\exists_{v'}(wRw'\Rightarrow vRv'\land w'Bv')\land\forall_{v'}\exists_{w'}(vRv'\Rightarrow wRw'\land w'Bv').$$
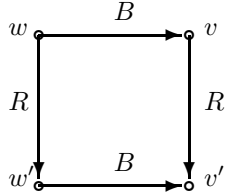
\begin{figure}[H]
 \unitlength1cm
\begin{picture}(3,2)
\thicklines

\put(5,2){\vector(1,0){1.9}}
\put(5,2){\vector(0,-1){1.9}}
\put(7,2){\vector(0,-1){1.9}}
\put(5,0){\vector(1,0){1.9}}
\put(5,0){\circle{0.1}}
\put(5,2){\circle{0.1}}
\put(7,0){\circle{0.1}}
\put(7,2){\circle{0.1}}

\put(4.6,2){\mbox{$w$}}
\put(7.2,2){\mbox{$v$}}
\put(4.6,0){\mbox{$w'$}}
\put(7.2,0){\mbox{$v'$}}
\put(6,2.2){\mbox{$B$}}
\put(6,0.2){\mbox{$B$}}
\put(4.6,1){\mbox{$R$}}
\put(7.2,1){\mbox{$R$}}

\end{picture}\caption{Bisimulation}\label{bis}\end{figure}
\noindent Note that $wBv\Rightarrow\forall_{w'}\exists_{v'}(wRw'\Rightarrow vRv'\land w'Bv')$  suffices if $B$ is symmetric.
 {\it A bisimulation of the $n$-model} $(W,R,w_0,V^n)$ additionally fulfils  $V^n(w)=V^n(v)$ if $wBv$.
\begin{lemma}\label{pMf}
(i)  If $B$ is a bisimulation of  $\mathfrak{M}^n$, then $B\!\!\upharpoonright_{(W)_w}$ is a bisimulation of $(\mathfrak{M}^n)_w$;\\
(ii) if $B$ is a bisimulation of  $(\mathfrak{M}^n)_w$, then $B$ is a bisimulation of $\mathfrak{M}^n$;  for any $w\in W$.
\end{lemma}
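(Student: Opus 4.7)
The plan is to observe that, because $R$ is a pre-order (hence transitive), the carrier $(W)_w=\{u\in W\colon wRu\}$ of the generated submodel is upward $R$-closed: if $u\in(W)_w$ and $uRu'$, then $wRu$ and $uRu'$ give $wRu'$, so $u'\in(W)_w$. Once this is noted, the restricted relation $R\!\upharpoonright_{(W)_w}$ and the ambient $R$ have \emph{the same $R$-successors} at every point of $(W)_w$, and both directions of the lemma become immediate bookkeeping.

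For (i), set $B'=B\cap((W)_w\times (W)_w)$ and suppose $uB'v$. To verify the forward zigzag, take any $u'\in(W)_w$ with $u(R\!\upharpoonright_{(W)_w})u'$; in particular $uRu'$ in $\mathfrak M^n$. Since $B$ is a bisimulation of $\mathfrak M^n$ and $uBv$, there exists $v'\in W$ with $vRv'$ and $u'Bv'$. From $wRv$ and $vRv'$, transitivity yields $wRv'$, so $v'\in(W)_w$; hence $v(R\!\upharpoonright_{(W)_w})v'$ and $u'B'v'$. The backward zigzag is symmetric, and the valuation requirement $V^n(u)=V^n(v)$ is inherited directly from $B$.

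For (ii), $B$ is already contained in $(W)_w\times(W)_w$ (it is a relation on the carrier of $(\mathfrak M^n)_w$). Assume $uBv$ and $uRu'$ in $\mathfrak M^n$; by the upward-closure remark, $u'\in(W)_w$, so the step $uRu'$ is in fact a step in $(\mathfrak M^n)_w$. Applying the bisimulation property of $B$ in $(\mathfrak M^n)_w$ produces $v'\in(W)_w\subseteq W$ with $vRv'$ and $u'Bv'$, which is exactly what the bisimulation condition in $\mathfrak M^n$ demands. The reverse zigzag and the valuation clause go through in the same way.

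The argument relies on no device beyond transitivity of $R$, so there is no real obstacle; the only point to watch is that one must use the transitivity of $R$ (not of $R\!\upharpoonright_{(W)_w}$) to match successors between the two models, which is the content of the upward-closure observation above.
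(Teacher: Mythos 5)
Your proof is correct. The paper states Lemma \ref{pMf} without proof, treating it as routine, and your argument is exactly the verification the authors leave implicit: the one substantive observation is that $(W)_w$ is upward $R$-closed by transitivity, so the restricted and ambient accessibility relations have the same successors on $(W)_w$, after which both zigzag conditions and the valuation clause transfer directly.
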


\begin{lemma}\label{pM4} If $B$ is a  bisimulation (of a frame or an $n$-model), then  the least equivalence relation $B^\star$ containing $B$ is also a bisimulation.
\end{lemma}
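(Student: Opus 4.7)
The plan is to realize $B^\star$ as a union of compositions of $B$, $B^{-1}$, and the diagonal, and to show that the bisimulation property is preserved by three operations on binary relations: taking the converse, taking a composition, and taking an arbitrary union. Once these three closure lemmas are in hand, the result is immediate.

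First I would establish the three closure lemmas. For the converse: the defining clause of a bisimulation is manifestly symmetric under interchanging $(w,w')$ with $(v,v')$, so the forth clause for $B^{-1}$ is exactly the back clause for $B$ and vice versa; the valuation equation $V^n(w)=V^n(v)$ is itself symmetric. For the composition $B_1\circ B_2$: assume $w\,B_1\,u$ and $u\,B_2\,v$, and let $wRw'$. Using the forth clause for $B_1$, pick $u'$ with $uRu'$ and $w'\,B_1\,u'$; then, using the forth clause for $B_2$ applied to the pair $u\,B_2\,v$ with $uRu'$, pick $v'$ with $vRv'$ and $u'\,B_2\,v'$; this yields $w'\,(B_1\circ B_2)\,v'$. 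The back direction is dual, and the valuation chain $V^n(w)=V^n(u)=V^n(v)$ delivers the valuation condition. For an arbitrary union $\bigcup_i B_i$ of bisimulations: any pair in the union lies in some single $B_j$, whose witnesses for the forth/back clauses a fortiori lie in the union, and equal valuations at the pair are inherited from $B_j$.

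Next I observe that the identity $\mathrm{Id}_W$ is trivially a bisimulation (the witnesses are forced to equal the input) and hence
\[
B^\star \;=\; \bigcup_{n\geq 0}\bigl(B\,\cup\, B^{-1}\,\cup\, \mathrm{Id}_W\bigr)^{n},
\]
where the $n$-th power denotes $n$-fold relational composition (with $n=0$ giving $\mathrm{Id}_W$), is the least equivalence relation containing $B$. By the three closure lemmas each finite composition inside the union is a bisimulation, and the union of these bisimulations is again a bisimulation. The only mildly delicate point is the quantifier ordering in the composition step; everything else is formal bookkeeping, and since the argument never invokes transitivity or reflexivity of $R$ nor any property of $V^n$ beyond extensional equality, it applies uniformly to bisimulations of frames and of $n$-models.
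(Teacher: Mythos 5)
Your proof is correct and follows essentially the same route as the paper, which likewise reduces the claim to closure of bisimulations under the diagonal, converse, union, and transitive closure (delegating the details to Patterson); your composition-plus-union-of-powers argument is just the standard way of filling in the transitive-closure step.
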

\begin{proof} A proof of this lemma  can be found in \cite{Pat}. Let us only specify properties of bisimulations which are useful here.

\noindent(i) \quad $\{(w,w)\colon w\in W\}$ is a bisimulation.

\noindent (ii)\quad $B$ is a bisimulation $\Rightarrow$ $B^{-1}$ is  a bisimulation.

\noindent (iii)\quad $\forall_i(B_i$ is a bisimulation) $\Rightarrow \quad \bigcup_iB_i$ is a bisimulation.

\noindent (iv)\quad $B$ is a bisimulation $\Rightarrow$ the transitive closure of $B$ is a bisimulation.
\hfill\qed\end{proof}

Suppose that $B$ is an equivalence  bisimulation of an $n$-model $\mathfrak{M}^n=(W,R,w_0,V^n)$. Let us define  $\mathfrak{M}^n\slash B=(W\slash B,R\slash B,[w_0]_B,[V]^n)$ where $W\slash B=\{[w]_B\colon w\in W\}$, and $[V]^n([w]_B)=V^n(w)$ for any $w\in W$, and
$$[w]_B\ R\slash B \ [v]_B \quad\Leftrightarrow\quad \exists_{w'v'}\bigl(wBw'\land vBv'\land w'Rv'\bigr).$$
\begin{theorem}\label{pM7} If $B$ is an equivalence  bisimulation of  an $n$-model $\mathfrak{M}^n$, then $\mathfrak{M}^n\slash B$ is an $n$-model and the canonical mapping $[\ ]_B\colon W\to W\slash B$ is a p-morphism of the $n$-models.\end{theorem}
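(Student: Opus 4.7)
The plan is to verify two assertions: (a) $\mathfrak{M}^n\slash B$ is a well-defined $n$-model over a pre-order, and (b) the canonical map $[\,]_B$ satisfies the four clauses of a p-morphism of $n$-models. Most clauses are routine unpackings of the definitions; the only substantial content is the transitivity of $R\slash B$ and the ``back'' condition of the p-morphism, both of which are where the bisimulation hypothesis gets used in an essential way.

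For (a): well-definedness of $[V]^n$ on equivalence classes is immediate from the valuation clause of a bisimulation of $n$-models, namely $wBw'\Rightarrow V^n(w)=V^n(w')$. Reflexivity of $R\slash B$ follows from reflexivity of $R$ by taking $w'=w''=w$ in the defining formula. The rootedness of $[w_0]_B$ is clear, since $w_0Rw$ together with $w_0Bw_0$ and $wBw$ gives $[w_0]_B\,(R\slash B)\,[w]_B$ directly. Monotonicity of $[V]^n$ along $R\slash B$ is also straightforward: any pair of witnesses $w',v'$ for $[w]_B\,(R\slash B)\,[v]_B$ satisfies $V^n(w)=V^n(w')\subseteq V^n(v')=V^n(v)$ by monotonicity of $V^n$ on $\mathfrak{M}^n$.

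The main obstacle is transitivity of $R\slash B$. Given $[w]_B\,(R\slash B)\,[v]_B\,(R\slash B)\,[u]_B$, unpack the definition to get $w',v'$ with $wBw'$, $vBv'$, $w'Rv'$, and $v'',u'$ with $vBv''$, $uBu'$, $v''Ru'$. By transitivity of $B$ we have $v'Bv''$. Now apply the bisimulation clause to $v'Bv''$ and the $R$-edge $v''Ru'$: there exists $u''$ with $v'Ru''$ and $u''Bu'$. Transitivity of $R$ yields $w'Ru''$, and transitivity of $B$ yields $uBu''$, so $w',u''$ witness $[w]_B\,(R\slash B)\,[u]_B$. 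This ``zig-zag'' is the only place in the argument where the bisimulation property (and not merely the fact that $B$ is an equivalence relation) is truly invoked; I expect it to be the most delicate step, because one must pick the right pair of witnesses and then match them up through $v$, exploiting symmetry of $B$ and both directions of the back-and-forth property.

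For (b), clauses (iii) and (iv) hold by the very definition of $[\,]_B$ and $[V]^n$. Clause (i), $wRv\Rightarrow[w]_B\,(R\slash B)\,[v]_B$, uses reflexivity of $B$ with witnesses $w,v$. For the ``back'' clause (ii), suppose $[w]_B\,(R\slash B)\,[a]_B$, and pick witnesses $w',a'$ so that $wBw'$, $aBa'$ and $w'Ra'$. Since $B$ is symmetric, $w'Bw$; applying the bisimulation property to $w'Bw$ and the edge $w'Ra'$ produces a $v\in W$ with $wRv$ and $vBa'$, and then $vBa$ by transitivity of $B$, i.e.\ $[v]_B=[a]_B$, as required. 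Assembling these observations completes the proof.
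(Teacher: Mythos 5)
Your proof is correct and follows essentially the same route as the paper's: transitivity of $R\slash B$ via the zig-zag argument linking the two pairs of witnesses through $v$ using symmetry, transitivity, and the back-and-forth property of $B$, and the ``back'' clause of the p-morphism by applying the bisimulation property to $w'Bw$ and the edge $w'Ra'$. You are somewhat more explicit than the paper about well-definedness and monotonicity of the quotient valuation, but these are exactly the points the paper dismisses as obvious, so there is no substantive difference.
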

\begin{proof} We should show that $R\slash B$ is a pre-order. If $w=v$, one can take $w'=v'=w$ (in the definition of $R\slash B$) to show $[w]_B\ R\slash B \ [w]_B$. Thus, $R\slash B$ is reflexive.

 Suppose that $[w]_B\ R\slash B\ [v]_B\ R\slash B\ [u]_B$, for some $w,v,u\in W$. Then $wBw'\land vBv'\land w'Rv'$ and  $vBv''\land uBu''\land v''Ru''$, for some $w',v',v'',u''\in W$. But $B$ is an equivalence,  hence $v''Bv'$ and, by $v''Ru''$, we get $v'Ru'\land u''Bu'$, for some $u'\in W$, as $B$ is a bisimulation. By transitivity of $R$, we have $w'Ru'$ and $uBu'$ as $B$ is an equivalence relation. Thus, $[w]_B\ R\slash B\ [u]_B$; the relation $R\slash B$ is transitive.

 There remains to show that the canonical mapping is a p-morphism.\\
(i) If $wRv$, then $[w]_B\ R\slash B\ [v]_B$, by the definition of $R\slash B$.

\noindent (ii) Suppose that $[w]_B\ R\slash B\ [v]_B$, for some $w,v\in W$. Then $wBw'$, and $vBv'$, and $w'Rv'$, for some $w',v'\in W$. As $B$ is a bisimulation, $wRv''\land v''Bv'$, for some $v''\in B$. Thus, $wRv''$ and $[v'']_B=[v]_B$, as required.

The conditions (iii) and (iv) are obviously fulfilled.\hfill\qed\end{proof}
\begin{theorem}\label{pMp}
If $B$ and $B'$ are  equivalence  bisimulations of  an $n$-model $\mathfrak{M}^n=(W,R,w_0,V^n)$ and $B'\subseteq B$, then there is a p-morphism $q\colon\mathfrak{M}^n\slash B'\to\mathfrak{M}^n\slash B$ such that the diagram in Figure \ref{pms} commutes.\end{theorem}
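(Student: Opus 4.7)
The plan is to define $q$ as the obvious factorization: set $q([w]_{B'})=[w]_B$ for each $w\in W$. The commutativity of the diagram then holds by construction, since $q\circ[\,\cdot\,]_{B'}(w)=q([w]_{B'})=[w]_B=[\,\cdot\,]_B(w)$.

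First I would verify that $q$ is well-defined and onto. Well-definedness follows immediately from $B'\subseteq B$: if $[w]_{B'}=[v]_{B'}$ then $wB'v$, so $wBv$, so $[w]_B=[v]_B$. Surjectivity is trivial because every class $[w]_B$ is hit by $[w]_{B'}$. The root condition $q([w_0]_{B'})=[w_0]_B$ and valuation condition $[V]^n_{B}(q([w]_{B'}))=V^n(w)=[V]^n_{B'}([w]_{B'})$ are both immediate from the definitions of the quotient models.

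The bulk of the work is the two p-morphism clauses for the relations. The forward clause (i) is easy: if $[w]_{B'}(R/B')[v]_{B'}$, there exist witnesses $w',v'\in W$ with $wB'w'$, $vB'v'$ and $w'Rv'$, and these same witnesses certify $[w]_B(R/B)[v]_B$ because $B'\subseteq B$. The back clause (ii) is the genuine obstacle: given $q([w]_{B'})(R/B)[a]_B$, we must produce some $[v]_{B'}$ in $W/B'$ with $[w]_{B'}(R/B')[v]_{B'}$ and $q([v]_{B'})=[a]_B$. Here I can no longer just reuse witnesses, because the relation $B$ providing them is coarser than $B'$. This is where I would invoke the bisimulation property of $B$ on $\mathfrak{M}^n$ itself (not just on the quotient). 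From $[w]_B(R/B)[a]_B$ we get $w',a'\in W$ with $wBw'$, $aBa'$, $w'Ra'$; applying the bisimulation condition for $B$ to $wBw'$ and $w'Ra'$ (using that $B$ is symmetric, being an equivalence) yields a point $v\in W$ with $wRv$ and $vBa'$, hence $vBa$.

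With this $v$ in hand, $[w]_{B'}(R/B')[v]_{B'}$ holds directly (the witnesses $w$ and $v$ themselves work, since $B'$ is reflexive and $wRv$), and $q([v]_{B'})=[v]_B=[a]_B$ because $vBa$. I expect the only non-routine point to be this use of the bisimulation property of $B$ to lift the $R/B$-step back to an actual $R$-step in $\mathfrak{M}^n$; everything else is bookkeeping verification directly from the definitions of $R/B$, $R/B'$ and the canonical quotient maps established in Theorem \ref{pM7}.
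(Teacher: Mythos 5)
Your proposal is correct and follows essentially the same route as the paper: define $q([w]_{B'})=[w]_B$, note that well-definedness, surjectivity, the forward clause, the root and valuation conditions are immediate, and handle the back clause by unwinding $[w]_B\,R/B\,[a]_B$ into $wBw'Ra'Ba$ and applying the bisimulation property of $B$ to $wBw'$ and $w'Ra'$ to produce an actual $R$-successor $v$ of $w$ with $vBa$. Your witness $v$ is exactly the paper's $u''$, so nothing further is needed.
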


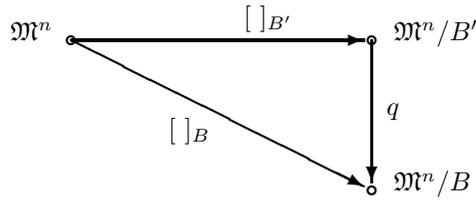
\begin{figure}[H]
\unitlength1cm
\begin{picture}(4,1.5)
\thicklines

\put(9,2){\vector(0,-1){1.9}}

\put(5,2){\vector(2,-1){3.9}}
\put(5,2){\vector(1,0){3.9}}
\put(9,0){\circle{0.1}}
\put(5,2){\circle{0.1}}
\put(9,2){\circle{0.1}}
\put(9.3,2){\mbox{$\mathfrak{M}^n\slash B'$}}
\put(4.2,2){\mbox{$\mathfrak{M}^n$}}
\put(9.3,0){\mbox{$\mathfrak{M}^n\slash B$}}
\put(9.2,1){\mbox{$q$}}
\put(7.3,2.2){\mbox{$[\ ]_{B'}$}}
\put(6.3,0.7){\mbox{$[\ ]_{B}$}}

\end{picture}
\caption{Comparison of Bisimulations.}\label{pms}
\end{figure}
\begin{proof} Let us define $q([w]_{B'})=[w]_B$ and notice that the mapping is well-defined and maps $W\slash B'$ onto $W\slash B$. We should only cheque that $q$ is a p-morphism. Note that the conditions (i),(iii) and (iv) are quite obvious.

(ii) Suppose that $q([w]_{B'})R\slash B\ [u]_B$. By the definition of $R\slash B$, there are $w',u'$ such that $wBw'Ru'Bu$. Since $B$ is a bisimulation and $wBw'Ru'$ there is an $u''$ such that $wRu''Bu'$. Thus, $[w]_{B'}R\slash B'\ [u'']_{B'}$ and $q([u'']_{B'})=[u'']_B=[u]_B$ as required.
\hfill\qed\end{proof}

\begin{theorem}\label{pMr}
If $p:\mathfrak{M}^n\to \mathfrak{N}^n$  is a  p-morphism of $n$-models, then
$$wBv\quad\Leftrightarrow\quad p(w)=p(v)$$
is an equivalence bisimulation of the $n$-model $\mathfrak{M}^n$, and $\mathfrak{M}^n\slash B\equiv\mathfrak{N}^n$.\end{theorem}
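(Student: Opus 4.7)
My plan is to verify the two claims in turn: that $B$ is an equivalence bisimulation of $\mathfrak M^n$, and that the canonical map induced by $p$ provides an isomorphism between $\mathfrak M^n/B$ and $\mathfrak N^n$.

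First, $B$ is visibly the kernel of the function $p$, hence automatically an equivalence relation on $W$. For the bisimulation condition, assume $wBv$, i.e.\ $p(w)=p(v)$, and let $wRw'$. By clause (i) of the p-morphism definition, $p(w)Sp(w')$, and hence $p(v)Sp(w')$. Applying clause (ii) of the p-morphism definition at $v$, there exists $v'\in W$ with $vRv'$ and $p(v')=p(w')$; this gives $w'Bv'$, which is exactly what the forward bisimulation clause requires. Since $B$ is symmetric, the backward clause follows by interchanging the roles of $w$ and $v$. Finally, the valuation condition for bisimulations of $n$-models follows from clause (iv): if $wBv$, then $V^n(w)=V'^n(p(w))=V'^n(p(v))=V^n(v)$.

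For the second claim, I will define $q\colon W/B\to U$ by $q([w]_B)=p(w)$. Well-definedness and injectivity follow immediately from the very definition of $B$, while surjectivity is given by surjectivity of $p$. I also have $q([w_0]_B)=p(w_0)=u_0$. The remaining task is to check that $q$ is a frame isomorphism and a valuation isomorphism: concretely, I must verify
\[
[w]_B\,R/B\,[v]_B\quad\Longleftrightarrow\quad p(w)\,S\,p(v),\qquad [V]^n([w]_B)=V'^n(q([w]_B)).
\]
The valuation equality is immediate from (iv). For the relational equivalence, the $(\Rightarrow)$ direction uses clause (i) along any witnessing chain $wBw'Rv'Bv$, giving $p(w)=p(w')Sp(v')=p(v)$. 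The $(\Leftarrow)$ direction is the one place where the content of clause (ii) is really needed: from $p(w)Sp(v)$, clause (ii) applied at $w$ furnishes $v''\in W$ with $wRv''$ and $p(v'')=p(v)$, i.e.\ $v''Bv$, and taking $w'=w$, $v'=v''$ in the definition of $R/B$ yields $[w]_B\,R/B\,[v]_B$.

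I expect no substantive obstacle: the only subtle point is that the $(\Leftarrow)$ half of the relational equivalence is not forced by functoriality of kernels alone but requires the back-condition (ii) of p-morphisms, which is precisely what bisimulations are designed to encode. Once that step is in place, the diagram of Theorem~\ref{pMp} applied to $B'=\{(w,w)\}$ and $B$ (or equivalently the direct verification above) shows $q$ is an isomorphism, so $\mathfrak M^n/B\equiv\mathfrak N^n$.
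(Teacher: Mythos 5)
Your proof is correct and follows essentially the same route as the paper: $B$ is the kernel of $p$, the bisimulation clause is checked by chaining the forth condition (i) with the back condition (ii) of the p-morphism, and the canonical map $[w]_B\mapsto p(w)$ is verified to be a well-defined, one-to-one p-morphism, hence an isomorphism. Your explicit verification of the two directions of $[w]_B\,R/B\,[v]_B\Leftrightarrow p(w)\,S\,p(v)$ merely spells out what the paper leaves to the reader.
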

\begin{proof}Let $wBv$ and $wRw'$ for some $w,w',v\in W$ (see Figure \ref{bis}). Then $p(w)=p(v)$ and $p(w)Sp(w')$, where $S$ is the accessibility relation in $\mathfrak{N}^n$. Thus, $p(v)Sp(w')$. Since $p$ is a p-morphism, $vRv'$ and $p(v')=p(w')$, for some $v'\in W$. Thus, $vRv'$ and $w'Bv'$.

In the same way one shows $wBv$ and $vRv'$ give us $wRw'$ and $w'Bv'$, for some $w'$, and we obviously have $V^n(w)=V^n(v)$ if $wBv$.

The $n$-models $\mathfrak{M}^n\slash B$ and $\mathfrak{N}^n$ are isomorphic as the mapping $i([w]_B)=p(w)$ is well defined, one-to-one and p-morphic.
\hfill\qed\end{proof}

Bisimulations preserve such properties of frames as reflexivity, symmetry, transitivity; consequently, p-morphic images preserve these properties, as well. There are, however, some properties  which are not preserved by p-morphisms.
 \begin{example}\label{pMex}
Let $W=\{u_i\colon i\geq 0\}\cup \{v_i\colon i\geq 0\}\cup\{w_0\}$ and a partial order $R$ on $W$, and a bisimulation $B$ on $W$, are defined as in the following picture (see Figure \ref{asym})
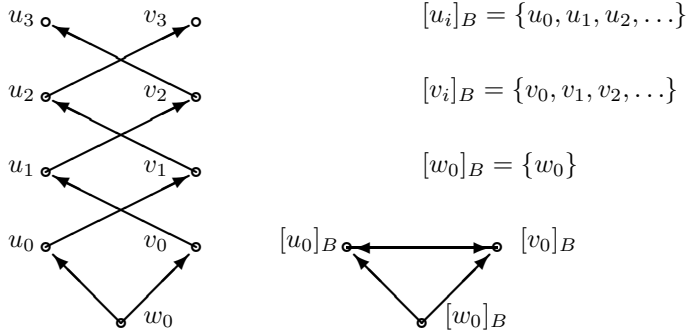
\begin{figure}[H]
 \unitlength1cm
\begin{picture}(3,4)
\thicklines
\put(2,0){\circle{0.1}}
\put(1,1){\circle{0.1}}
\put(3,1){\circle{0.1}}
\put(1,2){\circle{0.1}}
\put(3,2){\circle{0.1}}
\put(1,3){\circle{0.1}}
\put(3,3){\circle{0.1}}
\put(1,4){\circle{0.1}}
\put(3,4){\circle{0.1}}
\put(2,0){\vector(-1,1){0.9}}
\put(2,0){\vector(1,1){0.9}}
\put(3,1){\vector(-2,1){1.9}}
\put(1,1){\vector(2,1){1.9}}
\put(2.3,0){\mbox{$w_0$}}
\put(0.5,1){\mbox{$u_0$}}
\put(2.3,1){\mbox{$v_0$}}
\put(0.5,2){\mbox{$u_1$}}
\put(2.3,2){\mbox{$v_1$}}
\put(0.5,3){\mbox{$u_2$}}
\put(2.3,3){\mbox{$v_2$}}
\put(0.5,4){\mbox{$u_3$}}
\put(2.3,4){\mbox{$v_3$}}

\put(3,2){\vector(-2,1){1.9}}
\put(1,2){\vector(2,1){1.9}}
\put(3,3){\vector(-2,1){1.9}}
\put(1,3){\vector(2,1){1.9}}
\put(6,0){\circle{0.1}}
\put(5,1){\circle{0.1}}
\put(7,1){\circle{0.1}}
\put(6,0){\vector(-1,1){0.9}}
\put(6,0){\vector(1,1){0.9}}
\put(5,1){\vector(1,0){1.9}}
\put(7,1){\vector(-1,0){1.9}}
\put(6,4){\mbox{$[u_i]_B=\{u_0,u_1,u_2,\dots\}$}}
\put(6,3){{\mbox{$[v_i]_B=\{v_0,v_1,v_2,\dots\}$}}}
\put(6,2){\mbox{$[w_0]_B=\{w_0\}$}}
\put(6.3,0){\mbox{$[w_0]_B$}}
\put(4.1,1){\mbox{$[u_0]_B$}}
\put(7.3,1){\mbox{$[v_0]_B$}}\end{picture}\caption{Weak Asymmetry is not Preserved.}\label{asym}\end{figure}
\noindent Thus, a p-morphic image of a partial order is not a partial order (only pre-order).  \hfill\qed \end{example} Note that the set $W$ in the above Example is infinite which is essential as
\begin{corollary}\label{fin} Any p-morhic image of any finite po-frame is a po-frame.
\end{corollary}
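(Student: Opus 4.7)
The plan is to show that a p-morphic image $p\colon(W,\leq,w_0)\to (U,R,u_0)$ of a finite po-frame, which by general facts about p-morphisms preserves reflexivity and transitivity (so that $R$ is at least a pre-order), must also be antisymmetric. Finiteness will be used exactly to rule out the kind of infinite ascending alternation exhibited in Example~\ref{pMex}, where antisymmetry fails precisely because $W$ is infinite.

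First I would fix $a,b\in U$ with $aRb$ and $bRa$ and aim to prove $a=b$. Pick any $w_0^\star\in p^{-1}(a)$ (using surjectivity of $p$). Apply the back-condition (ii) of p-morphisms: from $p(w_0^\star)=a$ and $aRb$ there is $v_0^\star\in W$ with $w_0^\star\leq v_0^\star$ and $p(v_0^\star)=b$. From $p(v_0^\star)=b$ and $bRa$, there is $w_1^\star\in W$ with $v_0^\star\leq w_1^\star$ and $p(w_1^\star)=a$. Iterating, I would build an increasing chain
\[
w_0^\star\leq v_0^\star\leq w_1^\star\leq v_1^\star\leq w_2^\star\leq v_2^\star\leq\cdots
\]
in $W$ whose terms satisfy $p(w_i^\star)=a$ and $p(v_i^\star)=b$ for every $i$.

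Second, I would exploit finiteness: since $W$ is finite, two of the terms in this sequence must coincide, giving indices at which the same element of $W$ reappears. Combined with transitivity, this forces $x\leq y\leq x$ in $W$ for all intermediate pairs $x,y$ in the chain between the two coinciding positions. Antisymmetry of $\leq$ in the original po-frame then collapses all these intermediate elements to a single point $z\in W$. But among these intermediate elements are both a $w_i^\star$ and a $v_i^\star$, hence $a=p(w_i^\star)=p(z)=p(v_i^\star)=b$. This shows $R$ is antisymmetric, so the p-morphic image is a po-frame.

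The only potentially delicate step is the construction of the alternating chain, which requires repeatedly invoking condition (ii) of a p-morphism, but this is routine. There is no real obstacle beyond keeping the bookkeeping of the alternating $p$-values straight; the core idea is simply that finiteness plus antisymmetry in $W$ forbids the infinite ladder that made Example~\ref{pMex} work, so the collapse of the chain forces $a=b$.
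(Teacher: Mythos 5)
Your proof is correct. The paper itself offers no proof of this corollary (it is stated as an immediate observation after Example~\ref{pMex}, with the remark that finiteness is essential), and your argument supplies exactly the intended reasoning: reflexivity and transitivity of the image relation come for free, and the alternating ascending chain built from the back-condition (ii) must, in a finite po-set, collapse two consecutive terms with images $a$ and $b$, forcing $a=b$ and hence antisymmetry.
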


\subsection{p-Irreducible $n$-Models.}\label{pirm}

 An $n$-model $\mathfrak{M}^n$ is said to be {\it p-irreducible} if each  p-morphism $p\colon \mathfrak{M}^n\to \mathfrak{N}^n$, for any $n$-model $\mathfrak{N}^n$, is an isomorphism. Thus, any  p-morphic image of any irreducible $n$-model is its isomorphic copy.\footnote{ The concept of p-irreducibility, in contrast to other concepts in this Section, would  make no sense for frames.}
Irreducible $n$-models are  po-sets, see Example \ref{pMe}, and we show any $n$-model can be  reduced to a p-irreducible one.
\begin{theorem}\label{Irr}
For each $n$-model $\mathfrak{M}^n$ there exists a p-irreducible $n$-model $\mathfrak{N}^n$ which is a p-morphic image of $\mathfrak{M}^n$ (and $\mathfrak{N}^n$ is unique up to $\equiv$).
\end{theorem}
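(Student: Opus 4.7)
My plan is to construct $\mathfrak{N}^n$ as the quotient of $\mathfrak{M}^n$ by the largest equivalence bisimulation, and then use maximality to establish both p-irreducibility and uniqueness.

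First I would form the union $B_0$ of all bisimulations of $\mathfrak{M}^n$, which is again a bisimulation by property (iii) in the proof of Lemma \ref{pM4}. Let $B^\star$ be the least equivalence relation containing $B_0$; it remains a bisimulation by properties (i), (ii), (iv) of that same lemma (reflexivity, symmetry and transitivity preserve the bisimulation property). By construction $B^\star$ is the maximal equivalence bisimulation of $\mathfrak{M}^n$. Theorem \ref{pM7} then yields an $n$-model $\mathfrak{N}^n:=\mathfrak{M}^n\slash B^\star$ and a p-morphism $[\ ]_{B^\star}\colon \mathfrak{M}^n\to \mathfrak{N}^n$.

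Next I would verify p-irreducibility. Suppose $p\colon \mathfrak{N}^n\to \mathfrak{L}^n$ is a p-morphism. Then $p\circ[\ ]_{B^\star}\colon \mathfrak{M}^n\to \mathfrak{L}^n$ is a p-morphism, and by Theorem \ref{pMr} the relation $wB'v\Leftrightarrow p([w]_{B^\star})=p([v]_{B^\star})$ is an equivalence bisimulation of $\mathfrak{M}^n$. Clearly $B^\star\subseteq B'$, so by maximality $B'=B^\star$. This forces $p$ to be injective on $W\slash B^\star$, and since p-morphisms are onto by definition, $p$ is an isomorphism.

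For uniqueness, suppose $\mathfrak{N}'^n$ is another p-irreducible p-morphic image of $\mathfrak{M}^n$ via $q\colon\mathfrak{M}^n\to\mathfrak{N}'^n$. By Theorem \ref{pMr}, $q$ induces an equivalence bisimulation $B'$ of $\mathfrak{M}^n$ with $\mathfrak{M}^n\slash B'\equiv\mathfrak{N}'^n$. Maximality gives $B'\subseteq B^\star$, so Theorem \ref{pMp} yields a p-morphism $\mathfrak{M}^n\slash B'\to \mathfrak{M}^n\slash B^\star=\mathfrak{N}^n$; transported along $\equiv$, this is a p-morphism from the p-irreducible $\mathfrak{N}'^n$, hence an isomorphism. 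Therefore $\mathfrak{N}'^n\equiv\mathfrak{N}^n$.

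The only delicate point is checking that the symmetric--transitive closure of a union of bisimulations is still a bisimulation (i.e.\ that enlarging by equivalence-closure respects the back-and-forth condition); this is exactly what the proof of Lemma \ref{pM4} delivers, so no real obstacle remains.
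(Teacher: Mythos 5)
Your proposal is correct and follows essentially the same route as the paper: quotient by the greatest equivalence bisimulation (obtained via Lemma \ref{pM4} from the union of all bisimulations), use Theorem \ref{pM7} for the canonical p-morphism, and derive both p-irreducibility and uniqueness from maximality together with Theorems \ref{pMp} and \ref{pMr}. The only difference is that you spell out the injectivity argument for p-irreducibility a bit more explicitly than the paper does; no gap.
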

\begin{proof} Let $\mathfrak{M}^n=(W,R,w_0,\{\mathfrak{f}^n_w\}_{w\in W})$ and $B$ be the least equivalence  on $W$ containing  $\bigcup\{B_i\colon B_i \ \mbox{is a bisimulation on } \mathfrak{M}^n\}.$ By Lemma \ref{pM4}, $B$ is the greatest bisimulation on $\mathfrak{M}^n$. Take $\mathfrak{N}^n=\mathfrak{M}^n\!\!\slash B$, see Theorem \ref{pM7}.
Since the composition of any two p-morphisms is a p-morphism, any p-morphic image $\mathfrak{N'}^n$ of $\mathfrak{N}^n$ would be a p-morphic image of  $\mathfrak{M}^n$. Thus, by maximality of $B$, we would get, by Theorem \ref{pMp}, an isomorphism $p'\colon\mathfrak{N'}^n\equiv\mathfrak{N}^n$ which means $\mathfrak{N}^n$ is p-irreducible.

The uniqueness of $\mathfrak{N}^n$ also follows; if $\mathfrak{N'}^n$ were another p-irreducible p-morphic image of $\mathfrak{M}^n$, we would get by Theorems \ref{pMp} and \ref{pMr}, a p-morphism  $p'\colon\mathfrak{N'}^n\to\mathfrak{N}^n$ which would mean  that $\mathfrak{N'}^n$ and $\mathfrak{N}^n$ are isomorphic. \hfill\qed\end{proof}
The following theorem could give another characterization of p-irreducible $n$-models.

\begin{theorem}\label{pM5} If an $n$-model $\mathfrak{M}^n$ is p-irreducible, then for any $n$-model $\mathfrak{N}^n$ there is at most one p-morphism $p\colon \mathfrak{N}^n\to \mathfrak{M}^n$.
\end{theorem}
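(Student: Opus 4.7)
The plan is to argue by contradiction with p-irreducibility, using bisimulations in the standard ``graph of two morphisms'' way. Suppose $p,q\colon\mathfrak{N}^n\to\mathfrak{M}^n$ are two p-morphisms, where $\mathfrak{N}^n=(W',R',w'_0,V'^n)$ and $\mathfrak{M}^n=(W,R,w_0,V^n)$. Define a relation $B$ on $W$ by
\[
aBb \quad\Leftrightarrow\quad \exists_{v\in W'}\bigl(p(v)=a\ \land\ q(v)=b\bigr).
\]
I would first check that $B$ is a bisimulation on $\mathfrak{M}^n$. The valuation condition is immediate: if $aBb$ via $v$, then $V^n(a)=V^n(p(v))=V'^n(v)=V^n(q(v))=V^n(b)$ by clause (iv) of the definition of p-morphism. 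For the forth condition, suppose $aBb$ via $v$ and $aRa'$. Since $p(v)=a$ and $p$ is a p-morphism, clause (ii) gives $v'\in W'$ with $vR'v'$ and $p(v')=a'$; then $q$ being a p-morphism gives $q(v)Rq(v')$, i.e.\ $b R q(v')$, and $a'Bq(v')$ holds by construction. The back condition follows symmetrically, swapping the roles of $p$ and $q$.

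Next I would pass to the equivalence closure $B^\star$. By Lemma~\ref{pM4} it is again a bisimulation of $\mathfrak{M}^n$, so Theorem~\ref{pM7} applies: the quotient $\mathfrak{M}^n\slash B^\star$ is an $n$-model and the canonical map $[\ ]_{B^\star}\colon\mathfrak{M}^n\to\mathfrak{M}^n\slash B^\star$ is a p-morphism. Here is the point where p-irreducibility of $\mathfrak{M}^n$ comes in: this canonical p-morphism must be an isomorphism, which forces every $B^\star$-class to be a singleton, i.e.\ $B^\star$ is the identity on $W$. In particular $B$ itself is contained in the diagonal, so $p(v)=q(v)$ for every $v\in W'$, giving $p=q$.

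The two ingredients already in the paper do all the work: verifying that $B$ is a bisimulation is a direct diagram chase from the back-and-forth clauses of p-morphisms, and the collapse of $B^\star$ to the identity is exactly what p-irreducibility (together with Theorem~\ref{pM7}) asserts. The only mild subtlety is that one must form the equivalence closure before invoking Theorem~\ref{pM7}, since that theorem is stated for equivalence bisimulations; Lemma~\ref{pM4} guarantees that this closure remains a bisimulation, so no obstacle arises. Consequently the proof is essentially a two-line application of the machinery of Section~3.2, and I would not expect any genuinely hard step.
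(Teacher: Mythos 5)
Your proof is correct and follows essentially the same route as the paper: both define the relation $B=\{(p(v),q(v))\colon v\in W'\}$, verify it is a bisimulation of $\mathfrak{M}^n$ by the same back-and-forth diagram chase, pass to the equivalence closure via Lemma~\ref{pM4}, and use Theorem~\ref{pM7} together with p-irreducibility to force $B$ into the diagonal. The only difference is presentational (you argue directly that $B^\star$ collapses to the identity, while the paper phrases it as a contradiction with the assumption $p\neq q$), so there is nothing to add.
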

\begin{proof} Let $\mathfrak{M}^n=(W,R,w_0,V^n)$ be p-irreducible and $p,q\colon \mathfrak{N}^n\to \mathfrak{M}^n$ be two (different) p-morphisms for some  $\mathfrak{N}^n=(U,S,u_0,V'^n)$. Take $B=\{(p(v),q(v))\colon v\in V\}$ and let us show  $B$ is a bisimulation on $\mathfrak{M}^n$. This would be a contradiction as, if $B^\star$ were  the least equivalence relation containing $B$ (see Lemma \ref{pM4}),  $[\ ]_{B^\star}\colon  \mathfrak{M}^n\to\mathfrak{M}^n\slash B^\star$ would be a non-isomorphic p-morphism, see Theorem \ref{pM7}, and it would mean that $\mathfrak{M}^n$ were not p-irreducible.

Let $p(v)Rw$, for some $v\in V$ and $w\in W$. As $p$ is a p-morphism, $p(v')=w$ and $vSv'$ for some $v'\in V$. Then $q(v)Rq(v')$, as $q$ is a p-morphism, and  $wBq(v')$ as $w=p(v')$.

Similarly, if $q(v)Rw$, for some $v\in V$ and $w\in W$, then $q(v')=w$ and $vSv'$, for some $v'\in V$, and hence $p(v)Rp(v')$ and  $p(v')Bw$ (as $w=q(v'))$.
  \hfill\qed\end{proof}
\begin{theorem}\label{pM6} If  $\mathfrak{M}^n$ is p-irreducible, then $(\mathfrak{M}^n)_w$ is p-irreducible for each $w\in W$.\end{theorem}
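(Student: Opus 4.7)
My plan is to argue by contradiction, converting a hypothetical non-isomorphic p-morphism out of $(\mathfrak{M}^n)_w$ into a non-isomorphic p-morphism out of $\mathfrak{M}^n$, using the bisimulation machinery already built up in Theorems \ref{pMr}, \ref{pM7}, \ref{pM4} and Lemma \ref{pMf}(ii).

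Suppose for contradiction that $(\mathfrak{M}^n)_w$ is not p-irreducible for some $w\in W$. Then there is a p-morphism $p\colon (\mathfrak{M}^n)_w\to\mathfrak{N}^n$ which is not an isomorphism; in particular $p$ is not injective, so there exist $u,v\in (W)_w$ with $u\neq v$ and $p(u)=p(v)$. By Theorem \ref{pMr} the relation $B=\{(u',v')\in (W)_w\times(W)_w\colon p(u')=p(v')\}$ is an equivalence bisimulation of $(\mathfrak{M}^n)_w$ which is not the identity, since $(u,v)\in B$.

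Now I promote $B$ to a bisimulation of the whole model: by Lemma \ref{pMf}(ii), the same relation $B$, viewed as a binary relation on $W$, is a bisimulation of $\mathfrak{M}^n$. Applying Lemma \ref{pM4}, the least equivalence relation $B^\star$ on $W$ containing $B$ is still a bisimulation of $\mathfrak{M}^n$. Since $(u,v)\in B\subseteq B^\star$ with $u\neq v$, the equivalence $B^\star$ is strictly coarser than the identity on $W$.

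Finally I invoke Theorem \ref{pM7}: the canonical map $[\ ]_{B^\star}\colon\mathfrak{M}^n\to\mathfrak{M}^n\slash B^\star$ is a p-morphism of $n$-models, and it fails to be injective (as $[u]_{B^\star}=[v]_{B^\star}$ while $u\neq v$), hence it is not an isomorphism. This contradicts the p-irreducibility of $\mathfrak{M}^n$, completing the proof. The only subtle point, and really the whole content of the argument, is the step from Lemma \ref{pMf}(ii) combined with Lemma \ref{pM4}: one must check that closing $B$ under the equivalence axioms on all of $W$ does not collapse the witnessing pair $(u,v)$, but this is automatic since $B^\star$ only adds pairs.
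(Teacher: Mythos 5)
Your proof is correct and follows exactly the paper's own argument: take a non-trivial bisimulation on $(\mathfrak{M}^n)_w$ via Theorem \ref{pMr}, lift it to $\mathfrak{M}^n$ by Lemma \ref{pMf}(ii), close it to an equivalence bisimulation by Lemma \ref{pM4}, and quotient via Theorem \ref{pM7} to contradict p-irreducibility. Your added remark that the equivalence closure preserves the witnessing pair $(u,v)$ is a harmless (and correct) elaboration of a step the paper leaves implicit.
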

\begin{proof} Let $\mathfrak{M}^n=(W,R,w_0,V^n)$ and suppose  $(\mathfrak{M}^n)_w$ is not p-irreducible for some $w\in W$. By Theorem \ref{pMr}, there is a (non-trivial) bisimulation $B$ on $(\mathfrak{M}^n)_w$. Since (by Lemma \ref{pMf}) $B$ is  a bisimulation of $\mathfrak{M}^n$, if we extend $B$ (see Lemma \ref{pM4}) to an equivalence bisimulation $B^\star$ of $\mathfrak{M}^n$, we  get a (non-isomorphic) p-morphism of $\mathfrak{M}^n$, see Theorem  \ref{pM7}.
Thus, $\mathfrak{M}^n$ is not p-irreducible.     \hfill\qed\end{proof}

\subsection{Finite $n$-Models.}\label{Fin}
It follows  from Example \ref{pMe} that, without loosing any generality, we can confine ourselves to frames\slash $n$-models defined over partial orders (not pre-orders). So, in what follows, we assume that all frames\slash $n$-model are (defined over) po-sets even though we (sometimes) keep the notation $\mathfrak{M}^n=(W,R,w_0,V^n)$.
We examine here specific properties of finite $n$-models such as Corollary \ref{fin}.

\begin{theorem}\label{lfi2}
If $\mathfrak{M}^n$ is a finite $n$-model, then one can define $\Delta(\mathfrak{M}^n)\in \mathsf{Fm}^n$  (called the {\it character} of $\mathfrak{M}^n$)\footnote{The explicit definition of the character can be found in many papers; for instance, see Ghilardi \cite{Ghi2}, p.869. The idea of characterizing finite structures by formulas is due to Jankov \cite{Jankov} but the character should not be missed with the characteristic formula of a frame. If we consider $n$-models of a given locally tabular logic {\sf L}, where there is only finitely many (up to $=_{\sf L}$) formulas in $n$-variables, one could define the character of any finite $n$-model as the conjunction of the formulas (out of the finitely many) which are true in the model.} such that \
$\mathfrak{N}^n\Vdash \Delta(\mathfrak{M}^n) \quad\Leftrightarrow\quad \mathsf{Th}(\mathfrak{M}^n)\subseteq\mathsf{Th}(\mathfrak{N}^n)$, \ for any $n$-model $\mathfrak{N}^n$.
\end{theorem}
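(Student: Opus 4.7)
The plan is to construct $\Delta(\mathfrak{M}^n)$ by simultaneous recursion, assigning to every $w\in W$ a formula $\Delta_w\in\mathsf{Fm}^n$ by induction on the co-depth of $w$, and proving along the way the stronger inductive statement
\[
\mathfrak{N}^n\Vdash_v\Delta_w\;\iff\;\mathsf{Th}\bigl((\mathfrak{M}^n)_w\bigr)\subseteq\mathsf{Th}\bigl((\mathfrak{N}^n)_v\bigr),\qquad\text{for any $n$-model $\mathfrak{N}^n$ and any world $v$}.
\]
The character of $\mathfrak{M}^n$ will then be $\Delta(\mathfrak{M}^n):=\Delta_{w_0}$, and the theorem follows by specialising $v$ to $u_0$. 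Finiteness of $W$ is used throughout to turn collections of successors into genuine finite conjunctions and disjunctions.

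For the base case, when $w$ is maximal in $\mathfrak{M}^n$, the submodel $(\mathfrak{M}^n)_w$ is a single point whose theory coincides with the classical theory of the assignment $V^n(w)$. I take
\[
\Delta_w\;=\;\bigwedge_{x_i\in V^n(w)}x_i\;\wedge\;\bigwedge_{x_j\notin V^n(w)}\neg x_j,
\]
so that forcing $\Delta_w$ at $v$ forces every $x_i\in V^n(w)$ and refutes every $x_j\notin V^n(w)$ throughout the cone above $v$; a routine subformula induction then transfers forcing of an arbitrary $A$ from the singleton at $w$ to $v$. Conversely $\Delta_w\in\mathsf{Th}((\mathfrak{M}^n)_w)$, so theory inclusion immediately gives $\mathfrak{N}^n\Vdash_v\Delta_w$.

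For the inductive step, let $w$ be non-maximal with immediate successors $u_1,\ldots,u_k$, and assume the characters $\Delta_{u_1},\ldots,\Delta_{u_k}$ have already been defined with the claimed property. Guided by the intuition that $\mathsf{Th}((\mathfrak{M}^n)_w)\subseteq\mathsf{Th}((\mathfrak{N}^n)_v)$ should require $v$ to inherit the local variables at $w$ and, at every strict successor $v'>v$ in $\mathfrak{N}^n$ that deviates from $w$, the behaviour of some $u_i$, one picks (following \cite{Ghi2}) a formula of the shape
\[
\Delta_w\;=\;\bigwedge_{x_i\in V^n(w)}x_i\;\wedge\;\bigwedge_{x_j\notin V^n(w)}\Bigl(x_j\to\bigvee_{i=1}^{k}\Delta_{u_i}\Bigr),
\]
possibly strengthened by Fine-style conjuncts of the form $\bigl((\Delta_{u_i}\to\bigvee_{i'\neq i}\Delta_{u_{i'}})\to\bigvee_{i'\neq i}\Delta_{u_{i'}}\bigr)$ to pin down the branching structure above $w$. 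The $(\Leftarrow)$ direction reduces to verifying $\mathfrak{M}^n\Vdash_w\Delta_w$, which follows from persistence (Lemma~\ref{pMm}) together with the inductive hypothesis applied at each $u_i$ in turn.

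The main obstacle is the $(\Rightarrow)$ direction: assuming $\mathfrak{N}^n\Vdash_v\Delta_w$, one proves by structural induction on $A\in\mathsf{Th}((\mathfrak{M}^n)_w)$ that $A$ is forced at $v$. The propositional and $\wedge,\vee$ cases are immediate from persistence, but the implication case $A=B\to C$ requires, for each $v'\geq v$ forcing $B$, locating either $w$ itself or an appropriate $u_i$ whose theory governs the future of $v'$; this is exactly what the two families of conjuncts of $\Delta_w$ are designed to guarantee. The delicate part is checking that they suffice for \emph{every} possible shape of $\mathfrak{N}^n$ above $v$, because the $x_j$-triggered conjuncts only catch deviations that force a fresh variable, and the Fine-style conjuncts are needed to catch deviations that merely split the branching pattern at $w$. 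Once the finite list $u_1,\ldots,u_k$ exhausts all strict-successor types at $w$ (which it does, since $W$ is finite), the recursion closes and the theorem follows.
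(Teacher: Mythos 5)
Your overall strategy --- a character $\Delta_w$ for every point $w$, defined by induction on depth, with the strengthened hypothesis relating $\mathfrak{N}^n\Vdash_v\Delta_w$ to $\mathsf{Th}((\mathfrak{M}^n)_w)\subseteq\mathsf{Th}((\mathfrak{N}^n)_v)$ --- is the right one and matches the Ghilardi/de~Jongh construction the paper points to, and your base case is correct. But the inductive-step formula you propose is wrong, and the step you flag as "delicate" and do not carry out is exactly where it breaks. Concretely, take $n=1$ and let $\mathfrak{M}^1$ be the model over $\mathfrak{G}_3$ with points $w<a<p$, $w<b$ and $V(p)=\{x_1\}$, $V(a)=V(b)=V(w)=\emptyset$. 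Your recursion gives $\Delta_p=x_1$, $\Delta_b=\neg x_1$, $\Delta_a=(x_1\to x_1)\wedge\neg\neg x_1\equiv\neg\neg x_1$, and at $w$ the variable conjunct is $x_1\to(\neg\neg x_1\vee\neg x_1)\equiv\top$, while the two Fine-style conjuncts are $((\neg\neg x_1\to\neg x_1)\to\neg x_1)\equiv(\neg x_1\to\neg x_1)\equiv\top$ and $((\neg x_1\to\neg\neg x_1)\to\neg\neg x_1)\equiv(\neg\neg x_1\to\neg\neg x_1)\equiv\top$. Hence $\Delta(\mathfrak{M}^1)\equiv\top$, which every model forces. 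Yet for $\mathfrak{N}^1$ the model over $\mathfrak{F}_2$ with top valuations $\{x_1\}$ and $\emptyset$ and root valuation $\emptyset$, the formula $(\neg\neg x_1\to x_1)\to(x_1\vee\neg x_1)$ belongs to $\mathsf{Th}(\mathfrak{M}^1)$ but not to $\mathsf{Th}(\mathfrak{N}^1)$, so $\mathsf{Th}(\mathfrak{M}^1)\not\subseteq\mathsf{Th}(\mathfrak{N}^1)$. Your conjuncts miss precisely the deviations that neither force a fresh variable nor split the siblings: a point of $\mathfrak{N}^n$ can lie strictly below the theories of all the $u_i$ without triggering any of your antecedents.

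What is missing is the second half of the de~Jongh pair: one must define simultaneously with $\Delta_w$ an auxiliary formula $\psi_w$ (roughly $\psi_w=\Delta_w\to\bigvee\{\Delta_{u}\colon u\ \mbox{an immediate successor of}\ w\}$), and the main conjunct at $w$ must be $\bigl(\bigvee_{x_j\notin V(w)}x_j\vee\bigvee_{i}\psi_{u_i}\bigr)\to\bigvee_{i}\Delta_{u_i}$; the antecedents $\psi_{u_i}$ refer to the successors of each $u_i$, not to its siblings, and it is these that catch the offending points (in the example above the root of the fork forces $\psi_a=\neg\neg x_1\to x_1$ but not $\Delta_a\vee\Delta_b$, so the corrected character is refuted there, as it should be). Note also that the paper itself offers no proof of this theorem: it cites Ghilardi for the explicit construction, and its footnote observes that in the only setting where the character is actually used --- $n$-models of a fixed locally tabular logic $\mathsf{L}$ --- one can bypass the recursion entirely and take $\Delta(\mathfrak{M}^n)$ to be the conjunction of the finitely many (up to $=_{\mathsf{L}}$) formulas of $\mathsf{Fm}^n$ true in $\mathfrak{M}^n$, for which the stated equivalence is immediate. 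Either repair would close the gap; as written, your construction does not prove the theorem.
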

The next theorem is due to Patterson \cite{Pat}:\begin{theorem}\label{pat}
If $\{\mathsf{Th}((\mathfrak{M}^n)_w)\}_{w\in W}$ is finite (which is the case when $\mathfrak{M}^n$ is finite), then $$\mathsf{Th}(\mathfrak{M}^n)\subseteq\mathsf{Th}(\mathfrak{N}^n)\quad\Leftrightarrow\quad\mathfrak N^n\thicksim(\mathfrak M^n)_w, \ \mbox{for some} \ w\in W,\quad \mbox{for any $n$-model $\mathfrak{N}^n$}.$$
\end{theorem}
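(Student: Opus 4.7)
The direction $(\Leftarrow)$ is immediate from Lemma~\ref{pMm}: since $w_0 R w$ for every $w \in W$, forcing at the root propagates upwards, giving $\mathsf{Th}(\mathfrak M^n) \subseteq \mathsf{Th}((\mathfrak M^n)_w)$, so $\mathfrak N^n \thicksim (\mathfrak M^n)_w$ yields $\mathsf{Th}(\mathfrak N^n) = \mathsf{Th}((\mathfrak M^n)_w) \supseteq \mathsf{Th}(\mathfrak M^n)$.

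For $(\Rightarrow)$, I would enumerate the finitely many distinct theories $T_1, \ldots, T_k$ occurring among $\{\mathsf{Th}((\mathfrak M^n)_w)\}_{w \in W}$; by the inclusion just noted, one of them, say $T_{i_0}$, equals $\mathsf{Th}(\mathfrak M^n)$ and is the $\subseteq$-smallest. The plan is to construct, for each $i \leq k$, a ``local character'' $\delta_i \in \mathsf{Fm}^n$ with the property that for every $n$-model $\mathfrak Q^n$ satisfying $\mathsf{Th}(\mathfrak M^n) \subseteq \mathsf{Th}(\mathfrak Q^n)$,
$$\mathfrak Q^n \Vdash \delta_i \quad\Longleftrightarrow\quad \mathsf{Th}(\mathfrak Q^n) = T_i.$$
Granted such $\delta_i$'s, the ``$\Leftarrow$'' of this characterization applied to $\mathfrak Q^n = \mathfrak M^n$ gives $\mathfrak M^n \Vdash \delta_{i_0}$, hence $\delta_1 \vee \cdots \vee \delta_k \in \mathsf{Th}(\mathfrak M^n) \subseteq \mathsf{Th}(\mathfrak N^n)$. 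Since forcing a disjunction at the root of a Kripke model forces a disjunct, $\mathfrak N^n \Vdash \delta_j$ for some $j$; invoking the ``$\Rightarrow$'' of the characterization (which applies to $\mathfrak N^n$ by our assumption $\mathsf{Th}(\mathfrak M^n) \subseteq \mathsf{Th}(\mathfrak N^n)$) yields $\mathsf{Th}(\mathfrak N^n) = T_j$, and picking any $w \in W$ with $\mathsf{Th}((\mathfrak M^n)_w) = T_j$ gives $\mathfrak N^n \thicksim (\mathfrak M^n)_w$.

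The decisive step, and what I expect to be the main obstacle, is the construction of the $\delta_i$. The plan is to exploit the fact that $(\{T_1, \ldots, T_k\}, \subseteq)$ is a finite poset with minimum $T_{i_0}$, and to define $\delta_i$ by induction on its dual rank: for each pair $i \neq j$ pick a separating formula $A_{ij}$ lying in exactly one of $T_i, T_j$, and assemble $\delta_i$ as a finite conjunction built from such $A_{ij}$ together with implications $\delta_j \to A$ that use previously constructed $\delta_j$ to pin down the theory at the root. The delicate point here is that intuitionistically $A \notin T_j$ does not yield $\neg A \in T_j$, so naive negation cannot be used to block membership in $T_j$, and one is forced to route the exclusions through the $\delta_j$'s. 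A cleaner alternative, likely the route taken by Patterson, is to extract from $\mathfrak M^n$ a finite $n$-model $\mathfrak M'^n$ realizing exactly the theories $T_1, \ldots, T_k$ (by choosing one representative $w_i$ per $T_i$ and passing to the p-irreducible reduct via Theorem~\ref{Irr}), and then to obtain the $\delta_i$ directly as the characters $\Delta((\mathfrak M'^n)_{w_i})$ supplied by Theorem~\ref{lfi2}.
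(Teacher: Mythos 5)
Your $(\Leftarrow)$ direction and the reduction of $(\Rightarrow)$ to the existence of ``exact characters'' $\delta_i$ (with $\mathfrak Q^n\Vdash\delta_i\Leftrightarrow\mathsf{Th}(\mathfrak Q^n)=T_i$) are sound, but the existence of those $\delta_i$ --- which you yourself flag as the decisive step --- is precisely where the proposal has a genuine gap, and neither of your two routes closes it. The characters supplied by Theorem \ref{lfi2} satisfy only $\mathfrak Q^n\Vdash\Delta((\mathfrak M'^n)_{w_i})\Leftrightarrow T_i\subseteq\mathsf{Th}(\mathfrak Q^n)$, which is strictly weaker than the equality you need: running your argument with these one-sided characters yields only $T_j\subseteq\mathsf{Th}(\mathfrak N^n)$ for some $j$, and since $T_{i_0}=\mathsf{Th}(\mathfrak M^n)$ is the $\subseteq$-least theory this tells you nothing beyond the hypothesis; the upper bound $\mathsf{Th}(\mathfrak N^n)\subseteq T_j$ is never produced. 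Your first route concedes the real obstruction (intuitionistically $A\notin T_j$ gives no $\neg A\in T_j$) without overcoming it, and routing the exclusions through implications $\delta_j\to A$ does not work because an arbitrary $\mathfrak Q^n$ with $\mathsf{Th}(\mathfrak Q^n)=T_i$ may have generated submodels whose theories are not among $T_1,\dots,T_k$ at all, so such implications cannot be certified to belong to $T_i$. There is additionally a circularity in the second route: extracting from a possibly infinite $\mathfrak M^n$ a finite model realizing exactly $T_1,\dots,T_k$ requires knowing that ``having the same theory'' is a bisimulation (Theorem \ref{GB}) or that $\mathfrak M^n$ is finitely reducible (Corollary \ref{FM}), both of which the paper derives \emph{from} Theorem \ref{pat}.

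The paper avoids exact characters altogether by tailoring a single implication to the given $\mathfrak N^n$: for each $w\in W$ it picks $A_w\in\mathsf{Th}(\mathfrak N^n)\setminus\mathsf{Th}((\mathfrak M^n)_w)$ when possible ($\top$ otherwise) and $B_w\in\mathsf{Th}((\mathfrak M^n)_w)\setminus\mathsf{Th}(\mathfrak N^n)$ when possible ($\bot$ otherwise); with $A=\bigwedge_w A_w$ and $B=\bigvee_w B_w$ (finite by the hypothesis on theories), $A\to B$ fails at the root of $\mathfrak N^n$, hence $A\to B\notin\mathsf{Th}(\mathfrak M^n)$, hence $\mathfrak M^n\Vdash_w A$ and $\mathfrak M^n\not\Vdash_w B$ for some $w$ --- and at that $w$ the two choices force $\mathsf{Th}(\mathfrak N^n)\subseteq\mathsf{Th}((\mathfrak M^n)_w)$ and $\mathsf{Th}(\mathfrak N^n)\supseteq\mathsf{Th}((\mathfrak M^n)_w)$ simultaneously. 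Any repair of your plan would in effect have to prove this two-sided statement anyway.
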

\begin{proof} The implication $(\Leftarrow)$ is obvious by Lemma \ref{pMm}. Let us prove $(\Rightarrow)$. If not all of $\mathsf{Th}(\mathfrak{N}^n)$ is true at $(\mathfrak M^n)_w$, we pick   $A_w\in\mathsf{Th}(\mathfrak{N}^n)$ such that $A_w\not\in\mathsf{Th}((\mathfrak{M}^n)_w)$ or $A_w=\top$ otherwise. As $\{\mathsf{Th}((\mathfrak{M}^n)_w)\colon w\in W\}$ is finite, we  take $A=\bigwedge A_w$  and notice  $\mathfrak{M}^n\Vdash_wA$ means that  $\mathsf{Th}(\mathfrak{N}^n)\subseteq \mathsf{Th}((\mathfrak M^n)_w)$.

If a formula not in $\mathsf{Th}(\mathfrak{N}^n)$ is true at $(\mathfrak M^n)_w$, we pick  $B_w\not\in\mathsf{Th}(\mathfrak{N}^n)$ such that $B_w\in\mathsf{Th}((\mathfrak{M}^n)_w)$ (or $B_w=\bot$ if $\mathsf{Th}(\mathfrak{N}^n)\supseteq \mathsf{Th}((\mathfrak M^n)_w$)), for each $w\in W$. Take $B=\bigvee B_w$  and notice  $\mathfrak{M}^n\not\Vdash_wB$ yields  $\mathsf{Th}(\mathfrak{N}^n)\supseteq \mathsf{Th}((\mathfrak M^n)_w)$.

Clearly, $(A\Rightarrow B)\not\in\mathsf{Th}(\mathfrak{N}^n)$. Thus, $(A\Rightarrow B)\not\in\mathsf{Th}(\mathfrak{M}^n)$ and hence $\mathfrak{M}^n\Vdash_wA$ and $\mathfrak{M}^n\not\Vdash_w B$, for some $w\in W$, and this means that   $\mathsf{Th}(\mathfrak{N}^n)=\mathsf{Th}((\mathfrak M^n)_w)$.
\end{proof}
\begin{theorem}\label{GB}
If $\{\mathsf{Th}((\mathfrak{M}^n)_w)\}_{w\in W}$ is finite, then  the greatest bisimulation $B$ of  $\mathfrak{M}^n$ is:
$$wBv \quad\Leftrightarrow\quad (\mathfrak{M}^n)_w\thicksim(\mathfrak{M}^n)_v.$$
\end{theorem}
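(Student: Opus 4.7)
The plan is to verify both that the relation $R^{\star}$ given by $wR^{\star}v \Leftrightarrow (\mathfrak{M}^n)_w \thicksim (\mathfrak{M}^n)_v$ is itself an equivalence bisimulation, and that every bisimulation on $\mathfrak{M}^n$ is contained in $R^{\star}$. Together with the obvious fact that $R^{\star}$ is an equivalence, this makes $R^{\star}$ the greatest bisimulation.

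For the bisimulation check, the valuation clause $V^n(w) = V^n(v)$ whenever $wR^{\star}v$ is immediate, since $x_i \in V^n(w) \Leftrightarrow x_i \in \mathsf{Th}((\mathfrak{M}^n)_w)$. For the forth condition, suppose $wR^{\star}v$ and $wRw'$. By Lemma \ref{pMm} we have $\mathsf{Th}((\mathfrak{M}^n)_w) \subseteq \mathsf{Th}((\mathfrak{M}^n)_{w'})$, and so $\mathsf{Th}((\mathfrak{M}^n)_v) \subseteq \mathsf{Th}((\mathfrak{M}^n)_{w'})$ by hypothesis. I would now apply Theorem \ref{pat} with $(\mathfrak{M}^n)_v$ in the role of $\mathfrak{M}^n$ and $(\mathfrak{M}^n)_{w'}$ in the role of $\mathfrak{N}^n$: this yields some $v'$ with $vRv'$ and $(\mathfrak{M}^n)_{w'} \thicksim (\mathfrak{M}^n)_{v'}$, i.e.\ $w'R^{\star}v'$. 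The back clause follows by swapping the roles of $w$ and $v$.

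For maximality, let $B'$ be an arbitrary bisimulation of $\mathfrak{M}^n$. A routine induction on the complexity of $A \in \mathsf{Fm}^n$ shows that $wB'v$ implies $\mathfrak{M}^n \Vdash_w A \Leftrightarrow \mathfrak{M}^n \Vdash_v A$: the atomic case uses the valuation clause of $B'$, conjunction and disjunction use only the induction hypothesis, and the implication case uses precisely the forth and back conditions of $B'$ in combination with the induction hypothesis. Hence $wB'v$ gives $\mathsf{Th}((\mathfrak{M}^n)_w) = \mathsf{Th}((\mathfrak{M}^n)_v)$, so $B' \subseteq R^{\star}$.

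The only delicate point is that the invocation of Theorem \ref{pat} in the second paragraph requires the family $\{\mathsf{Th}(((\mathfrak{M}^n)_v)_u)\}_{u \in (W)_v}$ to be finite; but this family is a subfamily of $\{\mathsf{Th}((\mathfrak{M}^n)_u)\}_{u \in W}$, which is finite by hypothesis, so the application is legitimate. No other step poses real difficulty.
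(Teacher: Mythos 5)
Your proof is correct. The first half --- showing that the relation $wR^{\star}v \Leftrightarrow (\mathfrak{M}^n)_w\thicksim(\mathfrak{M}^n)_v$ satisfies the forth condition via Lemma \ref{pMm} and Theorem \ref{pat}, with the back condition by symmetry --- is exactly the paper's argument, including your (correct and worth making) remark that the finiteness hypothesis passes down to the generated submodel $(\mathfrak{M}^n)_v$. The maximality half is where you diverge: you prove directly, by induction on the complexity of $A\in\mathsf{Fm}^n$, that any bisimulation $B'$ preserves forcing at related points, whence $B'\subseteq R^{\star}$. The paper instead extends $B'$ to an equivalence bisimulation (Lemma \ref{pM4}), forms the quotient $\mathfrak{M}^n\slash B'$ (Theorem \ref{pM7}), and applies Lemma \ref{pM0} to get $(\mathfrak{M}^n)_w\thicksim(\mathfrak{M}^n)_v$ from $p(w)=p(v)$. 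The two routes cost about the same overall --- the induction you carry out is essentially the one hidden inside Lemma \ref{pM0} --- but yours is more self-contained and applies to an arbitrary bisimulation without first closing it up to an equivalence, whereas the paper's reuses machinery it has already built.
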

\begin{proof}  Let $wBv\land wRw'$. Then $\mathsf{Th}((\mathfrak{M}^n)_v)=\mathsf{Th}((\mathfrak{M}^n)_{w})\subseteq\mathsf{Th}((\mathfrak{M}^n)_{w'})$ and, by Theorem \ref{pat}, $w'Bv'\land vRv'$ for some $v'$. Thus, $B$ is a bisimulation as $B$ is symmetric.

Let $wB'v$ and $B'$ be a bisimulation  of  $\mathfrak{M}^n$. By Theorem \ref{pM7}, there is a p-morphism $p\colon\mathfrak{M}^n\to\mathfrak{M}^n\slash B'$ such that $p(w)=p(v)$. Hence, by Lemma \ref{pM0},
$(\mathfrak{M}^n)_w\thicksim(\mathfrak{M}^n)_v$ which means $wBv$. Thus, we have showed $B'\subseteq B$.
\end{proof}
\begin{corollary}\label{FMbis} If $\{\mathsf{Th}((\mathfrak{M}^n)_w)\}_{w\in W}$ is finite, then there is a p-morphism from $\mathfrak{M}^n$ onto the $n$-model:
$$\Bigl(\{\mathsf{Th}((\mathfrak{M}^n)_w)\}_{w\in W},\ \subseteq ,\ \mathsf{Th}(\mathfrak{M}^n),\ \{\{x_1,\dots,x_n\}\cap\mathsf{Th}((\mathfrak{M}^n)_w)\}_{w\in W}\Bigr).$$
\end{corollary}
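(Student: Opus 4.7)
The plan is to produce the desired p-morphism by composing the canonical quotient map with an isomorphism to the target model, using the characterization of the greatest bisimulation provided by Theorem \ref{GB}.

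First, I would invoke Theorem \ref{GB} to assert that the relation $B$ defined by $wBv \Leftrightarrow (\mathfrak{M}^n)_w\thicksim(\mathfrak{M}^n)_v$, equivalently $\mathsf{Th}((\mathfrak{M}^n)_w) = \mathsf{Th}((\mathfrak{M}^n)_v)$, is an equivalence bisimulation of $\mathfrak{M}^n$. Theorem \ref{pM7} then provides the canonical p-morphism $[\,]_B\colon\mathfrak{M}^n\to\mathfrak{M}^n\slash B$. It therefore suffices to exhibit an isomorphism between $\mathfrak{M}^n\slash B$ and the target $n$-model, since a composition of p-morphisms is a p-morphism.

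Next, I would define the candidate isomorphism $i([w]_B) := \mathsf{Th}((\mathfrak{M}^n)_w)$. By the very definition of $B$ this map is well-defined and injective, and it is obviously surjective onto $\{\mathsf{Th}((\mathfrak{M}^n)_w)\}_{w\in W}$. The root is mapped correctly since $i([w_0]_B) = \mathsf{Th}((\mathfrak{M}^n)_{w_0}) = \mathsf{Th}(\mathfrak{M}^n)$, and the valuation clause holds because $V^n(w) = \{x_1,\dots,x_n\}\cap\mathsf{Th}((\mathfrak{M}^n)_w)$ by the definition of forcing.

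The step requiring real content is showing that $i$ preserves and reflects the accessibility relation, i.e.\ that $[w]_B\,R\slash B\,[v]_B \Leftrightarrow \mathsf{Th}((\mathfrak{M}^n)_w)\subseteq\mathsf{Th}((\mathfrak{M}^n)_v)$. The forward direction is routine: if $wBw'Rv'Bv$, then Lemma \ref{pMm} gives $\mathsf{Th}((\mathfrak{M}^n)_{w'})\subseteq\mathsf{Th}((\mathfrak{M}^n)_{v'})$, and each side is $B$-invariant. The backward direction is the main (though still short) obstacle and is where the finiteness hypothesis is used: assuming $\mathsf{Th}((\mathfrak{M}^n)_w)\subseteq\mathsf{Th}((\mathfrak{M}^n)_v)$, Theorem \ref{pat} (applied to $\mathfrak{N}^n = (\mathfrak{M}^n)_v$) produces some $w'$ with $wRw'$ and $(\mathfrak{M}^n)_{w'}\thicksim(\mathfrak{M}^n)_v$, i.e.\ $w'Bv$; hence $[w]_B\,R\slash B\,[v]_B$ as required. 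With $i$ now shown to be an isomorphism of $n$-models, the composition $i\circ[\,]_B$ is the p-morphism claimed by the corollary.
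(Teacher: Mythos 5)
Your proof is correct and follows essentially the same route as the paper, whose proof of this corollary is simply the citation of Theorem \ref{GB} together with Theorem \ref{pM7}; you have merely made explicit the isomorphism between $\mathfrak{M}^n\slash B$ and the stated target model (using Lemma \ref{pMm} and Theorem \ref{pat} for the two directions of the order-compatibility), which the paper leaves implicit.
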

\begin{proof} By the above Theorem and by Theorem \ref{pM7}.\end{proof}
\begin{corollary}\label{FM}
 $\mathfrak{M}^n$ is finitely reducible (which means there is a p-morphism $p\colon\mathfrak{M}^n\to\mathfrak{N}^n$ for some finite $n$-model $\mathfrak{N}^n$) if and only if $\{\mathsf{Th}((\mathfrak{M}^n)_w)\}_{w\in W}$ is finite.
\end{corollary}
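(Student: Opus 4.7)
The plan is to establish the two implications separately, with each resting on a result already proved earlier in the paper.

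For the direction $(\Leftarrow)$, I would simply invoke Corollary \ref{FMbis}. Under the hypothesis that $\{\mathsf{Th}((\mathfrak{M}^n)_w)\}_{w\in W}$ is finite, that corollary constructs an $n$-model whose carrier is precisely this finite set of theories, together with a p-morphism from $\mathfrak{M}^n$ onto it. Thus $\mathfrak{M}^n$ is finitely reducible, with no further work required.

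For the direction $(\Rightarrow)$, I would fix a p-morphism $p\colon\mathfrak{M}^n\to\mathfrak{N}^n$ with $\mathfrak{N}^n=(U,S,u_0,V'^n)$ finite, and read off the $w$-theory through $p$. By Lemma \ref{pM0}, for every $w\in W$ and every $A\in\mathsf{Fm}^n$ we have $\mathfrak{M}^n\Vdash_w A$ iff $\mathfrak{N}^n\Vdash_{p(w)} A$, so
$$\mathsf{Th}((\mathfrak{M}^n)_w)\ =\ \mathsf{Th}((\mathfrak{N}^n)_{p(w)}).$$
Since $U$ is finite, the set $\{\mathsf{Th}((\mathfrak{N}^n)_u)\colon u\in U\}$ has at most $|U|$ elements, and the displayed equality forces $\{\mathsf{Th}((\mathfrak{M}^n)_w)\}_{w\in W}\subseteq\{\mathsf{Th}((\mathfrak{N}^n)_u)\}_{u\in U}$, hence finite.

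There is no real obstacle here: the corollary is an easy packaging of Lemma \ref{pM0} for one direction and of Corollary \ref{FMbis} (which in turn rests on Theorems \ref{GB} and \ref{pM7}) for the other, so the proof amounts to a two-sentence combination of results already in place.
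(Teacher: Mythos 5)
Your proof is correct and follows exactly the route the paper intends: the corollary is stated without an explicit proof because it is meant to be read off immediately from Corollary \ref{FMbis} (for the direction from finitely many theories to finite reducibility) and from Lemma \ref{pM0} applied to the generated submodels (for the converse). Nothing is missing.
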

\begin{corollary}\label{lf3i}
 Let $\mathfrak{M}^n$ and  $\mathfrak{N}^n$ be finite (or finitely reducible) $n$-models. Then $\mathfrak{M}^n\thicksim\mathfrak{N}^n$ if and only if
 $\mathfrak{M}^n$ and $\mathfrak{N}^n$ have a common p-morphic  image.\end{corollary}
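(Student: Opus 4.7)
The easy direction is $(\Leftarrow)$: if $p\colon\mathfrak{M}^n\to\mathfrak{P}^n$ and $q\colon\mathfrak{N}^n\to\mathfrak{P}^n$ are p-morphisms onto a common image, then by Lemma \ref{pM0} both $\mathfrak{M}^n$ and $\mathfrak{N}^n$ are $\thicksim$-equivalent to $\mathfrak{P}^n$, hence to each other. This requires nothing about finiteness.

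For $(\Rightarrow)$ the natural strategy is to exhibit an explicit common image, namely the canonical theory-indexed model produced by Corollary \ref{FMbis}. I would first note that finite reducibility of $\mathfrak{M}^n$ and $\mathfrak{N}^n$ is equivalent, by Corollary \ref{FM}, to finiteness of the families $\{\mathsf{Th}((\mathfrak{M}^n)_w)\}_{w\in W}$ and $\{\mathsf{Th}((\mathfrak{N}^n)_v)\}_{v\in V}$, so Corollary \ref{FMbis} applies to both and yields p-morphisms onto
$$\mathfrak{P}^n_M=\bigl(\{\mathsf{Th}((\mathfrak{M}^n)_w)\}_{w\in W},\subseteq,\mathsf{Th}(\mathfrak{M}^n),\ldots\bigr)\quad\text{and}\quad \mathfrak{P}^n_N=\bigl(\{\mathsf{Th}((\mathfrak{N}^n)_v)\}_{v\in V},\subseteq,\mathsf{Th}(\mathfrak{N}^n),\ldots\bigr).$$

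The key step is to show $\mathfrak{P}^n_M\equiv\mathfrak{P}^n_N$ from the assumption $\mathfrak{M}^n\thicksim\mathfrak{N}^n$. Since $\mathsf{Th}(\mathfrak{M}^n)=\mathsf{Th}(\mathfrak{N}^n)$, for any $v\in V$ we have $\mathsf{Th}(\mathfrak{M}^n)\subseteq\mathsf{Th}((\mathfrak{N}^n)_v)$, so Theorem \ref{pat} gives some $w\in W$ with $(\mathfrak{N}^n)_v\thicksim(\mathfrak{M}^n)_w$, i.e.\ $\mathsf{Th}((\mathfrak{N}^n)_v)=\mathsf{Th}((\mathfrak{M}^n)_w)$; by symmetry the two underlying sets of theories coincide. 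The roots coincide (both equal $\mathsf{Th}(\mathfrak{M}^n)$), the orderings are inclusion in both cases, and the valuations at a theory $T$ are forced to be $T\cap\{x_1,\ldots,x_n\}$ in both $\mathfrak{P}^n_M$ and $\mathfrak{P}^n_N$. Hence $\mathfrak{P}^n_M$ and $\mathfrak{P}^n_N$ are literally the same $n$-model, and it is a common p-morphic image of $\mathfrak{M}^n$ and $\mathfrak{N}^n$.

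The only delicate point I foresee is justifying that the set of submodel-theories really is the same set for $\mathfrak{M}^n$ and $\mathfrak{N}^n$; this is precisely what Theorem \ref{pat} delivers, and the finiteness hypothesis is exactly what is needed to invoke it. Everything else is a bookkeeping check that the canonical theory-model is determined by the set of submodel-theories together with $\subseteq$ and the obvious valuation, all of which depend only on $\mathsf{Th}((\cdot)_w)$.
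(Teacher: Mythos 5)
Your proof is correct and follows essentially the same route as the paper: the paper's (terse) argument is precisely that $\mathfrak{M}^n\thicksim\mathfrak{N}^n$ forces, via Theorem \ref{pat}, the equality $\{\mathsf{Th}((\mathfrak{M}^n)_w)\}_{w\in W}=\{\mathsf{Th}((\mathfrak{N}^n)_u)\}_{u\in U}$, whence the canonical theory-indexed model of Corollary \ref{FMbis} is a common p-morphic image, with the converse by Lemma \ref{pM0}. You have merely spelled out the details the paper leaves implicit.
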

 \begin{proof} Let $\mathfrak{M}^n=(W,R,w_0,V^n)$ and  $\mathfrak{N}^n=(U,S,u_0,V'^n)$. It suffices to notice   that $\mathfrak{M}^n\thicksim\mathfrak{N}^n$ yields, by Theorem  \ref{pat}, $\{\mathsf{Th}((\mathfrak{M}^n)_w)\}_{w\in W}=\{\mathsf{Th}((\mathfrak{N}^n)_u)\}_{u\in U}$.\end{proof}
\begin{corollary}\label{lf4i}
If $\mathfrak{M}^n=(W,R,w_0,V^n)$ and $\mathfrak{N}^n=(U,S,u_0,V'^n)$ are finite  and $\mathfrak{M}^n\thicksim\mathfrak{N}^n$, then\\
(i) for every $w\in W$ there is an element $u\in U$ such that $(\mathfrak{M}^n)_{w}\thicksim(\mathfrak{N}^n)_{u}$;
\\ (ii) for every $u\in U$ there is an element  $w\in W$ such that $(\mathfrak{M}^n)_{w}\thicksim(\mathfrak{N}^n)_{u}$.
\end{corollary}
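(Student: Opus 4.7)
The plan is to obtain both statements as direct consequences of Theorem \ref{pat}, applied with the roles of the two models interchanged. Since both $\mathfrak{M}^n$ and $\mathfrak{N}^n$ are finite, the families $\{\mathsf{Th}((\mathfrak{M}^n)_w)\}_{w\in W}$ and $\{\mathsf{Th}((\mathfrak{N}^n)_u)\}_{u\in U}$ are finite, so Theorem \ref{pat} is available in either direction.

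For (i), fix $w \in W$. By Lemma \ref{pMm} (persistence of the forcing relation), every formula true at $w_0$ is also true at $w$, so $\mathsf{Th}(\mathfrak{M}^n) \subseteq \mathsf{Th}((\mathfrak{M}^n)_w)$. Combined with the hypothesis $\mathsf{Th}(\mathfrak{M}^n) = \mathsf{Th}(\mathfrak{N}^n)$, this yields
$$\mathsf{Th}(\mathfrak{N}^n) \ \subseteq \ \mathsf{Th}((\mathfrak{M}^n)_w).$$
Now apply Theorem \ref{pat} with $\mathfrak{N}^n$ in the role of the finite model and $(\mathfrak{M}^n)_w$ in the role of the arbitrary model: we obtain some $u \in U$ with $(\mathfrak{M}^n)_w \thicksim (\mathfrak{N}^n)_u$, as required.

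Statement (ii) is symmetric: given $u \in U$, Lemma \ref{pMm} gives $\mathsf{Th}(\mathfrak{N}^n) \subseteq \mathsf{Th}((\mathfrak{N}^n)_u)$, so $\mathsf{Th}(\mathfrak{M}^n) \subseteq \mathsf{Th}((\mathfrak{N}^n)_u)$, and Theorem \ref{pat} with $\mathfrak{M}^n$ as the finite model yields some $w \in W$ with $(\mathfrak{N}^n)_u \thicksim (\mathfrak{M}^n)_w$.

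There is essentially no obstacle here; the only bookkeeping matter is to make sure Theorem \ref{pat} is invoked with the correct model playing the ``finite'' role on each side, which is possible precisely because the hypothesis of finiteness is symmetric. (Alternatively, one could run the proof through Corollary \ref{lf3i}: take a common p-morphic image $\mathfrak{P}^n$ of $\mathfrak{M}^n$ and $\mathfrak{N}^n$ with p-morphisms $p,q$; given $w \in W$, surjectivity of $q$ gives $u \in U$ with $q(u) = p(w)$, and Lemma \ref{pM0} applied to the restrictions $p \!\upharpoonright_{(W)_w}$ and $q \!\upharpoonright_{(U)_u}$ yields $(\mathfrak{M}^n)_w \thicksim (\mathfrak{P}^n)_{p(w)} \thicksim (\mathfrak{N}^n)_u$. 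But the Theorem \ref{pat} route is shorter.)
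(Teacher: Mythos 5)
Your argument is correct. The paper's own proof takes the route you relegate to the parenthetical: it invokes Corollary \ref{lf3i} to obtain a common p-morphic image of $\mathfrak{M}^n$ and $\mathfrak{N}^n$ via p-morphisms $p,q$, and then observes (using Lemma \ref{pM0}) that $(\mathfrak{M}^n)_w\thicksim(\mathfrak{N}^n)_u$ whenever $p(w)=q(u)$, with surjectivity of $q$ supplying the required $u$. Your main route instead applies Theorem \ref{pat} directly: persistence (Lemma \ref{pMm}) gives $\mathsf{Th}(\mathfrak{N}^n)=\mathsf{Th}(\mathfrak{M}^n)\subseteq\mathsf{Th}((\mathfrak{M}^n)_w)$, and Theorem \ref{pat}, with the finite model $\mathfrak{N}^n$ playing the role of the model whose generated submodels are enumerated, produces the desired $u$. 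The two arguments are close relatives --- Corollary \ref{lf3i} is itself a consequence of Theorem \ref{pat} --- but yours is arguably the more economical, since it bypasses the construction of the common image and uses only the syntactic characterization of the theories of generated submodels; the paper's version has the minor advantage of making the witnessing element $u$ concrete as any preimage of $p(w)$ under $q$. Your care in identifying which model plays the ``finite'' role in Theorem \ref{pat} is exactly the point that needs checking, and you handle it correctly in both directions.
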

\begin{proof} Let $p$ and $q$ be p-morphisms from $\mathfrak{M}^n$ and $\mathfrak{N}^n$, correspondingly, onto a common p-morphic image. By Lemma \ref{pM0},  $(\mathfrak{M}^n)_{w}\thicksim(\mathfrak{N}^n)_{u}$ if $p(w)=q(u)$.
\end{proof}
\subsection{$\sigma$-Models.}\label{sM}
This is the key notion and it was  defined by Ghilardi  \cite{Ghi2}.
Let  $\sigma:\{x_1,\dots,x_n\}\to \mathsf{Fm^k}$, for  $k,n\geq 0$. For any  $\mathfrak{M}^k=(W,R,w_0,V^k)$,  let $\sigma(\mathfrak{M}^k)=(W,R,w_0,V^n)$ where
$$x_i\in V^n(w)\quad\Leftrightarrow\quad \mathfrak{M}^k\Vdash_w\sigma(x_i), \quad \mbox{ for any $w\in W$ \ and \ $i=1,\dots,n$}.$$
\begin{lemma}\label{sigma0} For every $w\in W$ \ and every \ $A\in \mathsf{Fm^n}$, we have
$$\sigma(\mathfrak{M}^k)\Vdash_wA \quad\Leftrightarrow\quad \mathfrak{M}^k\Vdash_w\sigma(A) .$$
\end{lemma}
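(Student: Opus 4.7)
The plan is a straightforward structural induction on the formula $A \in \mathsf{Fm}^n$. First I would quickly check that $\sigma(\mathfrak{M}^k)$ is a legitimate $n$-model, i.e.\ that the valuation $V^n$ given by $x_i \in V^n(w) \Leftrightarrow \mathfrak{M}^k \Vdash_w \sigma(x_i)$ is monotone: this is immediate from Lemma \ref{pMm} applied to $\mathfrak{M}^k$ and the formulas $\sigma(x_1),\dots,\sigma(x_n)$, since $uRw$ and $\mathfrak{M}^k \Vdash_u \sigma(x_i)$ force $\mathfrak{M}^k \Vdash_w \sigma(x_i)$.

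For the induction itself, the base cases are handled by definitions: for $A = x_i$ the equivalence is exactly how $V^n$ was defined in $\sigma(\mathfrak{M}^k)$, and for $A = \bot$ both sides are vacuously false because $\sigma(\bot) = \bot$. The propositional connectives $\land$ and $\lor$ are routine — the forcing clauses are local at $w$ and $\sigma$ commutes with them syntactically, so the induction hypothesis gives the equivalence directly.

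The only case that requires a line of care is $A = B \to C$, since the clause is not local. Here I would unfold: $\sigma(\mathfrak{M}^k)\Vdash_w B\to C$ iff for every $u$ with $wRu$, $\sigma(\mathfrak{M}^k)\Vdash_u B$ implies $\sigma(\mathfrak{M}^k)\Vdash_u C$. Applying the induction hypothesis \emph{at each such $u$} (crucially, the induction hypothesis is universally quantified over $w$), this is equivalent to: for every $u$ with $wRu$, $\mathfrak{M}^k\Vdash_u \sigma(B)$ implies $\mathfrak{M}^k\Vdash_u \sigma(C)$, which is precisely $\mathfrak{M}^k\Vdash_w \sigma(B)\to\sigma(C) = \sigma(B\to C)$.

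No real obstacle arises; the main conceptual point is simply that the inductive hypothesis must be stated uniformly for all points $w \in W$ so that it can be invoked at successors of $w$ in the implication case, and that monotonicity of $V^n$ in $\sigma(\mathfrak{M}^k)$ has to be verified separately (using Lemma \ref{pMm}) so that $\sigma(\mathfrak{M}^k)$ is an $n$-model in the sense of the paper.
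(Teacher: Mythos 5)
Your proof is correct and is exactly the standard structural induction that the paper leaves implicit (Lemma \ref{sigma0} is stated without proof). The two points you single out — verifying monotonicity of the induced valuation via Lemma \ref{pMm}, and invoking the inductive hypothesis at successors in the $\to$ case — are precisely the only places where anything needs to be said.
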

\begin{lemma}\label{sigmai}
(i) $\mathfrak{M}^k$ and $\sigma(\mathfrak{M}^k)$ are models over the same frame;\\
 (ii) $\sigma((\mathfrak{M}^k)_w)=(\sigma(\mathfrak{M}^k))_w$, \quad for every $w\in W$;\\
 (iii) if  $\mathsf{Th}(\mathfrak{M}^k)\subseteq\mathsf{Th}(\mathfrak{N}^k)$,\ then\ $\mathsf{Th}(\sigma(\mathfrak{M}^k))\subseteq\mathsf{Th}(\sigma(\mathfrak{N}^k))$.
	\end{lemma}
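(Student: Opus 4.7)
The plan is to verify each of the three clauses separately, observing that all three are essentially direct consequences of the definition of $\sigma(\mathfrak{M}^k)$ together with Lemma~\ref{sigma0} and Lemma~\ref{pMm}.

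For (i), by construction $\sigma(\mathfrak{M}^k) = (W,R,w_0,V^n)$ shares the underlying frame $(W,R,w_0)$ with $\mathfrak{M}^k$. The only nontrivial point is to confirm that $V^n$ is monotone, so that $\sigma(\mathfrak{M}^k)$ is indeed an $n$-model. Suppose $uRw$ and $x_i \in V^n(u)$. By definition $\mathfrak{M}^k \Vdash_u \sigma(x_i)$; since $\sigma(x_i) \in \mathsf{Fm}^k$, Lemma~\ref{pMm} yields $\mathfrak{M}^k \Vdash_w \sigma(x_i)$, hence $x_i \in V^n(w)$.

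For (ii), both $\sigma((\mathfrak{M}^k)_w)$ and $(\sigma(\mathfrak{M}^k))_w$ are built on the same underlying generated subframe $((W)_w, R\!\!\upharpoonright_{(W)_w}, w)$, so I only need to check that the two valuations on $(W)_w$ coincide. For any $u \in (W)_w$, the forcing $\mathfrak{M}^k \Vdash_u \sigma(x_i)$ is determined solely by the submodel $(\mathfrak{M}^k)_u$, which is the same whether computed inside $\mathfrak{M}^k$ or inside $(\mathfrak{M}^k)_w$. Applying the definition of $\sigma(-)$ on both sides gives the desired equality of valuations, hence of $n$-models.

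For (iii), let $A \in \mathsf{Th}(\sigma(\mathfrak{M}^k))$, i.e. $\sigma(\mathfrak{M}^k)\Vdash A$. By Lemma~\ref{sigma0}, this is equivalent to $\mathfrak{M}^k \Vdash \sigma(A)$, so $\sigma(A) \in \mathsf{Th}(\mathfrak{M}^k) \subseteq \mathsf{Th}(\mathfrak{N}^k)$. Hence $\mathfrak{N}^k \Vdash \sigma(A)$, and another application of Lemma~\ref{sigma0} gives $\sigma(\mathfrak{N}^k)\Vdash A$, i.e. $A \in \mathsf{Th}(\sigma(\mathfrak{N}^k))$.

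No step is genuinely an obstacle; the only mild caveat is to remember to check monotonicity of the new valuation in (i), which is what makes the definition of $\sigma(\mathfrak{M}^k)$ legitimate in the first place. All three parts follow mechanically once Lemma~\ref{sigma0} is in hand.
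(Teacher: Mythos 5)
Your proof is correct and follows essentially the same route as the paper: the paper dismisses (i) and (ii) as immediate from the definition of $\sigma(\mathfrak{M}^k)$ and proves (iii) by exactly your chain of equivalences through Lemma~\ref{sigma0}. Your added checks for (i) (monotonicity of $V^n$ via Lemma~\ref{pMm}) and (ii) (locality of forcing to generated submodels) simply make explicit what the paper leaves implicit.
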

 \begin{proof} We get  (i) and (ii) by the definition of $\sigma(\mathfrak{M}^k)$. As concerns (iii):\\
 $\sigma(\mathfrak{M}^k)\Vdash A \ \Leftrightarrow  \ \mathfrak{M}^k\Vdash\sigma(A) \ \Rightarrow \ \mathfrak{N}^k\Vdash\sigma(A)\ \Leftrightarrow \ \sigma(\mathfrak{N}^k)\Vdash A$.
\end{proof}
\begin{lemma}\label{sigma2}
If $p\colon\mathfrak{M}^k\to \mathfrak{N}^k$ is a p-morphism of $k$-models, then $p\colon\sigma(\mathfrak{M}^k)\to \sigma(\mathfrak{N}^k)$ is also a p-morphism of $n$-models and hence $p(\sigma(\mathfrak{M}^k))=\sigma(p(\mathfrak{M}^k))$ (see Figure. \ref{ps}).
\end{lemma}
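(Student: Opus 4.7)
The plan is to verify the four clauses defining a p-morphism for the map $p\colon \sigma(\mathfrak{M}^k)\to \sigma(\mathfrak{N}^k)$ and then deduce the commutativity equation as a trivial consequence. By Lemma~\ref{sigmai}(i), $\sigma(\mathfrak{M}^k)$ is an $n$-model over the \emph{same} frame as $\mathfrak{M}^k$, and similarly $\sigma(\mathfrak{N}^k)$ lives on the same frame as $\mathfrak{N}^k$. Therefore the three purely frame-theoretic conditions in the definition of a p-morphism — forth, back, and preservation of the root — are inherited verbatim from the assumption that $p\colon\mathfrak{M}^k\to\mathfrak{N}^k$ is a p-morphism of $k$-models. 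The only thing that must actually be checked is the valuation clause~(iv).

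For clause~(iv), write $\mathfrak{N}^k=(U,S,u_0,V'^k)$ and denote the valuations of $\sigma(\mathfrak{M}^k)$ and $\sigma(\mathfrak{N}^k)$ by $V^n$ and $V''^n$ respectively. Fix $w\in W$ and $i\leq n$. Unpacking the definition of the $\sigma$-model and invoking Lemma~\ref{pM0} on the formula $\sigma(x_i)\in\mathsf{Fm}^k$ gives the chain
\[
x_i\in V^n(w)\;\Leftrightarrow\;\mathfrak{M}^k\Vdash_w \sigma(x_i)\;\Leftrightarrow\;\mathfrak{N}^k\Vdash_{p(w)}\sigma(x_i)\;\Leftrightarrow\;x_i\in V''^n(p(w)),
\]
so $V^n(w)=V''^n(p(w))$, as required. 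Surjectivity of $p$ as a set-map is unchanged when the valuations are swapped, so $p$ is indeed a surjective p-morphism of $n$-models.

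Finally, since $p$ is surjective from $\mathfrak{M}^k$ onto $\mathfrak{N}^k$, we have $p(\mathfrak{M}^k)=\mathfrak{N}^k$, whence $\sigma(p(\mathfrak{M}^k))=\sigma(\mathfrak{N}^k)$. On the other hand, the first part of the lemma just established that $p\colon\sigma(\mathfrak{M}^k)\to\sigma(\mathfrak{N}^k)$ is a surjective p-morphism, so by definition $p(\sigma(\mathfrak{M}^k))=\sigma(\mathfrak{N}^k)$ as well. The two sides of the displayed equation therefore coincide. There is no serious obstacle in this argument; the entire content is the single appeal to Lemma~\ref{pM0} applied coordinate-wise to the formulas $\sigma(x_1),\dots,\sigma(x_n)$.
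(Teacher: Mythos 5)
Your proof is correct and is exactly the argument the paper leaves implicit (the lemma is stated without proof): the frame clauses of a p-morphism transfer because $\sigma$ does not change the frame, the valuation clause follows from Lemma~\ref{pM0} applied to each $\sigma(x_i)\in\mathsf{Fm}^k$, and the equation $p(\sigma(\mathfrak{M}^k))=\sigma(p(\mathfrak{M}^k))$ is then immediate from surjectivity and the paper's convention $p(\mathfrak{M}^k)=\mathfrak{N}^k$. Nothing is missing.
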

\begin{figure}[H]
\unitlength1cm
\begin{picture}(2.5,2.5)
\thicklines

\put(4.3,0){\mbox{$\sigma(\mathfrak{M}^n)$}}
\put(4.5,2){\mbox{$\mathfrak{M}^k$}}
\put(4.6,1.9){\vector(0,-1){1.5}}
\put(4.2,1){\mbox{$\sigma$}}
\put(8.5,0){\mbox{$\sigma(\mathfrak{N}^k)$}}
\put(8.6,1.9){\vector(0,-1){1.5}}
\put(8.5,2){\mbox{$\mathfrak{N}^k$}}
\put(8.8,1){\mbox{$\sigma$}}
\put(6.7,2.2){\mbox{$p$}}
\put(6.7,0.2){\mbox{$p$}}
\put(5.5,2){\vector(1,0){2.5}}
\put(5.5,0){\vector(1,0){2.5}}

\end{picture}
\caption{p-Morphic images of $\sigma$-models.}\label{ps}
\end{figure}
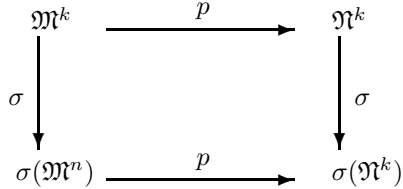
The above does not mean that $\sigma$-models are closed under p-morphic images. Two
(counter)examples below show that they may be not.
\begin{example}\label{Kost}
Let $\sigma(x_1)=x_2 \lor (x_2 \to (x_1\lor \neg x_1)).$ The  $1$-model over the two-element chain (in Figure \ref{ex1}) cannot be any $\sigma$-model as to falsify $\sigma(x_1)$ at the root one needs at least three elements in the chain.
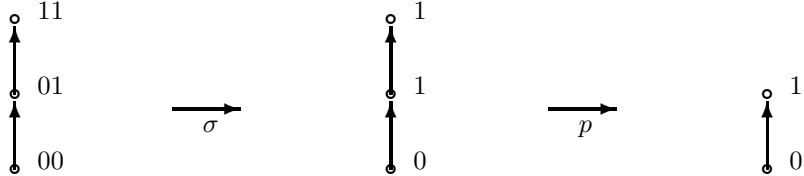
\begin{figure}[H]

\unitlength1cm
\begin{picture}(3,2)
\thicklines
\put(1,0){\vector(0,1){0.9}}
\put(1,1){\vector(0,1){0.9}}
\put(1,1){\circle{0.1}}
\put(1,2){\circle{0.1}}
\put(1,0){\circle{0.1}}
\put(1.3,0){\mbox{$00$}}
\put(1.3,2){\mbox{$11$}}
\put(1.3,1){\mbox{$01$}}

\put(3.1,0.8){\vector(1,0){0.9}}
\put(3.5,0.5){$\sigma$}

\put(6,0){\vector(0,1){0.9}}
\put(6,1){\vector(0,1){0.9}}
\put(6,1){\circle{0.1}}
\put(6,2){\circle{0.1}}
\put(6,0){\circle{0.1}}
\put(6.3,0){\mbox{$ 0$}}
\put(6.3,2){\mbox{$ 1$}}
\put(6.3,1){\mbox{$ 1$}}

\put(8.1,0.8){\vector(1,0){0.9}}
\put(8.5,0.5){$p$}

\put(11,0){\circle{0.1}}
\put(11.3,0){\mbox{$ 0$}}
\put(11,1){\circle{0.1}}
\put(11.3,1){\mbox{$1$}}
\put(11,0){\vector(0,1){0.9}}

\end{picture}
\caption{The First Counterexample.}\label{ex1}
\end{figure}

Let $\sigma(x)=\neg\neg x\lor \neg x$ (we write $x$ instead of $x_1$). Models and the p-morphism are defined in Figure \ref{ex2}. The  $1$-model over a two-element chain cannot be any $\sigma$-model as to falsify $\sigma(x)$ at the root one needs at least two end elements above the root.

\begin{figure}[H]
\unitlength1cm
\begin{picture}(5,2)
\thicklines
\put(0,1){\circle{0.1}}
\put(1,0){\circle{0.1}}
\put(2,1){\circle{0.1}}
\put(1.3,0){\mbox{$0$}}
\put(0.3,1){\mbox{$1$}}
\put(2.3,1){\mbox{$0$}}
\put(1,0){\vector(1,1){0.9}}
\put(1,0){\vector(-1,1){0.9}}

\put(3.1,0.8){\vector(1,0){0.9}}
\put(3.5,0.5){$\sigma$}

\put(6,0){\vector(-1,1){0.9}}
\put(6,0){\vector(1,1){0.9}}
\put(5,1){\circle{0.1}}
\put(6,0){\circle{0.1}}
\put(7,1){\circle{0.1}}
\put(6.3,0){\mbox{$0$}}
\put(5.2,1){\mbox{$1$}}
\put(7.3,1){\mbox{$1$}}

\put(8.6,0.8){\vector(1,0){0.9}}
\put(9,0.5){$p$}

\put(11,0){\circle{0.1}}
\put(11.3,0){\mbox{$ 0$}}
\put(11,0){\line(0,1){0.9}}
\put(11,1){\circle{0.1}}
\put(11.3,1){\mbox{$ 1$}}
\put(11,0){\vector(0,1){0.9}}

\end{picture}
\caption{The Second Counterexample.}\label{ex2}
\end{figure}
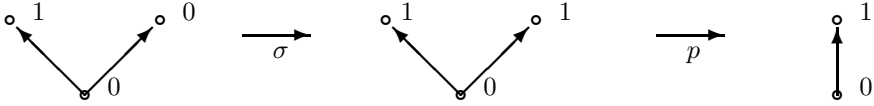
\end{example}
Nowhere  (but Theorem  \ref{lfi2}) we have used the fact that valuations of any $n$-model are restricted to the $n$-initial variables. It would  make no change in our argument if we replaced (everywhere) valuations $V^n$ with $V$, valuations of all variables. Thus,  all results (but Theorem  \ref{lfi2}) of this section remain valid for usual Kripke models.
\section{Locally Tabular Logics.}\label{LDF}
For any class  {\bf F} of frames, let {\it sm({\bf F})} be the least class (of  frames) containing {\bf F} and closed under  generated subframes and p-morphic images.
\begin{lemma}\label{lf8} \indent\indent\indent\indent \indent\indent\indent \indent \indent \qquad $\mathsf{L}(sm(\mathbf{F}))=\mathsf{L}(\mathbf{F}).$\end{lemma}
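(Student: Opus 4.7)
The plan is to prove the two inclusions separately. The inclusion $\mathsf{L}(sm(\mathbf F))\subseteq\mathsf{L}(\mathbf F)$ is immediate, because $\mathbf F\subseteq sm(\mathbf F)$ and enlarging the class of frames only narrows the set of valid formulas.

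For the non-trivial inclusion $\mathsf{L}(\mathbf F)\subseteq\mathsf{L}(sm(\mathbf F))$, I would use the standard ``validity is preserved'' strategy. Define
$$\mathbf G \;=\; \{\mathfrak G : \mathsf{L}(\mathbf F)\subseteq \mathsf{L}(\mathfrak G)\}.$$
Clearly $\mathbf F\subseteq \mathbf G$. Thus if I can show that $\mathbf G$ is closed under generated subframes and p-morphic images, then by minimality of $sm(\mathbf F)$ I get $sm(\mathbf F)\subseteq\mathbf G$, which is exactly the required inclusion.

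For closure under generated subframes, let $\mathfrak G=(W,R,w_0)\in\mathbf G$ and pick $w\in W$, so $(\mathfrak G)_w=((W)_w,R\!\!\upharpoonright_{(W)_w},w)$. Given any $n$-model $\mathfrak M'^{\,n}$ on $(\mathfrak G)_w$, extend it to an $n$-model $\mathfrak M^n$ on $\mathfrak G$ by setting $V^n(u)=\emptyset$ for $u\notin (W)_w$; monotonicity is automatic since $\emptyset$ is contained in every set. A routine induction on formula complexity shows $\mathfrak M^n\Vdash_v B\Leftrightarrow \mathfrak M'^{\,n}\Vdash_v B$ for every $v\in(W)_w$ and every $B\in\mathsf{Fm}^n$. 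Now if $A\in\mathsf{L}(\mathbf F)$, then $A\in\mathsf{L}(\mathfrak G)$, so $\mathfrak M^n\Vdash_{w_0}A$; by Lemma~\ref{pMm} also $\mathfrak M^n\Vdash_{w}A$, hence $\mathfrak M'^{\,n}\Vdash A$. Since $\mathfrak M'^{\,n}$ was arbitrary, $(\mathfrak G)_w\in\mathbf G$.

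For closure under p-morphic images, let $p\colon\mathfrak G\to\mathfrak G'$ be a p-morphism of frames and take any $n$-model $\mathfrak M'^{\,n}=(U,S,u_0,V'^{\,n})$ over $\mathfrak G'$. Lift it to an $n$-model $\mathfrak M^n=(W,R,w_0,V^n)$ on $\mathfrak G$ by $V^n(w)=V'^{\,n}(p(w))$; monotonicity of $V^n$ follows from $wRv\Rightarrow p(w)Sp(v)$. By construction $p$ is then a p-morphism of $n$-models, so Lemma~\ref{pM0} gives $\mathfrak M^n\Vdash_{w_0}A\Leftrightarrow \mathfrak M'^{\,n}\Vdash_{p(w_0)}A$, and $p(w_0)=u_0$. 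If $A\in\mathsf{L}(\mathbf F)\subseteq\mathsf{L}(\mathfrak G)$, the left side holds, hence $\mathfrak M'^{\,n}\Vdash A$, so $\mathfrak G'\in\mathbf G$.

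There is no real obstacle here: the only delicate point is verifying that the trivial extension used for generated subframes preserves monotonicity and truth, both of which are standard. Everything else is an unpacking of the relevant definitions together with Lemmas~\ref{pMm} and \ref{pM0}.
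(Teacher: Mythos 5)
Your proof is correct and follows exactly the route the paper intends: its entire proof is the citation ``By Lemma~\ref{pM0} and Lemma~\ref{pMm}'', and your argument is precisely the unpacking of that — the trivial inclusion from $\mathbf F\subseteq sm(\mathbf F)$, preservation under generated subframes via persistence (Lemma~\ref{pMm}) and the generated-submodel equivalence, and preservation under p-morphic images via the pullback valuation and Lemma~\ref{pM0}. No gaps; the monotonicity check for the trivial extension is handled correctly since $(W)_w$ is upward closed.
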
\begin{proof} By Lemma \ref{pM0}  and Lemma \ref{pMm}\end{proof}
Extending any class of frames with generated subframes and p-morphic images does not change the logic but it enables us to characterize extensions of $\mathsf{L}(\mathbf{F})$:\footnote{The following theorem resembles (not without reasons)  characterizations, see \cite{rw,PW}, of extensions of logics given by  logical matrices.}
\begin{theorem}\label{lf7}  Let $\mathbf{F}$ be a class of finite frames and $\mathsf L=\mathsf{L}(\mathbf{F})$ be locally tabular.   If {\sf L'} is an intermediate logic  such that  $\mathsf{L}\subseteq \mathsf L'$, then $\mathsf{L'}=\mathsf{L}(\mathbf{G})$, for some $\mathbf{G}\subseteq sm(\mathbf{F})$.
\end{theorem}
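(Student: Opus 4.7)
Set
$$\mathbf{G}\;:=\;\{\mathfrak G \in sm(\mathbf F)\colon\ \mathsf L' \subseteq \mathsf L(\mathfrak G)\}.$$
The inclusion $\mathsf L' \subseteq \mathsf L(\mathbf G)$ is immediate from the definition, so the only real task is to show that every $A \notin \mathsf L'$ is refuted on some member of $\mathbf G$.

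Fix $A \in \mathsf{Fm}^n \setminus \mathsf L'$. By Corollary \ref{fp}(ii) the extension $\mathsf L'$ is itself locally tabular, so the Lindenbaum--Tarski algebra $H_0 := \mathsf{Fm}^n/\!=_{\mathsf L'}$ is a finite Heyting algebra in which $[A]_{=_{\mathsf L'}}\neq\top$. Since $H_0$ is finite I can express it as a subdirect product of its finitely many subdirectly irreducible quotients; in at least one such quotient $\pi\colon H_0\twoheadrightarrow H$ the element $\pi([A])$ is still not $\top$. Dually, $H$ is the algebra of upsets of a finite rooted po-frame $\mathfrak G$, and the composite $\mathsf{Fm}^n \to H_0 \xrightarrow{\pi} H$ supplies a valuation on $\mathfrak G$ under which $A$ fails, i.e.\ $\mathfrak G \not\Vdash A$. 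Since $H$ is a homomorphic image of the free $\mathsf L'$-algebra $H_0$, it lies in the variety of $\mathsf L'$-algebras, so $\mathfrak G$ is a frame of $\mathsf L'$: $\mathsf L'\subseteq \mathsf L(\mathfrak G)$.

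It remains to place $\mathfrak G$ in $sm(\mathbf F)$. From the chain $\mathsf L = \mathsf L(\mathbf F)\subseteq \mathsf L'\subseteq \mathsf L(\mathfrak G)$ together with Theorem \ref{Jankov} one reads off $\chi(\mathfrak G)\notin \mathsf L(\mathbf F)=\bigcap_{\mathfrak F\in\mathbf F}\mathsf L(\mathfrak F)$, hence $\chi(\mathfrak G)\notin \mathsf L(\mathfrak F)$ for some single $\mathfrak F \in \mathbf F$. Invoking the classical content of Jankov's formula for finite rooted frames --- that $\chi(\mathfrak G)\notin\mathsf L(\mathfrak F)$ is equivalent to $\mathfrak G$ being a p-morphic image of a generated subframe of $\mathfrak F$ --- one concludes $\mathfrak G \in sm(\{\mathfrak F\})\subseteq sm(\mathbf F)$. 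Thus $\mathfrak G \in \mathbf G$ refutes $A$, giving $A \notin \mathsf L(\mathbf G)$, as required.

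The step I expect to be the main obstacle is the very last one: the characterization $\chi(\mathfrak G)\notin\mathsf L(\mathfrak F)\Leftrightarrow \mathfrak G \in sm(\{\mathfrak F\})$ is slightly stronger than the bare splitting statement in Theorem \ref{Jankov}. It is classical folklore (Jankov--de~Jongh) and can be taken for granted; alternatively, in the present locally tabular setting it can be recovered semantically by combining Theorem \ref{pat} with Corollary \ref{FMbis} to realize $\mathfrak G$ as a p-morphic image of the finite reduct of a suitable generated submodel $(\mathfrak M^n)_{w}$ of an $n$-model $\mathfrak M^n$ over some $\mathfrak F\in\mathbf F$ sharing the theory of the refuting valuation on $\mathfrak G$.
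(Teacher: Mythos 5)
Your proposal is correct, but it takes a genuinely different route from the paper's. The paper stays entirely on the model-theoretic side: it enumerates (up to $=_{\mathsf L}$) the formulas $A_0,\dots,A_j$ of $\mathsf{L'}\cap\mathsf{Fm}^k$, observes that $\bigwedge_{i}A_i\to A\notin\mathsf L(\mathbf F)$, extracts a point $w$ of a $k$-model over $\mathbf F$ at which all $A_i$ hold and $A$ fails, passes to a p-irreducible reduct and takes the frame of the generated submodel at the image of $w$ (so membership in $sm(\mathbf F)$ is automatic by construction), and then proves $\mathsf{L'}\subseteq\mathsf L(\mathfrak F)$ directly by turning any hypothetical refuting valuation on that frame into a substitution built from the characters $\Delta((\mathfrak N^k)_u)$ (via Corollary \ref{FMbis}), reaching a contradiction with the $A_i$'s. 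You work algebraically instead: the finite Lindenbaum algebra of $\mathsf{L'}$ in $n$ variables (local tabularity via Corollary \ref{fp}), a subdirectly irreducible quotient still refuting $A$, duality to obtain a finite rooted frame $\mathfrak G$ validating $\mathsf{L'}$, and then the Jankov--de~Jongh characterization to place $\mathfrak G$ in $sm(\mathbf F)$. Both arguments are sound, and the trade-off is exactly the one you flag: your route is shorter and more conceptual, but it leans on the full Jankov--de~Jongh theorem ($\chi(\mathfrak G)\notin\mathsf L(\mathfrak F)$ iff $\mathfrak G$ is a p-morphic image of a generated subframe of $\mathfrak F$), which is strictly stronger than the bare splitting statement recorded as Theorem \ref{Jankov}; the paper's argument is self-contained in its own machinery of characters and p-irreducible models and in effect re-proves the needed direction of Jankov--de~Jongh in this locally tabular setting. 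Your fallback sketch for avoiding the external theorem is viable but needs one more step than you indicate: to realize the refuting $n$-model $\mathfrak N^n$ on $\mathfrak G$ (up to $\thicksim$) as a generated submodel of a model over $\mathbf F$, note that otherwise the formula $\Delta(\mathfrak N^n)\to\bigvee_{T}C_T$ (with $C_T$ chosen in $T\setminus\mathsf{Th}(\mathfrak N^n)$ for each strictly larger realized theory $T$) would lie in $\mathsf L(\mathbf F)\subseteq\mathsf{L'}$ yet fail at the root of $\mathfrak N^n$, contradicting $\mathsf{L'}\subseteq\mathsf L(\mathfrak G)$.
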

\begin{proof} Let $\mathbf{G}=\{\mathfrak{F}\in sm(\mathbf{F})\colon \mathsf{L'}\subseteq \mathsf{L}(\mathfrak{F})\}$. Clearly, $\mathsf{L'}\subseteq\mathsf{L}(\mathbf{G})$. We need to show the reverse inclusion. So, assume $A\not\in\mathsf{L'}$ and show $A\not\in\mathsf{L}(\mathfrak{F})$ for some $\mathfrak{F}\in \mathbf{G}$.

Suppose that $A=A(x_1,\dots,x_k)$, for some $k\geq 0$, and let $A_0,\dots,A_j$ be all (non-equivalent in {\sf L}) formulas in $\mathsf{Fm^k}\cap\mathsf{L'}$. Let
$$B=\bigwedge_{i=0}^jA_i\ \rightarrow\ A.$$
If $B\in \mathsf{L}(\mathbf{F})$, then $B\in\mathsf{L'}$ and it would give $A\in \mathsf{L'}$, a contradiction. Thus, we have  $B\not\in \mathsf{L}(\mathbf{F})$. There is a $k$-model $\mathfrak{M}^k=(W,R,w_0,V^k)$ over a frame from {\bf F} such that $\mathfrak{M}^k\Vdash_wA_i$, for all $i\leq j$, and $\mathfrak{M}^k\not\Vdash_wA$, for some $w\in W$.  Let $p\colon\mathfrak{M}^k\to\mathfrak{N}^k$ be a p-morphism from $\mathfrak{M}^k$ onto a $p$-irreducible $k$-model $\mathfrak{N}^k$, see Theorem \ref{Irr}. We take the frame of $(\mathfrak{N}^k)_{p(w)}$ as our $\mathfrak{F}$. Let $\mathfrak{F}=(U,\leq,p(w))$. Since $(\mathfrak{N}^k)_{p(w)}$ is a $k$-model over $\mathfrak{F}$, we have $A\not\in\mathsf{L}(\mathfrak{F})$. There remains to show that
$\mathsf{L'}\subseteq \mathsf{L}(\mathfrak{F})$.

Suppose that $C\not\in\mathsf{L}(\mathfrak{F})$ for some $C\in \mathsf{L'}$. Let $C=C(x_1,\dots,x_n)$ and let $\mathfrak{N}^n$ be an $n$-model over $\mathfrak{F}$ such that $\mathfrak{N}^n\not\Vdash C$. We  define a substitution $\varepsilon\colon\{x_1,\dots,x_n\}\to\mathsf{Fm^k}$ taking
$\varepsilon(x_i)=\bigvee\{\Delta((\mathfrak{N}^k)_u)\colon \mathfrak{N}^n\Vdash_ux_i\}$, for any $i\leq n$. Then we have $\mathfrak{N}^k\Vdash_v \varepsilon(x_i)\Leftrightarrow$\\ $\exists_{u\in U}\bigl(\mathfrak{N}^k\Vdash_v\Delta((\mathfrak{N}^k)_u)\land \mathfrak{N}^n\Vdash_ux_i\bigr)\Leftrightarrow \exists_{u\in U}\bigl(\mathsf{Th}((\mathfrak{N}^k)_u)\subseteq\mathsf{Th}((\mathfrak{N}^k)_v) \land\mathfrak{N}^n\Vdash_ux_i\bigr) \Leftrightarrow\exists_{u\in U}(u\leq v\land\mathfrak{N}^n\Vdash_ux_i)\Leftrightarrow\mathfrak{N}^n\Vdash_vx_i$, for any $i\leq n$ and  $v\in U$. Note that the last but one equivalence needs Corollary \ref{FMbis}. This shows $\mathfrak{N}^k\Vdash_{v} \varepsilon(C)\Leftrightarrow\mathfrak{N}^n\Vdash_v C$, for any $v\in U$ and hence we get $\mathfrak{N}^k\not\Vdash_{p(w)} \varepsilon(C)$, that is $\mathfrak{M}^k\not\Vdash_w \varepsilon(C)$,
which cannot happen as $\varepsilon(C)$ is one of the $A_i$'s and must be true at $(\mathfrak{M}^k)_w$.
\end{proof}

\subsection{Substitutions in Locally Tabular Logics.}\label{sub}
Let $\mathbf{F}$ be a class of finite frames, $\mathsf L=\mathsf{L}(\mathbf{F})$ be locally tabular and $\mathbf{M}^n=\mathbf{M}^n(\mathbf F)$, for any $n\geq 0$. Assume, additionally, that  {\bf F} is closed under generated subframes and p-morphic images, that is {\it sm}({\bf F})={\bf F},  see Lemma \ref{lf8}.    For any $\sigma:\{x_1,\dots,x_n\}\to \mathsf{Fm^k}$,  define $H_\sigma\colon\mathbf{M}^k\to\mathbf{M}^n$  putting $H_\sigma(\mathfrak{M}^k)=\sigma(\mathfrak{M}^k)$, for each $\mathfrak{M}^k$.\footnote{Ghilardi wrote $\sigma(u)$ for any Kripke model $u$ and hence we have $\sigma(\mathfrak{M}^k)$. We should, perhaps, wrote $\sigma\colon\mathbf{M}^k\to\mathbf{M}^n$ but we think it could be misleading as we already have $\sigma:\{x_1,\dots,x_n\}\to \mathsf{Fm^k}$ and $\sigma:\mathsf{Fm}\to \mathsf{Fm}$.  Talking about the mapping $\sigma$, it would be unclear if we had in mind a mapping between formulas  or models. For this reason we decided to introduce $H_\sigma$, to replace $\sigma$, though it could be seen as an excessive reaction.}
\begin{lemma}\label{lfs}  Suppose that $\varepsilon,\sigma:\{x_1,\dots,x_n\}\to \mathsf{Fm^k}$. Then \ $\varepsilon=_{\sf L}\sigma$ \ iff \ $H_\sigma\thicksim H_\varepsilon$.\footnote{where $H_\sigma\thicksim H_\varepsilon$ obviously means $H_\sigma(\mathfrak{M}^k)\thicksim H_\varepsilon(\mathfrak{M}^k)$ for each $\mathfrak{M}^k$.}\end{lemma}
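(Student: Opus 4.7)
The plan is to unpack both sides in terms of forcing at each node of each $k$-model in $\mathbf{M}^k$, translate between models through $\sigma$ via Lemma \ref{sigma0}, and use closure of $\mathbf{F}$ under generated subframes (which gives $sm(\mathbf{F})=\mathbf{F}$) so that one may test theories at arbitrary worlds, not only at the root. Observe that since $\varepsilon,\sigma\colon\{x_1,\dots,x_n\}\to\mathsf{Fm}^k$ only move the first $n$ variables, the equivalence $\varepsilon=_{\sf L}\sigma$ is equivalent, by substitutivity of $=_{\sf L}$, to $\vdash_{\sf L}\varepsilon(x_i)\leftrightarrow\sigma(x_i)$ for each $i\leq n$; hence it suffices to work with formulas in $\mathsf{Fm}^n$.

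For the direction $(\Rightarrow)$, I would fix an arbitrary $\mathfrak{M}^k\in\mathbf{M}^k$ and an arbitrary $A\in\mathsf{Fm}^n$. From $\varepsilon=_{\sf L}\sigma$ we have $\vdash_{\sf L}\varepsilon(A)\leftrightarrow\sigma(A)$, so $\mathfrak{M}^k\Vdash_w\varepsilon(A)\Leftrightarrow\mathfrak{M}^k\Vdash_w\sigma(A)$ for every $w\in W$ (by the semantic clause for $\rightarrow$ together with Lemma \ref{pMm}). Applying Lemma \ref{sigma0} to both sides gives $\varepsilon(\mathfrak{M}^k)\Vdash_w A\Leftrightarrow\sigma(\mathfrak{M}^k)\Vdash_w A$; specializing to $w=w_0$ yields $\mathsf{Th}(\varepsilon(\mathfrak{M}^k))=\mathsf{Th}(\sigma(\mathfrak{M}^k))$, i.e.\ $H_\varepsilon(\mathfrak{M}^k)\thicksim H_\sigma(\mathfrak{M}^k)$.

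For the direction $(\Leftarrow)$, suppose $H_\sigma\thicksim H_\varepsilon$, and fix $i\leq n$, $\mathfrak{M}^k\in\mathbf{M}^k$, and $w\in W$. Since $\mathbf{F}$ is closed under generated subframes, $(\mathfrak{M}^k)_w$ is again in $\mathbf{M}^k$, so by hypothesis $\sigma((\mathfrak{M}^k)_w)\thicksim\varepsilon((\mathfrak{M}^k)_w)$. By Lemma \ref{sigmai}(ii) this is the same as $(\sigma(\mathfrak{M}^k))_w\thicksim(\varepsilon(\mathfrak{M}^k))_w$, which (taking $A=x_i\in\mathsf{Fm}^n$) gives $\sigma(\mathfrak{M}^k)\Vdash_w x_i\Leftrightarrow\varepsilon(\mathfrak{M}^k)\Vdash_w x_i$. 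Applying Lemma \ref{sigma0} once more translates this to $\mathfrak{M}^k\Vdash_w\sigma(x_i)\Leftrightarrow\mathfrak{M}^k\Vdash_w\varepsilon(x_i)$. Since this holds at every $w$ of every $\mathfrak{M}^k\in\mathbf{M}^k$, we conclude $\vdash_{\sf L}\sigma(x_i)\leftrightarrow\varepsilon(x_i)$, hence $\varepsilon=_{\sf L}\sigma$.

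There is no real obstacle: the lemma is essentially a semantic restatement of the definitions, and the only subtle point is to test equivalence at \emph{every} world (not just the root), which is handled by invoking Lemma \ref{sigmai}(ii) together with the fact that $\mathbf{M}^k$ is closed under generated submodels. The use of the root-based definition of $\thicksim$ is legitimate precisely because generated subframes of frames in $\mathbf{F}$ are still in $\mathbf{F}$.
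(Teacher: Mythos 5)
Your proof is correct and follows essentially the same route as the paper's: both directions are obtained by unfolding the definition of $H_\sigma$ via Lemma \ref{sigma0} and quantifying over all $k$-models. You are somewhat more explicit than the paper in the $(\Leftarrow)$ direction, where you pass to generated submodels (via Lemma \ref{sigmai}(ii) and the standing closure $sm(\mathbf F)=\mathbf F$) in order to upgrade root-level equivalence to the validity of $\sigma(x_i)\leftrightarrow\varepsilon(x_i)$; the paper leaves that step implicit.
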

\begin{proof}   $(\Rightarrow)$ is obvious. $(\Leftarrow)$. Let  $H_\sigma(\mathfrak{M}^k)\thicksim H_\varepsilon(\mathfrak{M}^k)$,  for any    $\mathfrak{M}^k\in\mathbf{M}^k$. Then\\
$\mathfrak{M}^k\vdash \sigma(A)\Leftrightarrow\sigma(\mathfrak{M}^k)\vdash A\Leftrightarrow\varepsilon(\mathfrak{M}^k)\vdash A\Leftrightarrow\mathfrak{M}^k\vdash \varepsilon(A).$ Thus, $\vdash_{\sf L}\sigma(A)\leftrightarrow\varepsilon(A)$, for any $A\in\mathsf{Fm^n}$, which shows $\varepsilon=_{\sf L}\sigma$.\end{proof}
The assumptions that the frames  {\bf F} are finite  and {\sf L}({\bf F}) is locally tabular do not play any role in the above Lemma but they   are essential in the subsequent theorem, to prove that  the conditions (i)-(iii) of Lemma \ref{sigmai} characterize  substitutions:
\begin{theorem}\label{nsigmai}
Let $H\colon\mathbf{M}^k\to\mathbf{M}^n$. Then $H\thicksim H_\sigma$, for some $\sigma:\{x_1,\dots,x_n\}\to \mathsf{Fm^k}$ if and only if $H$ fulfills the following conditions:\\
(i)  the $n$-model $H(\mathfrak{M}^k)$ has the same frame  as the $k$-model $\mathfrak{M}^k$, for any   $\mathfrak{M}^k\in\mathbf{M}^k$;\\
(ii) $H((\mathfrak{M}^k)_w)\thicksim(H(\mathfrak{M}^k))_w$, \ for any $\mathfrak{M}^k=(W,R,w_0,V^k)\in\mathbf{M}^k$ and  $w\in W$;\\
 (iii) if  $\mathfrak{N}^k\thicksim\mathfrak{M}^k$,\ then\ $H(\mathfrak{N}^k)\thicksim H(\mathfrak{M}^k)$, for any $\mathfrak{M}^k,\mathfrak{N}^k\in\mathbf{M}^k.$
 \end{theorem}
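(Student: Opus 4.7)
The plan is to verify the $(\Rightarrow)$ direction as a routine consequence of Lemma \ref{sigmai}: if $H\thicksim H_\sigma$, then (i) and (iii) are immediate from Lemma \ref{sigmai}(i),(iii), while (ii) follows from Lemma \ref{sigmai}(ii) together with the fact that $\thicksim$ respects taking the $w$-th generated submodel (Corollary \ref{lf4i}). The real content is the converse, where we must manufacture a substitution $\sigma$ out of the abstract map $H$.

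For $(\Leftarrow)$, I would first exploit local tabularity. Since the frames in $\mathbf{F}$ are finite and closed under generated subframes, every generated submodel of any $\mathfrak{M}^k\in \mathbf{M}^k$ again lies in $\mathbf{M}^k$; by Theorem \ref{lf6} the collection of such generated submodels is finite up to $\thicksim$. Each equivalence class carries a character from Theorem \ref{lfi2}, and I would define, for each $i\le n$,
$$\sigma(x_i)\;=\;\bigvee\bigl\{\Delta\bigl((\mathfrak{M}^k)_w\bigr)\colon \mathfrak{M}^k\in \mathbf{M}^k,\ w\in W_{\mathfrak{M}^k},\ H(\mathfrak{M}^k)\Vdash_w x_i\bigr\}.$$
Conditions (ii) and (iii) guarantee that whether $H(\mathfrak{M}^k)\Vdash_w x_i$ holds depends only on the equivalence class of $(\mathfrak{M}^k)_w$, so this is a finite disjunction up to $=_{\sf L}$ and $\sigma(x_i)\in \mathsf{Fm}^k$ is well-defined.

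The verification $H\thicksim H_\sigma$ then reduces, for each $\mathfrak{M}^k$ and each $w$, to proving the chain
$$H_\sigma(\mathfrak{M}^k)\Vdash_w x_i\ \iff\ \mathfrak{M}^k\Vdash_w \sigma(x_i)\ \iff\ H(\mathfrak{M}^k)\Vdash_w x_i.$$
The first equivalence is the very definition of $H_\sigma$. For the second, I would unfold the disjunction: $\mathfrak{M}^k\Vdash_w \sigma(x_i)$ means $(\mathfrak{M}^k)_w\Vdash\Delta((\mathfrak{N}^k)_u)$ for some disjunct, which by Theorem \ref{lfi2} is $\mathsf{Th}((\mathfrak{N}^k)_u)\subseteq \mathsf{Th}((\mathfrak{M}^k)_w)$, and by Theorem \ref{pat} is equivalent to the existence of $v\ge u$ with $(\mathfrak{M}^k)_w\thicksim(\mathfrak{N}^k)_v$. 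Monotonicity lifts $H(\mathfrak{N}^k)\Vdash_u x_i$ to $H(\mathfrak{N}^k)\Vdash_v x_i$; applying (ii) transfers this to $H((\mathfrak{N}^k)_v)\Vdash x_i$; applying (iii) transfers it to $H((\mathfrak{M}^k)_w)\Vdash x_i$; and one more use of (ii) yields $H(\mathfrak{M}^k)\Vdash_w x_i$. The reverse implication is trivial: if $H(\mathfrak{M}^k)\Vdash_w x_i$ then $\Delta((\mathfrak{M}^k)_w)$ is itself a disjunct of $\sigma(x_i)$ and is satisfied at $(\mathfrak{M}^k)_w$.

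The main obstacle is essentially bookkeeping: organizing the equivalence chain cleanly and confirming that the a priori infinite index set for $\sigma(x_i)$ collapses under $=_{\sf L}$. Both matters pivot on Theorem \ref{pat}, which is the sole place where finiteness of $\mathbf{M}^k/\!\!\thicksim$ is really used; without local tabularity the construction would not even produce a formula.
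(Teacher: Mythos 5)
Your proof is correct and follows essentially the same route as the paper: one direction is read off from Lemma \ref{sigmai}, and for the converse you define $\sigma(x_i)$ as a disjunction of characters of models at which $H$ forces $x_i$ and verify $H\thicksim H_\sigma$ via Theorem \ref{lfi2}, Theorem \ref{pat}, and conditions (ii)--(iii). Your indexing over generated submodels $(\mathfrak{M}^k)_w$ rather than over all rooted models of $\mathbf{M}^k$ yields the same formula up to $=_{\sf L}$, since $\mathbf{M}^k$ is closed under generated submodels and condition (ii) identifies $H(\mathfrak{M}^k)\Vdash_w x_i$ with $H((\mathfrak{M}^k)_w)\Vdash x_i$.
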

\begin{proof}  $(\Leftarrow)$ follows from Lemma \ref{sigmai}. The conditions (i)-(iii) of Lemma \ref{sigmai} seem to be stronger than the above ones, but they are not (see Theorem \ref{pat}).

To prove $(\Rightarrow)$ we assume  $H\colon\mathbf{M}^k\to\mathbf{M}^n$ fulfills the above (i)--(iii). Let
$$\sigma(x_i)=\bigvee\{\Delta(\mathfrak{N}^k)\colon  \ \mathfrak{N}^k\in\mathbf{M}^k \ \land \ H(\mathfrak{N}^k)\Vdash x_i\},\qquad \mbox{ for $i=1,\dots,n$}.$$ By  Theorem \ref{lf6}, we can claim that we have defined $\sigma\colon \{x_1,\dots,x_n\}\to \mathsf{Fm}^k$. For  any  $k$-model $\mathfrak{M}^k=(W,R,w_0,V^k)\in \mathbf{M}^k$, we  have
$$\sigma(\mathfrak{M}^k)\Vdash_{w}x_i \quad \Leftrightarrow \quad \sigma((\mathfrak{M}^k)_w)\Vdash x_i\quad \Leftrightarrow \quad (\mathfrak{M}^k)_w\Vdash\sigma(x_i)\quad \Leftrightarrow  $$ $$\exists_{\mathfrak{N}^k}\bigl((\mathfrak{M}^k)_w\Vdash\Delta(\mathfrak{N}^k) \land  H(\mathfrak{N}^k)\Vdash x_i\bigr) \  \Leftrightarrow \ H((\mathfrak{M}^k)_w)\Vdash x_i \ \Leftrightarrow\ H(\mathfrak{M}^k)\Vdash_{w}x_i,$$ for any $i=1,\dots,n$ and any $w\in W$.
 Hence  $\sigma(\mathfrak{M}^k)\thicksim H(\mathfrak{M}^k)$.
 \end{proof}
The above theorem is useful to define substitutions. However, the condition (iii) is hard to check if  there is too many p-morphisms between models. So,  we would prefer a variant of \ref{nsigmai}, given below,  concerning p-irreducible models. The closure of {\bf F} under p-morphic images is not necessary  for the above theorem (it suffices the closure under generated subframes) but it is necessary for the subsequent theorem.

Let $\mathbf{M}^n_{ir}$, for any $n\geq 0$, be the class of p-irreducible $n$-models  over the frames $\mathbf{F}$. According to Theorem \ref{Irr}, for any $A\in \mathsf{Fm}^n$
$$A\in\mathsf{L} \quad \Leftrightarrow \quad (\mathfrak{M}^n\Vdash A, \mbox{ for every } \mathfrak{M}^n\in \mathbf{M}^n_{ir} ). $$
\begin{theorem}\label{nsi}
If $H\colon\mathbf{M}^k_{ir}\to\mathbf{M}^n$ fulfills\\
(i) the $n$-model $H(\mathfrak{M}^k)$ has the same frame  as the $k$-model $\mathfrak{M}^k$, for any   $\mathfrak{M}^k\in\mathbf{M}^k_{ir}$;\\
(ii) $H((\mathfrak{M}^k)_w)\thicksim(H(\mathfrak{M}^k))_w$, \ for any $\mathfrak{M}^k=(W,R,w_0,V^k)\in\mathbf{M}^k_{ir}$ and any  $w\in W$;\\
 (iii) if  $\mathfrak{N}^k\equiv\mathfrak{M}^k$,\ then\ $H(\mathfrak{N}^k)\thicksim H(\mathfrak{M}^k)$, for any $\mathfrak{M}^k,\mathfrak{N}^k\in\mathbf{M}^k_{ir};$\\
then there is exactly one (up to $=_{\sf L}$) substitution  $\sigma:\{x_1,\dots,x_n\}\to \mathsf{Fm^k}$ such that $H(\mathfrak{M}^k)\thicksim H_\sigma(\mathfrak{M}^k)$, for each $\mathfrak{M}^k\in\mathbf{M}^k_{ir}.$
\end{theorem}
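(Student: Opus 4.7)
The plan is to mimic the construction from the proof of Theorem \ref{nsigmai}, restricted to the subclass $\mathbf{M}^k_{ir}$, and check that the weaker hypothesis (iii) of the present theorem is enough to carry it out. Concretely, I would define
$$\sigma(x_i)\;=\;\bigvee\{\Delta(\mathfrak{N}^k)\colon \mathfrak{N}^k\in\mathbf{M}^k_{ir}\ \text{and}\ H(\mathfrak{N}^k)\Vdash x_i\},\qquad i=1,\dots,n.$$
This is a well-defined member of $\mathsf{Fm}^k$: by Theorem \ref{lf6} local tabularity collapses the family $\{\Delta(\mathfrak{N}^k)\}_{\mathfrak{N}^k\in\mathbf{M}^k_{ir}}$ to a finite one modulo $=_{\sf L}$, and condition (iii) guarantees that the predicate $H(\mathfrak{N}^k)\Vdash x_i$ depends only on the $\equiv$-class of $\mathfrak{N}^k$, so the formula does not see the choice of representatives.

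The next step is to verify $H_\sigma(\mathfrak{M}^k)\thicksim H(\mathfrak{M}^k)$ for each $\mathfrak{M}^k\in\mathbf{M}^k_{ir}$. The pivotal fact here is Theorem \ref{pM6}: every generated submodel $(\mathfrak{M}^k)_w$ of a p-irreducible model is itself p-irreducible and therefore belongs to $\mathbf{M}^k_{ir}$. Fix $w$ and $x_i$. The implication $H(\mathfrak{M}^k)\Vdash_w x_i\Rightarrow H_\sigma(\mathfrak{M}^k)\Vdash_w x_i$ is the easy half: take $\mathfrak{N}^k=(\mathfrak{M}^k)_w$ as witness and use (ii). The reverse direction is the delicate one: any witness $\mathfrak{N}^k$ with $(\mathfrak{M}^k)_w\Vdash\Delta(\mathfrak{N}^k)$ satisfies $\mathsf{Th}(\mathfrak{N}^k)\subseteq\mathsf{Th}((\mathfrak{M}^k)_w)$, Patterson's Theorem \ref{pat} produces a $v$ with $(\mathfrak{M}^k)_w\thicksim(\mathfrak{N}^k)_v$, and since both of these submodels are p-irreducible (Theorem \ref{pM6}), Corollary \ref{lf3i} promotes $\thicksim$ to $\equiv$. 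Conditions (iii) and (ii) then give $H((\mathfrak{M}^k)_w)\thicksim(H(\mathfrak{N}^k))_v$, and monotonicity of variable forcing carries $H(\mathfrak{N}^k)\Vdash x_i$ down to $v$, producing the required $H(\mathfrak{M}^k)\Vdash_w x_i$.

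For uniqueness, suppose $\tau$ is another substitution with $H_\tau(\mathfrak{M}^k)\thicksim H(\mathfrak{M}^k)$ on $\mathbf{M}^k_{ir}$, so that $H_\sigma\thicksim H_\tau$ throughout $\mathbf{M}^k_{ir}$. Given an arbitrary $\mathfrak{M}^k\in\mathbf{M}^k$, Theorem \ref{Irr} provides a p-morphism $p\colon\mathfrak{M}^k\to\mathfrak{M}^k_\star$ onto its p-irreducible reduct; by Lemma \ref{sigma2}, $p$ is simultaneously a p-morphism $H_\sigma(\mathfrak{M}^k)\to H_\sigma(\mathfrak{M}^k_\star)$ and $H_\tau(\mathfrak{M}^k)\to H_\tau(\mathfrak{M}^k_\star)$, so Lemma \ref{pM0} yields $H_\sigma(\mathfrak{M}^k)\thicksim H_\sigma(\mathfrak{M}^k_\star)\thicksim H_\tau(\mathfrak{M}^k_\star)\thicksim H_\tau(\mathfrak{M}^k)$. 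Hence $H_\sigma\thicksim H_\tau$ on all of $\mathbf{M}^k$, and Lemma \ref{lfs} concludes $\sigma=_{\sf L}\tau$.

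The main obstacle I foresee is precisely the $\thicksim$-to-$\equiv$ promotion inside the verification, where condition (iii) of the present theorem, being stated only for isomorphic models, cannot be applied before the $\thicksim$-equivalence of two generated submodels has been upgraded to an actual isomorphism. This is exactly what p-irreducibility is designed to deliver (Theorem \ref{pM6} together with Corollary \ref{lf3i}), and it is the reason the theorem genuinely improves on Theorem \ref{nsigmai} while still producing a unique substitution.
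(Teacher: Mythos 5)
Your proposal is correct and follows essentially the same route as the paper: the same definition of $\sigma(x_i)$ as a disjunction of characters over $\mathbf{M}^k_{ir}$, the same use of Theorem \ref{pat} together with Theorem \ref{pM6} and Corollary \ref{lf3i} to promote $\thicksim$ to $\equiv$ so that the weakened condition (iii) applies, and the same reduction to Lemma \ref{lfs} via Theorem \ref{Irr} for uniqueness. Your explicit verification of the well-definedness of $\sigma$ and of the easy direction merely spells out steps the paper leaves implicit.
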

\begin{proof}  We proceed  in the same way as above. In particular, we define $$\sigma(x_i)=\bigvee\{\Delta(\mathfrak{N}^k)\colon  \ \mathfrak{N}^k\in\mathbf{M}^k_{ir} \ \land \ H(\mathfrak{N}^k)\Vdash x_i\},\qquad \mbox{ for $i=1,\dots,n$}$$ and prove $H(\mathfrak{M}^k)\thicksim H_\sigma(\mathfrak{M}^k)$, for any $\mathfrak{M}^k\in\mathbf{M}^k_{ir}$. The crucial step in our argument $$\exists_{\mathfrak{N}^k}\bigl((\mathfrak{M}^k)_w\Vdash\Delta(\mathfrak{N}^k) \land  H(\mathfrak{N}^k)\Vdash x_i\bigr) \  \Rightarrow \ H((\mathfrak{M}^k)_w)\Vdash x_i$$ follows from the fact that, if $(\mathfrak{M}^k)_w\thicksim(\mathfrak{N}^k)_u$, for some $u$, then $(\mathfrak{M}^k)_w$ and $(\mathfrak{N}^k)_u$ are p-irreducible by Theorem \ref{pM6} and hence $(\mathfrak{M}^k)_w\equiv(\mathfrak{N}^k)_u$ by Corollary \ref{lf3i}. Thus, by (iii), we have $H((\mathfrak{M}^k)_w)\Vdash x_i$ if $H(\mathfrak{N}^k)\Vdash x_i$.

The uniqueness of $\sigma$ follows from Lemma \ref{lfs} (and Theorem \ref{Irr}). \end{proof}
Suppose we need $H\colon\mathbf{M}^k_{ir}\to\mathbf{M}^n$ fulfilling the above (i)--(iii). Let $\mathfrak{M}^k=(W,\leq,w_0,V^k)$ be a p-irreducible $k$-model. We should have $H(\mathfrak{M}^k)=(W,\leq,w_0,V^n)$, for some $V^n$, which means only the valuations $V^n$ are to be defined. By Theorem \ref{pM6}, $(\mathfrak{M}^k)_w$ is p-irreducible, for any $w\in W$, and hence we could define $H((\mathfrak{M}^k)_w)$ inductively with respect to $d_{\mathfrak F}(w)$, where $\mathfrak F=(W,\leq,w_0)$. First, we define $H((\mathfrak{M}^k)_w)$ for $n$-models over 1-element reflexive frames, that is we define $V^n(w)$ for end elements $w\in W$; any subset of $\{x_1,\dots,x_k\}$ can be taken as $V^n(w)$.  Then, assuming $H((\mathfrak{M}^k)_u)$ has been defined for any $u>w$, and hence we have $V^n(u)$ for any $u>w$, we define $V^n(w)$. The only restriction is monotonicity, that is we should have $V^n(w)\subseteq V^k(u)$ for any $u>w$. The condition (ii) would be satisfied as we would even have $H((\mathfrak{M}^k)_w)=(H(\mathfrak{M}^k))_w$.
The condition (iii) should not produce any harm if we
define $H$ on the equivalence classes $[\mathfrak{M}^k]_\equiv$. We should get $H\colon\mathbf{M}^k_{ir}\slash\!\!\equiv \ \to \ \mathbf{M}^n\slash\!\!\thicksim$. Note that it would be much easier to satisfy this condition than to satisfy the corresponding (iii) of Theorem \ref{nsigmai} where we should get $H\colon\mathbf{M}^k\slash\!\!\thicksim \ \to \ \mathbf{M}^n\slash\!\!\thicksim$.\\

Since {\sf L} is locally tabular,
any formula $A\in\mathsf{Fm}^n$ is a disjunction of the characters of all $A$-{\it models} (that is $n$-models $\mathfrak{M}^n\in\mathbf M^n$ such that $A$ is true at $\mathfrak{M}^n$):
$$A=_{\sf L}\bigvee\{\Delta(\mathfrak{M}^n)\colon \Delta(\mathfrak{M}^n)\to A\in \mathsf{L}\}=_{\sf L}\bigvee\{\Delta(\mathfrak{M}^n)\colon \mathfrak{M}^n\Vdash A\}.\leqno (\star)$$ It also shows that each {\sf L}-consistent (that is $\not=_{\sf L}\bot$) formula   is {\sf L}-unifiable.

\begin{lemma}\label{n1i} A substitution $\sigma\colon \{x_1,\dots,x_n\}\to \mathsf{Fm}^k$ is an {\sf L}-unifier for $A\in\mathsf{Fm}^n$ iff
$\sigma(\mathfrak{M}^k)\Vdash A$ for each $k$-model $\mathfrak{M}^k$. In other words, $\sigma$ is a unifier for $A$ iff all $\sigma$-models are $A$-models. \end{lemma}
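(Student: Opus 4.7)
The proof is essentially a one-line chain of equivalences threading together the definition of unifier, Kripke completeness of $\mathsf L = \mathsf L(\mathbf F)$, and the semantic transfer property of $\sigma$-models. The plan is to reduce everything to Lemma \ref{sigma0}, which already supplies the key pointwise equivalence $\mathfrak M^k\Vdash_w\sigma(A)\Leftrightarrow\sigma(\mathfrak M^k)\Vdash_w A$.

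First I would unfold the definition: $\sigma$ is an {\sf L}-unifier for $A$ means $\sigma(A)\in\mathsf L$. Since $\mathsf L=\mathsf L(\mathbf F)$ and $A\in\mathsf{Fm}^n$ (so $\sigma(A)\in\mathsf{Fm}^k$), completeness gives
$$\sigma(A)\in\mathsf L \quad\Leftrightarrow\quad \mathfrak M^k\Vdash\sigma(A) \ \text{for every } \mathfrak M^k\in\mathbf M^k.$$
Next I would apply Lemma \ref{sigma0} at the root $w_0$ of each $\mathfrak M^k$ to rewrite the right-hand side as $\sigma(\mathfrak M^k)\Vdash A$ for every $\mathfrak M^k\in\mathbf M^k$. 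Concatenating these two equivalences yields exactly the claim, and the paraphrase ``all $\sigma$-models are $A$-models'' is then just a restatement, since by construction every $\sigma$-model is of the form $\sigma(\mathfrak M^k)$ for some $\mathfrak M^k\in\mathbf M^k$ (both being $n$-models over the same underlying frame by Lemma \ref{sigmai}(i)).

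There is no real obstacle here: no induction, no appeal to the p-irreducible or finite structural machinery, and no use of local tabularity. The only subtlety worth flagging is a notational one — the statement implicitly quantifies $\mathfrak M^k$ over $\mathbf M^k=\mathbf M^k(\mathbf F)$, i.e.\ over $k$-models built over frames of $\mathbf F$, which is precisely the class used by the completeness half of the argument, so the two universal quantifiers match on the nose.
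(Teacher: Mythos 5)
Your proof is correct and follows exactly the paper's argument: unfold $\sigma(A)\in\mathsf L$ via completeness over $\mathbf M^k$, then apply Lemma \ref{sigma0} at the root to exchange $\mathfrak M^k\Vdash\sigma(A)$ for $\sigma(\mathfrak M^k)\Vdash A$. Nothing to add.
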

\begin{proof}  $\sigma(\mathfrak{M}^k)\Vdash A$ iff $\mathfrak{M}^k\Vdash\sigma(A)$,
for each  $\mathfrak{M}^k$. Thus, $\sigma(\mathfrak{M}^k)\Vdash A$ for each  $\mathfrak{M}^k$ iff $\mathfrak{M}^k\Vdash\sigma(A)$ for each  $\mathfrak{M}^k$. But $\sigma$ is an {\sf L}-unifier for $A$ iff $\mathfrak{M}^k\Vdash\sigma(A)$ for each  $\mathfrak{M}^k$.\end{proof}
Accordingly, $\sigma$ is a unifier for $A$ iff the disjunction $\bigvee\{\Delta(\mathfrak{M}^n)\colon \mathfrak{M}^n\Vdash A\}$ in $(\star)$ contains the characters of all $\sigma$-models. To put it in other words:
\begin{corollary}\label{in2} A substitution $\sigma\colon \{x_1,\dots,x_n\}\to \mathsf{Fm}^k$ is an {\sf L}-unifier for a formula $A\in\mathsf{Fm}^n$ iff $A_\sigma\to A\in \mathsf{L}$, where
$A_\sigma=\bigvee\{\Delta(\sigma(\mathfrak{M}^k))\colon \mathfrak{M}^k\in \mathbf{M}^k\}.$
\end{corollary}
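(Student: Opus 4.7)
The plan is to derive Corollary \ref{in2} as an essentially formal reformulation of Lemma \ref{n1i}, where the semantic content ``every $\sigma$-model is an $A$-model'' is internalised in the logic via the characters $\Delta$. First I would note that $A_\sigma$ is indeed a well-defined formula: although the index set $\mathbf{M}^k$ may be infinite, local tabularity of {\sf L} guarantees that there are only finitely many characters $\Delta(\sigma(\mathfrak{M}^k))$ up to $=_{\sf L}$ (they all live in $\mathsf{Fm}^n$), so the disjunction collapses to a finite one.

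For the direction $(\Rightarrow)$, suppose $\sigma$ is an {\sf L}-unifier for $A$. Fix an arbitrary $n$-model $\mathfrak{N}^n\in\mathbf{M}^n$ and a world $w$ at which $\mathfrak{N}^n\Vdash_w A_\sigma$. By definition of $A_\sigma$, there exists $\mathfrak{M}^k\in\mathbf{M}^k$ with $\mathfrak{N}^n\Vdash_w\Delta(\sigma(\mathfrak{M}^k))$, which by Theorem \ref{lfi2} means $\mathsf{Th}(\sigma(\mathfrak{M}^k))\subseteq\mathsf{Th}((\mathfrak{N}^n)_w)$. Lemma \ref{n1i} applied to $\sigma$ and $A$ gives $\sigma(\mathfrak{M}^k)\Vdash A$, hence $A\in\mathsf{Th}(\sigma(\mathfrak{M}^k))\subseteq\mathsf{Th}((\mathfrak{N}^n)_w)$, i.e.\ $\mathfrak{N}^n\Vdash_w A$. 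Since $\mathfrak{N}^n$ and $w$ were arbitrary, the implication $A_\sigma\to A$ is valid in every $n$-model over $\mathbf{F}$, so $A_\sigma\to A\in\mathsf{L}$.

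For the direction $(\Leftarrow)$, suppose $A_\sigma\to A\in\mathsf{L}$. Pick any $\mathfrak{M}^k\in\mathbf{M}^k$. By Lemma \ref{sigmai}(i), $\sigma(\mathfrak{M}^k)$ is an $n$-model over the same frame as $\mathfrak{M}^k$, hence $\sigma(\mathfrak{M}^k)\in\mathbf{M}^n$. Trivially $\mathsf{Th}(\sigma(\mathfrak{M}^k))\subseteq\mathsf{Th}(\sigma(\mathfrak{M}^k))$, so by Theorem \ref{lfi2} we have $\sigma(\mathfrak{M}^k)\Vdash\Delta(\sigma(\mathfrak{M}^k))$, and therefore $\sigma(\mathfrak{M}^k)\Vdash A_\sigma$. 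The assumption $A_\sigma\to A\in\mathsf{L}$ then yields $\sigma(\mathfrak{M}^k)\Vdash A$. Since $\mathfrak{M}^k$ was arbitrary, every $\sigma$-model is an $A$-model, and Lemma \ref{n1i} concludes that $\sigma$ is an {\sf L}-unifier for $A$.

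There is no substantive obstacle here; the only point that requires a sentence of care is the well-definedness of $A_\sigma$, which rests on local tabularity. Everything else is mechanical use of the defining property of $\Delta$ from Theorem \ref{lfi2} together with Lemma \ref{n1i}.
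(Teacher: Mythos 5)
Your proof is correct and follows essentially the same route the paper intends: the corollary is an immediate unwinding of Lemma \ref{n1i} together with the defining property of the character $\Delta$ from Theorem \ref{lfi2} (the paper states it without proof, right after observing via $(\star)$ that $\sigma$ is a unifier iff the disjunction defining $A$ absorbs the characters of all $\sigma$-models). Your remark on the well-definedness of $A_\sigma$ via local tabularity is the right point of care, and both implications are carried out correctly.
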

Thus, each substitution $\sigma$ is an {\sf L}-unifier of some formulas $A\in\mathsf{Fm}^n$ and among these formulas we can find the strongest (or  the smallest in the Lindenbum algebra) formula $A_\sigma$. Using Lemma \ref{n1i} and Lemma \ref{pat} we can now characterize all unifiers of $A_\sigma$.
\begin{corollary}\label{n3i} Let $\sigma\colon \{x_1,\dots,x_n\}\to \mathsf{Fm}^k$. A substitution $\tau\colon \{x_1,\dots,x_n\}\to \mathsf{Fm}^m$ is an {\sf L}-unifier for $A_\sigma$ iff all $\tau$-models are (equivalent to)  $\sigma$-models, that is iff\\
(iv) for every $\mathfrak{M}^m\in\mathbf{M}^m$ there is an $\mathfrak{M}^k\in\mathbf{M}^k$ such that $\tau(\mathfrak{M}^m) \thicksim \sigma(\mathfrak{M}^k)$.
\end{corollary}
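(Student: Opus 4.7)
The plan is to chain together the three tools already developed in this section: Lemma \ref{n1i} (which rephrases unification in terms of all $\tau$-models being $A$-models), Theorem \ref{lfi2} (the character $\Delta$ captures the theory of a finite model), and Theorem \ref{pat} (an equivalent model exists as a generated submodel whenever theory containment holds). The closure of $\mathbf{F}$ under generated subframes (i.e.\ $\mathit{sm}(\mathbf F)=\mathbf F$) will be used at the very end to reabsorb generated submodels back into $\mathbf{M}^k$.

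First I would apply Lemma \ref{n1i} to rewrite: $\tau$ is an {\sf L}-unifier of $A_\sigma$ iff $\tau(\mathfrak{M}^m)\Vdash A_\sigma$ for every $\mathfrak{M}^m\in\mathbf{M}^m$. Expanding the definition $A_\sigma=\bigvee\{\Delta(\sigma(\mathfrak{M}^k))\colon \mathfrak{M}^k\in\mathbf{M}^k\}$ and using the forcing clause for disjunction at the root, this is equivalent to: for every $\mathfrak{M}^m\in\mathbf{M}^m$ there exists $\mathfrak{M}^k\in\mathbf{M}^k$ with $\tau(\mathfrak{M}^m)\Vdash \Delta(\sigma(\mathfrak{M}^k))$. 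One small point worth mentioning is that $A_\sigma$ is a (formally) infinite disjunction, but since {\sf L} is locally tabular, Theorem \ref{lf6} guarantees only finitely many non-equivalent disjuncts $\Delta(\sigma(\mathfrak{M}^k))$ occur, so $A_\sigma$ is a bona fide formula and the disjunction clause applies.

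Next, Theorem \ref{lfi2} turns $\tau(\mathfrak{M}^m)\Vdash \Delta(\sigma(\mathfrak{M}^k))$ into the theory containment $\mathsf{Th}(\sigma(\mathfrak{M}^k))\subseteq \mathsf{Th}(\tau(\mathfrak{M}^m))$. Applying Theorem \ref{pat} (and the fact that $\{\mathsf{Th}((\sigma(\mathfrak{M}^k))_w)\}_w$ is finite because $\sigma(\mathfrak{M}^k)$ has the same underlying finite frame as $\mathfrak{M}^k$, by Lemma \ref{sigmai}(i)), this containment is equivalent to $\tau(\mathfrak{M}^m)\thicksim (\sigma(\mathfrak{M}^k))_w$ for some $w$ in the frame of $\sigma(\mathfrak{M}^k)$.

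The final step uses Lemma \ref{sigmai}(ii): $(\sigma(\mathfrak{M}^k))_w=\sigma((\mathfrak{M}^k)_w)$. Because $\mathbf F$ is closed under generated subframes, $(\mathfrak{M}^k)_w\in\mathbf{M}^k$, so renaming this generated submodel as the $\mathfrak{M}^k$ witness yields exactly condition (iv). I expect no real obstacle here; the only point to be careful about is justifying the root-disjunction step for the (a priori infinite) big disjunction $A_\sigma$, which is handled by local tabularity as noted above, and making sure to invoke closure under generated subframes so that the submodel witness lives in $\mathbf{M}^k$.
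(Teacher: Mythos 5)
Your proof is correct and follows exactly the route the paper intends: the corollary is stated there as an immediate consequence of Lemma \ref{n1i} and Theorem \ref{pat} (via the character $\Delta$ and closure of $\mathbf F$ under generated subframes), which is precisely the chain you spell out. The two points you flag — finiteness of the disjunction defining $A_\sigma$ via local tabularity, and reabsorbing the generated submodel into $\mathbf M^k$ — are the right ones to check, and both go through.
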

The above condition (iv) can be also written down as the inclusion
 $$\tau(\mathbf{M}^m)\slash\!\!\thicksim \quad \subseteq \quad \sigma(\mathbf{M}^k)\slash\!\!\thicksim.$$
There remains to characterize the relation $\preccurlyeq$ (of being a more general substitution) in terms of $\sigma$-models.
\begin{lemma}\label{n4i} A substitution $\tau\colon \{x_1,\dots,x_n\}\to \mathsf{Fm}^m$ is more general than $\sigma\colon \{x_1,\dots,x_n\}\to \mathsf{Fm}^k$ (that is $\tau\preccurlyeq\sigma$) iff there is  $F\colon\mathbf{M}^k\to\mathbf{M}^m$, fulfilling (i)--(iii) of Theorem \ref{nsigmai}, such that\\
(v) \ $\tau(F(\mathfrak{M}^k))\thicksim\sigma(\mathfrak{M}^k)$, for every  $\mathfrak{M}^k\in \mathbf{M}^k$.\\
Consequently, if $\tau\preccurlyeq\sigma$, then all $\sigma$-models are (equivalent to) $\tau$-models and hence
$$\tau(\mathbf{M}^m)\slash\!\!\thicksim \quad \supseteq \quad \sigma(\mathbf{M}^k)\slash\!\!\thicksim.$$
\end{lemma}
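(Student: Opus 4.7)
The plan is to prove the biconditional by passing between the algebraic notion $\tau\preccurlyeq\sigma$ and the semantic condition (v) using the correspondence between substitutions and model transformations established in Lemma \ref{lfs}, Lemma \ref{sigmai}, and Theorem \ref{nsigmai}. The key observation is that composition of substitutions corresponds (up to $\thicksim$) to composition of the associated model transformations $H$, so the statement really is just an unpacking of what the preceding machinery already provides.

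For the forward direction I would assume $\tau\preccurlyeq\sigma$, i.e. pick $\alpha\colon\{x_1,\dots,x_m\}\to\mathsf{Fm}^k$ with $\alpha\circ\tau=_{\sf L}\sigma$, and take $F=H_\alpha$. Lemma \ref{sigmai} gives (i)--(iii) of Theorem \ref{nsigmai} for $F$. For (v), iterated application of Lemma \ref{sigma0} yields, for every $A\in\mathsf{Fm}^n$ and $\mathfrak{M}^k\in\mathbf M^k$,
$$\tau(F(\mathfrak{M}^k))\Vdash A\ \Leftrightarrow\ F(\mathfrak{M}^k)\Vdash\tau(A)\ \Leftrightarrow\ \mathfrak{M}^k\Vdash\alpha(\tau(A)),$$
while $\sigma(\mathfrak{M}^k)\Vdash A\Leftrightarrow\mathfrak{M}^k\Vdash\sigma(A)$. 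Since $\alpha\circ\tau=_{\sf L}\sigma$, the two right-hand sides agree, proving $\tau(F(\mathfrak{M}^k))\thicksim\sigma(\mathfrak{M}^k)$.

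For the reverse direction, assume $F$ satisfying (i)--(iii) of Theorem \ref{nsigmai} and (v) is given. Theorem \ref{nsigmai} hands me a substitution $\alpha\colon\{x_1,\dots,x_m\}\to\mathsf{Fm}^k$ with $F(\mathfrak M^k)\thicksim H_\alpha(\mathfrak M^k)$ for every $\mathfrak M^k$. By Lemma \ref{sigmai}(iii) this propagates: $\tau(F(\mathfrak M^k))\thicksim\tau(H_\alpha(\mathfrak M^k))$. Applying Lemma \ref{sigma0} twice shows that $H_\tau\circ H_\alpha$ and $H_{\alpha\circ\tau}$ satisfy the same formulas at each point, so $\tau(H_\alpha(\mathfrak M^k))\thicksim H_{\alpha\circ\tau}(\mathfrak M^k)$. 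Combining with (v) gives $H_\sigma(\mathfrak M^k)\thicksim H_{\alpha\circ\tau}(\mathfrak M^k)$ for every $\mathfrak M^k\in\mathbf M^k$, and Lemma \ref{lfs} then yields $\sigma=_{\sf L}\alpha\circ\tau$, i.e.\ $\tau\preccurlyeq\sigma$. The consequence in the statement is immediate: (v) says that each $\sigma$-model $\sigma(\mathfrak M^k)$ is $\thicksim$-equivalent to the $\tau$-model $\tau(F(\mathfrak M^k))$, whence $\sigma(\mathbf M^k)/\!\!\thicksim\ \subseteq\ \tau(\mathbf M^m)/\!\!\thicksim$.

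The only real check is the identity $H_\tau\circ H_\alpha\thicksim H_{\alpha\circ\tau}$, which uses nothing more than Lemma \ref{sigma0}, and the fact that Theorem \ref{nsigmai} indeed produces a single substitution $\alpha$ working uniformly for all $\mathfrak M^k$ (not one per model); both are immediate from the earlier setup, so I do not expect any genuine obstacle beyond careful bookkeeping of which composition of substitutions and which composition of $H$-operators are being invoked.
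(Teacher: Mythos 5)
Your proposal is correct and follows essentially the same route as the paper: take $F=H_\alpha$ for a witness $\alpha$ of $\alpha\circ\tau=_{\sf L}\sigma$ in one direction, and recover $\alpha$ from $F$ via Theorem \ref{nsigmai} (then apply Lemma \ref{lfs}) in the other, with the computation reducing in both cases to the identity $H_\tau\circ H_\alpha\thicksim H_{\alpha\circ\tau}$ obtained from Lemma \ref{sigma0}.
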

\begin{proof} Suppose that $\nu\circ\tau=_{\sf L}\sigma$ for some  $\nu\colon \{x_1,\dots,x_m\}\to \mathsf{Fm}^k$ and  $F=H_\nu$. Then,
 $\tau(F(\mathfrak{M}^k))=\tau(\nu(\mathfrak{M}^k))$ and, for each $B\in \mathsf{Fm}^n$,\  $\tau(\nu(\mathfrak{M}^k))\Vdash  B \ \Leftrightarrow \ \nu(\mathfrak{M}^k)\Vdash  \tau(B) \ \Leftrightarrow \ \mathfrak{M}^k\Vdash \nu(\tau(B)) \ \Leftrightarrow \ \mathfrak{M}^k\Vdash \sigma(B) \ \Leftrightarrow \ \sigma(\mathfrak{M}^k)\Vdash B.$ Hence we get (v).

The reverse implication is shown in the same way. We get $\nu\colon \{x_1,\dots,x_m\}\to \mathsf{Fm}^k$ such that $F(\mathfrak{M}^k)=\nu(\mathfrak{M}^k)$, for each $\mathfrak{M}^k$, by Lemma \ref{n1i}. Then, using (v), we show $\mathfrak{M}^k\Vdash \nu(\tau(B))\ \Leftrightarrow \ \mathfrak{M}^k\Vdash \sigma(B)$ for each $B\in \mathsf{Fm}^n$ and this yields $\nu\circ\tau=_{\sf L}\sigma$.\end{proof}

\subsection{Locally Tabular Logics with Finitary Unification.}\label{FU}
Now, we give a short proof of the main result of \cite{dkw} characterizing locally tabular intermediate logics with finitary (or unitary) unification.

 \begin{theorem}\label{main} A locally tabular intermediate logic  {\sf L}  has finitary (or unitary)  unification if and only if for every $n\geq 0$ there exists a number $m\geq 0$ such that for every $k\geq 0$ and every  $\sigma\colon \{x_1,\dots,x_n\}\to \mathsf{Fm}^k$  there are   $G:\mathbf{M}^m\to\mathbf{M}^n$ and $F:\mathbf{M}^k\to\mathbf{M}^m$ such that \\
(i) $G$ preserves the frame  of any $m$-model $\mathfrak{M}^m=(W,R,w_0,V^m)\in\mathbf{M}^m$;\\
\indent	 $F$ preserves the frame  of any $k$-model $\mathfrak{M}^k=(W',R',w'_0,V^k)\in\mathbf{M}^k$;\\
	 (ii) $G((\mathfrak{M}^m)_w)\thicksim(G(\mathfrak{M}^m))_w$,   for any  $\mathfrak{M}^m=(W,R,w_0,V^m)\in\mathbf{M}^m$ and  $w\in W$;\\
\indent $F((\mathfrak{M}^k)_{w'})\thicksim(F(\mathfrak{M}^k))_{w'}$, for any $\mathfrak{M}^k=(W',R',w_0',V^k)\in\mathbf{M}^k$ and  $w'\in W'$;\\
(iii) $\mathfrak{N}^m\thicksim \mathfrak{M}^m \ \Rightarrow \ G(\mathfrak{N}^m)\thicksim G(\mathfrak{M}^m), \ \mbox{ for any } \ \mathfrak{M}^m,\mathfrak{N}^m\in \mathbf{M}^m$;\\
\indent \indent	 $\mathfrak{N}^k\thicksim \mathfrak{M}^k \ \Rightarrow \ F(\mathfrak{N}^k)\thicksim F(\mathfrak{M}^k), \ \mbox{ for any } \ \mathfrak{M}^k,\mathfrak{N}^k\in \mathbf{M}^k$;\\
(iv) for every $\mathfrak{M}^m\in\mathbf{M}^m$ there is  $\mathfrak{M}^k\in\mathbf{M}^k$ such that $G(\mathfrak{M}^m) \thicksim \sigma(\mathfrak{M}^k)$;\\
(v) $G(F(\mathfrak{M}^k))\thicksim\sigma(\mathfrak{M}^k)$, for every  $\mathfrak{M}^k\in \mathbf{M}^k$; see Figure \ref{v} below.
\end{theorem}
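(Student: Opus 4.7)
The plan is to prove both directions by translating the model-level maps $G,F$ into substitutions via Theorems \ref{nsigmai} and \ref{nsi}, and by reading conditions (iv), (v) through Corollary \ref{n3i} and Lemma \ref{n4i}. Writing $\gamma\colon\{x_1,\dots,x_n\}\to\mathsf{Fm}^m$ for the substitution corresponding to $G$, condition (v) will amount by Lemma \ref{n4i} to $\gamma\preccurlyeq_{\sf L}\sigma$, while condition (iv) will, by Corollary \ref{n3i}, express that $\gamma$ is an $\mathsf L$-unifier of the formula
$A_\sigma=\bigvee\{\Delta(\sigma(\mathfrak{M}^k))\colon\mathfrak{M}^k\in\mathbf{M}^k\}\in\mathsf{Fm}^n.$

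For the implication from the conditions to finitary unification, I would fix an $\mathsf L$-unifiable $A\in\mathsf{Fm}^n$ and the $m$ furnished by the theorem. Local tabularity makes the collection of $=_{\sf L}$-classes of substitutions $\{x_1,\dots,x_n\}\to\mathsf{Fm}^m$ finite, so the subfamily $\{\gamma_1,\dots,\gamma_p\}$ of those that unify $A$ is also finite. For any $\mathsf L$-unifier $\sigma$ of $A$, applying the conditions would supply $\gamma$ with $\gamma\preccurlyeq\sigma$ and with $\gamma$ a unifier of $A_\sigma$; since $\sigma$ unifies $A$, Corollary \ref{in2} yields $A_\sigma\vdash_{\sf L} A$, so $\gamma$ too unifies $A$ and must coincide with some $\gamma_i$. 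This exhibits $\{\gamma_1,\dots,\gamma_p\}$ as a finite complete set of $\mathsf L$-unifiers for $A$.

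For the converse, I would fix $n$ and use local tabularity to note that $\mathsf{Fm}^n/\!\!=_{\sf L}$ is finite. For each $\mathsf L$-unifiable class in $\mathsf{Fm}^n$, pick a finite complete set of $\mathsf L$-unifiers (furnished by the finitary hypothesis), and let $m$ be the maximum number of variables used by any unifier in any of these sets. Given $k$ and $\sigma\colon\{x_1,\dots,x_n\}\to\mathsf{Fm}^k$, the formula $A_\sigma$ lives in $\mathsf{Fm}^n$ and is unified by $\sigma$, so its chosen complete set contains some $\gamma$ with $\gamma\preccurlyeq\sigma$; by choice of $m$ we may regard $\gamma$ as a substitution into $\mathsf{Fm}^m$. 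Taking $G=H_\gamma$ and $F=H_\phi$ for a witness $\phi$ of $\gamma\preccurlyeq\sigma$, Lemma \ref{sigmai} supplies (i)--(iii), condition (iv) follows from $\gamma$ being a unifier of $A_\sigma$ via Corollary \ref{n3i}, and (v) follows from $\gamma\preccurlyeq\sigma$ via Lemma \ref{n4i}.

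The main obstacle is obtaining a bound $m$ in the converse direction that depends only on $n$, even though $\sigma$ ranges over substitutions into arbitrarily many $k$-variable formulas. The key observation that unlocks this is that $A_\sigma$ itself always lives in the fragment $\mathsf{Fm}^n$; thus the unification problems we actually have to solve are drawn from the finite set $\mathsf{Fm}^n/\!\!=_{\sf L}$, and the finite complete sets of unifiers for those yield a single uniform upper bound $m$.
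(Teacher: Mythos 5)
Your proposal is correct and follows essentially the same route as the paper's proof: both directions reduce to the observation that $A_\sigma\in\mathsf{Fm}^n$, so local tabularity gives a uniform bound $m$ from the finitely many unifiable classes in $\mathsf{Fm}^n$, and the conditions (i)--(v) are translated to and from substitutions via Theorem \ref{nsigmai}, Corollary \ref{n3i} and Lemma \ref{n4i}. Your write-up is in fact somewhat more explicit than the paper's about how (iv) and (v) decode as ``$\gamma$ unifies $A_\sigma$'' and ``$\gamma\preccurlyeq\sigma$''.
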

\begin{figure}[H]
\unitlength1cm
\begin{picture}(5,2)
\thicklines

\put(8,2){\vector(0,-1){1.9}}

\put(8,2){\vector(-1,-1){1.9}}
\put(8,0){\vector(-1,0){1.9}}
\put(8,0){\circle{0.1}}
\put(6,0){\circle{0.1}}
\put(8,2){\circle{0.1}}
\put(8.3,2){\mbox{$\mathbf{M}^k\!\slash\!\!\thicksim$}}
\put(5,0){\mbox{$\mathbf{M}^n\!\slash\!\!\thicksim$}}
\put(8.3,0){\mbox{$\mathbf{M}^m\!\slash\!\!\thicksim$}}
\put(8.1,1){\mbox{$F$}}
\put(7.1,0.1){\mbox{$G$}}
\put(6.7,1.2){\mbox{$H_\sigma$}}

\end{picture}
\caption{Condition (v)}\label{v}
\end{figure}
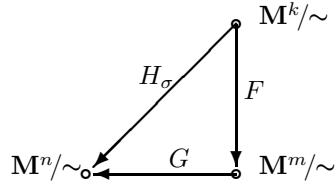
\begin{proof}
Since {\sf L} is locally tabular, $\mathsf{Fm}^n$ is finite (up to $=_{\sf L}$), for each $n$, and each $\sigma\colon \{x_1,\dots,x_n\}\to \mathsf{Fm}^k$ is a unifier of $A_\sigma\in\mathsf{Fm}^n$, see Corollary \ref{in2}. Thus, if {\sf L} has finitary (or unitary) unification, one can find, for every $n\geq 0$, a number $m$ such that each {\sf L}-unifiable formula $A\in\mathsf{Fm}^n$ (and $A_\sigma$ in particular) has a complete set of unifiers among $\tau\colon \{x_1,\dots,x_n\}\to \mathsf{Fm}^m$. Suppose $\tau\colon \{x_1,\dots,x_n\}\to \mathsf{Fm}^m$ is a unifier for $A_\sigma$ and $\tau\preccurlyeq\sigma$.
Take $G=H_\tau$ and let $F$ be the mapping  determined by Lemma \ref{n4i}. Then the conditions (i)--(v) are obviously fulfilled.

On the other hand, using the conditions (i)--(v) one shows that the set of  {\sf L}-unifiers $\tau\colon \{x_1,\dots,x_n\}\to\mathsf{Fm}^m$, for any unifiable $A\in\mathsf{Fm}^n$,   is complete. Since  $\mathsf L$ is locally tabular,  there is only finitely many (up to $=_{\sf L}$) such unifiers $\tau$ and hence unification is finitary (unless unitary).
\end{proof}
The above theorem do not need the assumption that {\bf F} is closed under p-morphic images; it suffices only the closure under generated subframes. But {\bf F}={\it sm}({\bf F}) is needed for the following version of \ref{main} where we refer to Theorem \ref{nsi} (instead of \ref{nsigmai}). For this version of \ref{main} we must, however, modify, the condition (v) as we cannot assume that $F(\mathfrak{M}^k)$ is p-irreducible even if $\mathfrak{M}^k$ is p-irreducible.
\begin{theorem}\label{main2} A locally tabular intermediate logic  {\sf L}  has finitary (or unitary)  unification if and only if for every $n\geq 0$ there exists a number $m\geq 0$ such that for every $k\geq 0$ and every  $\sigma\colon \{x_1,\dots,x_n\}\to \mathsf{Fm}^k$  there are   $G:\mathbf{M}^m_{ir}\to\mathbf{M}^n$ and $F:\mathbf{M}^k_{ir}\to\mathbf{M}^m$ such that \\
(i) $G$ preserves the frame  of any $m$-model $\mathfrak{M}^m=(W,R,w_0,V^m)\in\mathbf{M}^m_{ir}$;\\
\indent	 $F$ preserves the frame  of any $k$-model $\mathfrak{M}^k=(W',R',w'_0,V^k)\in\mathbf{M}^k_{ir}$;\\
	 (ii) $G((\mathfrak{M}^m)_w)\thicksim(G(\mathfrak{M}^m))_w$,   for any  $\mathfrak{M}^m=(W,R,w_0,V^m)\in\mathbf{M}^m_{ir}$ and  $w\in W$;\\
\indent $F((\mathfrak{M}^k)_{w'})\thicksim(F(\mathfrak{M}^k))_{w'}$, for any $\mathfrak{M}^k=(W',R',w_0',V^k)\in\mathbf{M}^k_{ir}$ and  $w'\in W'$;\\
(iii) $\mathfrak{N}^m\equiv \mathfrak{M}^m \ \Rightarrow \ G(\mathfrak{N}^m)\thicksim G(\mathfrak{M}^m), \ \mbox{ for any } \ \mathfrak{M}^m,\mathfrak{N}^m\in \mathbf{M}^m_{ir}$;\\
\indent \indent	 $\mathfrak{N}^k\equiv \mathfrak{M}^k \ \Rightarrow \ F(\mathfrak{N}^k)\thicksim F(\mathfrak{M}^k), \ \mbox{ for any } \ \mathfrak{M}^k,\mathfrak{N}^k\in \mathbf{M}^k_{ir}$;\\
(iv) for every $\mathfrak{M}^m\in\mathbf{M}^m_{ir}$ there is  $\mathfrak{M}^k\in\mathbf{M}^k_{ir}$ such that $G(\mathfrak{M}^m) \thicksim \sigma(\mathfrak{M}^k)$;\\
(v) if $\mathfrak{N}^m\thicksim F(\mathfrak{M}^k)$, then $G(\mathfrak{N}^m)\thicksim\sigma(\mathfrak{M}^k)$, for every  $\mathfrak{M}^k\in \mathbf{M}^k_{ir}$ and $\mathfrak{N}^m\in \mathbf{M}^m_{ir}$.
\end{theorem}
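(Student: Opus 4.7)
My plan is to mirror the proof of Theorem \ref{main}, but to work with the p-irreducible subclass $\mathbf{M}^m_{ir}$ throughout, invoking Theorem \ref{nsi} in place of Theorem \ref{nsigmai}. The only nontrivial twist is the weakening of condition (v): since $F(\mathfrak{M}^k)$ need not be p-irreducible when $\mathfrak{M}^k$ is, one cannot ask $G(F(\mathfrak{M}^k))\thicksim \sigma(\mathfrak{M}^k)$ directly; instead we demand that $G(\mathfrak{N}^m)\thicksim \sigma(\mathfrak{M}^k)$ hold at every p-irreducible $\mathfrak{N}^m$ that is $\thicksim$-equivalent to $F(\mathfrak{M}^k)$ (in particular at its p-irreducible reduct, which exists by Theorem \ref{Irr}).

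For the forward direction I would assume finitary or unitary unification and pick, for a given $n$, the bound $m$ so that every unifiable $A\in\mathsf{Fm}^n$ admits a complete set of unifiers in $m$ variables. Given $\sigma\colon\{x_1,\dots,x_n\}\to\mathsf{Fm}^k$, Corollary \ref{in2} shows $\sigma$ unifies $A_\sigma$; pick a complete unifier $\tau\colon\{x_1,\dots,x_n\}\to\mathsf{Fm}^m$ of $A_\sigma$ with $\tau\preccurlyeq \sigma$, witnessed by some $\nu$ with $\nu\circ\tau=_{\sf L}\sigma$. Set $G=H_\tau\!\upharpoonright_{\mathbf{M}^m_{ir}}$ and $F=H_\nu\!\upharpoonright_{\mathbf{M}^k_{ir}}$. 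Properties (i)--(iii) are Lemma \ref{sigmai}; (iv) follows from Corollary \ref{n3i}, since $\tau$ unifies $A_\sigma$; and for (v), if $\mathfrak{N}^m\thicksim F(\mathfrak{M}^k)=H_\nu(\mathfrak{M}^k)$ then by Lemma \ref{sigmai}(iii) and Lemma \ref{lfs} we get $G(\mathfrak{N}^m)=H_\tau(\mathfrak{N}^m)\thicksim H_\tau(H_\nu(\mathfrak{M}^k))\thicksim H_\sigma(\mathfrak{M}^k)=\sigma(\mathfrak{M}^k)$.

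For the backward direction I would apply Theorem \ref{nsi} to $G$, noting that on $\mathbf{M}^m_{ir}$ the relation $\equiv$ coincides with $\thicksim$ by Corollary \ref{lf3i} together with Theorem \ref{pM6}, so conditions (i)--(iii) of \ref{main2} match the hypotheses of \ref{nsi}. This yields a (unique up to $=_{\sf L}$) substitution $\tau$ with $H_\tau(\mathfrak{M}^m)\thicksim G(\mathfrak{M}^m)$ for every $\mathfrak{M}^m\in\mathbf{M}^m_{ir}$. To verify that $\tau$ unifies $A_\sigma$, I would reduce an arbitrary $\mathfrak{M}^m\in\mathbf{M}^m$ to its p-irreducible quotient $\widetilde{\mathfrak{M}}^m$ by Theorem \ref{Irr}, chain $\tau(\mathfrak{M}^m)\thicksim \tau(\widetilde{\mathfrak{M}}^m)\thicksim G(\widetilde{\mathfrak{M}}^m)\thicksim \sigma(\mathfrak{M}^k)$ using (iv), and conclude via Corollary \ref{in2} that $A_\tau\vdash_{\sf L}A_\sigma$. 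To obtain $\tau\preccurlyeq \sigma$ via Lemma \ref{n4i}, I would extend $F$ to $F'\colon\mathbf{M}^k\to\mathbf{M}^m$ by $F'(\mathfrak{M}^k)=F(\widetilde{\mathfrak{M}}^k)$; its Lemma \ref{n4i}(v) condition is checked by taking the p-irreducible reduct $\widetilde{\mathfrak{N}}^m$ of $F(\widetilde{\mathfrak{M}}^k)$ and chaining $\tau(F'(\mathfrak{M}^k))\thicksim \tau(\widetilde{\mathfrak{N}}^m)\thicksim G(\widetilde{\mathfrak{N}}^m)\thicksim \sigma(\widetilde{\mathfrak{M}}^k)\thicksim \sigma(\mathfrak{M}^k)$, where the third $\thicksim$ is precisely hypothesis (v) of Theorem \ref{main2}. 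Finiteness of the resulting unifier set (hence finitary, or unitary, unification for $A_\sigma$, and thus for {\sf L}) follows from local tabularity, which forces $\mathsf{Fm}^m/\!\!=_{\sf L}$ to be finite, and from the observation that every {\sf L}-unifiable formula is equivalent to some $A_\sigma$.

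The hardest step will be the backward chain above: the asymmetry introduced by the revised (v) forces a detour through the p-irreducible reduct $\widetilde{\mathfrak{N}}^m$ of $F(\widetilde{\mathfrak{M}}^k)$ rather than $F(\widetilde{\mathfrak{M}}^k)$ itself, and reconciling that indirection with the $\thicksim$-preservation of $\tau$ and $\sigma$ (via Lemma \ref{sigmai}(iii)) is where the argument has to be done with care; once this reconciliation is in place, the rest is the same bookkeeping as in Theorem \ref{main}.
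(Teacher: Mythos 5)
Your proposal is correct and follows exactly the route the paper intends: Theorem \ref{main2} is presented there as the variant of Theorem \ref{main} obtained by substituting Theorem \ref{nsi} for Theorem \ref{nsigmai} and weakening condition (v) to account for $F(\mathfrak{M}^k)$ not being p-irreducible, and the paper supplies no separate proof beyond that remark, so your write-up is precisely the intended argument. The one cosmetic point is that your $F'(\mathfrak{M}^k)=F(\widetilde{\mathfrak{M}}^k)$ does not literally satisfy the frame-preservation clause that Lemma \ref{n4i} inherits from Theorem \ref{nsigmai}(i); the clean fix is to apply Theorem \ref{nsi} to $F$ itself to obtain a substitution $\nu$ and use $H_\nu$ (which is $\thicksim$-equal to your $F'$ on all of $\mathbf{M}^k$) as the extension, after which your chain of equivalences goes through verbatim.
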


In  \cite{dkw}  the above theorems is applied to determine the unification types of locally tabular logics. We use them, as well, in the present paper.

 Let us recall that $\mathbf H_{un}$ (see Figure \ref{hpa}) consists of all frames $\mathfrak L_d+\mathfrak R_s$ , where $s,d\geq 0$.\footnote{There is no $\mathfrak L_0$ (or it would be the empty frame), so we should have here: all logics determined by  $\mathfrak R_s$ and $\mathfrak L_d+\mathfrak R_s$, where $s\geq 0$ and $d\geq 1$, if we  agree that $\mathfrak R_0=\mathfrak F_0=\mathfrak L_1$.} The logic {\sf L}($\mathbf H_{un}$) is locally tabular as it extends {\sf PWL}, see \cite{Esakia}.
\begin{theorem}\label{wpl} For any  $\mathbf F\subseteq \mathbf H_{un}$, the logic   {\sf L}($\mathbf F$)   has  unitary unification.\end{theorem}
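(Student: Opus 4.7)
My plan would be to combine the fact that every frame in $\mathbf H_{un}$ has a top element with the characterization theorem for locally tabular logics with finitary/unitary unification, and then perform a concrete construction of a universal unifier.

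First, since every frame in $\mathbf F \subseteq \mathbf H_{un}$ has the form $\mathfrak L_d+\mathfrak R_s$ and $\mathfrak R_s$ carries a unique $\leq$-maximal element for every $s\geq 0$, each frame in $\mathbf F$ has a top. By Theorem \ref{Jankov} this gives $\chi(\mathfrak F_2)\in\mathsf L(\mathbf F)$, so $\mathsf{KC}\subseteq\mathsf L(\mathbf F)$. Then Theorem \ref{fil} tells us unification is filtering, and Theorem \ref{kc} pins the unification type to either unitary or nullary. Moreover $\mathsf L(\mathbf H_{un})$ is locally tabular (it extends $\mathsf{PWL}$, which has enough bounded branching plus depth to be locally tabular), and since $\mathsf L(\mathbf F)\supseteq\mathsf L(\mathbf H_{un})$, Corollary \ref{fp}(ii) makes $\mathsf L(\mathbf F)$ locally tabular as well, so Theorem \ref{main} (equivalently Theorem \ref{main2}) is applicable.

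To exclude nullary, I would apply Theorem \ref{main2}. I intend to take $m=n$ for every $n\ge 0$. Given any $\sigma\colon\{x_1,\dots,x_n\}\to\mathsf{Fm}^k$, the task is to construct $G\colon\mathbf M^n_{ir}\to\mathbf M^n$ and $F\colon\mathbf M^k_{ir}\to\mathbf M^n$ satisfying (i)--(v). For $G$, I would proceed by induction on the depth of a p-irreducible $n$-model $\mathfrak N^n$ over a frame $\mathfrak L_d+\mathfrak R_s\in\mathbf F$: at each point $w$ copy the valuation of a chosen $\sigma$-model whose restriction to $(W)_w$ is $\thicksim$-equivalent to the already-constructed $(G(\mathfrak N^n))_{w'}$ for $w'>w$. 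The existence of such a chosen $\sigma$-model is what forces (iv); uniqueness of the top in a rhombus-on-chain guarantees we can match valuations at middle points against a single top valuation from the $\sigma$-side, which is where the filtering property of $\mathsf{KC}\subseteq\mathsf L(\mathbf F)$ is used essentially. The map $F$ is defined by sending a p-irreducible $k$-model $\mathfrak M^k$ to an $n$-model over the same frame whose valuation at each $w$ records which of the (finitely many, up to $\thicksim$) images in $G$'s range corresponds to $\sigma((\mathfrak M^k)_w)$.

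The main obstacle will be verifying condition (v): that $G(F(\mathfrak M^k))\thicksim\sigma(\mathfrak M^k)$ for every $\mathfrak M^k$. This requires that the coherent choices made in the inductive definition of $G$ at all p-irreducible $n$-models over every $\mathfrak L_d+\mathfrak R_s\in\mathbf F$ glue into an actual substitution (via Theorem \ref{nsi}) and that the value of $\sigma$ at a particular $\mathfrak M^k$ is recovered by first passing through $F$. The rhombus structure is essential here: the $s$ incomparable middle points with a common top give exactly the room needed to encode the finitely many valuation patterns that any $\sigma$ can produce on the submodels below the top, while the chain below the rhombus carries over verbatim from the $\sigma$-side by monotonicity. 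Condition (iii) is handled because the inductive construction depends only on the isomorphism class of the p-irreducible input model, and (ii) holds by the way $G$ is defined pointwise on generated submodels. Once (i)--(v) are verified the theorem gives a finite complete set of unifiers, and since filtering unification can only be unitary or nullary, we conclude unitary.
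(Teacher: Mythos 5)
Your overall framing is right --- local tabularity via {\sf PWL}, $\mathsf{KC}\subseteq\mathsf L(\mathbf F)$ so the type is unitary or nullary, and Theorem \ref{main2} with an inductive, depth-driven definition of $G$ on p-irreducible models is exactly the paper's strategy. But there is a fatal gap at the very first quantitative choice: you take $m=n$. That cannot work. The logics in question (e.g.\ $\mathsf L(\mathfrak R_2)$, which lies in $\mathbf H_{un}$) are unitary but not projective, and Section \ref{UU} of the paper shows concretely that for $A=x_1\lor x_2\lor(\neg x_1\land\neg x_2)$ no unifier $\varepsilon\colon\{x_1,x_2\}\to\mathsf{Fm}^2$ is simultaneously more general than the three unifiers $\varepsilon_1,\varepsilon_2,\varepsilon_3$; every mgu must introduce new variables (Theorem \ref{u1}, Corollary \ref{dichotomy}). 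Translated into the language of Theorem \ref{main2}: if $\sigma$ is the filtering combination of $\varepsilon_1,\varepsilon_2,\varepsilon_3$, the required $\tau=G$-substitution in $n=m=2$ variables with $\tau\preccurlyeq\sigma$ would be a $2$-variable unifier more general than all three $\varepsilon_i$, which is exactly what the $\mathfrak R_2$-model computation rules out. The failure is a pigeonhole at the top element of the rhombus: $F$ must send $2^k$ top valuations into $2^m$ of them, so with $m=n$ it identifies $k$-valuations whose $\sigma$-images admit different sets of $n$-valuations beneath them, and $G$ can no longer satisfy (v). Your remark that ``uniqueness of the top \dots guarantees we can match valuations at middle points against a single top valuation'' is precisely where this breaks.

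The paper's proof takes $m=n\cdot 2^n+1$ and defines the top valuation of $F(\mathfrak M^k)$ to be $code(\mathfrak f^k_0)$, a concatenation of $\mathfrak f^n_0$ with \emph{all} $n$-valuations $\mathfrak g^n_i$ that can occur below $\mathfrak f^k_0$ in a $\sigma$-image of a two-element chain, plus a suffix bit; the extra $n\cdot 2^n$ coordinates are exactly the room needed so that $G$ (defined as restriction to the first $n$ variables on models equivalent to some $F(\mathfrak M^k)$, and inductively otherwise) can reconstruct a $\sigma$-model and verify (iv)--(v). Your idea of having $F$ ``record which of the finitely many images in $G$'s range corresponds to $\sigma((\mathfrak M^k)_w)$'' is in fact the right instinct --- but recording that information requires more than $n$ bits, which is incompatible with your own choice $m=n$. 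With $m$ enlarged and the coding made explicit, the rest of your outline (induction on depth, carrying the chain part over by monotonicity, factoring through $\equiv$ for (iii)) matches the paper's argument.
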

\begin{proof}Let $n\geq 1$, and  $m=n\cdot 2^n+1$ and $\sigma\colon \{x_1,\dots,x_n\}\to \mathsf{Fm}^k$, for some $k$.  We need   $G:\mathbf{M}^m_{ir}\to\mathbf{M}^n$ and $F:\mathbf{M}^k_{ir}\to\mathbf{M}^m$ fulfilling (i)-(v) of Theorem \ref{main2}.  Let  $code(\mathfrak{f}^k_0)=\mathfrak{f}^n_0\mathfrak{g}^n_2\mathfrak{g}^n_3\dots,\mathfrak{g}^n_{2^n}1\quad \mbox{(the concatenation of  $\mathfrak{f}^k_0,\mathfrak{g}^n_i$'s and the suffix $1$)}$
includes  all  $\mathfrak{g}^n_i$'s (in any order where repetitions are allowed) such that\\
\begin{figure}[H]
\begin{center} \unitlength1cm
	\begin{minipage}[c][10mm][b]{10mm}
		
		\begin{picture}(3,2)
		\thicklines
		\linethickness{0.3mm}
		\put(0.2,0){\circle{0.1}}
		\put(0.5,0){\mbox{$\mathfrak{g}^n_i$}}
		\put(0.2,1){\circle{0.1}}
		\put(0.5,1){\mbox{$\mathfrak{f}^n_0$}}
		\put(0.2,0){\vector(0,1){0.9}}
		\end{picture}
		
	\end{minipage}
$\quad = \quad \sigma \Bigl($
	\begin{minipage}[c][10mm][b]{10mm}
		
		\begin{picture}(3,2)
		\thicklines
		\linethickness{0.3mm}
		\put(0.2,0){\circle{0.1}}
		\put(0.5,0){\mbox{$\mathfrak{g}^k_i$}}
		\put(0.2,1){\circle{0.1}}
		\put(0.5,1){\mbox{$\mathfrak{f}^k_0$}}
		\put(0.2,0){\vector(0,1){0.9}}
		\end{picture}
		
	\end{minipage}
$\Bigr), \quad \mbox{ for some } \mathfrak{g}_i^k .$ Then we define $F(\mathfrak{M}^{k})$:
\end{center}

\end{figure}
\begin{figure}[H]
\unitlength1cm
\begin{picture}(5,4)
\thicklines

\put(0,1){$\mathfrak{M}^k$:}
\put(2,3){\circle{0.1}}
\put(1,3){\circle{0.1}}
\put(0.5,3){\circle{0.1}}
\put(0.75,3){\circle{0.1}}
\put(1.75,3){\circle{0.1}}
\put(1.25,3){\circle{0.1}}
\put(0.25,3){\circle{0.1}}
\put(0,3){\circle{0.1}}
\put(1,2){\vector(1,1){0.9}}
\put(1,2){\vector(-1,1){0.9}}
\put(1,2){\vector(0,1){0.9}}
\put(1,2){\circle{0.1}}
\put(1,1.75){\circle{0.1}}
\put(1,1.5){\circle{0.1}}
\put(1,1.25){\circle{0.1}}
\put(1,1){\circle{0.1}}
\put(1,0){\circle{0.1}}
\put(1,0){\vector(0,1){0.9}}
\put(1,1){\circle{0.1}}
\put(1,0){\circle{0.1}}
\put(1,4){\circle{0.1}}
\put(2,3){\vector(-1,1){0.9}}
\put(1,3){\vector(0,1){0.9}}
\put(0,3){\vector(1,1){0.9}}
\put(1,2){\vector(-1,2){0.4}}
\put(1,2){\vector(1,2){0.4}}
\put(1.5,3){\circle{0.1}}
\put(0.5,3){\vector(1,2){0.4}}
\put(1.5,3){\vector(-1,2){0.4}}
\put(1.2,4){$\mathfrak{f}^k_0$}
\put(2.2,3){$\mathfrak{f}^k_{s}$}
\put(1.4,2){$\mathfrak{f}^k_{s+1}$}
\put(1.2,0){$\mathfrak{f}^k_{s+d+1}$}
\put(0,3.5){$\mathfrak{f}^k_{1}$}

\put(3.5,1){$\sigma(\mathfrak{M}^k)$:}
\put(6,3){\circle{0.1}}
\put(5,3){\circle{0.1}}
\put(4.5,3){\circle{0.1}}
\put(4.75,3){\circle{0.1}}
\put(5.25,3){\circle{0.1}}
\put(4.25,3){\circle{0.1}}
\put(4,3){\circle{0.1}}
\put(5,2){\vector(1,1){0.9}}
\put(5,2){\vector(1,1){0.9}}
\put(5,2){\vector(0,1){0.9}}
\put(5,2){\circle{0.1}}
\put(5,1.75){\circle{0.1}}
\put(5,1.5){\circle{0.1}}
\put(5,1.25){\circle{0.1}}
\put(5,1.75){\circle{0.1}}
\put(5,1){\circle{0.1}}
\put(5,0){\circle{0.1}}
\put(5,0){\vector(0,1){0.9}}
\put(5,1){\circle{0.1}}
\put(5,0){\circle{0.1}}
\put(5,4){\circle{0.1}}
\put(6,3){\vector(-1,1){0.9}}
\put(5,3){\vector(0,1){0.9}}
\put(4,3){\vector(1,1){0.9}}
\put(5,2){\vector(1,2){0.4}}
\put(5,2){\vector(-1,2){0.4}}
\put(5,2){\vector(-1,1){0.9}}
\put(5.5,3){\circle{0.1}}
\put(4.5,3){\vector(1,2){0.4}}
\put(5.5,3){\vector(-1,2){0.4}}
\put(5.2,4){$\mathfrak{f}^n_0$}
\put(6.2,3){$\mathfrak{f}^n_{s}$}
\put(5.4,2){$\mathfrak{f}^n_{s+1}$}
\put(5.2,0){$\mathfrak{f}^n_{s+d+1}$}
\put(3.6,3){$\mathfrak{f}^n_{1}$}

\put(7.5,1){$F(\mathfrak{M}^{k})$:}
\put(10,3){\circle{0.1}}
\put(9,3){\circle{0.1}}
\put(8.5,3){\circle{0.1}}
\put(8.75,3){\circle{0.1}}
\put(9.25,3){\circle{0.1}}
\put(8.25,3){\circle{0.1}}
\put(8,3){\circle{0.1}}
\put(9,2){\vector(1,1){0.9}}
\put(9,2){\vector(-1,1){0.9}}
\put(9,2){\vector(0,1){0.9}}
\put(9,2){\circle{0.1}}
\put(9,1.75){\circle{0.1}}
\put(9,1.5){\circle{0.1}}
\put(9,1.25){\circle{0.1}}
\put(9,1){\circle{0.1}}
\put(9,0){\circle{0.1}}
\put(9,0){\vector(0,1){0.9}}
\put(9,1){\circle{0.1}}
\put(9,0){\circle{0.1}}
\put(9,4){\circle{0.1}}
\put(10,3){\vector(-1,1){0.9}}
\put(9,3){\vector(0,1){0.9}}
\put(8,3){\vector(1,1){0.9}}
\put(9,2){\vector(-1,2){0.4}}
\put(9,2){\vector(1,2){0.4}}
\put(9.5,3){\circle{0.1}}
\put(8.5,3){\vector(1,2){0.4}}
\put(9.5,3){\vector(-1,2){0.4}}
\put(9.2,4){$\mathfrak{f}^n_0\mathfrak{g}^n_2\dots\mathfrak{g}^n_{2^n}1=code(\mathfrak f^k_0)$}
\put(10.2,3){$\mathfrak{f}^n_{s}\mathfrak{g}^n_2\dots\mathfrak{g}^n_{2^n}0$}
\put(9.4,2){$\mathfrak{f}^n_{s+1}0\cdots0$}
\put(9.2,0){$\mathfrak{f}^n_{s+d+1}0\cdots0$}
\put(6.5,3.5){$\mathfrak{f}^n_{1}\mathfrak{g}^n_2\dots\mathfrak{g}^n_{2^n}0$}
\end{picture}
\caption{}\label{hpa2}
\end{figure}
\noindent

\noindent One checks the conditions (i)-(iii) for the  mapping $F$.  Note that $F(\mathfrak{M}^{k})$ could be not  p-irreducible, for some p-irreducible $\mathfrak{M}^{k}$, but one could only collapse elements of the depth $2$ or elements below $s+1$ that is elements in the `leg' of the model.\\
A.  We put $G(\mathfrak{M}^{m})= \mathfrak{M}^{m}\!\!\upharpoonright n$ if $\mathfrak{M}^{m}\thicksim F(\mathfrak{M}^{k})$ for some $\mathfrak{M}^{k}$. Let $\mathfrak{M}^{m}=(W,R,w_0,V^m)$ and $\mathfrak{M}^{k}=(U,S,u_0,V^k)$. The conditions (i)-(v) for $G$ seem to be obvious; as concerns (ii), for every $w\in W$ there is  $u\in U$ such that $(\mathfrak{M}^{m})_w\thicksim F((\mathfrak{M}^{k})_u)$.
To show (iv) and (v), let us note that $F(\mathfrak{M}^{k})\!\!\upharpoonright n=\sigma(\mathfrak{M}^{k})$, see Figure \ref{hpa2}. Thus, $G(F(\mathfrak{M}^{k}))=\sigma(\mathfrak{M}^{k})$ and $G(\mathfrak{M}^{m})=\mathfrak{M}^{m}\!\!\upharpoonright n\thicksim F(\mathfrak{M}^{k})\!\!\upharpoonright n=\sigma(\mathfrak{M}^{k})$ if $\mathfrak{M}^{m}\thicksim F(\mathfrak{M}^{k})$.\\
B. There remains to define $G(\mathfrak{M}^{m})$ for $\mathfrak{M}^{m}\in \mathbf M^m_{ir}$ non-equivalent to any $F(\mathfrak{M}^{k})$. Our definition is inductive (to secure (ii)) and we should take into account that for some $w\in W$ and some $\mathfrak{M}^{k}$ we may have $(\mathfrak{M}^{m})_w\thicksim F(\mathfrak{M}^{k})$ and hence we have defined $G((\mathfrak{M}^{m})_w)=(\mathfrak{M}^{m})_w\!\!\upharpoonright n$. Since (iii) is  obvious  and (v) is irrelevant in this case, we have to bother only about (i) and (iv). Thus, the frame of  the $n$-model  $G(\mathfrak{M}^{m})$  should be the same as the frame of $\mathfrak{M}^{m}$ and $G(\mathfrak{M}^{m})$ should always be equivalent with a $\sigma$-model.
Let $\mathfrak{M}^{m}$ be one of the following $m$-models:
\begin{figure}[H]
\unitlength1cm
\begin{picture}(5,4)
\thicklines

\put(0.5,4){\circle{0.1}}
\put(2,3){\vector(0,1){0.9}}
\put(2,4){\circle{0.1}}
\put(2,3){\circle{0.1}}
\put(0.7,4){$\mathfrak{f}^m_0$}
\put(2.2,4){$\mathfrak{f}^m_{0}$}
\put(2.2,3){$\mathfrak{f}^m_{1}$}
\put(0.7,1){where $s,d\geq 1$}

\put(3.6,3){$\mathfrak{f}^m_{1}$}
\put(6,3){\circle{0.1}}
\put(5,3){\circle{0.1}}
\put(4.5,3){\circle{0.1}}
\put(4.75,3){\circle{0.1}}
\put(5.25,3){\circle{0.1}}
\put(4.25,3){\circle{0.1}}
\put(4,3){\circle{0.1}}
\put(5,2){\vector(1,1){0.9}}
\put(5,2){\vector(1,1){0.9}}
\put(5,2){\vector(0,1){0.9}}
\put(5,2){\circle{0.1}}
\put(5,4){\circle{0.1}}
\put(6,3){\vector(-1,1){0.9}}
\put(5,3){\vector(0,1){0.9}}
\put(4,3){\vector(1,1){0.9}}
\put(5,2){\vector(1,2){0.4}}
\put(5,2){\vector(-1,2){0.4}}
\put(5,2){\vector(-1,1){0.9}}
\put(5.5,3){\circle{0.1}}
\put(4.5,3){\vector(1,2){0.4}}
\put(5.5,3){\vector(-1,2){0.4}}
\put(5.2,4){$\mathfrak{f}^m_0$}
\put(6.2,3){$\mathfrak{f}^m_{s}$}
\put(5.4,2){$\mathfrak{f}^m_{s+1}$}

\put(7.6,3){$\mathfrak{f}^m_{1}$}
\put(10,3){\circle{0.1}}
\put(9,3){\circle{0.1}}
\put(8.5,3){\circle{0.1}}
\put(8.75,3){\circle{0.1}}
\put(9.25,3){\circle{0.1}}
\put(8.25,3){\circle{0.1}}
\put(8,3){\circle{0.1}}
\put(9,2){\vector(1,1){0.9}}
\put(9,2){\vector(-1,1){0.9}}
\put(9,2){\vector(0,1){0.9}}
\put(9,2){\circle{0.1}}
\put(9,1.75){\circle{0.1}}
\put(9,1.5){\circle{0.1}}
\put(9,1.25){\circle{0.1}}
\put(9,1){\circle{0.1}}
\put(9,0){\circle{0.1}}
\put(9,0){\vector(0,1){0.9}}
\put(9,1){\circle{0.1}}
\put(9,0){\circle{0.1}}
\put(9,4){\circle{0.1}}
\put(10,3){\vector(-1,1){0.9}}
\put(9,3){\vector(0,1){0.9}}
\put(8,3){\vector(1,1){0.9}}
\put(9,2){\vector(-1,2){0.4}}
\put(9,2){\vector(1,2){0.4}}
\put(9.5,3){\circle{0.1}}
\put(8.5,3){\vector(1,2){0.4}}
\put(9.5,3){\vector(-1,2){0.4}}
\put(9.2,4){$\mathfrak{f}^m_0$}
\put(10.2,3){$\mathfrak{f}^m_{s}$}
\put(9.4,2){$\mathfrak{f}^m_{s+1}$}
\put(9.2,0){$\mathfrak{f}^m_{s+d+1}$}
\end{picture}
\caption{}\label{hpa3}
\end{figure}
\noindent B1. If $(\mathfrak{M}^{m})_{0}$\footnote{Let us agree that, in Figure \ref{hpa3}, the vertex $i$ is  labeled with $\mathfrak f^m_i$.} is not equivalent with any $F(\mathfrak{M}^{k})$, we take any valuation $\mathfrak f^k$, let $\sigma(\mathfrak L_1,\{\mathfrak f^k\})=(\mathfrak L_1,\{\mathfrak f^n\})$, for some $\mathfrak f^n$, and put  $G(\mathfrak{M}^{m})\thicksim(\mathfrak L_1,\{\mathfrak f^n\})$ (which uniquely determines $G(\mathfrak{M}^{m})$).
Then (i)--(iv) are clearly fulfilled and (v) is irrelevant. We do the same if $0$ is the only vertex $i$ for which $(\mathfrak{M}^{m})_i\thicksim F(\mathfrak{M}^{k})$, with some $\mathfrak{M}^{k}$; we only take $\mathfrak f^k=\mathfrak f^k_0$.\\
B2. We can defined $G((\mathfrak{M}^{m})_{i})$, if $1\leq i\leq s$, using  A.  or B1. Thus, if $(\mathfrak{M}^{m})_{i}=(\mathfrak{L}_{2},\{\mathfrak{f}^m_0,\mathfrak{f}^m_1\})$ and
$\mathfrak{f}^m_0=\mathfrak{f}^n_0\mathfrak{g}^n_2\dots\mathfrak{g}^n_{2^n}1=code(\mathfrak f^k_0)$ and
$\mathfrak{f}^m_1=\mathfrak{f}^n_1\mathfrak{g}^n_2\dots\mathfrak{g}^n_{2^n}0$, for some p-irreducible $\mathfrak{M}^{k}=(\mathfrak{L}_{2},\{\mathfrak{f}^k_0,\mathfrak{f}^k_1\})$ (and we have $\sigma(\mathfrak{M}^{k})=(\mathfrak{L}_{2},\{\mathfrak{f}^n_0,\mathfrak{f}^n_1\})$), we put $G((\mathfrak{M}^{m})_{i})=\sigma(\mathfrak{M}^{k})=(\mathfrak{L}_{2},\{\mathfrak{f}^n_0,\mathfrak{f}^n_1\})$. Note that $\mathfrak{f}^n_1$ must be one of  $\mathfrak{g}^n_i$'s in $code(\mathfrak f^k_0)$.
Otherwise, $G((\mathfrak{M}^{m})_{i})\thicksim\sigma(\mathfrak{L}_{1},\{\mathfrak{f}^k_0\})$,  where  $G((\mathfrak{M}^{m})_{0})=\sigma((\mathfrak{L}_{1},\{\mathfrak{f}^k_0\})$).
Thus, in each case we have $G((\mathfrak{M}^{m})_{i})=\sigma(\mathfrak{M}^{k})$, for some $k$-model $\mathfrak{M}^{k}$ over  $\mathfrak{L}_{2}$.\footnote{We have added the suffix $0$ or $1$ at the end of $code(\mathfrak f^k_0)$, see Figure \ref{hpa2}, to be sure that $F(\mathfrak{M}^{k})$ is p-irreducible if $\mathfrak{M}^{k}$ is a p-irreducible model over $\mathfrak{L}_{2}$.}

\noindent B3. The worst case is   $G((\mathfrak{M}^{m})_{s+1})$. By B2. and  the definition of $code(\mathfrak f^k_0)$ (included in $\mathfrak f^m_0$), we can assume that, if $1\leq i\leq s$, then $G((\mathfrak{M}^{m})_{i})=G(\mathfrak L_2,\{\mathfrak f^m_0,\mathfrak f^m_i\})=\sigma(\mathfrak{M}^{k}_i)$ for some $k$-model $\mathfrak{M}^{k}_i=(\mathfrak L_2,\{\mathfrak f^k_0,\mathfrak f^k_i\})$; note that we  have the same $\mathfrak f^k_0$ in each of the models $\mathfrak{M}^{k}_i$. There are different  $\mathfrak f^k_0$'s with the same $code(\mathfrak f^k_0)$; but we can always decide which $\mathfrak f^k_0$ is chosen - for instance, our choose could be given according to the lexicographical order  in which all valuations are gven. Then, assuming $(\mathfrak{M}^{m})_{{s+1}}$ is not equivalent with any $F(\mathfrak{M}^{k})$, we take the $k$-valuation $0\cdots0$ and define $G((\mathfrak{M}^{m})_{{s+1}})$ as it is shown in Figure \ref{hpa5} below.
\begin{figure}[H]
\unitlength1cm
\begin{picture}(5,4)
\thicklines

\put(0,3){$\sigma\Bigl(\ \ \mathfrak{f}^k_1$}
\put(2.5,3){\circle{0.1}}
\put(1.5,3){\circle{0.1}}
\put(1,3){\circle{0.1}}
\put(0.5,3){\circle{0.1}}
\put(1.5,2){\vector(1,1){0.9}}
\put(1.5,2){\vector(1,1){0.9}}
\put(1.5,2){\vector(0,1){0.9}}
\put(1.5,2){\circle{0.1}}
\put(1.5,4){\circle{0.1}}
\put(2.5,3){\vector(-1,1){0.9}}
\put(1.5,3){\vector(0,1){0.9}}
\put(0.5,3){\vector(1,1){0.9}}
\put(1.5,2){\vector(1,2){0.4}}
\put(1.5,2){\vector(-1,2){0.4}}
\put(1.5,2){\vector(-1,1){0.9}}
\put(2,3){\circle{0.1}}
\put(2,3){\vector(-1,2){0.4}}
\put(1,3){\vector(1,2){0.4}}
\put(1.7,4){$\mathfrak{f}^k_0$}
\put(2.7,3){$\mathfrak{f}^k_s\ \Bigr)\ =$}
\put(1.9,2){$0\cdots0$}

\put(6,3){\circle{0.1}}
\put(5,3){\circle{0.1}}
\put(4.5,3){\circle{0.1}}
\put(4,3){\circle{0.1}}
\put(5,2){\vector(1,1){0.9}}
\put(5,2){\vector(1,1){0.9}}
\put(5,2){\vector(0,1){0.9}}
\put(5,2){\circle{0.1}}
\put(5,4){\circle{0.1}}
\put(6,3){\vector(-1,1){0.9}}
\put(5,3){\vector(0,1){0.9}}
\put(4,3){\vector(1,1){0.9}}
\put(5,2){\vector(1,2){0.4}}
\put(5,2){\vector(-1,2){0.4}}
\put(5,2){\vector(-1,1){0.9}}
\put(5.5,3){\circle{0.1}}
\put(4.5,3){\vector(1,2){0.4}}
\put(5.5,3){\vector(-1,2){0.4}}
\put(5.2,4){$\mathfrak{f}^n_0$}
\put(6.2,3){$\mathfrak{f}^n_s$}
\put(5.4,2){$\mathfrak{f}^n$}
\put(4.22,3){$\mathfrak{f}^n_1$}

\put(10.9,3.3){$\mathfrak{f}^n_1$}
\put(13,3){\circle{0.1}}
\put(12,3){\circle{0.1}}
\put(11.5,3){\circle{0.1}}
\put(11,3){\circle{0.1}}
\put(12,2){\vector(1,1){0.9}}
\put(12,2){\vector(-1,1){0.9}}
\put(12,2){\vector(0,1){0.9}}
\put(12,2){\circle{0.1}}
\put(12,1.75){\circle{0.1}}
\put(12,1.5){\circle{0.1}}
\put(12,1.25){\circle{0.1}}
\put(12,1){\circle{0.1}}
\put(12,0){\circle{0.1}}
\put(12,0){\vector(0,1){0.9}}
\put(12,1){\circle{0.1}}
\put(12,0){\circle{0.1}}
\put(12,4){\circle{0.1}}
\put(13,3){\vector(-1,1){0.9}}
\put(12,3){\vector(0,1){0.9}}
\put(11,3){\vector(1,1){0.9}}
\put(12,2){\vector(-1,2){0.4}}
\put(12,2){\vector(1,2){0.4}}
\put(12.5,3){\circle{0.1}}
\put(11.5,3){\vector(1,2){0.4}}
\put(12.5,3){\vector(-1,2){0.4}}

\put(7.8,3.3){$\mathfrak{f}^m_1$}
\put(10,3){\circle{0.1}}
\put(9,3){\circle{0.1}}
\put(8.5,3){\circle{0.1}}
\put(8,3){\circle{0.1}}
\put(9,2){\vector(1,1){0.9}}
\put(9,2){\vector(-1,1){0.9}}
\put(9,2){\vector(0,1){0.9}}
\put(9,2){\circle{0.1}}
\put(9,1.75){\circle{0.1}}
\put(9,1.5){\circle{0.1}}
\put(9,1.25){\circle{0.1}}
\put(9,1){\circle{0.1}}
\put(9,0){\circle{0.1}}
\put(9,0){\vector(0,1){0.9}}
\put(9,1){\circle{0.1}}
\put(9,0){\circle{0.1}}
\put(9,4){\circle{0.1}}
\put(10,3){\vector(-1,1){0.9}}
\put(9,3){\vector(0,1){0.9}}
\put(8,3){\vector(1,1){0.9}}
\put(9,2){\vector(-1,2){0.4}}
\put(9,2){\vector(1,2){0.4}}
\put(9.5,3){\circle{0.1}}
\put(8.5,3){\vector(1,2){0.4}}
\put(9.5,3){\vector(-1,2){0.4}}

\put(9.2,0){$\mathfrak{f}^m_{s+d+1}$}
\put(9.2,4){$\mathfrak{f}^m_0$}
\put(10,3.3){$\mathfrak{f}^m_s$}
\put(9.4,2){$\mathfrak{f}^m_{s+1}$}
\put(7.1,2){$G\Bigl($}
\put(10.3,2){$\Bigr)\ =$}

\put(12.2,1){$\mathfrak{f}^n$}
\put(12.2,0){$\mathfrak{f}^n$}
\put(12.2,4){$\mathfrak{f}^n_0$}
\put(13,3.3){$\mathfrak{f}^n_s$}
\put(12.4,2){$\mathfrak{f}^n$}

\end{picture}
\caption{}\label{hpa5}
\end{figure}
\noindent B4. There remains to define   $G((\mathfrak{M}^{m})_{{s+i+1}})$ assuming that $i\geq 1$, and  $(\mathfrak{M}^{m})_{{s+i+1}}$ is not equivalent with any  $F(\mathfrak{M}^{k})$ and  $G((\mathfrak{M}^{m})_{{s+i}})$ has already been defined. Then we approach as it is shown in Figure \ref{hpa6}
\begin{figure}[H]
\unitlength1cm
\begin{picture}(5,4)
\thicklines

\put(0.2,3.2){ $\mathfrak{f}^m_1$}
\put(0,2){$G\Bigl( $}
\put(2.5,3){\circle{0.1}}
\put(1.5,3){\circle{0.1}}
\put(1,3){\circle{0.1}}
\put(0.5,3){\circle{0.1}}
\put(1.5,2){\vector(1,1){0.9}}
\put(1.5,2){\vector(1,1){0.9}}
\put(1.5,2){\vector(0,1){0.9}}
\put(1.5,2){\circle{0.1}}
\put(1.5,4){\circle{0.1}}
\put(2.5,3){\vector(-1,1){0.9}}
\put(1.5,3){\vector(0,1){0.9}}
\put(0.5,3){\vector(1,1){0.9}}
\put(1.5,2){\vector(1,2){0.4}}
\put(1.5,2){\vector(-1,2){0.4}}
\put(1.5,2){\vector(-1,1){0.9}}
\put(2,3){\circle{0.1}}
\put(2,3){\vector(-1,2){0.4}}
\put(1,3){\vector(1,2){0.4}}
\put(1.7,4){$\mathfrak{f}^m_0$}
\put(2.5,3.2){$\mathfrak{f}^m_s$}
\put(2.7,2){$\ \ \Bigr)\ =$}
\put(1.9,2){$\mathfrak{f}^m_{s+1}$}
\put(1.5,1.75){\circle{0.1}}
\put(1.5,1.5){\circle{0.1}}
\put(1.5,1){\circle{0.1}}
\put(1.5,1){\vector(0,1){0.4}}
\put(1.5,1){\circle{0.1}}
\put(1.9,1){$\mathfrak{f}^m_{s+i}$}

\put(6,3){\circle{0.1}}
\put(5,3){\circle{0.1}}
\put(4.5,3){\circle{0.1}}
\put(4,3){\circle{0.1}}
\put(5,2){\vector(1,1){0.9}}
\put(5,2){\vector(1,1){0.9}}
\put(5,2){\vector(0,1){0.9}}
\put(5,2){\circle{0.1}}
\put(5,4){\circle{0.1}}
\put(6,3){\vector(-1,1){0.9}}
\put(5,3){\vector(0,1){0.9}}
\put(4,3){\vector(1,1){0.9}}
\put(5,2){\vector(1,2){0.4}}
\put(5,2){\vector(-1,2){0.4}}
\put(5,2){\vector(-1,1){0.9}}
\put(5.5,3){\circle{0.1}}
\put(4.5,3){\vector(1,2){0.4}}
\put(5.5,3){\vector(-1,2){0.4}}
\put(5.2,4){$\mathfrak{f}^n_0$}
\put(6,3.2){$\mathfrak{f}^n_s$}
\put(5.4,2){$\mathfrak{f}^n_{s+1}$}
\put(3.8,3.2){$\mathfrak{f}^n_1$}
\put(5,1.75){\circle{0.1}}
\put(5,1.5){\circle{0.1}}
\put(5,1){\circle{0.1}}
\put(5,1){\vector(0,1){0.4}}
\put(5,1){\circle{0.1}}
\put(5.4,1){$\mathfrak{f}^n_{s+i}$}

\put(10.9,3.3){$\mathfrak{f}^n_1$}
\put(13,3){\circle{0.1}}
\put(12,3){\circle{0.1}}
\put(11.5,3){\circle{0.1}}
\put(11,3){\circle{0.1}}
\put(12,2){\vector(1,1){0.9}}
\put(12,2){\vector(-1,1){0.9}}
\put(12,2){\vector(0,1){0.9}}
\put(12,2){\circle{0.1}}
\put(12,1.75){\circle{0.1}}
\put(12,1.5){\circle{0.1}}
\put(12,1.25){\circle{0.1}}
\put(12,1){\circle{0.1}}
\put(12,0){\circle{0.1}}
\put(12,0){\vector(0,1){0.9}}
\put(12,1){\circle{0.1}}
\put(12,0){\circle{0.1}}
\put(12,4){\circle{0.1}}
\put(13,3){\vector(-1,1){0.9}}
\put(12,3){\vector(0,1){0.9}}
\put(11,3){\vector(1,1){0.9}}
\put(12,2){\vector(-1,2){0.4}}
\put(12,2){\vector(1,2){0.4}}
\put(12.5,3){\circle{0.1}}
\put(11.5,3){\vector(1,2){0.4}}
\put(12.5,3){\vector(-1,2){0.4}}

\put(7.8,3.3){$\mathfrak{f}^m_1$}
\put(10,3){\circle{0.1}}
\put(9,3){\circle{0.1}}
\put(8.5,3){\circle{0.1}}
\put(8,3){\circle{0.1}}
\put(9,2){\vector(1,1){0.9}}
\put(9,2){\vector(-1,1){0.9}}
\put(9,2){\vector(0,1){0.9}}
\put(9,2){\circle{0.1}}
\put(9,1.75){\circle{0.1}}
\put(9,1.5){\circle{0.1}}
\put(9,1.25){\circle{0.1}}
\put(9,1){\circle{0.1}}
\put(9,0){\circle{0.1}}
\put(9,0){\vector(0,1){0.9}}
\put(9,1){\circle{0.1}}
\put(9,0){\circle{0.1}}
\put(9,4){\circle{0.1}}
\put(10,3){\vector(-1,1){0.9}}
\put(9,3){\vector(0,1){0.9}}
\put(8,3){\vector(1,1){0.9}}
\put(9,2){\vector(-1,2){0.4}}
\put(9,2){\vector(1,2){0.4}}
\put(9.5,3){\circle{0.1}}
\put(8.5,3){\vector(1,2){0.4}}
\put(9.5,3){\vector(-1,2){0.4}}

\put(9.2,1){$\mathfrak{f}^m_{s+i}$}
\put(9.2,0){$\mathfrak{f}^m_{s+d+1}$}
\put(9.2,4){$\mathfrak{f}^m_0$}
\put(10,3.3){$\mathfrak{f}^m_s$}
\put(9.4,2){$\mathfrak{f}^m_{s+1}$}
\put(7.1,2){$G\Bigl($}
\put(10.3,2){$\Bigr)\ =$}

\put(12.2,1){$\mathfrak{f}^n_{s+i}$}
\put(12.2,0){$\mathfrak{f}^n_{s+i}$}
\put(12.2,0.5){$\mathfrak{f}^n_{s+i}$}
\put(12.2,4){$\mathfrak{f}^n_0$}
\put(13,3.3){$\mathfrak{f}^n_s$}
\put(12.4,2){$\mathfrak{f}^n_{s+1}$}

\end{picture}
\caption{}\label{hpa6}
\end{figure}
\noindent Thus, we have $G(\mathfrak{M}^{m})\thicksim G((\mathfrak{M}^{m})_{{s+i}})$.
\end{proof}
\begin{theorem}\label{c5} The logic $\mathsf L(\mathfrak C_5)$ (see Figure \ref{TF}) has finitary unification.\end{theorem}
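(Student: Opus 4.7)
The plan is to apply Theorem \ref{main2}, following the same blueprint used in the proof of Theorem \ref{wpl}. I fix $n\geq 0$ and choose $m$ as a function of $n$ alone (analogous to $m=n\cdot 2^n+1$ in Theorem \ref{wpl}, with possibly a small additive constant for extra bookkeeping). For an arbitrary substitution $\sigma\colon\{x_1,\dots,x_n\}\to\mathsf{Fm}^k$ I construct mappings $F\colon\mathbf{M}^k_{ir}\to\mathbf{M}^m$ and $G\colon\mathbf{M}^m_{ir}\to\mathbf{M}^n$ satisfying the conditions (i)--(v) of Theorem \ref{main2}.

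The structure of $\mathfrak{C}_5$ is convenient here: its generated subframes are, up to isomorphism, of only five types --- the singleton $\mathfrak{L}_1$, the two-fork $\mathfrak{F}_2$ rooted at $a$ (with tops $p,q$) or at $b$ (with tops $q,s$), and $\mathfrak{C}_5$ itself. For a p-irreducible $k$-model $\mathfrak{M}^k=(W,R,w_0,V^k)$ I define $F(\mathfrak{M}^k)=(W,R,w_0,V^m)$ over the same frame by letting the first $n$ coordinates of $V^m(w)$ copy the valuation of $\sigma(\mathfrak{M}^k)$ at $w$, and letting the remaining coordinates at the root encode $code(\mathfrak{f}^k_0)$ in the style of Theorem \ref{wpl} (with zeros elsewhere). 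This ensures $F(\mathfrak{M}^k)\!\!\upharpoonright_n\thicksim\sigma(\mathfrak{M}^k)$, and conditions (i)--(iii) of Theorem \ref{main2} will then follow routinely. On $m$-models $\mathfrak{M}^m\thicksim F(\mathfrak{M}^k)$ for some $\mathfrak{M}^k$ I set $G(\mathfrak{M}^m):=\mathfrak{M}^m\!\!\upharpoonright_n$, which makes (iv) and (v) hold in that case.

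For the remaining p-irreducible $m$-models $\mathfrak{M}^m$ I define $G(\mathfrak{M}^m)$ inductively on the depth of the root so that each $G((\mathfrak{M}^m)_w)$ is equivalent to $\sigma(\mathfrak{N}^k_w)$ for some $k$-model $\mathfrak{N}^k_w$ over the subframe generated by $w$; this secures conditions (ii) and (iv). The principal obstacle is the gluing step at a root of $\mathfrak{C}_5$-type: the two middle elements $a$ and $b$ carry fork submodels $(\mathfrak{M}^m)_a$ and $(\mathfrak{M}^m)_b$, and the already constructed $n$-models $G((\mathfrak{M}^m)_a)$ and $G((\mathfrak{M}^m)_b)$ must be simultaneously realizable as $\sigma$ applied to a single $k$-model over $\mathfrak{C}_5$. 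This forces a compatibility condition on the shared top element $q$, where the two fork images meet, and it is precisely here that the geometry of $\mathfrak{C}_5$ (two forks glued at a common top, sitting under a common root) must be exploited. Because $\mathfrak{C}_5$ is finite and the number of $n$-valuation patterns is bounded, the gluing reduces to a combinatorial check independent of $k$; hence $m$ can be fixed once and for all, and since $\mathsf{KC}\not\subseteq\mathsf{L}(\mathfrak{C}_5)$ (the frame $\mathfrak{C}_5$ has no greatest element), Theorems \ref{kc} and \ref{main2} together yield finitary, not unitary, unification.
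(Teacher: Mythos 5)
Your overall strategy coincides with the paper's: apply Theorem \ref{main2} with $m$ depending only on $n$, build $F$ by appending a $code(\cdot)$-block in the style of Theorem \ref{wpl}, let $G$ be restriction to the first $n$ coordinates on models equivalent to some $F(\mathfrak M^k)$, and handle the rest by induction on depth. But three things in your write-up are genuine gaps rather than omitted routine detail. First, you work only with the generated subframes of $\mathfrak C_5$ (five types), whereas Theorem \ref{main2} requires the class of frames to be closed under p-morphic images as well: $sm(\mathfrak C_5)$ also contains $\mathfrak L_2,\mathfrak L_3,+\mathfrak F_2,\mathfrak R_2,\mathfrak Y_2,\mathfrak Y_3$, and the p-irreducible $k$- and $m$-models over these collapsed frames are precisely where the trouble lives (e.g.\ a p-irreducible $m$-model over $\mathfrak L_2$ that is equivalent to $F(\mathfrak M^k)$ for $\mathfrak M^k$ over $\mathfrak F_2$); this is why the paper attaches the distinguishing suffixes $11$, $01$, $00$ to block unwanted reductions. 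Second, your $F$ places $code(\mathfrak f^k_0)$ at the root with zeros elsewhere; since the code contains $1$'s, this violates monotonicity of the valuation ($V^m(w_0)\subseteq V^m(w)$ must hold for all $w$ above the root), so $F(\mathfrak M^k)$ is not an intuitionistic $m$-model. The code has to sit at the \emph{end} elements, one block $code(\mathfrak f^k_e)$ per end-element valuation, with decreasing strings below.

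Third, and most seriously, the gluing step at a $\mathfrak C_5$-root is exactly the mathematical content of the theorem, and you dispose of it by asserting that "the gluing reduces to a combinatorial check independent of $k$" because the frame is finite and there are finitely many $n$-valuations. That reasoning would equally "prove" that $\mathsf L(\mathfrak G_3)$ has finitary unification, which is false (Theorem \ref{F6m}); finiteness guarantees the check is finite, not that it succeeds. What actually makes it succeed for $\mathfrak C_5$ is the specific content of the code: $code(\mathfrak f^k)$ lists \emph{all} pairs of $n$-valuations realizable by $\sigma$ on a fork having $\mathfrak f^k$ as one top, so that when the inductive hypothesis supplies $\sigma$-preimages of $G((\mathfrak M^m)_a)$ and $G((\mathfrak M^m)_b)$ one can arrange (by a canonical choice of preimage for the shared top $q$) that both forks use the \emph{same} $k$-valuation at $q$, and then a single $k$-model over $\mathfrak C_5$ with root $0\cdots 0$ realizes both simultaneously. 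Without specifying what the code records and why the shared-top compatibility can always be met, the proof of condition (iv) at the $\mathfrak C_5$ step is missing.
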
\begin{proof} Let $\mathbf F=\{\mathfrak L_1, \mathfrak L_2, \mathfrak F_2, \mathfrak L_3, +\mathfrak F_2, \mathfrak R_2, \mathfrak Y_2, \mathfrak Y_3, \mathfrak C_5 \}$ (see Figure \ref{GF} and \ref{8fames}) and let $\mathsf{L}=\mathsf{L}(\mathbf{F})$. Since $\mathbf{F}=sm(\mathfrak C_5)$, we conclude {\sf L} is the logic of  $\mathfrak C_5$; see Lemma \ref{lf8}. Let $n\geq 1$ and $m=n+n\cdot 4^n+2$. Take any  $\sigma\colon\{x_1,\dots,x_n\}\to \mathsf{Fm}^k$ and show there are $F\colon \mathbf M^k\to\mathbf M^m$ and $G\colon \mathbf M^m\to\mathbf M^n$  fulfilling the conditions (i)--(v) of  Theorem \ref{main2}.

 Let $\mathfrak f^k$ be any $k$-valuation  and let $\sigma(\mathfrak L_1,\mathfrak f^k)=(\mathfrak L_1,\mathfrak f^n)$ for some $\mathfrak f^n$. We define the $m$-valuation $code(\mathfrak f^k)$  a bit different way than in the proof of Theorem \ref{wpl}. Let\\
\indent $code(\mathfrak{f}^k)=\mathfrak{f}^n\mathfrak{g}^n_1\mathfrak h^n_1\dots\mathfrak{g}^n_{2^n}\mathfrak{h}^n_{2^n}11\quad \mbox{(the concatenation of $\mathfrak f^n$, all $\mathfrak{g}^n_i,\mathfrak{h}^n_i$'s and  $11$)}$\\
where  $\mathfrak{g}^n_i,\mathfrak{h}^n_i$'s are all $n$-valuations such that
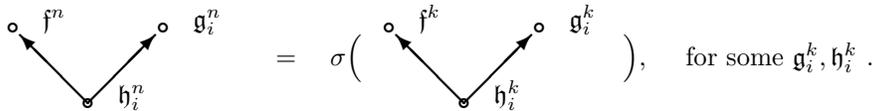
\begin{figure}[H]
\unitlength1cm
\begin{picture}(3,1)
\thicklines

		\put(0,1){\circle{0.1}}
        \put(1,0){\circle{0.1}}
		\put(0.4,1){\mbox{$\mathfrak{f}^n$}}
		\put(2,1){\circle{0.1}}
		\put(2.4,1){\mbox{$\mathfrak{g}^n_i$}}
		\put(1,0){\vector(1,1){0.9}}
         \put(1,0){\vector(-1,1){0.9}}
		\put(1.4,0){\mbox{$\mathfrak{h}^n_i$}}
		
\put(3.5,0.5){\mbox{$ = \quad \sigma \Bigl($}}

			\put(5,1){\circle{0.1}}
        \put(6,0){\circle{0.1}}
        \put(6.4,0){\mbox{$\mathfrak{h}_i^k$}}
		\put(5.4,1){\mbox{$\mathfrak{f}^k$}}
		\put(7,1){\circle{0.1}}
		\put(7.4,1){\mbox{$\mathfrak{g}^k_i$}}
		\put(6,0){\vector(1,1){0.9}}
         \put(6,0){\vector(-1,1){0.9}}
		\put(8,0.5){ $\Bigr), \quad \mbox{ for some } \mathfrak{g}_i^k,\mathfrak{h}_i^k$ .}
\end{picture}
\caption{The definition of $code(\mathfrak f^k)$} \label{code}\end{figure}
We could make $code(\mathfrak f^k)$ unique, for any $\mathfrak f^k$. For instance, let us begin with the sequence $\mathfrak{g}_1^n\mathfrak{h}_1^n\dots \mathfrak{g}_{2^n}^n\mathfrak{h}_{2^n}^n$ containing all possible pairs $\mathfrak{g}_i^n\mathfrak{h}_i^n$ of $n$-valuations (regarded as binary strings) put in a lexicographical order. The sequence contains $4^n$ bites. Then, step by step, we would remain the pair $\mathfrak{g}_i^n\mathfrak{h}_i^n$ or replace it with $\mathfrak{f}^n\mathfrak{f}^n$ depending  if it fulfills (or not) the equation in Figure \ref{code}, for some  $\mathfrak{g}_i^k,\mathfrak{h}_i^k$. Eventually, we add $\mathfrak f^n$ at the beginning and $11$ at the end of the sequence receiving an $m$-valuation.

Now, we are ready to define $F(\mathfrak{M}^k)$, for any p-irreducible $k$-model $\mathfrak{M}^k$ over any frame in $\mathbf F$. So, let   $\mathfrak{M}^k$ be one of the following $k$-models:
\begin{figure}[H]
\unitlength1cm
\begin{picture}(3,2)
\thicklines

\put(0,0){\circle{0.1}}
\put(0.3,0){\mbox{$\mathfrak{f}^k_0$}}
\put(2,0){\circle{0.1}}
\put(2.3,0){\mbox{$\mathfrak{f}^k_1$}}
\put(2,1){\circle{0.1}}
\put(2.3,1){\mbox{$\mathfrak{f}^k_0$}}
\put(2,0){\vector(0,1){0.9}}

\put(4,1){\circle{0.1}}
\put(5,0){\circle{0.1}}
\put(6,1){\circle{0.1}}
\put(5.3,0){\mbox{$\mathfrak{f}^k_1$}}
\put(4.3,1){\mbox{$\mathfrak{f}^k_{0'}$}}
\put(6.3,1){\mbox{$\mathfrak{f}^k_0$}}
\put(5,0){\vector(1,1){0.9}}
\put(5,0){\vector(-1,1){0.9}}

\put(8,0){\vector(0,1){0.9}}
\put(8,1){\vector(0,1){0.9}}
\put(8,1){\circle{0.1}}
\put(8,2){\circle{0.1}}
\put(8,0){\circle{0.1}}
\put(8.3,0){\mbox{$\mathfrak{f}^k_2$}}
\put(8.3,1.8){\mbox{$\mathfrak{f}^k_0$}}
\put(8.3,1){\mbox{$\mathfrak{f}^k_1$}}

\put(10,2){\circle{0.1}}
\put(11,1){\circle{0.1}}
\put(11,0){\circle{0.1}}
\put(12,2){\circle{0.1}}
\put(11.3,1){\mbox{$\mathfrak{f}^k_1$}}
\put(11.3,0){\mbox{$\mathfrak{f}^k_2$}}
\put(10.3,2){\mbox{$\mathfrak{f}^k_{0'}$}}
\put(12.3,2){\mbox{$\mathfrak{f}^k_0$}}
\put(11,1){\vector(1,1){0.9}}
\put(11,1){\vector(-1,1){0.9}}
\put(11,0){\vector(0,1){0.9}}

\end{picture}\\

\unitlength1cm
\begin{picture}(5,3)
\thicklines

\put(1,0){\vector(-1,1){0.9}}
\put(1,0){\vector(1,1){0.9}}
\put(0,1){\vector(1,1){0.9}}
\put(2,1){\vector(-1,1){0.9}}
\put(0,1){\circle{0.1}}
\put(1,2){\circle{0.1}}
\put(1,0){\circle{0.1}}
\put(2,1){\circle{0.1}}
\put(1.3,0){\mbox{$\mathfrak{f}^k_2$}}
\put(0,1.4){\mbox{$\mathfrak{f}^k_{1'}$}}
\put(1.1,2){\mbox{$\mathfrak{f}^k_0$}}
\put(2,1.4){\mbox{$\mathfrak{f}^k_1$}}

\put(4.5,0){\vector(-1,1){0.9}}
\put(4.5,0){\vector(1,1){0.9}}
\put(5.5,1){\vector(1,1){0.9}}
\put(3.5,1){\circle{0.1}}
\put(4.5,2){\circle{0.1}}
\put(4.5,0){\circle{0.1}}
\put(5.5,1){\circle{0.1}}
\put(6.5,2){\circle{0.1}}
\put(3.5,1){\vector(1,1){0.9}}
\put(5.5,1){\vector(-1,1){0.9}}
\put(4.8,0){\mbox{$\mathfrak{f}^k_2$}}
\put(3.5,1.4){\mbox{$\mathfrak{f}^k_{1'}$}}
\put(4.1,2){\mbox{$\mathfrak{f}^k_{0'}$}}
\put(5.5,1.4){\mbox{$\mathfrak{f}^k_1$}}
\put(6.1,2){\mbox{$\mathfrak{f}^k_{0}$}}

\put(7.7,2){\circle{0.1}}
\put(9.7,2){\circle{0.1}}
\put(7.7,1){\circle{0.1}}
\put(9.7,1){\circle{0.1}}
\put(8.7,0){\circle{0.1}}
\put(7.7,1){\vector(0,1){0.9}}
\put(9.7,1){\vector(0,1){0.9}}
\put(7.7,1){\vector(2,1){1.9}}
\put(9.7,1){\vector(-2,1){1.9}}
\put(8.7,0){\vector(1,1){0.9}}
\put(8.7,0){\vector(-1,1){0.9}}
\put(9,0){\mbox{$\mathfrak{f}^k_2$}}
\put(7.3,1){\mbox{$\mathfrak{f}^k_{1'}$}}
\put(7.7,2.2){\mbox{$\mathfrak{f}^k_{0'}$}}
\put(9.8,1){\mbox{$\mathfrak{f}^k_1$}}
\put(9.7,2.2){\mbox{$\mathfrak{f}^k_{0}$}}

\put(11.5,0){\vector(-1,1){0.9}}
\put(11.5,0){\vector(1,1){0.9}}
\put(12.5,1){\vector(-1,1){0.9}}
\put(10.5,1){\circle{0.1}}
\put(11.5,2){\circle{0.1}}
\put(11.5,0){\circle{0.1}}
\put(12.5,1){\circle{0.1}}
\put(10.5,1){\vector(1,1){0.9}}
\put(10.5,2){\circle{0.1}}
\put(12.5,2){\circle{0.1}}
\put(10.5,1){\vector(0,1){0.9}}
\put(12.5,1){\vector(0,1){0.9}}
\put(12,0){\mbox{$\mathfrak{f}^k_2$}}
\put(10.8,1){\mbox{$\mathfrak{f}^k_{1'}$}}
\put(10.5,2.2){\mbox{$\mathfrak{f}^k_{0'}$}}
\put(11.8,1){\mbox{$\mathfrak{f}^k_1$}}
\put(11.5,2.2){\mbox{$\mathfrak{f}^k_{0}$}}
\put(12.5,2.2){\mbox{$\mathfrak{f}^k_{0''}$}}

\end{picture}\\
\caption{} \label{c}
\end{figure}
and let $\sigma(\mathfrak{M}^k)$ be, correspondingly;
\begin{figure}[H]
\unitlength1cm
\begin{picture}(3,2)
\thicklines

\put(0,0){\circle{0.1}}
\put(0.3,0){\mbox{$\mathfrak{f}^n_0$}}
\put(2,0){\circle{0.1}}
\put(2.3,0){\mbox{$\mathfrak{f}^n_1$}}
\put(2,1){\circle{0.1}}
\put(2.3,1){\mbox{$\mathfrak{f}^n_0$}}
\put(2,0){\vector(0,1){0.9}}

\put(4,1){\circle{0.1}}
\put(5,0){\circle{0.1}}
\put(6,1){\circle{0.1}}
\put(5.3,0){\mbox{$\mathfrak{f}^n_1$}}
\put(4.3,1){\mbox{$\mathfrak{f}^n_{0'}$}}
\put(6.3,1){\mbox{$\mathfrak{f}^n_0$}}
\put(5,0){\vector(1,1){0.9}}
\put(5,0){\vector(-1,1){0.9}}

\put(8,0){\vector(0,1){0.9}}
\put(8,1){\vector(0,1){0.9}}
\put(8,1){\circle{0.1}}
\put(8,2){\circle{0.1}}
\put(8,0){\circle{0.1}}
\put(8.3,0){\mbox{$\mathfrak{f}^n_2$}}
\put(8.3,1.8){\mbox{$\mathfrak{f}^n_0$}}
\put(8.3,1){\mbox{$\mathfrak{f}^n_1$}}

\put(10,2){\circle{0.1}}
\put(11,1){\circle{0.1}}
\put(11,0){\circle{0.1}}
\put(12,2){\circle{0.1}}
\put(11.3,1){\mbox{$\mathfrak{f}^n_1$}}
\put(11.3,0){\mbox{$\mathfrak{f}^n_2$}}
\put(10.3,2){\mbox{$\mathfrak{f}^n_{0'}$}}
\put(12.3,2){\mbox{$\mathfrak{f}^n_0$}}
\put(11,1){\vector(1,1){0.9}}
\put(11,1){\vector(-1,1){0.9}}
\put(11,0){\vector(0,1){0.9}}

\end{picture}\\

\unitlength1cm
\begin{picture}(5,3)
\thicklines

\put(1,0){\vector(-1,1){0.9}}
\put(1,0){\vector(1,1){0.9}}
\put(0,1){\vector(1,1){0.9}}
\put(2,1){\vector(-1,1){0.9}}
\put(0,1){\circle{0.1}}
\put(1,2){\circle{0.1}}
\put(1,0){\circle{0.1}}
\put(2,1){\circle{0.1}}
\put(1.3,0){\mbox{$\mathfrak{f}^n_2$}}
\put(0,1.4){\mbox{$\mathfrak{f}^n_{1'}$}}
\put(1.1,2){\mbox{$\mathfrak{f}^n_0$}}
\put(2,1.4){\mbox{$\mathfrak{f}^n_1$}}

\put(4.5,0){\vector(-1,1){0.9}}
\put(4.5,0){\vector(1,1){0.9}}
\put(5.5,1){\vector(1,1){0.9}}
\put(3.5,1){\circle{0.1}}
\put(4.5,2){\circle{0.1}}
\put(4.5,0){\circle{0.1}}
\put(5.5,1){\circle{0.1}}
\put(6.5,2){\circle{0.1}}
\put(3.5,1){\vector(1,1){0.9}}
\put(5.5,1){\vector(-1,1){0.9}}
\put(4.8,0){\mbox{$\mathfrak{f}^n_2$}}
\put(3.5,1.4){\mbox{$\mathfrak{f}^n_{1'}$}}
\put(4.1,2){\mbox{$\mathfrak{f}^n_{0'}$}}
\put(5.5,1.4){\mbox{$\mathfrak{f}^n_1$}}
\put(6.1,2){\mbox{$\mathfrak{f}^n_{0}$}}

\put(7.7,2){\circle{0.1}}
\put(9.7,2){\circle{0.1}}
\put(7.7,1){\circle{0.1}}
\put(9.7,1){\circle{0.1}}
\put(8.7,0){\circle{0.1}}
\put(7.7,1){\vector(0,1){0.9}}
\put(9.7,1){\vector(0,1){0.9}}
\put(7.7,1){\vector(2,1){1.9}}
\put(9.7,1){\vector(-2,1){1.9}}
\put(8.7,0){\vector(1,1){0.9}}
\put(8.7,0){\vector(-1,1){0.9}}
\put(9,0){\mbox{$\mathfrak{f}^n_2$}}
\put(7.3,1){\mbox{$\mathfrak{f}^n_{1'}$}}
\put(7.7,2.2){\mbox{$\mathfrak{f}^n_{0'}$}}
\put(9.8,1){\mbox{$\mathfrak{f}^n_1$}}
\put(9.7,2.2){\mbox{$\mathfrak{f}^n_{0}$}}

\put(11.5,0){\vector(-1,1){0.9}}
\put(11.5,0){\vector(1,1){0.9}}
\put(12.5,1){\vector(-1,1){0.9}}
\put(10.5,1){\circle{0.1}}
\put(11.5,2){\circle{0.1}}
\put(11.5,0){\circle{0.1}}
\put(12.5,1){\circle{0.1}}
\put(10.5,1){\vector(1,1){0.9}}
\put(10.5,2){\circle{0.1}}
\put(12.5,2){\circle{0.1}}
\put(10.5,1){\vector(0,1){0.9}}
\put(12.5,1){\vector(0,1){0.9}}
\put(12,0){\mbox{$\mathfrak{f}^n_2$}}
\put(10.8,1){\mbox{$\mathfrak{f}^n_{1'}$}}
\put(10.5,2.2){\mbox{$\mathfrak{f}^n_{0'}$}}
\put(11.8,1){\mbox{$\mathfrak{f}^n_1$}}
\put(11.5,2.2){\mbox{$\mathfrak{f}^n_{0}$}}
\put(12.5,2.2){\mbox{$\mathfrak{f}^n_{0''}$}}

\end{picture}\\
\caption{} \label{cc1}
\end{figure}

\noindent We define $F(\mathfrak M^k)$ as one of the following $m$-models:
\begin{figure}[H]
\unitlength1cm
\begin{picture}(3,2)
\thicklines

\put(0,0){\circle{0.1}}
\put(0,0.3){\mbox{$code(\mathfrak{f}^k_0)$}}
\put(2,0){\circle{0.1}}
\put(2.3,0){\mbox{$\mathfrak{f}^n_10\cdots01$}}
\put(2,1){\circle{0.1}}
\put(2,1.3){\mbox{$code(\mathfrak{f}^k_0)$}}
\put(2,0){\vector(0,1){0.9}}

\put(4,1){\circle{0.1}}
\put(5,0){\circle{0.1}}
\put(6,1){\circle{0.1}}
\put(5.3,0){\mbox{$\mathfrak{f}^n_10\cdots01$}}
\put(4,1.3){\mbox{$code(\mathfrak{f}^k_{0'})$}}
\put(6,1.3){\mbox{$code(\mathfrak{f}^k_0)$}}
\put(5,0){\vector(1,1){0.9}}
\put(5,0){\vector(-1,1){0.9}}

\put(8,0){\vector(0,1){0.9}}
\put(8,1){\vector(0,1){0.9}}
\put(8,1){\circle{0.1}}
\put(8,2){\circle{0.1}}
\put(8,0){\circle{0.1}}
\put(8.3,0){\mbox{$\mathfrak{f}^n_20\cdots0$}}
\put(8.3,1.8){\mbox{$code(\mathfrak{f}^k_0)$}}
\put(8.3,1){\mbox{$\mathfrak{f}^n_10\cdots01$}}

\put(10,2){\circle{0.1}}
\put(11,1){\circle{0.1}}
\put(11,0){\circle{0.1}}
\put(12,2){\circle{0.1}}
\put(11.3,1){\mbox{$\mathfrak{f}^n_10\cdots01$}}
\put(11.3,0){\mbox{$\mathfrak{f}^n_20\cdots0$}}
\put(10.3,2){\mbox{$code(\mathfrak{f}^k_{0'})$}}
\put(12,2.3){\mbox{$code(\mathfrak{f}^k_0)$}}
\put(11,1){\vector(1,1){0.9}}
\put(11,1){\vector(-1,1){0.9}}
\put(11,0){\vector(0,1){0.9}}

\end{picture}\\

\unitlength1cm
\begin{picture}(5,3)
\thicklines

\put(1,0){\vector(-1,1){0.9}}
\put(1,0){\vector(1,1){0.9}}
\put(0,1){\vector(1,1){0.9}}
\put(2,1){\vector(-1,1){0.9}}
\put(0,1){\circle{0.1}}
\put(1,2){\circle{0.1}}
\put(1,0){\circle{0.1}}
\put(2,1){\circle{0.1}}
\put(1.3,0){\mbox{$\mathfrak{f}^n_20\cdots0$}}
\put(0.3,0.9){\mbox{$\mathfrak{f}^n_{1'}0\cdots01$}}
\put(0.8,2.2){\mbox{$code(\mathfrak{f}^k_0)$}}
\put(2,1.4){\mbox{$\mathfrak{f}^n_10\cdots01$}}

\put(4.5,0){\vector(-1,1){0.9}}
\put(4.5,0){\vector(1,1){0.9}}
\put(5.5,1){\vector(1,1){0.9}}
\put(3.5,1){\circle{0.1}}
\put(4.5,2){\circle{0.1}}
\put(4.5,0){\circle{0.1}}
\put(5.5,1){\circle{0.1}}
\put(6.5,2){\circle{0.1}}
\put(3.5,1){\vector(1,1){0.9}}
\put(5.5,1){\vector(-1,1){0.9}}
\put(4.8,0){\mbox{$\mathfrak{f}^n_20\cdots0$}}
\put(3.7,0.9){\mbox{$\mathfrak{f}^n_{1'}0\cdots01$}}
\put(3.6,2.2){\mbox{$code(\mathfrak{f}^k_{0'})$}}
\put(5.5,0.6){\mbox{$\mathfrak{f}^n_10\cdots01$}}
\put(5.3,2.2){\mbox{$code(\mathfrak{f}^k_{0})$}}

\put(7.7,2){\circle{0.1}}
\put(9.7,2){\circle{0.1}}
\put(7.7,1){\circle{0.1}}
\put(9.7,1){\circle{0.1}}
\put(8.7,0){\circle{0.1}}
\put(7.7,1){\vector(0,1){0.9}}
\put(9.7,1){\vector(0,1){0.9}}
\put(7.7,1){\vector(2,1){1.9}}
\put(9.7,1){\vector(-2,1){1.9}}
\put(8.7,0){\vector(1,1){0.9}}
\put(8.7,0){\vector(-1,1){0.9}}
\put(9,0){\mbox{$\mathfrak{f}^n_20\cdots0$}}
\put(6.3,1.2){\mbox{$\mathfrak{f}^n_{1'}0\cdots01$}}
\put(7,2.2){\mbox{$code(\mathfrak{f}^k_{0'})$}}
\put(8.2,0.9){\mbox{$\mathfrak{f}^n_10\cdots01$}}
\put(8.2,1.9){\mbox{$code(\mathfrak{f}^k_{0})$}}

\put(11.5,0){\vector(-1,1){0.9}}
\put(11.5,0){\vector(1,1){0.9}}
\put(12.5,1){\vector(-1,1){0.9}}
\put(10.5,1){\circle{0.1}}
\put(11.5,2){\circle{0.1}}
\put(11.5,0){\circle{0.1}}
\put(12.5,1){\circle{0.1}}
\put(10.5,1){\vector(1,1){0.9}}
\put(10.5,2){\circle{0.1}}
\put(12.5,2){\circle{0.1}}
\put(10.5,1){\vector(0,1){0.9}}
\put(12.5,1){\vector(0,1){0.9}}
\put(11.8,0){\mbox{$\mathfrak{f}^n_20\cdots0$}}
\put(10.8,1){\mbox{$\mathfrak{f}^n_{1'}0\cdots01$}}
\put(9.4,2.2){\mbox{$code(\mathfrak{f}^k_{0'})$}}
\put(12.3,0.5){\mbox{$\mathfrak{f}^n_10\cdots01$}}
\put(10.9,2.2){\mbox{$code(\mathfrak{f}^k_{0})$}}
\put(12.3,2.2){\mbox{$code(\mathfrak{f}^k_{0''})$}}

\end{picture}\\
\caption{} \label{cc2}
\end{figure}

\noindent The conditions (i)-(iii) of Theorem \ref{main2}, as concerns the mapping $F$, are obviously fulfilled. Note that it is important that all binary strings  have the size $m$ and we get intuitionistic models, but it is of no importance what is included in $code(\mathfrak f^k)$. There remains to define the mapping $G$. Let
$$G(\mathfrak{M}^m)=\left\{
                      \begin{array}{ll}
                        \mathfrak{M}^m\!\!\upharpoonright n\quad & \hbox{if $\mathfrak{M}^m\thicksim F(\mathfrak{M}^k)$ for some $\mathfrak{M}^k$;} \\
                        ? & \hbox{otherwise;}
                      \end{array}   \right.$$
where the $n$-model $\mathfrak{M}^m\!\!\upharpoonright n$ results from  $\mathfrak{M}^m$ by the restriction of all valuations $\mathfrak{f}^{m}_i$ of $\mathfrak{M}^m$ to $\{x_1,\dots,x_n\}$. Note that $\mathfrak{M}^m\thicksim F(\mathfrak{M}^k)$ yields $\mathfrak{M}^m\!\!\upharpoonright n\thicksim F(\mathfrak{M}^k)\!\!\upharpoonright n$ and hence
$G(\mathfrak{M}^m)\thicksim \sigma(\mathfrak{M}^k)$ by the definition of $F$ and $G$. Thus, (v) is fulfilled. If an  $m$-model $\mathfrak{M}^m$ is equivalent with some  $F(\mathfrak{M}^k)$, then any  its generated submodel $(\mathfrak{M}^m)_w $ is  equivalent with $F((\mathfrak{M}^k)_u)$, for some $u$; see Corollary \ref{lf4i}. Thus, (i)-(v) are fulfilled if the first line of the definition $G(\mathfrak{M}^m)$ applies.

There remains  to define $G(\mathfrak{M}^m)$ assuming $\mathfrak{M}^m$ is not equivalent with any  $F(\mathfrak{M}^k)$. Our definition is inductive with respect to the depth of $\mathfrak{M}^m$ to secure  (ii). Since (v) is irrelevant here and (iii) is quite obvious, we should require  $G(\mathfrak{M}^m)$ and $\mathfrak{M}^m$ were defined over the same frame,   and $G(\mathfrak{M}^m)$ should  be equivalent with a $\sigma$-model.

\underline{$\mathfrak{L}_1$-Models.} If $\mathfrak{M}^m=(\mathfrak{L}_1,\mathfrak f^m)$ is  equivalent with some   $F(\mathfrak{M}^k)$, then we easily get $\mathfrak{M}^m=F(\mathfrak{L}_1,\mathfrak f^k)$, for some $\mathfrak f^k$, and hence  $\mathfrak f^m=code(\mathfrak f^k)$. Thus, we have
 $$G(\mathfrak M^m)=(\mathfrak{L}_1,code(\mathfrak f^k)\!\!\upharpoonright n)=\sigma(\mathfrak{L}_1,\mathfrak f^k).$$
If $\mathfrak f^m\not=code(\mathfrak f^k)$, for any $\mathfrak f^k$, we could take $G(\mathfrak{M}^m)=\sigma(\mathfrak{M}^k)$, for any $k$-model $\mathfrak{M}^k$  over $\mathfrak{L}_1$. We can even take the same $\mathfrak{M}^k$, for each $\mathfrak{M}^m$.  So, we choose some $\mathfrak s^k$ and agree that, if $(\mathfrak{L}_1,\mathfrak f^m)$ is not equivalent with any $F(\mathfrak M^k)$, we have
 $$G(\mathfrak M^m)=\sigma(\mathfrak{L}_1,\mathfrak s^k)=(\mathfrak{L}_1,code(\mathfrak s^k)\!\!\upharpoonright n).$$
\underline{$\mathfrak{F}_2$- and $\mathfrak{L}_2$-Models.}
Let  $\mathfrak{M}^m=(\mathfrak{F}_2,\{\mathfrak{f}^m_0,\mathfrak{f}^m_{0'},\mathfrak{f}^m_1\})$ be a p-irreducible $m$-model over $\mathfrak{F}_2$. Assume that  $\mathfrak{M}^m\thicksim F(\mathfrak{M}^k)$, for some $\mathfrak{M}^k$. Then $\mathfrak{M}^m\equiv F(\mathfrak{M}^k)$ as the suffices $11$,$01$ and $00$ prevent any reduction of  $F(\mathfrak{M}^k)$, for any p-irreducible $\mathfrak{M}^k$ of the depth $3$, to a model over $\mathfrak F_2$. Thus $\mathfrak{M}^k=(\mathfrak{F}_2,\{\mathfrak{f}^k_0,\mathfrak{f}^k_{0'},\mathfrak{f}^k_1\})$, and $\mathfrak{f}^m_0=code(\mathfrak{f}^k_0)=\mathfrak{f}^n_0\mathfrak{g}^n_1\mathfrak h^n_1\dots\mathfrak{g}^n_{2^n}\mathfrak{h}^n_{2^n}11$, and    $\mathfrak{f}^m_{0'}=code(\mathfrak{f}^k_{0'})=\mathfrak f^n_{0'}\mathfrak{g'}^n_1\mathfrak {h'}^n_1\dots\mathfrak{g'}^n_{2^n}\mathfrak{h'}^n_{2^n}11$, and $\mathfrak{f}^m_1=\mathfrak f_1^n0\cdots01$ where $\sigma(\mathfrak M^k)=(\mathfrak{F}_2,\{\mathfrak{f}^n_0,\mathfrak{f}^n_{0'},\mathfrak{f}^n_1\})=G(\mathfrak M^m)$. By the definition of $code()$, see the Figure \ref{code}, the pair $\mathfrak f^n_0\mathfrak f^n_1$ must occur in  $code(\mathfrak{f}^k_{0'})$ and $\mathfrak f^n_{0'}\mathfrak f^n_1$ in
$code(\mathfrak{f}^k_{0})$. We would like to preserve this condition even if $\mathfrak{M}^m$ is not equivalent  with any $F(\mathfrak{M}^k)$.

Let $\mathfrak{M}^m$ be not equivalent with any $F(\mathfrak{M}^k)$. We have  $G(\mathfrak{L}_1,\mathfrak{f}^m_0)=\sigma(\mathfrak{L}_1,\mathfrak{f}^k_0)=(\mathfrak{L}_1,\mathfrak{f}^n_0) $ and $G(\mathfrak{L}_1,\mathfrak{f}^m_{0'})=\sigma(\mathfrak{L}_1,\{\mathfrak{f}^k_{0'}\})=(\mathfrak{L}_1,\mathfrak{f}^n_{0'}) $,  for some $\mathfrak{f}^k_0,\mathfrak{f}^k_{0'}$ and some $\mathfrak{f}^n_0,\mathfrak{f}^n_{0'}$.  Take any  $\mathfrak{f}^k_1$ such that $(\mathfrak{F}_2,\{\mathfrak{f}^k_0,\mathfrak{f}^k_{0'},\mathfrak{f}^k_1\})$ is an intuitionistic model and define
$$ G(\mathfrak{M}^m)=\sigma((\mathfrak{F}_2,\{\mathfrak{f}^k_0,\mathfrak{f}^k_{0'},\mathfrak{f}^k_1\})=(\mathfrak{F}_2, \{\mathfrak{f}^n_0,\mathfrak{f}^n_{0'},\mathfrak{f}^n_1\}), \quad \mbox{for some } \mathfrak{f}^n_1.$$
We obviously have $\mathfrak f^n_0\mathfrak f^n_1$  in  $code(\mathfrak{f}^k_{0'})$ and $\mathfrak f^n_{0'}\mathfrak f^n_1$ in $code(\mathfrak{f}^k_{0})$. For the choice of $\mathfrak{f}^k_1$,  use the lexicographical order which would also be sufficient to satisfy the condition (iii) of Theorem \ref{main2} saying that isomorphic models have isomorphic images.

Let $\mathfrak{M}^m=(\mathfrak{L}_2,\{\mathfrak{f}^m_0,\mathfrak{f}^m_1\})$ be a p-irreducible $m$-model over $\mathfrak{L}_2$ and   $\mathfrak{M}^m\thicksim F(\mathfrak M^k)$, for some $\mathfrak M^k$. We have two possibilities: either $\mathfrak M^k$ is a model over $\mathfrak L_2$ or it is a model over $\mathfrak F_2$. In the first case,  $G(\mathfrak{M}^m)=\sigma(\mathfrak{M}^k)=\sigma(\mathfrak{L}_2,\{\mathfrak{f}^k_0,\mathfrak{f}^k_1\})=
(\mathfrak{L}_2,\{\mathfrak{f}^k_0,\mathfrak{f}^k_1\})$, for some $\mathfrak{f}^k_0,\mathfrak{f}^k_1,\mathfrak{f}^n_0,\mathfrak{f}^n_1$, and $\mathfrak{f}^m_0=code(\mathfrak{f}^k_0)$. It is clear that
$\mathfrak f^n_0\mathfrak f^n_1$  occurs in  $code(\mathfrak{f}^k_{0})$. The second possibilities is  worse. We have $\mathfrak{M}^k=(\mathfrak{F}_2,\{\mathfrak{f}^k_0,\mathfrak{f}^k_{0'},\mathfrak{f}^k_1\})$ and $\mathfrak{f}^m_0=code(\mathfrak{f}^k_0)=code(\mathfrak{f}^k_{0'})$ (which does not yield $\mathfrak{f}^k_0=\mathfrak{f}^k_{0'}$). Since $\sigma(\mathfrak{M}^k)=(\mathfrak{F}_2,\{\mathfrak{f}^n_0,\mathfrak{f}^n_{0'},\mathfrak{f}^n_1\})$ and $\mathfrak{f}^n_0=\mathfrak{f}^n_{0'}$, we take $G(\mathfrak{M}^m)=(\mathfrak{L}_2,\{\mathfrak{f}^n_0,\mathfrak{f}^n_1\})=\mathfrak{M}^m\!\!\upharpoonright n\thicksim\sigma(\mathfrak{M}^k)$ and have $\mathfrak f^n_0\mathfrak f^n_1$  in  $code(\mathfrak{f}^k_{0})$.

Suppose that $\mathfrak{M}^m=(\mathfrak{L}_2,\{\mathfrak{f}^m_0,\mathfrak{f}^m_1\})$ is not equivalent with  any $F(\mathfrak M^k)$. Then   $G(\mathfrak{L}_1,\mathfrak{f}^m_0)=\sigma(\mathfrak{L}_1,\mathfrak{f}^k_0)=(\mathfrak{L}_1,\mathfrak{f}^n_0), $  for some $\mathfrak{f}^k_0,\mathfrak{f}^n_0$. Let us take any $k$-valuation $\mathfrak{f}^k_1$ such that $(\mathfrak{L}_2,\{\mathfrak{f}^k_0,\mathfrak{f}^k_1\})$ is an intuitionistic model and define
$ G(\mathfrak{M}^m)=\sigma((\mathfrak{L}_2,\{\mathfrak{f}^k_0,\mathfrak{f}^k_1\})=(\mathfrak{L}_2, \{\mathfrak{f}^n_0,\mathfrak{f}^n_1\}), \quad \mbox{for some } \mathfrak{f}^n_1.$
We obviously have $\mathfrak f^n_0\mathfrak f^n_1$  in   $code(\mathfrak{f}^k_{0})$.

\underline{$\mathfrak{C}_5$- and $\mathfrak{Y}_3$-Models.}
Let $\mathfrak{M}^m$ be a p-irreducible model over $\mathfrak{C}_5$ (or over $\mathfrak{Y}_3$)  and assume  it is not equivalent with any $F(\mathfrak{M}^k)$. We have  (see Figure \ref{c}-\ref{cc2})\\ $G((\mathfrak{M}^m)_1)\thicksim\sigma(\mathfrak{M}^k)=(\mathfrak F_2,\{\mathfrak{f}^n_0,\mathfrak{f}^n_{0'},\mathfrak{f}^n_1\})$, and \\ $G((\mathfrak{M}^m)_{1'})\thicksim\sigma(\mathfrak{M'}^k)=(\mathfrak F_2,\{\mathfrak{f}^n_0,\mathfrak{f}^n_{0''},\mathfrak{f}^n_{1'}\})$ (or $\sigma(\mathfrak{M'}^k)=(\mathfrak F_2,\{\mathfrak{f}^n_0,\mathfrak{f}^n_{0'},\mathfrak{f}^n_{1'}\})$)\\ for some $\mathfrak{M}^k=(\mathfrak F_2,\{\mathfrak{f}^k_0,\mathfrak{f}^k_{0''},\mathfrak{f}^k_{1'}\})$ and  $\mathfrak{M'}^k=(\mathfrak F_2,\{\mathfrak{f}^k_0,\mathfrak{f}^k_{0''},\mathfrak{f}^k_{1'}\})$ (or $(\mathfrak F_2,\{\mathfrak{f}^k_0,\mathfrak{f}^k_{0'},\mathfrak{f}^k_{1'}\})$), which are $k$-model over $\mathfrak F_2$.
We have $\mathfrak{f}^n_{0'}\mathfrak{f}^n_{1'}$ and $\mathfrak{f}^n_{0''}\mathfrak{f}^n_{1}$ (or $\mathfrak{f}^n_{0'}\mathfrak{f}^n_{1}$) in $code(\mathfrak{f}^k_0)$ (see Figure \ref{code}). Then, for some $\mathfrak{g}^k_1,\mathfrak{h}^k_1,\mathfrak{g}^k_2,\mathfrak{h}^k_2$, we can define  $\mathfrak{C}_5$-models such that

\begin{figure}[H]
\unitlength1cm
\begin{picture}(3,2)
\thicklines
\put(0,1){\mbox{$\sigma\Bigl($}}
\put(2,0){\vector(-1,1){0.9}}
\put(2,0){\vector(1,1){0.9}}
\put(3,1){\vector(-1,1){0.9}}
\put(1,1){\circle{0.1}}
\put(2,2){\circle{0.1}}
\put(2,0){\circle{0.1}}
\put(3,1){\circle{0.1}}
\put(1,1){\vector(1,1){0.9}}
\put(1,2){\circle{0.1}}
\put(3,2){\circle{0.1}}
\put(1,1){\vector(0,1){0.9}}
\put(3,1){\vector(0,1){0.9}}
\put(2.3,0){\mbox{$0\cdots0$}}
\put(1.3,1){\mbox{$\mathfrak{h}^k_{1}$}}
\put(0.5,2){\mbox{$\mathfrak{g}^k_{1}$}}
\put(3.3,1){\mbox{$\mathfrak{h}^k_2$}}
\put(2.3,2){\mbox{$\mathfrak{f}^k_{0}$}}
\put(3.3,2){\mbox{$\mathfrak{g}^k_{2}$}}
\put(4,1){\mbox{$\Bigr)\quad=$}}

\put(7,0){\vector(-1,1){0.9}}
\put(7,0){\vector(1,1){0.9}}
\put(8,1){\vector(-1,1){0.9}}
\put(6,1){\circle{0.1}}
\put(7,2){\circle{0.1}}
\put(7,0){\circle{0.1}}
\put(8,1){\circle{0.1}}
\put(6,1){\vector(1,1){0.9}}
\put(6,2){\circle{0.1}}
\put(8,2){\circle{0.1}}
\put(6,1){\vector(0,1){0.9}}
\put(8,1){\vector(0,1){0.9}}
\put(7.6,0){\mbox{$\mathfrak{f}^n_2$}}
\put(8.5,1){\mbox{$\mathfrak{f}^n_{1}$\qquad, for some  $\mathfrak f_2^n$.}}
\put(5.4,2){\mbox{$\mathfrak{f}^n_{0'}$}}
\put(5.3,1){\mbox{$\mathfrak{f}^n_{1'}$}}
\put(7.3,2){\mbox{$\mathfrak{f}^n_{0}$}}
\put(8.3,2){\mbox{$\mathfrak{f}^n_{0''}\  (\mbox{or}\ \mathfrak{f}^n_{0'})$}}

\end{picture}\\
\caption{} \label{cc5}
\end{figure}
The last $n$-model over $\mathfrak C_5$ can be taken as $G(\mathfrak{M}^m)$. In the case of $\mathfrak Y_3$ we have $\mathfrak f^n_{0'}=\mathfrak f^n_{0''}$ and we can reduce the $\mathfrak C_5$-model to a $\mathfrak Y_3$-model taking $0'=0''$.

\underline{$\mathfrak{R}_2$- and $\mathfrak Y_2$-Models.}
Let $\mathfrak{M}^m$ be a p-irreducible model over $\mathfrak{R}_2$  and assume  it is not equivalent with any $F(\mathfrak{M}^k)$. By our inductive hypothesis, we have  (see  Figure \ref{c}-\ref{cc2})\\
$G((\mathfrak{M}^m)_1)\thicksim\sigma(\mathfrak{M}^k)=(\mathfrak F_2,\{\mathfrak{f}^n_0,\mathfrak{f}^n_{0'},\mathfrak{f}^n_{1'}\})$,  $G((\mathfrak{M}^m)_{1'})\thicksim\sigma(\mathfrak{M'}^k)=(\mathfrak F_2,\{\mathfrak{f}^n_0,\mathfrak{f}^n_{0''},\mathfrak{f}^n_{1}\})$;\\ for some $\mathfrak{M}^k=(\mathfrak F_2,\{\mathfrak{f}^k_0,\mathfrak{f}^k_{0'},\mathfrak{f}^k_{1'}\})$ and  $\mathfrak{M'}^k=(\mathfrak F_2,\{\mathfrak{f}^k_0,\mathfrak{f}^k_{0''},\mathfrak{f}^k_{1}\})$.
We have $\mathfrak{f}^n_{0'}=\mathfrak{f}^n_{0}=\mathfrak{f}^n_{0''}$ and $\mathfrak{f}^n_{0}\mathfrak{f}^n_{1'}$, $\mathfrak{f}^n_{0}\mathfrak{f}^n_{1}$ occur in $code(\mathfrak{f}^k_0)$ (see Figure \ref{code}). Then for some $\mathfrak{g}^k_1,\mathfrak{h}^k_1,\mathfrak{g}^k_2,\mathfrak{h}^k_2,\mathfrak f_2^n$

\begin{figure}[H]
\unitlength1cm
\begin{picture}(3,2)
\thicklines
\put(0,1){\mbox{$\sigma\Bigl($}}
\put(2,0){\vector(-1,1){0.9}}
\put(2,0){\vector(1,1){0.9}}
\put(3,1){\vector(-1,1){0.9}}
\put(1,1){\circle{0.1}}
\put(2,2){\circle{0.1}}
\put(2,0){\circle{0.1}}
\put(3,1){\circle{0.1}}
\put(1,1){\vector(1,1){0.9}}
\put(1,2){\circle{0.1}}
\put(3,2){\circle{0.1}}
\put(1,1){\vector(0,1){0.9}}
\put(3,1){\vector(0,1){0.9}}
\put(2.3,0){\mbox{$0\cdots0$}}
\put(1.3,1){\mbox{$\mathfrak{h}^k_{1}$}}
\put(0.5,2){\mbox{$\mathfrak{g}^k_{1}$}}
\put(3.3,1){\mbox{$\mathfrak{h}^k_2$}}
\put(2.3,2){\mbox{$\mathfrak{f}^k_{0}$}}
\put(3.3,2){\mbox{$\mathfrak{g}^k_{2}$}}
\put(4,1){\mbox{$\Bigr)\quad=$}}

\put(7,0){\vector(-1,1){0.9}}
\put(7,0){\vector(1,1){0.9}}
\put(8,1){\vector(-1,1){0.9}}
\put(6,1){\circle{0.1}}
\put(7,2){\circle{0.1}}
\put(7,0){\circle{0.1}}
\put(8,1){\circle{0.1}}
\put(6,1){\vector(1,1){0.9}}
\put(6,2){\circle{0.1}}
\put(8,2){\circle{0.1}}
\put(6,1){\vector(0,1){0.9}}
\put(8,1){\vector(0,1){0.9}}
\put(7.6,0){\mbox{$\mathfrak{f}^n_2$}}
\put(8.5,1){\mbox{$\thicksim$}}
\put(7.5,1){\mbox{$\mathfrak{f}^n_{1}$}}
\put(5.4,2){\mbox{$\mathfrak{f}^n_{0'}$}}
\put(5.3,1){\mbox{$\mathfrak{f}^n_{1'}$}}
\put(7.3,2){\mbox{$\mathfrak{f}^n_{0}$}}
\put(8.3,2){\mbox{$\mathfrak{f}^n_{0''}$}}

\put(10,0){\vector(-1,1){0.9}}
\put(10,0){\vector(1,1){0.9}}
\put(11,1){\vector(-1,1){0.9}}
\put(9,1){\circle{0.1}}
\put(10,2){\circle{0.1}}
\put(10,0){\circle{0.1}}
\put(11,1){\circle{0.1}}
\put(9,1){\vector(1,1){0.9}}
\put(10.6,0){\mbox{$\mathfrak{f}^n_2$}}
\put(11.2,1){\mbox{$\mathfrak{f}^n_{1}$}}
\put(9.3,1){\mbox{$\mathfrak{f}^n_{1'}$}}
\put(10.3,2){\mbox{$\mathfrak{f}^n_{0}$}}

\end{picture}\\
\caption{} \label{cc6}
\end{figure}
\noindent The received $n$-model over $\mathfrak R_2$ can be taken as $G(\mathfrak M^m)$. A similar argument (as we used above for $\mathfrak R_2$-models) applies for models  over $\mathfrak{Y}_2$.

\underline{$+\mathfrak{F}_2$- and $\mathfrak{L}_3$-Models.}
Let $\mathfrak{M}^m$ be a p-irreducible model over $\mathfrak{L}_3$, or over $+\mathfrak{F}_2$,  and assume that it is not equivalent with any $F(\mathfrak{M}^k)$. We can also assume $G((\mathfrak{M}^m)_1)$ has been defined (see Figure \ref{c}-\ref{cc2}). If $G((\mathfrak{M}^m)_1)=\sigma(\mathfrak{M}^k)$, for some $\mathfrak{M}^k$, then $\mathfrak{M}^k$ is a model over $\mathfrak L_2$, or over $\mathfrak F_2$,  and we can take $G(\mathfrak{M}^m)=\sigma(\mathfrak N^k)$ where $\mathfrak N^k$ is an extension of  $\mathfrak{M}^k$ with any $k$-valuation $\mathfrak{f}^k_2$ (the extended model must be an intuitionistic model). Then $\mathfrak N^k$ is a model over $\mathfrak{L}_3$, or over $+\mathfrak{F}_2$, as required. The only  problem  rises  if $G((\mathfrak{M}^m)_1)$ is equivalent with some $\sigma(\mathfrak{M}^k)$ but it is not  any $\sigma$-model. This can happen if $(\mathfrak{M}^m)_1$ is a model over $\mathfrak{L}_2$ and $\mathfrak{M}^k$ over $\mathfrak{F}_2$. Then extending $\mathfrak{M}^k$ with $\mathfrak{f}^k_2$ we get a model $\mathfrak{N}^k$ over $+\mathfrak{F}_2$ such that $\sigma(\mathfrak{N}^k)$ is equivalent with an extension of $G((\mathfrak{M}^m)_1)$ to a model over $\mathfrak L_3$;  and the model over $\mathfrak L_3$ is $G(\mathfrak{M}^m)$.
\end{proof}
\subsection{Projective Formulas in Locally Tabular Logics.}\label{PFLTL}
Projective formulas  are useful in the area of unification. By Ghilardi \cite{Ghi2}, any consistent Rasiowa-Harrop formula (e.g. $\neg B$) is projective in {\sf INT} and, consequently, in any intermediate logic, see Lemma \ref{mon}.  One can easily produce other examples of projective formulas. Although  conjunctions of projective formulas may be not projective, for instance $(x_1\rightarrow x_2\lor x_3)\land x_1$ is not, we have
\begin{lemma}\label{niu1} If $A=\bigwedge_{i=1}^s(B_i\leftrightarrow z_{i})$ for some distinct variables $z_1,\dots,z_s$ which do not occur in the  formulas $B_1,\dots,B_s$, where $s\geq 1$, then $A$ is projective in {\sf INT}.\end{lemma}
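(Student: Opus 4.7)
The plan is to write down an explicit projective unifier by using the fact that each $z_i$ does not appear in any $B_j$. Define the substitution $\varepsilon$ by
\[
\varepsilon(z_i)=B_i\ \ (i=1,\dots,s),\qquad \varepsilon(x)=x\ \text{ for every other variable }x.
\]
This is a legitimate substitution since the $z_i$'s are distinct, and it extends to an endomorphism of $\mathsf{Fm}$ in the usual way.

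First I would verify that $\varepsilon$ is an $\mathsf{INT}$-unifier of $A$. Since $z_1,\dots,z_s$ do not occur in any $B_j$, we have $\varepsilon(B_j)=B_j$ for each $j$. Hence
\[
\varepsilon(A)=\bigwedge_{i=1}^{s}\bigl(\varepsilon(B_i)\leftrightarrow\varepsilon(z_i)\bigr)=\bigwedge_{i=1}^{s}(B_i\leftrightarrow B_i),
\]
which is a theorem of $\mathsf{INT}$. So $\varepsilon$ unifies $A$.

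Next I would check the projectivity condition $A\vdash_{\mathsf{INT}}\varepsilon(x)\leftrightarrow x$ for every variable $x$. For any variable $x\notin\{z_1,\dots,z_s\}$ this is trivial since $\varepsilon(x)=x$. For $x=z_i$, the required equivalence $B_i\leftrightarrow z_i$ is literally one of the conjuncts of $A$, and therefore is derivable from $A$ in $\mathsf{INT}$ by the deduction theorem and propositional reasoning.

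There is no real obstacle here: the whole point of the hypothesis that the $z_i$'s are fresh with respect to the $B_j$'s is precisely to make $\varepsilon$ fix every $B_j$, and the conjuncts $B_i\leftrightarrow z_i$ provide exactly the equivalences needed for projectivity. Hence $\varepsilon$ is a projective $\mathsf{INT}$-unifier of $A$, and $A$ is projective in $\mathsf{INT}$ as claimed.
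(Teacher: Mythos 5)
Your proof is correct and uses exactly the substitution the paper uses, namely $\varepsilon\colon z_1/B_1\cdots z_s/B_s$; the paper merely asserts this is a projective unifier, while you spell out the verification. Nothing further is needed.
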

\begin{proof} Note that $\varepsilon\colon z_1\slash B_1\cdots z_s\slash B_s$ is a projective unifier for $A$.\end{proof}
For locally tabular logics, we can show
\begin{theorem}\label{n5}
Let $\sigma\colon\{x_1,\dots,x_n\}\to \mathsf{Fm}^k$  and  $\varepsilon\colon\{x_1,\dots,x_n\}\to \mathsf{Fm}^n$, for some $k,n\geq 0$. Then $\varepsilon$ is a projective unifier for $A_\sigma$ in the logic {\sf L} if and only if \\
(iv) for every $\mathfrak{M}^n\in\mathbf{M}^n$ there is a $\mathfrak{M}^k\in\mathbf{M}^k$ such that $\varepsilon(\mathfrak{M}^n) \thicksim \sigma(\mathfrak{M}^k)$;\\
(v) $\varepsilon(\sigma(\mathfrak{M}^k))\thicksim\sigma(\mathfrak{M}^k)$, for every  $\mathfrak{M}^k\in \mathbf{M}^k$, see Figure \ref{p1}.
\end{theorem}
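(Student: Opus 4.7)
The plan is to reduce both directions to the semantic characterisation of unifiers of $A_\sigma$ from the previous subsection, exploiting the fact that $\sigma(\mathfrak{M}^k)\Vdash A_\sigma$ for every $\mathfrak{M}^k\in\mathbf{M}^k$, which is immediate from the very definition of $A_\sigma$.

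For the forward direction, condition (iv) is literally Corollary \ref{n3i} applied to the unifier $\varepsilon$ of $A_\sigma$ (with $m=n$). To obtain (v), fix any $\mathfrak{M}^k\in\mathbf{M}^k$; since $\sigma(\mathfrak{M}^k)\Vdash A_\sigma$ and projectivity of $\varepsilon$ gives $A_\sigma\vdash_{\mathsf L}\varepsilon(B)\leftrightarrow B$ for every formula $B$, we have $\sigma(\mathfrak{M}^k)\Vdash B\Leftrightarrow\sigma(\mathfrak{M}^k)\Vdash\varepsilon(B)$. By Lemma \ref{sigma0} the right-hand side rewrites as $\varepsilon(\sigma(\mathfrak{M}^k))\Vdash B$, so $\sigma(\mathfrak{M}^k)$ and $\varepsilon(\sigma(\mathfrak{M}^k))$ have identical theories and are therefore $\thicksim$-equivalent, which is precisely (v).

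For the converse, (iv) combined with Corollary \ref{n3i} already secures that $\varepsilon$ is a unifier of $A_\sigma$, so it only remains to verify $A_\sigma\vdash_{\mathsf L}\varepsilon(x_i)\leftrightarrow x_i$ for each $i$. By the deduction theorem and the forcing semantics this amounts to showing that for every $n$-model $\mathfrak{N}^n$ and every $u\geq w$ with $(\mathfrak{N}^n)_w\Vdash A_\sigma$, the submodels $(\mathfrak{N}^n)_u$ and $(\varepsilon(\mathfrak{N}^n))_u$ assign the same truth value to $x_i$ at the root. From $(\mathfrak{N}^n)_w\Vdash A_\sigma$ I deduce $(\mathfrak{N}^n)_w\thicksim\sigma(\mathfrak{M}^k)$ for some $\mathfrak{M}^k$, and Corollary \ref{lf4i} together with Lemma \ref{sigmai}(ii) then produces, for each $u\geq w$, a point $v$ in $\mathfrak{M}^k$ with $(\mathfrak{N}^n)_u\thicksim\sigma((\mathfrak{M}^k)_v)$.

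The finishing move is a chain of $\thicksim$-equivalences in $\mathbf{M}^n$. Applying $\varepsilon$ via Lemma \ref{sigmai}(iii) and pushing it through the submodel via Lemma \ref{sigmai}(ii) gives $(\varepsilon(\mathfrak{N}^n))_u=\varepsilon((\mathfrak{N}^n)_u)\thicksim\varepsilon(\sigma((\mathfrak{M}^k)_v))$, and hypothesis (v) applied to $(\mathfrak{M}^k)_v\in\mathbf{M}^k$ continues the chain by $\thicksim\sigma((\mathfrak{M}^k)_v)\thicksim(\mathfrak{N}^n)_u$. Hence $(\varepsilon(\mathfrak{N}^n))_u$ and $(\mathfrak{N}^n)_u$ share their $\mathsf{Th}$, in particular they agree on the atom $x_i$, and Lemma \ref{sigma0} converts this into the required forcing equivalence $\mathfrak{N}^n\Vdash_u x_i\Leftrightarrow\mathfrak{N}^n\Vdash_u\varepsilon(x_i)$. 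The only real obstacle is the careful bookkeeping of these four $\thicksim$-equivalences, which arise from three different sources (hypothesis (v), the preservation of $\thicksim$ by $\varepsilon$, and the hypothesis on $w$); once they are lined up correctly, the argument is essentially formal.
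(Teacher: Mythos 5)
Your proof is correct. The forward direction matches the paper in substance: the paper gets (iv) from Corollary \ref{n3i} and (v) from Lemma \ref{proj} (which gives $\sigma\circ\varepsilon=_{\mathsf L}\sigma$, hence $\varepsilon(\sigma(\mathfrak{M}^k))\thicksim\sigma(\mathfrak{M}^k)$ by Lemma \ref{lfs}), whereas you derive (v) directly from $\sigma(\mathfrak{M}^k)\Vdash A_\sigma$ and $A_\sigma\vdash_{\mathsf L}\varepsilon(B)\leftrightarrow B$ — an equivalent computation. The real divergence is in the converse. The paper's argument is a two-liner: from (v) and Lemma \ref{sigma0} it reads off $\sigma(\mathfrak{M}^k)\Vdash\varepsilon(x_i)\Leftrightarrow\sigma(\mathfrak{M}^k)\Vdash x_i$, concludes that $\sigma$ is a unifier of $\varepsilon(x_i)\leftrightarrow x_i$, and then invokes Corollary \ref{in2} (that $A_\sigma\vdash_{\mathsf L}B$ iff $\sigma$ unifies $B$) to get $A_\sigma\vdash_{\mathsf L}\varepsilon(x_i)\leftrightarrow x_i$. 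You instead unfold the Kripke semantics of $A_\sigma\to(\varepsilon(x_i)\leftrightarrow x_i)$ by hand: identify every point forcing $A_\sigma$ with a $\sigma$-model (this step tacitly uses Theorem \ref{pat} to turn the theory inclusion coming from a disjunct $\Delta(\sigma(\mathfrak{M}^k))$ of $A_\sigma$ into an equivalence with a generated submodel $\sigma((\mathfrak{M}^k)_v)$ — worth saying explicitly, but it is exactly the content already packaged in Corollary \ref{n3i}), and then run the chain of $\thicksim$-equivalences through Lemma \ref{sigmai} and hypothesis (v). Both routes are sound; the paper's buys brevity by reusing Corollary \ref{in2}, while yours makes visible why the hypothesis (v) on root-generated $k$-models suffices at every point $u\geq w$, at the cost of re-proving the relevant direction of Corollary \ref{in2} inside the argument.
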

\begin{figure}
\unitlength1cm
\begin{picture}(5,2)
\thicklines

\put(8,2){\vector(0,-1){1.9}}

\put(8,2){\vector(-1,-1){1.9}}
\put(8,0){\vector(-1,0){1.9}}
\put(8,0){\circle{0.1}}
\put(6,0){\circle{0.1}}
\put(8,2){\circle{0.1}}
\put(8.3,2){\mbox{$\mathbf{M}^k\!\slash\!\!\thicksim$}}
\put(5,0){\mbox{$\mathbf{M}^n\!\slash\!\!\thicksim$}}
\put(8.3,0){\mbox{$\mathbf{M}^n\!\slash\!\!\thicksim$}}
\put(8.1,1){\mbox{$\sigma$}}
\put(7.1,0.1){\mbox{$\varepsilon$}}
\put(6.7,1.2){\mbox{$\sigma$}}

\end{picture}
\caption{Projective Unifiers}\label{p1}
\end{figure}
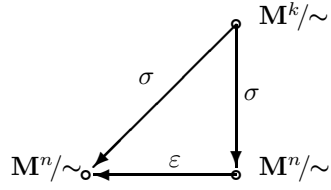
\begin{proof} Assume (iv) and (v). Then $\varepsilon$ is a unifier for $A_\sigma$,  by Corollary \ref{n3i}. We need to show $A_\sigma\vdash\varepsilon(x_i)\leftrightarrow x_i$, for each $i$. Let $\mathfrak{M}^k$ be any $k$-model. Then
$\varepsilon(\sigma(\mathfrak{M}^k))\thicksim\sigma(\mathfrak{M}^k)$ by (v). Hence, see Lemma \ref{sigma0}, we get
$$\sigma(\mathfrak{M}^k)\Vdash\varepsilon(x_i) \ \Leftrightarrow \ \varepsilon(\sigma(\mathfrak{M}^k))\Vdash x_i \ \Leftrightarrow \
\sigma(\mathfrak{M}^k)\Vdash x_i.$$
By Lemma \ref{sigma0}, $\sigma$ is a unifier for $\varepsilon(x_i)\leftrightarrow x_i$ and , by Corollary \ref{in2}, $A_\sigma\vdash\varepsilon(x_i)\leftrightarrow x_i$.

If $\varepsilon$ is a projective unifier for $A_\sigma$, we get (iv) and (v) using Lemma \ref{n1i} and  \ref{proj}.\end{proof}
\begin{corollary}\label{n7} A substitution $\varepsilon\colon\{x_1,\dots,x_n\}\to \mathsf{Fm}^n$ is a projective unifier for $A_\varepsilon$ in  {\sf L} iff $\varepsilon\circ\varepsilon=_\mathsf{L}\varepsilon$.\\
\end{corollary}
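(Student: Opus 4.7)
The plan is to derive the corollary directly from Theorem \ref{n5} by specializing to the case $\sigma = \varepsilon$ and $k = n$. With this specialization, condition (iv) of Theorem \ref{n5} becomes: for every $\mathfrak{M}^n \in \mathbf{M}^n$ there exists $\mathfrak{M}^k \in \mathbf{M}^n$ such that $\varepsilon(\mathfrak{M}^n) \thicksim \varepsilon(\mathfrak{M}^k)$. This is trivially satisfied by the choice $\mathfrak{M}^k = \mathfrak{M}^n$, so (iv) carries no content in this setting. Hence the whole statement reduces to showing that condition (v), namely $\varepsilon(\varepsilon(\mathfrak{M}^n)) \thicksim \varepsilon(\mathfrak{M}^n)$ for every $\mathfrak{M}^n \in \mathbf{M}^n$, is equivalent to $\varepsilon \circ \varepsilon =_{\mathsf L} \varepsilon$.

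The key observation for the equivalence is that the $\sigma$-model construction is functorial in substitutions: for any substitution $\tau$ on $n$-models and any $\mathfrak{M}^n$, one has $(\varepsilon \circ \varepsilon)(\mathfrak{M}^n) = \varepsilon(\varepsilon(\mathfrak{M}^n))$. This follows immediately from Lemma \ref{sigma0}, since both sides have valuation $x_i \in V(w) \Leftrightarrow \mathfrak{M}^n \Vdash_w \varepsilon(\varepsilon(x_i))$. Once this identity is noted, Lemma \ref{lfs} finishes the equivalence: $\varepsilon \circ \varepsilon =_{\mathsf L} \varepsilon$ iff $H_{\varepsilon \circ \varepsilon} \thicksim H_\varepsilon$, i.e., $\varepsilon(\varepsilon(\mathfrak{M}^n)) \thicksim \varepsilon(\mathfrak{M}^n)$ for every $\mathfrak{M}^n$, which is exactly (v).

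Assembling the two directions: If $\varepsilon$ is a projective unifier of $A_\varepsilon$, then Theorem \ref{n5} gives (v), and the above remarks yield $\varepsilon \circ \varepsilon =_{\mathsf L} \varepsilon$. Conversely, if $\varepsilon \circ \varepsilon =_{\mathsf L} \varepsilon$, then (v) holds, (iv) holds trivially with $\mathfrak{M}^k = \mathfrak{M}^n$, and Theorem \ref{n5} identifies $\varepsilon$ as a projective unifier of $A_\varepsilon$.

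There is essentially no obstacle here; the proof is a pure book-keeping exercise. The only point worth double-checking is the functoriality $(\varepsilon \circ \varepsilon)(\mathfrak{M}^n) = \varepsilon(\varepsilon(\mathfrak{M}^n))$, which is the unique substantive step and which is an immediate consequence of the defining equivalence $\sigma(\mathfrak{M}^k)\Vdash_w A \Leftrightarrow \mathfrak{M}^k \Vdash_w \sigma(A)$ from Lemma \ref{sigma0}.
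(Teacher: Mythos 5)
Your proof is correct and follows the route the paper intends: the corollary is stated without proof as an immediate specialization of Theorem \ref{n5} to $\sigma=\varepsilon$, $k=n$, where (iv) is vacuous and (v) becomes idempotency via Lemma \ref{sigma0} and Lemma \ref{lfs}. The one substantive step you flag, $(\varepsilon\circ\varepsilon)(\mathfrak{M}^n)=\varepsilon(\varepsilon(\mathfrak{M}^n))$, is verified correctly.
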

\begin{corollary}\label{n8} If $\varepsilon\colon\{x_1,\dots,x_n\}\to \mathsf{Fm}^n$ is an {\sf L}-projective unifier for a formula $A\in \mathsf{Fm^n}$,  then $A=_{\sf L}\bigwedge_{i=1}^n\bigl(x_i\leftrightarrow\varepsilon(x_i)\bigr).$\end{corollary}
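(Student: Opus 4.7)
My plan is to prove both directions of the equivalence separately, using the two defining properties of a projective unifier and the standard replacement theorem for intuitionistic (hence any intermediate) logic.

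For the direction $A \vdash_{\mathsf L}\bigwedge_{i=1}^n(x_i\leftrightarrow\varepsilon(x_i))$: this is immediate from the definition of projectivity given just before Lemma \ref{proj}. Since $\varepsilon$ is a projective unifier for $A$, we have $A\vdash_{\mathsf L}\varepsilon(x_i)\leftrightarrow x_i$ for every variable $x_i$, so in particular for $i=1,\dots,n$; conjoining these $n$ statements and applying the deduction theorem $(DT)$ in reverse gives the desired implication.

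For the converse direction $\bigwedge_{i=1}^n(x_i\leftrightarrow\varepsilon(x_i))\vdash_{\mathsf L}A$: here I would first invoke the standard \emph{replacement theorem} for {\sf INT} (and hence every intermediate logic): whenever $\vdash_{\mathsf L}B_i\leftrightarrow C_i$ for $i=1,\dots,n$, then $\vdash_{\mathsf L}D(B_1,\dots,B_n)\leftrightarrow D(C_1,\dots,C_n)$ for every formula $D(x_1,\dots,x_n)$. Applied under the hypothesis $\bigwedge_{i=1}^n(x_i\leftrightarrow\varepsilon(x_i))$ via $(DT)$, this yields
$$\bigwedge_{i=1}^n\bigl(x_i\leftrightarrow\varepsilon(x_i)\bigr)\ \vdash_{\mathsf L}\ A\leftrightarrow\varepsilon(A).$$
Because $\varepsilon$ is an {\sf L}-unifier for $A$, we have $\varepsilon(A)\in\mathsf L$, and the equivalence above then gives $\bigwedge_{i=1}^n(x_i\leftrightarrow\varepsilon(x_i))\vdash_{\mathsf L}A$, as required.

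Combining the two directions yields $A=_{\mathsf L}\bigwedge_{i=1}^n(x_i\leftrightarrow\varepsilon(x_i))$. I expect no real obstacle in this argument; the only point requiring care is the appeal to the replacement theorem, which is a well-known property of {\sf INT} (provable by induction on the structure of $D$) and transfers to every intermediate logic since replacement is preserved under adding new theorems. Note also that, by Lemma \ref{niu1} applied with $B_i=\varepsilon(x_i)$ and a fresh renaming of the variables $x_i$ to auxiliary $z_i$, the shape $\bigwedge_{i=1}^n(x_i\leftrightarrow\varepsilon(x_i))$ is essentially the canonical projective form, which provides an intuitive sanity check that $A$ must indeed be equivalent to it.
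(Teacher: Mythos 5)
Your proof is correct and supplies exactly the argument the paper leaves implicit: one direction is immediate from the definition of projectivity ($A\vdash_{\mathsf L}\varepsilon(x_i)\leftrightarrow x_i$), and the other follows from the intuitionistic replacement theorem, which gives $\bigwedge_{i=1}^n(x_i\leftrightarrow\varepsilon(x_i))\vdash_{\mathsf L}A\leftrightarrow\varepsilon(A)$, together with $\varepsilon(A)\in\mathsf L$. The only slightly off note is your closing sanity check: Lemma \ref{niu1} requires the variables $z_i$ not to occur in the formulas $B_i$, whereas here $x_i$ may well occur in $\varepsilon(x_i)$, so that lemma does not literally apply to $\bigwedge_{i=1}^n(x_i\leftrightarrow\varepsilon(x_i))$ — but since you offer this only as an aside, it does not affect the proof.
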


Suppose that  $m=n$ and  $F=H_\sigma$  in Theorem \ref{main}. Then, by (iv) and (v),  $G$ would generate a retraction from $\mathbf{M}^n\!\slash\!\!\thicksim$ \ {onto} \ $\sigma(\mathbf{M}^k)\slash\!\!\thicksim$. This simplified version of Theorem \ref{main} characterizes  logics with projective approximation (see Theorem \ref{praprox}).
\begin{theorem}\label{retraction} The logic  {\sf L} has projective approximation iff  for every $\sigma\colon \{x_1,\dots,x_n\}\to \mathsf{Fm}^k$, where $n,k\geq 0$,  there is a mapping $G:\mathbf{M}^n\to\mathbf{M}^n$ such that:\\
(i) $G$ preserves the frame  of any $n$-model $\mathfrak{M}^n=(W,R,w_0,V^n)$;\\
(ii) $G((\mathfrak{M}^n)_w)\thicksim(G(\mathfrak{M}^n))_w, \mbox{ for every } \quad w\in W; $\\
(iii) $\mathfrak{N}^n\thicksim \mathfrak{M}^n\quad\Rightarrow \quad G(\mathfrak{N}^n)\thicksim G(\mathfrak{M}^n), \quad \mbox{ for every } \quad \mathfrak{M}^n,\mathfrak{N}^n\in \mathbf{M}^n$;\\
(iv) for every $n$-model $\mathfrak{M}^n$, there is a $k$-model $\mathfrak{M}^k$ such that $G(\mathfrak{M}^n)\thicksim\sigma(\mathfrak{M}^k)$;\\
(v) for every $k$-model $\mathfrak{M}^k$, we have $G(\sigma(\mathfrak{M}^k))\thicksim\sigma(\mathfrak{M}^k)$.
\end{theorem}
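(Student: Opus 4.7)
The plan is to reduce the statement to a clean algebraic reformulation by combining Theorem \ref{nsigmai} with Theorem \ref{n5}. The conditions (i)--(iii) on $G\colon\mathbf{M}^n\to\mathbf{M}^n$ are exactly those characterizing maps of the form $H_\varepsilon$ for some substitution $\varepsilon\colon\{x_1,\dots,x_n\}\to\mathsf{Fm}^n$, so any such $G$ determines (up to $=_{\sf L}$) a unique $\varepsilon$ with $G(\mathfrak{M}^n)\thicksim\varepsilon(\mathfrak{M}^n)$. Moreover, conditions (iv), (v) of Theorem \ref{retraction} translate verbatim into conditions (iv), (v) of Theorem \ref{n5}, which say that $\varepsilon$ is a projective unifier of the formula $A_\sigma$ introduced in Corollary \ref{in2}. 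Thus Theorem \ref{retraction} is equivalent to the following: \emph{{\sf L} has projective approximation iff for every $\sigma\colon\{x_1,\dots,x_n\}\to\mathsf{Fm}^k$, the formula $A_\sigma\in\mathsf{Fm}^n$ has a projective unifier that uses only the variables $\{x_1,\dots,x_n\}$}. The rest of the proof is devoted to this reformulation.

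For the $(\Leftarrow)$ direction, fix $A\in\mathsf{Fm}^n$ and define
$$\Pi(A)\ =\ \{A_\sigma\colon \sigma\text{ is an {\sf L}-unifier of } A\}.$$
Every $\sigma$ is automatically an {\sf L}-unifier of $A_\sigma$ (Corollary \ref{in2}), and  $\sigma$ unifies $A$ exactly when $A_\sigma\to A\in\mathsf{L}$, so each $A_\sigma\in\Pi(A)$ is stronger than $A$. By the hypothesis each $A_\sigma$ is projective, and by local tabularity $\mathsf{Fm}^n/\!\!=_{\sf L}$ is finite, so $\Pi(A)$ is finite (up to $=_{\sf L}$). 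Clearly every {\sf L}-unifier of $A$ is an {\sf L}-unifier of the corresponding $A_\sigma\in\Pi(A)$, which gives projective approximation.

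For the $(\Rightarrow)$ direction, given $\sigma$, apply projective approximation to $A_\sigma$ itself. Since $\sigma$ unifies $A_\sigma$, it unifies some $B\in\Pi(A_\sigma)$, and $B$ has a projective unifier $\varepsilon$. I will verify that $\varepsilon$ satisfies conditions (iv) and (v) of Theorem \ref{n5} with respect to $\sigma$; then $G:=H_\varepsilon$ works. For (v), note that $\sigma(\mathfrak{M}^k)\Vdash B$ (as $\sigma$ unifies $B$), and by monotonicity this holds at every node; projectivity of $\varepsilon$ for $B$ then gives $B\vdash_{\sf L}\varepsilon(x_i)\leftrightarrow x_i$, so at every $w$ we have $\mathfrak{M}^k\Vdash_w\sigma(\varepsilon(x_i))\leftrightarrow\sigma(x_i)$, yielding $\varepsilon(\sigma(\mathfrak{M}^k))\thicksim\sigma(\mathfrak{M}^k)$. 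For (iv), since $\varepsilon$ is a unifier of $B$, hence of $A_\sigma$, Lemma \ref{n1i} says each $\varepsilon(\mathfrak{M}^n)$ is an $A_\sigma$-model, i.e.\ $\mathsf{Th}(\sigma(\mathfrak{M}^k))\subseteq\mathsf{Th}(\varepsilon(\mathfrak{M}^n))$ for some $\mathfrak{M}^k$; Theorem \ref{pat} then produces $w$ with $\varepsilon(\mathfrak{M}^n)\thicksim(\sigma(\mathfrak{M}^k))_w=\sigma((\mathfrak{M}^k)_w)$, and $(\mathfrak{M}^k)_w$ serves as the required $k$-model.

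The conceptual obstacle is that the projective unifier $\varepsilon$ of $B$ need \emph{not} be a projective unifier of the weaker formula $A_\sigma$ in the literal sense of the definition (we cannot in general deduce $A_\sigma\vdash_{\sf L}\varepsilon(x)\leftrightarrow x$ from $B\vdash_{\sf L}\varepsilon(x)\leftrightarrow x$). The crucial technical point is therefore that, as highlighted by Theorem \ref{n5}, being a projective unifier of $A_\sigma$ is equivalent to the two semantic conditions (iv), (v), and these \emph{are} inherited from $B$: (v) uses only that $\sigma$-models satisfy $B$, while (iv) uses only that $A_\sigma$-models are (equivalent to) generated submodels of $\sigma$-models. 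A minor bookkeeping point is that $\mathsf{Var}(A_\sigma)\subseteq\mathsf{Var}(A)$ in the footnote definition of projective approximation — this is ensured by using fresh variables only when strictly necessary and may require replacing $\mathsf{Fm}^n$ by $\mathsf{Fm}(\mathsf{Var}(A))$ throughout the $(\Leftarrow)$ argument.
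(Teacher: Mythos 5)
Your proposal is correct and follows essentially the same route as the paper: both directions hinge on identifying $G$ with $H_\varepsilon$ via Theorem \ref{nsigmai} and reading conditions (iv)--(v) as Theorem \ref{n5}'s characterization of projective unifiers of $A_\sigma$; the forward direction takes $B\in\Pi(A_\sigma)$ with its projective unifier $\varepsilon$ and sets $G=H_\varepsilon$, and the converse collects the formulas $A_\sigma$ into $\Pi(A)$, exactly as in the paper's (terser) argument. Your "conceptual obstacle" in fact dissolves: once (iv) and (v) are verified, Theorem \ref{n5} yields $A_\sigma\vdash_{\sf L}\varepsilon(x_i)\leftrightarrow x_i$ literally, so $\varepsilon$ is a projective unifier of $A_\sigma$ in the syntactic sense as well.
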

\begin{proof} Suppose that  {\sf L} has projective approximation and let $\sigma\colon \{x_1,\dots,x_n\}\to \mathsf{Fm}^k$, for  $k,n\geq 0$. By Corollary \ref{in2}, $\sigma$ is a unifier for $A_\sigma\in\mathsf{Fm}^n$. Let $B\in \Pi(A_\sigma)$ be a projective formula in $\mathsf{Fm}^n$ such that $B\vdash_\mathsf{L}A_\sigma$ and $\vdash_\mathsf{L}\sigma(B)$. If $\varepsilon\colon \{x_1,\dots,x_n\}\to \mathsf{Fm}^n$ is a projective unifier for $B$, then $\sigma\circ\varepsilon=_\mathsf{L}\sigma$ and $\vdash_\mathsf{L}\varepsilon(A_\sigma)$, by Lemma \ref{proj}. Thus, if one takes $G=H_\varepsilon$, one easily shows (i)-(v) using Lemma \ref{n1i} and Corollary \ref{n4i}.

Assume (i)-(v) and let $\sigma\colon\{x_1,\dots,x_n\}\to \mathsf{Fm}^k$ be a  unifier for a formula $A\in\mathsf{Fm}^n$. By Lemma \ref{nsigmai}, we have $G\thicksim H_\varepsilon$ for some $\varepsilon\colon\{x_1,\dots,x_n\}\to \mathsf{Fm}^n$. Then, by Lemma \ref{n5}, $\varepsilon$ is a projective unifier for $A_\sigma$. Thus, we have $A_\sigma$  projective, and $A_\sigma\vdash A$, and $A_\sigma\in\mathsf{Fm}^n$, and $\sigma$ is a unifier for $A_\sigma$. For each $A$ one can define $\Pi(A)$ so that to show projective approximation in  {\sf L}.
\end{proof}
In  \cite{dkw}, using the above Theorem, we have shown that any logic determined by frames of the depth $\leq 2$  has projective approximation. Thus, {\sf L}({$\mathfrak{F}_m$}) for any $m$-fork  $\mathfrak{F}_m$ (see Figure \ref{FRF}) has finitary unification. Now we extend this result to the logic determined by all frames from $\mathbf H_{pr}$ (that is frames $\mathfrak L_d+\mathfrak F_m$ , where $m,d\geq 0$,  see  Figure \ref{hpa}).
The logic {\sf L}($\mathbf H_{pa}$) extends {\sf PWL}, see \cite{Esakia}, hence it is locally tabular.
\begin{theorem}\label{lmk} For any $\mathbf F\subseteq \mathbf H_{pr}$,  the logic {\sf L}($\mathbf F$)   has projective approximation.
\end{theorem}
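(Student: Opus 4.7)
The plan is to verify the condition of Theorem \ref{retraction}: given any $\sigma\colon\{x_1,\dots,x_n\}\to\mathsf{Fm}^k$, construct a mapping $G\colon\mathbf M^n\to\mathbf M^n$ satisfying (i)--(v) of that theorem. By Theorem \ref{Irr} and Theorem \ref{nsi}, it is enough to define $G$ on the class $\mathbf M^n_{ir}$ of p-irreducible $n$-models over frames in $\mathbf F$ and to check the variant of (iii) with $\equiv$ in place of $\thicksim$, as is done in the proof of Theorem \ref{wpl}. Each such model sits over a frame $\mathfrak L_d+\mathfrak F_m$, so has depth $d+1$ with a top fan (the $m$ end elements) and a descending chain of length $d$ below the fork root. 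The construction of $G(\mathfrak M^n)$ will proceed by induction on $d+1$, so that condition (ii) is automatic.

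The first step is to declare $G$ the identity on $\sigma$-images, i.e.\ whenever $\mathfrak M^n\thicksim\sigma(\mathfrak M^k)$ for some p-irreducible $\mathfrak M^k$, set $G(\mathfrak M^n)=\mathfrak M^n$; this takes care of (v). The remaining task is to treat p-irreducible models $\mathfrak M^n$ not equivalent to any $\sigma$-image. For a single-point model $(\mathfrak L_1,\mathfrak f^n)$ I would fix once and for all a $k$-valuation $\mathfrak s^k$ and define $G(\mathfrak L_1,\mathfrak f^n)=\sigma(\mathfrak L_1,\mathfrak s^k)$, which is a $\sigma$-image by construction. Ascending to the fork root, having assigned $\sigma$-images at each of the $m$ end elements (say $\sigma(\mathfrak L_1,\mathfrak f^k_i)$ at the $i$-th end), I exploit the fact recalled in the paragraph preceding the theorem that any logic determined by frames of depth $\le 2$ has projective approximation: this provides a root $k$-valuation $\mathfrak f^k_0$ making $(\mathfrak F_m,\{\mathfrak f^k_0,\mathfrak f^k_1,\dots,\mathfrak f^k_m\})$ intuitionistic and whose $\sigma$-image realizes the prescribed end valuations at the top. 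This yields $G(\mathfrak M^n)$ at the depth-$2$ level as the $\sigma$-image of this $k$-model.

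For each additional chain element $v$ of depth $j\ge 3$ in $\mathfrak L_d+\mathfrak F_m$, I would proceed inductively: assume $G((\mathfrak M^n)_u)\thicksim\sigma(\mathfrak N^k)$ where $u$ is the immediate successor of $v$ on the chain and $\mathfrak N^k$ is already a $k$-model over a frame of depth $j-1$. Extend $\mathfrak N^k$ to a $k$-model over $\mathfrak L_1+(\text{frame of }\mathfrak N^k)$ by adjoining any $k$-valuation $\mathfrak g^k_v\subseteq V^k(u)$ at the new root (monotonicity being the only constraint), and declare $G((\mathfrak M^n)_v)$ to be the $\sigma$-image of this enlarged $k$-model. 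Because the underlying frame is indeed $\mathfrak L_{d-j+2}+\mathfrak F_m\in\mathbf F$, this keeps us inside $\sigma(\mathbf M^k)/{\thicksim}$, securing (i) and (iv); condition (iii) holds because the choices were made on isomorphism classes.

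The main obstacle is the fork-root step: one must show that any monotonically compatible collection of $\sigma$-images at the ends of $\mathfrak F_m$ lifts to a $\sigma$-image of some $k$-model over $\mathfrak F_m$ rather than merely to a variant. This is precisely the extension-property argument for depth-$\le 2$ frames, which in our setting is available because the $k$-valuations at the ends can be freely assembled and the required root $k$-valuation obtained by taking (for instance) the intersection $\bigcap_i\mathfrak f^k_i$, or a suitable monotone refinement, and then invoking Theorem \ref{niu2} together with Lemma \ref{sigmai}(ii). Once this step is in hand, condition (ii) follows inductively from the construction, (iv) and (i) are immediate, and (v) holds by the initial identification of $G$ on already-existing $\sigma$-images; Theorem \ref{retraction} then yields projective approximation for $\mathsf L(\mathbf F)$.
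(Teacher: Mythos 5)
Your overall strategy coincides with the paper's: verify the retraction criterion of Theorem \ref{retraction}, act as the identity on models equivalent to $\sigma$-images, and build $G$ on p-irreducible models by induction on depth, treating one-point models, the fork, and the chain in turn. Two points, however, need attention. First, the ``main obstacle'' you identify at the fork root is not an obstacle and is not resolved by Theorem \ref{niu2}: the extension property concerns finding a \emph{variant} of a model validating a projective formula at the root, which is not what is needed here. All that is required is \emph{some} $k$-valuation at the fork root lying below every end valuation $\mathfrak f^k_i$, and $0\cdots 0$ (or, as you also suggest, $\bigcap_i\mathfrak f^k_i$) does this outright; the paper simply puts $0\cdots0$ at the root and takes $G(\mathfrak M^n)=\sigma(\mathfrak M^k)$. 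Your concrete fallback is therefore correct, but the appeal to the depth-$\le 2$ projective-approximation result and to Theorem \ref{niu2} is a detour that proves nothing extra.

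Second, your chain step as written can violate condition (i). When the immediate generated submodel $(\mathfrak M^n)_u$ falls under your first clause (it is equivalent to a $\sigma$-image), the witnessing $k$-model $\mathfrak N^k$ with $G((\mathfrak M^n)_u)\thicksim\sigma(\mathfrak N^k)$ need not live over the frame of $(\mathfrak M^n)_u$ --- equivalence only guarantees a common p-morphic image --- so $\sigma$ applied to your enlarged $\mathfrak N^k$ may sit over a frame different from that of $\mathfrak M^n$, and $G$ would no longer be frame-preserving. The paper sidesteps this entirely: it extends the already-defined $n$-model $G((\mathfrak M^n)_{m+d-1})$ by \emph{duplicating} its root valuation at the new bottom point, so that $G(\mathfrak M^n)$ is over the correct frame and satisfies $G(\mathfrak M^n)\thicksim G((\mathfrak M^n)_{m+d-1})$, which is all that condition (iv) demands (it asks only for equivalence to a $\sigma$-model, not for $G(\mathfrak M^n)$ to be one). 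Your argument is repaired by adopting this duplication move; as stated, the chain step has a genuine hole.
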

\begin{proof}
Let $\mathbf F\subseteq \mathbf H_{pr}$. Since $\mathbf H_{pr}$ is closed on p-morphic images and generated subframes, we can assume {\it sm}({\bf F})={\bf F}; see Lemma \ref{lf8}. For any $\sigma\colon\{x_1,\dots,x_n\}\to \mathsf{Fm}^k$, where $k\geq 0$, we need $G\colon\mathbf{M}^n\to\mathbf{M}^n$ fulfilling (i)-(v).

 A. If $\mathfrak{M}^n=(W,R,w_0,V^n)\in\mathbf{M}^n$ is equivalent with a $\sigma$-model (i.e. $\mathfrak{M}^n\thicksim\sigma(\mathfrak{M}^k)$, for some $\mathfrak{M}^k$),  we take
      $G(\mathfrak{M}^n)=\mathfrak{M}^n$. Then
(i),(iv) and (v) are obvious.

\noindent (ii) If $\mathfrak{M}^n$ is equivalent with a $\sigma$-model, then so is $(\mathfrak{M}^n)_w$, for any  $w\in W$ (see Corollary \ref{lf4i} and Lemma \ref{sigmai}(iii), and hence $G((\mathfrak{M}^n)_w)=(\mathfrak{M}^n)_w=(G(\mathfrak{M}^n))_w$.

\noindent (iii) If $\mathfrak{M}^n$ is equivalent with a $\sigma$-model and $\mathfrak{M}^n\thicksim\mathfrak{N}^n$, then $\mathfrak{N}^n$ is equivalent with the same $\sigma$-model and hence $G(\mathfrak{M}^n)=\mathfrak{M}^n\thicksim\mathfrak{N}^n=G(\mathfrak{N}^n)$.

 B. Suppose that $\mathfrak{M}^n$ is not equivalent with any $\sigma$-model. Assume $\mathfrak{M}^n$
is p-irreducible and define $G(\mathfrak{M}^n)$ inductively with respect to the depth of $\mathfrak{M}^n$. We could not bother about (v) as it is irrelevant, our inductive approach secure (ii) and (iii), see Theorem \ref{nsi}. Our task is to preserve (i) and (iv), that is $G(\mathfrak{M}^n)$ should be defined over the  frame of $\mathfrak{M}^n$ and $G(\mathfrak{M}^n)$ should be equivalent with some $\sigma$-model.

 B1. Let $\mathfrak{M}^n$ be an $n$-model over  1-element chain $\mathfrak L_1$, that is
$\mathfrak{M}^n=(\mathfrak L_1,\mathfrak{f}^n)$ where $\mathfrak{f}^n=i_1\dots i_n$ with $i_j\in\{0,1\}$. All $n$-models over $\mathfrak L_1$ are p-irreducible and we have (see \cite{dkw}):
{ any  $n$-model over   $\mathfrak L_1$  equivalent with some $\sigma$-model is a $\sigma$-model.}
We can take any $k$-model  $\mathfrak{M}^k=(\mathfrak L_1,\mathfrak{f}^k)$  over   $\mathfrak L_1$ and define $G(\mathfrak{M}^n)=\sigma(\mathfrak{M}^k).$

B2. Let  $\mathfrak{M}^n$ be an $n$-model over  $m$-fork $\mathfrak F_m$, see Figure \ref{mfork}, for $m\geq 1$;
\begin{figure}[H]
\unitlength1cm
\begin{picture}(3,1.1)
\thicklines
\put(4,1){\circle{0.1}}
\put(7,1){\circle{0.1}}
\put(6,1){\circle{0.1}}
\put(8,1){\circle{0.1}}
\put(5,1){\circle{0.1}}
\put(6,0){\circle{0.1}}
\put(7.3,1.2){\mbox{$\cdots$}}
\put(6.4,0){\mbox{$m$}}
\put(4,1.2){\mbox{$0$}}
\put(5,1.2){\mbox{$1$}}
\put(6,1.2){\mbox{$2$}}
\put(8,1.2){\mbox{$(m-1)$}}
\put(7,1.2){\mbox{$3$}}
\put(6,0){\vector(1,1){0.9}}
\put(6,0){\vector(-1,1){0.9}}
\put(6,0){\vector(0,1){0.9}}
\put(6,0){\vector(2,1){1.9}}
\put(6,0){\vector(-2,1){1.9}}
\end{picture}
\caption{$m$-Fork}\label{mfork}
\end{figure}
\noindent By our inductive approach,   $G((\mathfrak{M}^n)_i)=(\mathfrak L_1,\mathfrak{g}^n_i)=\sigma(\mathfrak L_1,\mathfrak{f}^k_i)$, for some $\mathfrak{g}^n_i$ and $\mathfrak{f}^k_i$, if $i<m$.   We  define $\mathfrak{M}^k$, see Figure \ref{abc},  and put
$G(\mathfrak{M}^n)=\sigma(\mathfrak{M}^k).$
\begin{figure}[H]
\unitlength1cm
\begin{picture}(3,1.5)
\thicklines

\put(2,1){\circle{0.1}}
\put(1,1){\circle{0.1}}
\put(3,1){\circle{0.1}}
\put(0,1){\circle{0.1}}
\put(1,0){\circle{0.1}}
\put(2.4,1.2){\mbox{$\cdots$}}
\put(1.6,0){\mbox{$\mathfrak{f}^n_m$}}
\put(0,1.2){\mbox{$\mathfrak{f}^n_0$}}
\put(1,1.2){\mbox{$\mathfrak{f}^n_1$}}
\put(3,1.2){\mbox{$\mathfrak{f}^n_{m-1}$}}
\put(2,1.2){\mbox{$\mathfrak{f}^n_2$}}
\put(1,0){\vector(1,1){0.9}}
\put(1,0){\vector(-1,1){0.9}}
\put(1,0){\vector(0,1){0.9}}
\put(1,0){\vector(2,1){1.9}}
\put(3,0.5){\vector(1,0){1.9}}
\put(3.8,0){\mbox{$G$}}
\put(0,0){\mbox{$\mathfrak M^n=$}}

\put(7,1){\circle{0.1}}
\put(6,1){\circle{0.1}}
\put(8,1){\circle{0.1}}
\put(5,1){\circle{0.1}}
\put(6,0){\circle{0.1}}
\put(7.3,1.2){\mbox{$\cdots$}}
\put(6.6,0){\mbox{$?$}}
\put(4.6,1.2){\mbox{$\mathfrak{g}^n_0$}}
\put(5.6,1.2){\mbox{$\mathfrak{g}^n_1$}}
\put(7.8,1.2){\mbox{$\mathfrak{g}^n_{m-1}$}}
\put(6.6,1.2){\mbox{$\mathfrak{g}^n_2$}}
\put(6,0){\vector(1,1){0.9}}
\put(6,0){\vector(-1,1){0.9}}
\put(6,0){\vector(0,1){0.9}}
\put(6,0){\vector(2,1){1.9}}
\put(9,0.5){\vector(-1,0){1.5}}
\put(8.2,0.2){\mbox{$\sigma$}}

\put(11,1){\circle{0.1}}
\put(10,1){\circle{0.1}}
\put(12,1){\circle{0.1}}
\put(9,1){\circle{0.1}}
\put(10,0){\circle{0.1}}
\put(11.4,1.2){\mbox{$\cdots$}}
\put(10.6,0){\mbox{$0\cdots0$}}
\put(9.2,1.2){\mbox{$\mathfrak{f}^k_0$}}
\put(10,1.2){\mbox{$\mathfrak{f}^k_1$}}
\put(12,1.2){\mbox{$\mathfrak{f}^k_{m-1}$}}
\put(11,1.2){\mbox{$\mathfrak{f}^k_2$}}
\put(10,0){\vector(1,1){0.9}}
\put(10,0){\vector(-1,1){0.9}}
\put(10,0){\vector(0,1){0.9}}
\put(10,0){\vector(2,1){1.9}}
\put(11.8,0.2){\mbox{$=\mathfrak M^k$}}
\end{picture}
\caption{}\label{abc}
\end{figure}
\noindent We should be more careful about defining the mapping $G$ on isomorphic $n$-models. Our mapping should be factorized by $\equiv$ and hence we should get the factorized mapping $G\colon\mathbf{M}^n_{ir}\slash\!\!\equiv \to\mathbf{M}^n\slash\!\!\equiv$. Thus, from any $\equiv$-class, we  take only one $\mathfrak{M}^n$, define as above $G(\mathfrak{M}^n)$ and then extend $G$, on other members of the equivalence class, taking as the values  for $G$ the appropriate isomorphic copy of $G(\mathfrak{M}^n)$.

B3. Eventually, suppose that the frame of $\mathfrak{M}^n$ is an $m$-fork extended with an $d$-element chain 'leg`, where $m,d\geq 0$. Our subsequent definition of $G$ is inductive with respect to $d$. So, we assume that $d\geq 1$ and $G((\mathfrak{M}^n)_{m+d-1})$ has already been defined, see Figure \ref{abcd}. We define
$G(\mathfrak{M}^n)$ extending  $G((\mathfrak{M}^n)_{m+d-1})$ with $\mathfrak{g}^n_{m+d}=\mathfrak{g}^n_{m+d-1}$. So, we get $G(\mathfrak{M}^n)\thicksim G((\mathfrak{M}^n)_{m+d-1})$ which guarantees the condition (iv).
\begin{figure}[H]
\unitlength1cm
\begin{picture}(3,3.5)
\thicklines

\put(2,3){\circle{0.1}}
\put(1,3){\circle{0.1}}
\put(3,3){\circle{0.1}}
\put(0,3){\circle{0.1}}
\put(1,2){\circle{0.1}}
\put(2.4,3.2){\mbox{$\cdots$}}
\put(1.6,2){\mbox{$\mathfrak{f}^n_m$}}
\put(0,3.2){\mbox{$\mathfrak{f}^n_0$}}
\put(1,3.2){\mbox{$\mathfrak{f}^n_1$}}
\put(3,3.2){\mbox{$\mathfrak{f}^n_{m-1}$}}
\put(2,3.2){\mbox{$\mathfrak{f}^n_2$}}
\put(1,2){\vector(1,1){0.9}}
\put(1,2){\vector(-1,1){0.9}}
\put(1,2){\vector(0,1){0.9}}
\put(1,2){\vector(2,1){1.9}}
\put(0,1){\mbox{$\mathfrak M^n=$}}
\put(1,1.5){\circle{0.1}}
\put(1,1){\circle{0.1}}
\put(1,0.5){\circle{0.1}}
\put(1,0){\circle{0.1}}
\put(1.3,1.5){\mbox{$\mathfrak{f}^n_{m+1}$}}
\put(1.3,0.5){\mbox{$\mathfrak{f}^n_{m+d-1}$}}
\put(1.3,0){\mbox{$\mathfrak{f}^n_{m+d}$}}
\put(1,1.6){\vector(0,1){0.4}}
\put(1,0.1){\vector(0,1){0.4}}

\put(3.1,1){\mbox{$G((\mathfrak M^n)_{m+d-1})=$}}
\put(7,31){\circle{0.1}}
\put(6,3){\circle{0.1}}
\put(8,3){\circle{0.1}}
\put(5,3){\circle{0.1}}
\put(6,2){\circle{0.1}}
\put(7.3,3.2){\mbox{$\cdots$}}
\put(6.6,2){\mbox{$\mbox{$\mathfrak{g}^n_{m}$}$}}
\put(4.6,3.2){\mbox{$\mathfrak{g}^n_0$}}
\put(5.6,3.2){\mbox{$\mathfrak{g}^n_1$}}
\put(7.8,3.2){\mbox{$\mathfrak{g}^n_{m-1}$}}
\put(6.6,3.2){\mbox{$\mathfrak{g}^n_2$}}
\put(6,2){\vector(1,1){0.9}}
\put(6,2){\vector(-1,1){0.9}}
\put(6,2){\vector(0,1){0.9}}
\put(6,2){\vector(2,1){1.9}}
\put(6,1.5){\circle{0.1}}
\put(6,1){\circle{0.1}}
\put(6,0.5){\circle{0.1}}
\put(6,1.6){\vector(0,1){0.4}}
\put(6.3,1.5){\mbox{$\mathfrak{g}^n_{m+1}$}}
\put(6.3,0.5){\mbox{$\mathfrak{g}^n_{m+d-1}$}}

\put(11,3){\circle{0.1}}
\put(10,3){\circle{0.1}}
\put(12,3){\circle{0.1}}
\put(9,3){\circle{0.1}}
\put(10,2){\circle{0.1}}
\put(11.4,3.2){\mbox{$\cdots$}}
\put(10.6,2){\mbox{$\mathfrak{g}^n_m$}}
\put(9.2,3.2){\mbox{$\mathfrak{g}^n_0$}}
\put(10,3.2){\mbox{$\mathfrak{g}^n_1$}}
\put(12,3.2){\mbox{$\mathfrak{g}^n_{m-1}$}}
\put(11,3.2){\mbox{$\mathfrak{g}^n_2$}}
\put(10,2){\vector(1,1){0.9}}
\put(10,2){\vector(-1,1){0.9}}
\put(10,2){\vector(0,1){0.9}}
\put(10,2){\vector(2,1){1.9}}
\put(10,1.5){\circle{0.1}}
\put(10,1){\circle{0.1}}
\put(10,0.5){\circle{0.1}}
\put(10,0){\circle{0.1}}
\put(10,1.6){\vector(0,1){0.4}}
\put(10,0.1){\vector(0,1){0.4}}
\put(8.3,1){\mbox{$G(\mathfrak M^n)=$}}
\put(10.3,1.5){\mbox{$\mathfrak{g}^n_{m+1}$}}
\put(10.3,0.5){\mbox{$\mathfrak{g}^n_{m+d-1}$}}
\put(10.3,0){\mbox{$\mathfrak{g}^n_{m+d}=\mathfrak{g}^n_{m+d-1}$}}
\end{picture}
\caption{}\label{abcd}
\end{figure}
So, we have $G\colon\mathbf{M}^n_{ir} \to\mathbf{M}^n$. Using Theorem \ref{nsi}, we extend the mapping to $G\colon\mathbf{M}^n \to\mathbf{M}^n$ preserving the conditions (i)-(iii), see Lemma \ref{sigmai}. It is an easy task to check that the conditions (iv) and (v) are also preserved by the extension.
\end{proof}

Theorems \ref{main} and \ref{retraction} can also be used to  show that certain locally tabular logics neither have finitary\slash unitary unification, nor projective approximation. Many examples follow in the next section. Here we only give
\begin{theorem}\label{L8i} The logics ${\mathsf L}(\mathfrak G_2)$ and ${\mathsf L}(\mathfrak G_2)\cap{\mathsf L}(\mathfrak C_{5})$ (see Figure \ref{GF} and   \ref{TF}) have finitary unification but they do not have projective approximation.
\end{theorem}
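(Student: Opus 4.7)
The proof proves four separate claims: finitary unification of each of the two logics, and the failure of projective approximation for each.

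For finitary unification I apply Theorem \ref{main2} by mimicking the code-and-extension pattern used in the proofs of Theorems \ref{wpl} and \ref{c5}. Given $n\geq 1$ and a substitution $\sigma\colon\{x_1,\dots,x_n\}\to\mathsf{Fm}^k$, I pick $m=n+n\cdot 2^n+c$ for a suitable small constant $c$ and attach to every $k$-valuation $\mathfrak f^k$ a code string $code(\mathfrak f^k)$ whose initial segment is the $n$-valuation $\mathfrak f^n$ determined by $\sigma(\mathfrak L_1,\mathfrak f^k)=(\mathfrak L_1,\mathfrak f^n)$, followed by enough labeled bits to (a) make the $F$-images p-irreducible (via suitable distinguishing suffixes as in Figure \ref{code}) and (b) record all pairs $\mathfrak f^n\mathfrak g^n$ arising as $\sigma$-images of two-element $k$-models over $\mathfrak L_2$ and $\mathfrak F_2$. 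The map $F\colon\mathbf M^k_{ir}\to\mathbf M^m$ is then defined worldwise by substituting $\mathfrak f^k$ with $code(\mathfrak f^k)$, and $G\colon\mathbf M^m_{ir}\to\mathbf M^n$ is set to $G(\mathfrak M^m)=\mathfrak M^m\!\!\upharpoonright n$ when $\mathfrak M^m\thicksim F(\mathfrak M^k)$ for some $\mathfrak M^k$, and otherwise by induction on the depth of $\mathfrak M^m$, each inductive step producing an $n$-model equivalent to some $\sigma(\mathfrak M^{k'})$ so that (iv) is preserved. For $\mathsf L(\mathfrak G_2)$ the p-irreducible frames to cover are those in $sm(\mathfrak G_2)=\{\mathfrak L_1,\mathfrak L_2,\mathfrak L_3,\mathfrak F_2,+\mathfrak F_2,\mathfrak G_2\}$; the delicate case is $\mathfrak G_2$ itself, where the dead-end branch $a$ and the fork branch $(b,c,d)$ must be combined into a $\sigma$-equivalent $\mathfrak G_2$-model, which is possible because the code registers the matching fork pairs.

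For the intersection $\mathsf L(\mathfrak G_2)\cap\mathsf L(\mathfrak C_5)=\mathsf L\bigl(sm(\mathfrak G_2)\cup sm(\mathfrak C_5)\bigr)$ (using Lemma \ref{lf8} and Corollary \ref{fp}), I enrich the code exactly as in the proof of Theorem \ref{c5} and splice in its case analysis to dispose of the additional frames $\mathfrak R_2,\mathfrak Y_2,\mathfrak Y_3,\mathfrak C_5$, so both constructions fit under a single enlarged code.

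For the failure of projective approximation I invoke Theorem \ref{retraction}. I would exhibit a concrete substitution $\sigma\colon\{x_1,x_2\}\to\mathsf{Fm}^k$ whose set of $\sigma$-models over $\mathfrak F_2$ contains a model with two distinct top valuations $\mathfrak f_0^n\neq\mathfrak g_0^n$ that do not jointly arise as the end and root labels of any $\sigma$-model over $\mathfrak L_2$. Assuming for contradiction a retraction $G\colon\mathbf M^2\to\mathbf M^2$ satisfying (i)--(v), I would apply $G$ to a $\mathfrak G_2$-model $\mathfrak M^2$ whose $b$-generated subframe is equivalent to such a $\sigma$-$\mathfrak F_2$-image and whose $a$-branch carries the orphan top valuation. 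Conditions (ii) and (v) pin down $G(\mathfrak M^2)$ on the two upper branches, while (iv) demands that $G(\mathfrak M^2)$ be equivalent to some $\sigma(\mathfrak M^k)$; the only candidate root valuations are incompatible with (i) because no $\sigma$-model over $\mathfrak G_2$ has the required asymmetric top pattern, and the extension property of Theorem \ref{niu2} is correspondingly violated for every candidate projective approximant of $A_\sigma$. The same $\sigma$ works for $\mathsf L(\mathfrak G_2)\cap\mathsf L(\mathfrak C_5)$ since $\mathfrak G_2$ is still a frame of this intersection.

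The main obstacle is the second part: pinning down the explicit $\sigma$ and verifying that no $G$ can simultaneously satisfy (i), (iv) and (v). The finitary unification part, though computationally lengthy, is a direct adaptation of the code-and-extension template of Theorems \ref{wpl} and \ref{c5}; the real substance is showing that the mixed end-plus-fork structure of $\mathfrak G_2$ creates a genuine obstruction to Ghilardi's extension property, which is exactly what separates finitary unification from projective approximation in this logic.
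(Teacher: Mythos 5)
Your overall template matches the paper's (extend the code-and-$F$/$G$ construction of Theorems \ref{wpl} and \ref{c5}, then refute projective approximation via Theorem \ref{retraction}), but there are two genuine gaps. First, your list $sm(\mathfrak G_2)=\{\mathfrak L_1,\mathfrak L_2,\mathfrak L_3,\mathfrak F_2,+\mathfrak F_2,\mathfrak G_2\}$ is wrong: collapsing the two maximal points of the fork branch of $\mathfrak G_2$ is a bisimulation, so $\mathfrak G_3$ is a p-morphic image of $\mathfrak G_2$ and belongs to $sm(\mathfrak G_2)$. Theorem \ref{main2} therefore forces you to define $G$ on p-irreducible $m$-models over $\mathfrak G_3$, and this --- not the $\mathfrak G_2$-models you flag as "delicate" --- is the crux of the whole proof: for such a model whose $\mathfrak F_2$-generated part is equivalent to an $F$-image that has collapsed onto $\mathfrak L_2$, condition (iv) demands an equivalent $\sigma$-model, and the only one available lives over $\mathfrak G_2$ (not over $\mathfrak G_3$). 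This is exactly what separates $\mathsf L(\mathfrak G_2)$ (finitary) from $\mathsf L(\mathfrak G_3)$ (nullary); an argument that never confronts the $\mathfrak G_3$-case would equally "prove" the false claim that $\mathsf L(\mathfrak G_3)$ is finitary. The paper explicitly adds $\mathfrak G_3$ to the frame family and devotes the main case analysis (Figures \ref{k}--\ref{M2}) to it.

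Second, the projective-approximation half is only a plan: you do not exhibit $\sigma$, and you yourself identify "pinning down the explicit $\sigma$ and verifying that no $G$ can simultaneously satisfy (i), (iv) and (v)" as the main obstacle. The paper's witness is simpler than what you sketch --- a single-variable substitution $\sigma(x_1)=\neg\neg x_1\land\bigl(x_2\lor(x_2\to x_1\lor\neg x_1)\bigr)$, whose $\sigma$-models are easily listed; the retraction $G\colon\mathbf M^1\to\mathbf M^1$ is then forced on $\circ\,0$, $\circ\,1$ and the chain $0{\to}1$ by (iii) and (v), and applying (ii) to a particular $1$-model over $\mathfrak G_2$ produces a $G$-image not equivalent to any $\sigma$-model, contradicting (iv). Until you supply a concrete $\sigma$ and carry out that forcing argument, the negative half of the theorem is not proved.
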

\begin{proof} Let us prove ${\mathsf L}(\mathfrak G_2)\cap{\mathsf L}(\mathfrak C_{5})$ has finitary unification. We  extend the family {\bf F} (in our proof of  Theorem \ref{c5}) with the frames $\mathfrak G_2$ and $\mathfrak G_3$.
The definition of $F$, see Figure \ref{c}-\ref{cc2}, needs the following two (additional) clauses:
\begin{figure}[H]
\unitlength1cm
\begin{picture}(3,2)
\thicklines

\put(1.5,0){\vector(-1,1){0.9}}
\put(1.5,0){\vector(1,1){0.9}}
\put(2.5,1){\vector(0,1){0.9}}
\put(0.5,1){\circle{0.1}}
\put(2.5,2){\circle{0.1}}
\put(1.5,0){\circle{0.1}}
\put(2.5,1){\circle{0.1}}
\put(1.8,0){\mbox{$\mathfrak{f}^k_2$}}
\put(0.6,1.1){\mbox{$\mathfrak{f}^k_{0'}$}}
\put(2.6,2){\mbox{$\mathfrak{f}^k_0$}}
\put(2.6,1.1){\mbox{$\mathfrak{f}^k_1$}}
\put(0,1){\mbox{$\sigma\Bigl($}}
\put(3,1){\mbox{$\Bigr)\ = $}}
\put(0.9,1.8){\mbox{$\mathfrak{M}^k=$}}

\put(5,0){\vector(-1,1){0.9}}
\put(5,0){\vector(1,1){0.9}}
\put(6,1){\vector(0,1){0.9}}
\put(4,1){\circle{0.1}}
\put(6,2){\circle{0.1}}
\put(5,0){\circle{0.1}}
\put(6,1){\circle{0.1}}
\put(5.3,0){\mbox{$\mathfrak{f}^n_2$}}
\put(4.1,1.1){\mbox{$\mathfrak{f}^n_{0'}$}}
\put(6.1,2){\mbox{$\mathfrak{f}^n_0$}}
\put(6.3,1.1){\mbox{$\mathfrak{f}^n_1$}}

\put(8,1){\mbox{$F(\mathfrak{M}^k)\ = $}}
\put(11,0){\vector(-1,1){0.9}}
\put(11,0){\vector(1,1){0.9}}
\put(12,1){\vector(0,1){0.9}}
\put(10,1){\circle{0.1}}
\put(12,2){\circle{0.1}}
\put(11,0){\circle{0.1}}
\put(12,1){\circle{0.1}}

\put(10.5,1){\mbox{$\mathfrak{f}^n_{1}0\cdots01$}}
\put(11.3,0){\mbox{$\mathfrak{f}^n_20\cdots0$}}
\put(9.7,1.4){\mbox{$code(\mathfrak{f}^k_{0'})$}}
\put(11.4,2.1){\mbox{$code(\mathfrak{f}^k_{0})$}}

\end{picture}\\

\unitlength1cm
\begin{picture}(5,2.2)
\thicklines
\put(1.5,0){\vector(-1,1){0.9}}
\put(1.5,0){\vector(1,1){0.9}}
\put(2.5,1){\vector(0,1){0.9}}
\put(0.5,1){\circle{0.1}}
\put(2.5,2){\circle{0.1}}
\put(1.5,0){\circle{0.1}}
\put(2.5,1){\circle{0.1}}
\put(1.8,0){\mbox{$\mathfrak{f}^k_2$}}
\put(0.6,1.1){\mbox{$\mathfrak{f}^k_{0''}$}}
\put(2.6,2){\mbox{$\mathfrak{f}^k_0$}}
\put(2.6,1.1){\mbox{$\mathfrak{f}^k_1$}}
\put(0,1){\mbox{$\sigma\Bigl($}}
\put(3,1){\mbox{$\Bigr)\ = $}}
\put(2.5,1){\vector(-1,1){0.9}}
\put(1.5,2){\circle{0.1}}
\put(1,1.8){\mbox{$\mathfrak{f}^k_{0'}$}}

\put(5,0){\vector(-1,1){0.9}}
\put(5,0){\vector(1,1){0.9}}
\put(6,1){\vector(0,1){0.9}}
\put(4,1){\circle{0.1}}
\put(6,2){\circle{0.1}}
\put(5,0){\circle{0.1}}
\put(6,1){\circle{0.1}}
\put(5.3,0){\mbox{$\mathfrak{f}^n_2$}}
\put(4.1,1.1){\mbox{$\mathfrak{f}^n_{0''}$}}
\put(6.1,2){\mbox{$\mathfrak{f}^n_0$}}
\put(6.3,1.1){\mbox{$\mathfrak{f}^n_1$}}
\put(6,1){\vector(-1,1){0.9}}
\put(5,2){\circle{0.1}}
\put(4.5,1.8){\mbox{$\mathfrak{f}^n_{0'}$}}

\put(8,1){\mbox{$F(\mathfrak{M}^k)\ = $}}
\put(11,0){\vector(-1,1){0.9}}
\put(11,0){\vector(1,1){0.9}}
\put(12,1){\vector(0,1){0.9}}
\put(10,1){\circle{0.1}}
\put(12,2){\circle{0.1}}
\put(11,0){\circle{0.1}}
\put(12,1){\circle{0.1}}

\put(9.5,2){\mbox{$code(\mathfrak{f}^k_{0'})$}}
\put(10.5,1){\mbox{$\mathfrak{f}^n_{1}0\cdots01$}}
\put(11.3,0){\mbox{$\mathfrak{f}^n_20\cdots0$}}
\put(9.7,1.4){\mbox{$code(\mathfrak{f}^k_{0''})$}}
\put(11.4,2.1){\mbox{$code(\mathfrak{f}^k_{0})$}}

\put(12,1){\vector(-1,1){0.9}}
\put(11,2){\circle{0.1}}

\end{picture}\\
\caption{} \label{k}
\end{figure}
\noindent Then we need to extend the definition of $G(\mathfrak{M}^m)$ with:

\underline{$\mathfrak{G}_2$- and $\mathfrak{G}_2$-models.}
 We only deal with the worst case. Suppose that  $\mathfrak{M}^m=(\mathfrak{G}_3,\{\mathfrak{f}^m_0,\mathfrak{f}^m_{0'},\mathfrak{f}^m_1,\mathfrak{f}^m_2\})$ is a p-irreducible model over $\mathfrak{G}_3$ (see   Figure \ref{k}) non-equivalent with any $F(\mathfrak{M}^k)$, but $(\mathfrak{M}^m)_1\thicksim F(\mathfrak{M}^k)$, where $\mathfrak{M}^k$ is a model over $\mathfrak{F}_2$. Then $(\mathfrak{M}^m)_1\!\!\upharpoonright n = (\mathfrak{L}_2,\{\mathfrak{f}^n_0,\mathfrak{f}^n_1\})\thicksim\sigma(\mathfrak{M}^k)$ for some $\mathfrak{f}^n_0,\mathfrak{f}^n_1$  where $\thicksim$ cannot be replaced with $=$. Let $\mathfrak{M}^k=(\mathfrak{F}_2,\{\mathfrak{f}^k_0,\mathfrak{f}^k_{0'},\mathfrak{f}^k_1\})$ which means $\sigma(\mathfrak{L}_1,\mathfrak{f}^k_0)=\sigma(\mathfrak{L}_1,\mathfrak{f}^k_{0'})=(\mathfrak{L}_1,
 \mathfrak{f}^n_0)$ and let $G((\mathfrak{M}^m)_{0'})=\sigma(\mathfrak{L}_1,\mathfrak{g}^k_{0})=(\mathfrak{L}_1,\mathfrak{f}^n_{0'}), $ for some $\mathfrak{g}^k_{0},\mathfrak{f}^n_{0'}$. Then, for some $\mathfrak{f}^n_2$,  we have
 \begin{figure}[H]\label{pi}
\unitlength1cm
\begin{picture}(3,2)
\thicklines

\put(8.5,1){$\thicksim$}
\put(11,0){\vector(-1,1){0.9}}
\put(11,0){\vector(1,1){0.9}}
\put(12,1){\vector(0,1){0.9}}
\put(10,1){\circle{0.1}}
\put(12,2){\circle{0.1}}
\put(11,0){\circle{0.1}}
\put(12,1){\circle{0.1}}
\put(11.3,0){\mbox{$\mathfrak{f}^n_2$}}
\put(9.7,1.2){\mbox{$\mathfrak{f}^n_{0'}$}}
\put(12.1,2){\mbox{$\mathfrak{f}^n_0$}}
\put(12.3,1.1){\mbox{$\mathfrak{f}^n_1$}}

\put(6,0){\vector(-1,1){0.9}}
\put(6,0){\vector(1,1){0.9}}
\put(7,1){\vector(0,1){0.9}}
\put(7,1){\vector(-1,1){0.9}}
\put(5,1){\circle{0.1}}
\put(7,2){\circle{0.1}}
\put(6,2){\circle{0.1}}
\put(6,0){\circle{0.1}}
\put(7,1){\circle{0.1}}
\put(6.3,0){\mbox{$\mathfrak{f}^n_2$}}
\put(4.8,1.2){\mbox{$\mathfrak{f}^n_{0'}$}}
\put(7.1,2.1){\mbox{$\mathfrak{f}^n_0$}}
\put(7.3,1){\mbox{$\mathfrak{f}^n_1$}}
\put(6.3,2.1){\mbox{$\mathfrak{f}^n_{0}$}}

\put(0,1){$\sigma\Bigl($}
\put(2,0){\vector(-1,1){0.9}}
\put(2,0){\vector(1,1){0.9}}
\put(3,1){\vector(0,1){0.9}}
\put(3,1){\vector(-1,1){0.9}}
\put(1,1){\circle{0.1}}
\put(3,2){\circle{0.1}}
\put(2,2){\circle{0.1}}
\put(2,0){\circle{0.1}}
\put(3,1){\circle{0.1}}
\put(1.3,0){\mbox{$\mathfrak{f}^k_2$}}
\put(0.7,1.2){\mbox{$\mathfrak{g}^k_{0}$}}
\put(3.1,2.1){\mbox{$\mathfrak{f}^k_0$}}
\put(3.3,1){\mbox{$\mathfrak{f}^k_1$}}
\put(2.3,2.1){\mbox{$\mathfrak{f}^k_{0'}$}}
\put(3.6,1){$\Bigr)\quad =$}
\end{picture}\\
\caption{} \label{M2}
\end{figure}
\noindent and the  $n$-model over $\mathfrak{G}_3$ may be  $G(\mathfrak{M}^m)$; it is equivalent with a $\sigma$-model over $\mathfrak{G}_2$.\footnote{The  problem would rise if we tried in the same way show ${\mathsf L}(\mathfrak G_3)$ is finitary, we have no $\sigma$-model over $\mathfrak{G}_3$ equivalent with $G(\mathfrak{M}^m)$. So, ${\mathsf L}(\mathfrak G_3)$ is nullary and the above may be seen as  Ghilardi's Theorem 9, p.112, \cite{Ghi5}.}
The rest of the proof remains the same.

In the same way one shows ${\mathsf L}(\mathfrak G_2)$ has finitary unification; from the proof  that ${\mathsf L}(\mathfrak G_2)\cap{\mathsf L}(\mathfrak C_{5})$ has finitary unification, it suffices to remove $\mathfrak C_{5},\mathfrak R_{2},\mathfrak Y_{2}$ and $\mathfrak Y_{3}$. Then we can  even simplify our reasoning and delate $\mathfrak{g}^n_1\mathfrak{h}^n_1\dots\mathfrak{g}^n_{2^n}\mathfrak{h}^n_{2^n}$
from $code(\mathfrak{f}^k)$. Thus,  we can take $m=n+2$ and define $F(\mathfrak{M}^k)$  using $m$-valuations $\mathfrak{f}^n11$, $\mathfrak{f}^n01$, or $\mathfrak{f}^n00$. The suffices $11$, $01$ and $00$ are necessary.\footnote{Using them we avoid problems that could rise  p-morphisms mentioned in Example \ref{Kost}, the first counterexample. For the second type of p-morphisms, mentioned in the Example, we have no such remedia. The whole proof is to be shown that for these particular frames we can deal somehow with p-morphims which collapse $\mathfrak F_2$ onto $\mathfrak L_2$.}

We need to show that ${\mathsf L}(\mathfrak G_2)$ does not have projective approximation. Consider the substitution
$\sigma\colon\{x_1\}\to \mathsf{Fm}^2$ such that
$\sigma(x_1)=\neg\neg x_1\land \bigl(x_2\lor(x_2\to x_1\lor\neg x_1)\bigr).$ Any $\sigma$-model is equivalent with one of the  1-models:
\begin{figure}[H]
\unitlength1cm
\begin{picture}(3,1.5)
\thicklines

\put(0,0){\circle{0.1}}
\put(0.3,0){\mbox{$1$}}

\put(1.5,0){\circle{0.1}}
\put(1.8,0){\mbox{$0$}}

\put(12,0){\vector(0,1){0.9}}
\put(12,1){\circle{0.1}}
\put(12,0){\circle{0.1}}
\put(12.3,0){\mbox{$0$}}
\put(12.3,1){\mbox{$1$}}

\put(3,1){\circle{0.1}}
\put(4,0){\circle{0.1}}
\put(5,1){\circle{0.1}}
\put(4.3,0){\mbox{$0$}}
\put(3.3,1){\mbox{$0$}}
\put(5.3,1){\mbox{$1$}}
\put(4,0){\vector(1,1){0.9}}
\put(4,0){\vector(-1,1){0.9}}

\put(8,0){\vector(-1,1){0.9}}
\put(8,0){\vector(1,1){0.9}}
\put(9,1){\vector(0,1){0.9}}
\put(9,1){\vector(-1,1){0.9}}
\put(7,1){\circle{0.1}}
\put(9,2){\circle{0.1}}
\put(8,2){\circle{0.1}}
\put(8,0){\circle{0.1}}
\put(9,1){\circle{0.1}}
\put(8.3,0){\mbox{$0$}}
\put(6.7,1.1){\mbox{$1$}}
\put(9.1,1.9){\mbox{$1$}}
\put(9.4,1){\mbox{$1$}}
\put(8.3,1.9){\mbox{$1$}}
\put(10.5,0.5){\mbox{$\thicksim$}}
\end{picture}
\caption{}\label{tt}
\end{figure}
\noindent The above  model over $\mathfrak L_2$ is not a $\sigma$-model  but is equivalent with some $\sigma$-model over  $\mathfrak G_2$. Suppose ${\mathsf L}(\mathfrak G_2)$ has projective approximation and let $G\colon \mathbf M^1\to \mathbf M^1$ be the retraction  given by Theorem \ref{retraction}. We  have $G(\circ\ 0)=\circ\ 0$ and $G(\circ\ 1)=\circ\ 1$.
But if

\unitlength1cm
\begin{picture}(5,2.5)
\thicklines
\put(0,1.4){$G\bigl($}
\put(0.8,2){\circle{0.1}}
\put(0.8,1){\circle{0.1}}
\put(1.1,1){\mbox{$0$}}
\put(1.1,2){\mbox{$1$}}
\put(0.8,1){\vector(0,1){0.9}}
\put(1.5,1.4){$\bigr)=$}

\put(2.2,2){\circle{0.1}}
\put(2.2,1){\circle{0.1}}
\put(2.4,1){\mbox{$0$}}
\put(2.4,2){\mbox{$1$}}
\put(2.2,1){\vector(0,1){0.9}}
\put(2.7,1.4){, then}

\put(4,1.1){$G\Bigl($}

\put(6,0){\vector(-1,1){0.9}}
\put(6,0){\vector(1,1){0.9}}
\put(7,1){\vector(0,1){0.9}}
\put(7,1){\vector(-1,1){0.9}}
\put(5,1){\circle{0.1}}
\put(7,2){\circle{0.1}}
\put(6,2){\circle{0.1}}
\put(6,0){\circle{0.1}}
\put(7,1){\circle{0.1}}
\put(6.3,0){\mbox{$0$}}
\put(4.7,1.1){\mbox{$0$}}
\put(7.1,2.1){\mbox{$1$}}
\put(7.3,1){\mbox{$0$}}
\put(6.3,2.1){\mbox{$1$}}
\put(7.7,1.1){\mbox{$\Bigr)\ =$}}

\put(10,0){\vector(-1,1){0.9}}
\put(10,0){\vector(1,1){0.9}}
\put(11,1){\vector(0,1){0.9}}
\put(11,1){\vector(-1,1){0.9}}
\put(11,1){\circle{0.1}}
\put(11,2){\circle{0.1}}
\put(10,2){\circle{0.1}}
\put(10,0){\circle{0.1}}
\put(11,1){\circle{0.1}}
\put(9.5,0){\mbox{$0$}}
\put(9.5,1){\mbox{$0$}}
\put(11.1,2.1){\mbox{$1$}}
\put(11.3,1){\mbox{$0$}}
\put(10.3,2.1){\mbox{$1$}}

\end{picture}\\
which cannot happen as the last model is not equivalent to any model in
 Figure \ref{tt}. Thus, we get a contradiction.

  In the same way one shows ${\mathsf L}(\mathfrak G_2)\cap{\mathsf L}(\mathfrak C_{5})$ does not have projective approximation.
\end{proof}
We can also show  that  ${\mathsf L}(\mathfrak G_1)$ (see Figure \ref{GF}) has finitary unification; it suffices only to extend the above reasoning admitting  the frames $\mathfrak G_1,\mathfrak G_{3\mathfrak L_2}$ and $\mathfrak G_{3\mathfrak F_2}$ (see Figure \ref{MNU}).  But
 a detailed proof would take much time and place (and, above all, would be boring). We think that original Ghilardi's reasoning, see \cite{Ghi5}, is conscious and elegant and, therefore, we resign from any alternative proof.
\section{Unification Types.}\label{UT}

\subsection{Unitary Unification.}\label{UU}
By Theorem \ref{wpl}, the logic $\mathsf L(\mathfrak R_2)$ of rhombus (see figure \ref{FRF}) has unitary unification.
We have shown this directly, using Theorem \ref{main}, in \cite{dkw}. We cannot have $m=n$ in this case. In other words, unification in $\mathsf L(\mathfrak R_2)$ is unitary but it is not true that every unifiable formula in $n$-variables has a mgu in $n$-variables. A unifiable $A(x_1,\dots,x_n)$ has a mgu in $m$-variables where ($m$ is defined in the proof  and) $n<m$.

Indeed, let
 $A= x_1\lor  x_2\lor(\neg x_1\land\neg x_2)$ and $\varepsilon_1,\varepsilon_2,\varepsilon_3$ be the following  unifiers for $A$:
$$\varepsilon_1(x_1)=\top\ ,\ \varepsilon_1(x_2)= x_2\quad;\quad \varepsilon_2(x_1)= x_1\ ,\ \varepsilon_2(x_2)=\top\quad;\quad\varepsilon_3(x_1)=\varepsilon_3(x_2)=\bot.$$
Let $\varepsilon\colon\{x_1,x_2\}\to\mathsf{Fm}^2$ be a mgu for $A$ in any intermediate logic ${\mathsf L}\subseteq\mathsf L(\mathfrak R_2)$.  Then for some  $\alpha_i\colon\{x_1,x_2\}\to\mathsf{Fm}^2,$ where $i=1,2,3$, we have $\alpha_i\circ\varepsilon=_{\sf L}\varepsilon_i$, and hence the  diagram in Figure \ref{r1} commutes (up to $\thicksim$) where $\mathbf{M}^{2}=\mathbf{M}^{2}(sm(\mathfrak R_2))=\mathbf{M}^{2}(\{\mathfrak L_1,\mathfrak L_2,\mathfrak R_2\})$:
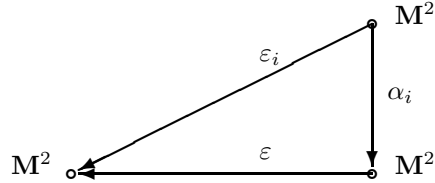
\begin{figure}[H]
\unitlength1cm
\begin{picture}(5,2)
\thicklines

\put(8,2){\vector(0,-1){1.9}}

\put(8,2){\vector(-2,-1){3.9}}
\put(8,0){\vector(-1,0){3.9}}
\put(8,0){\circle{0.1}}
\put(4,0){\circle{0.1}}
\put(8,2){\circle{0.1}}
\put(8.3,2){\mbox{$\mathbf{M}^{2}$}}
\put(3.2,0){\mbox{$\mathbf{M}^2$}}
\put(8.3,0){\mbox{$\mathbf{M}^{2}$}}
\put(8.2,1){\mbox{$\alpha_i$}}
\put(6.5,0.2){\mbox{$\varepsilon$}}
\put(6.5,1.5){\mbox{$\varepsilon_{i}$}}

\end{picture}
\caption{Commutative  Diagram}\label{r1}
\end{figure}
\noindent Consider all $2$-models over one-element frame $\mathfrak L_1$. They are $(\mathfrak L_1,00),(\mathfrak L_1,01)$, $(\mathfrak L_1,10)$ and $(\mathfrak L_1,11)$. Each of them is a $\varepsilon_i$-model for some $i$ and hence they are $\varepsilon-$models. The mapping
$\varepsilon\colon\mathbf{M}^2\to\mathbf{M}^2$ must be one-to-one on $\mathfrak L_1$-models.

Consider the following $2$-models over $\mathfrak L_2$ (a two-element chain):\\

   \unitlength1cm
\begin{picture}(0,1.1)
\thicklines
\put(0,0){\circle{0.1}}
\put(0.3,1){\mbox{$11$}}
\put(0,0){\vector(0,1){0.9}}
\put(0.3,0){\mbox{$10$}}
\put(0,1){\circle{0.1}}
\put(1.5,0.5){\mbox{and}}
\put(3.3,1){\mbox{$11$ \qquad\qquad \ Both are $\varepsilon_i$-model for some $i=1,2$ and hence }}
\put(3,0){\circle{0.1}}
\put(3.6,0.5){\mbox{\qquad\qquad \ \ \ they are also $\varepsilon-$models. Thus, there are such}}
\put(3.3,0){\mbox{$01$ \qquad\qquad \qquad  $a,b,c,d,e,f \in \{0, 1\}$  that}}
\put(3,0){\vector(0,1){0.9}}
\put(3,1){\circle{0.1}}

\end{picture} \\

   \unitlength1cm
\begin{picture}(0,1.1)
\thicklines

\put(0.2,0.5){\mbox{$\varepsilon\Bigl($}}
\put(1,0){\circle{0.1}}
\put(1.3,1){\mbox{$ab$}}
\put(1,0){\vector(0,1){0.9}}
\put(1.3,0){\mbox{$cd$}}
\put(1,1){\circle{0.1}}
\put(2,0.5){\mbox{$\Bigr)\qquad=$}}
\put(4,0){\circle{0.1}}
\put(4.3,1){\mbox{$11$}}
\put(4,0){\vector(0,1){0.9}}
\put(4.3,0){\mbox{$10$}}
\put(4,1){\circle{0.1}}

\put(6,0.5){\mbox{and}}
\put(8.2,0.5){\mbox{$\varepsilon\Bigl($}}
\put(9,0){\circle{0.1}}
\put(9.3,1){\mbox{$ab$}}
\put(9,0){\vector(0,1){0.9}}
\put(9.3,0){\mbox{$ef$}}
\put(9,1){\circle{0.1}}
\put(10,0.5){\mbox{$\Bigr)\qquad=$}}
\put(12,0){\circle{0.1}}
\put(12.3,1){\mbox{$11$}}
\put(12,0){\vector(0,1){0.9}}
\put(12.3,0){\mbox{$01$}}
\put(12,1){\circle{0.1}}

\end{picture}\\

 \noindent
and $ab\not=cd\not=ef\not=ab$. If we consider the following $\mathfrak R_2$-model and its $\varepsilon$-image:\\
 \unitlength1cm
\begin{picture}(3,2.2)
\thicklines
\put(2,1){$\varepsilon\Bigl($}
\put(4,0){\vector(-1,1){0.9}}
\put(4,0){\vector(1,1){0.9}}
\put(5,1){\vector(-1,1){0.9}}
\put(3,1){\circle{0.1}}
\put(4,2){\circle{0.1}}
\put(4,0){\circle{0.1}}
\put(5,1){\circle{0.1}}
\put(4.3,0){\mbox{$?$}}
\put(2.5,1){\mbox{$cd$}}
\put(4.3,2){\mbox{$ab$}}
\put(5.3,1){\mbox{$ef$}}
\put(3,1){\line(1,1){0.9}}
\put(3,1){\vector(1,1){0.9}}
\put(6,1){$\Bigr)$}
\put(7,1){$=$}
\put(10,0){\vector(-1,1){0.9}}
\put(10,0){\vector(1,1){0.9}}
\put(11,1){\vector(-1,1){0.9}}
\put(9,1){\circle{0.1}}
\put(10,2){\circle{0.1}}
\put(10,0){\circle{0.1}}
\put(11,1){\circle{0.1}}
\put(10.3,0){\mbox{$00$}}
\put(8.5,1){\mbox{$10$}}
\put(10.3,2){\mbox{$11$}}
\put(11.3,1){\mbox{$01$}}
\put(9,1){\vector(1,1){0.9}}
\end{picture}\\
\noindent we conclude there is a $\varepsilon$-model which is not a model for $A$ (whatever   $?$ is). Thus, $\varepsilon$ is not a unifier for $A$, a contradiction.

No unifier $\varepsilon\colon\{x_1,x_2\}\to\mathsf{Fm}^2$ for $A$ is more general any  $\varepsilon_i$, for  $i=1,2,3$. Clearly, using filtering unifications, we can find a unifier for $A$ which is more general than any $\varepsilon_i$, we can even find in this way a mgu for $A$. But filtering unifications require additional variables and
thus, we get a mgu $\varepsilon\colon\{x_1,x_2\}\to\mathsf{Fm}^m$, where $m>2$.

\begin{theorem}\label{u1} If an intermediate logic ${\mathsf L}\subseteq\mathsf L(\mathfrak R_2)$ has unitary unification, then  there are  {\sf L}-unifiable formulas  such that all their mgu's introduce new variables.
\end{theorem}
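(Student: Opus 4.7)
The plan is to reuse the specific example already worked out in the paragraphs preceding the theorem, namely the formula $A=x_1\lor x_2\lor(\neg x_1\land\neg x_2)$ together with its three unifiers $\varepsilon_1,\varepsilon_2,\varepsilon_3\colon\{x_1,x_2\}\to\mathsf{Fm}^2$, and to show that the obstruction produced there survives the passage from $\mathsf L(\mathfrak R_2)$ down to any sublogic $\mathsf L\subseteq\mathsf L(\mathfrak R_2)$. The first step is to observe that each $\varepsilon_i$ is an $\mathsf L$-unifier for $A$: each $\varepsilon_i(A)$ is already a theorem of $\mathsf{INT}\subseteq\mathsf L$, so unifiability is monotone in the downward direction and the three witnesses are $\mathsf L$-unifiers regardless of which sublogic we work in.

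Next I would argue by contradiction that no substitution $\varepsilon\colon\{x_1,x_2\}\to\mathsf{Fm}^2$ can be a common upper bound of $\varepsilon_1,\varepsilon_2,\varepsilon_3$ in the pre-order $\preccurlyeq_{\sf L}$. Since $\mathsf L\subseteq\mathsf L(\mathfrak R_2)$, the rhombus $\mathfrak R_2$ is a frame of $\mathsf L$, and the Jankov--Citkin splitting theory ensures that $\mathsf L$-equivalence of substitutions implies $\mathsf L(\mathfrak R_2)$-equivalence, hence agreement on all $\mathfrak R_2$-models (Lemma \ref{lfs}). Therefore the diagram in Figure \ref{r1}, read over $\mathbf M^2(sm(\mathfrak R_2))$, must commute up to $\thicksim$ for any putative common upper bound $\varepsilon$ in two variables. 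The explicit argument given after Figure \ref{r1} then applies verbatim: $\varepsilon$ would have to be injective (up to $\thicksim$) on the four $\mathfrak L_1$-models, which forces three pairwise distinct 2-valuations $ab,cd,ef$ at the three non-root points of an $\mathfrak R_2$-model, and the forced fourth valuation $?$ at the root yields an $\varepsilon$-model whose underlying $\mathfrak R_2$-shape falsifies $A$ at the root no matter how $?$ is chosen. This contradicts $\varepsilon$ being an $\mathsf L$-unifier for $A$, since $\mathfrak R_2$-models are $\mathsf L$-models.

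Finally, I invoke the hypothesis that $\mathsf L$ has unitary unification: $A$ is $\mathsf L$-unifiable (witnessed by any $\varepsilon_i$), so it has an mgu $\varepsilon\colon\{x_1,x_2\}\to\mathsf{Fm}^m$ for some $m$. By the preceding paragraph, $m=2$ is impossible, hence $m>2$, i.e.\ every mgu of $A$ in $\mathsf L$ introduces at least one variable not occurring in $A$. This yields the conclusion.

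The main technical obstacle is the second step, i.e.\ lifting the specific $\mathsf L(\mathfrak R_2)$-argument to arbitrary $\mathsf L\subseteq\mathsf L(\mathfrak R_2)$; the point that must be checked cleanly is that $\alpha_i\circ\varepsilon=_{\mathsf L}\varepsilon_i$ (which is what $\varepsilon_i\preccurlyeq_{\mathsf L}\varepsilon$ unfolds to) still forces the commutativity of Figure \ref{r1} over $\mathbf M^2(sm(\mathfrak R_2))$ when compared in $\thicksim$. This is exactly Lemma \ref{lfs} applied to the logic $\mathsf L(\mathfrak R_2)$, which is a consequence rather than a hypothesis of the argument, so the semantic obstruction is genuinely independent of which subsystem $\mathsf L$ of $\mathsf L(\mathfrak R_2)$ we chose; everything else is bookkeeping.
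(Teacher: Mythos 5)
Your proposal is correct and follows essentially the same route as the paper, which proves Theorem \ref{u1} via exactly this example $A=x_1\lor x_2\lor(\neg x_1\land\neg x_2)$, the three unifiers $\varepsilon_i$, and the $\mathfrak R_2$-model obstruction; you merely make explicit the (correct) point that $=_{\mathsf L}$ implies $=_{\mathsf L(\mathfrak R_2)}$ for $\mathsf L\subseteq\mathsf L(\mathfrak R_2)$, for which plain monotonicity of provable equivalence suffices --- no splitting theory is needed there.
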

We do not  assume here that {\sf L} is locally tabular.  Now, we can apply a particular splitting in the lattice of intermediate logics.
It is known (see Rautenberg \cite{Rautenberg}) that $(\mathsf L (\mathfrak R_2),\mathsf{LC})$ is a splitting pair for all extensions of $\mathsf{KC}$. It follows that for each {\sf L} extending $\mathsf{KC}$,  either $\mathsf L \supseteq  \mathsf{LC}$,  or $\mathsf L \subseteq  \mathsf L(\mathfrak R_2)$. Recall also that, if $\mathsf{LC}\subseteq  \mathsf L $, then  {\sf L} is one of  G\"odel logics that is {\sf L =LC} or  $\mathsf{L} = \mathsf L(\mathfrak{L}_n)$, for some $n\geq 1$.
Thus, we arrive to the following dichotomy.
\begin{corollary}\label{dichotomy}
For any intermediate logic {\sf L}  with unitary unification, either {\sf L} is one of the G\"odel logics and enjoys projective unification (and all unifiable formulas have mgu's preserving their variables) or there are  {\sf L}-unifiable formulas such that all their mgu's introduce new variables (that is mgu's do not preserve variables of unifiable formulas).
\end{corollary}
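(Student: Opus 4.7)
The plan is to combine three ingredients already assembled in the paper: Theorem \ref{kc} (the location of all unitary logics above {\sf KC}), Rautenberg's splitting of the lattice of extensions of {\sf KC} by the pair $(\mathsf L(\mathfrak R_2),\mathsf{LC})$, and Theorems \ref{projj} and \ref{u1} which describe the two sides of this split.

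First I would invoke Theorem \ref{kc} to conclude that any intermediate logic {\sf L} with unitary unification satisfies $\mathsf{KC}\subseteq \mathsf L$, because logics with good unification below $\mathsf{KC}$ are finitary-but-not-unitary (they sit inside $\mathsf L(\mathfrak F_2)$). Then I apply Rautenberg's splitting: since $(\mathsf L(\mathfrak R_2),\mathsf{LC})$ splits the lattice of extensions of $\mathsf{KC}$, we have the dichotomy $\mathsf L\subseteq \mathsf L(\mathfrak R_2)$ or $\mathsf{LC}\subseteq \mathsf L$.

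In the first branch, $\mathsf L\subseteq \mathsf L(\mathfrak R_2)$, Theorem \ref{u1} applies directly and hands us an $\mathsf L$-unifiable formula whose every mgu must introduce fresh variables; this is exactly conclusion (b) of the corollary. In the second branch, $\mathsf{LC}\subseteq \mathsf L$, Theorem \ref{projj} tells us {\sf L} enjoys projective unification, and by Wro\'nski's description of the Gödel-Dummett logics (extensions of {\sf LC}) this forces {\sf L} to be one of $\mathsf{LC}$ or $\mathsf L(\mathfrak L_n)$ for some $n\geq 1$. Moreover, projective unifiers are variables-preserving by definition: if $A\in\mathsf{Fm}^n$ is projective and $\varepsilon$ is a projective unifier, then (as recalled right after Lemma \ref{mon}) one may choose $\varepsilon:\{x_1,\dots,x_n\}\to\mathsf{Fm}^n$, and such $\varepsilon$ is an mgu by Lemma \ref{proj}. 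This yields conclusion (a).

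The two branches are mutually exclusive because $\mathsf{LC}\not\subseteq\mathsf L(\mathfrak R_2)$ (the rhombus frame falsifies linearity), so no logic falls into both cases simultaneously, and the corollary follows. There is essentially no obstacle here — the work has already been done in Theorems \ref{kc}, \ref{projj}, and \ref{u1}; the only subtlety is to confirm that in the projective case mgu's can genuinely be taken over the original variables of $A$, which is a direct consequence of the very definition of projective unifier combined with Lemma \ref{proj}.
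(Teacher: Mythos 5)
Your proposal is correct and follows essentially the same route as the paper: Theorem \ref{kc} places {\sf L} above {\sf KC}, Rautenberg's splitting by $(\mathsf L(\mathfrak R_2),\mathsf{LC})$ gives the dichotomy, and Theorems \ref{projj} and \ref{u1} handle the two branches exactly as the paper does. The observation that projective unifiers can be chosen over the original variables is likewise the paper's justification for the parenthetical claim in the Gödel-logic case.
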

We do  not deny the existence of unifiable (non-projective) formulas with mgu's preserving their variables. We claim that, if a non-projective logic has unitary unification, then there must occur unifiable formulas all their mgu's introduce new variables.

\begin{corollary}\label{up1} An  intermediate logic {\sf L} has projective unification if and only if  {\sf L} has unitary unification and  projective approximation.  \end{corollary}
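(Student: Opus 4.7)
The plan is to treat the two directions separately. The forward direction is immediate: if $\mathsf L$ has projective unification, every unifiable $A$ has a projective unifier $\varepsilon$; Lemma \ref{proj} applied with an arbitrary $\mathsf L$-unifier $\sigma$ of $A$ gives $\sigma \circ \varepsilon =_{\mathsf L} \sigma$, so $\varepsilon \preccurlyeq \sigma$ and $\varepsilon$ is a mgu, yielding unitary unification. Projective approximation is witnessed by $\Pi(A) = \{A\}$ when $A$ is unifiable (so $A$ is itself $\mathsf L$-projective) and $\Pi(A) = \emptyset$ otherwise, both clauses of the definition being trivially verified.

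For the backward direction, suppose $\mathsf L$ has unitary unification and projective approximation. The strategy is to reduce to $\mathsf{LC} \subseteq \mathsf L$ and then invoke Theorem \ref{projj}. Theorem \ref{kc} gives $\mathsf{KC} \subseteq \mathsf L$, so Rautenberg's splitting pair $(\mathsf L(\mathfrak R_2),\mathsf{LC})$ for extensions of $\mathsf{KC}$ applies: either $\mathsf{LC} \subseteq \mathsf L$ (and we are done) or $\mathsf L \subseteq \mathsf L(\mathfrak R_2)$, which I plan to eliminate by extracting a ``no-new-variables'' mgu from $\Pi(A)$ and appealing to Theorem \ref{u1}.

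Suppose $\mathsf L \subseteq \mathsf L(\mathfrak R_2)$. Theorem \ref{u1} supplies an $\mathsf L$-unifiable formula $A(x_1,\dots,x_n)$ all of whose mgus introduce new variables. Applying projective approximation, $\Pi(A) = \{B_1,\dots,B_k\}$ consists of $\mathsf L$-projective formulas with $\mathsf{Var}(B_i) \subseteq \{x_1,\dots,x_n\}$ and $B_i \vdash_{\mathsf L} A$. For each $B_i$ take the projective unifier and extend it by the identity on $\{x_1,\dots,x_n\} \setminus \mathsf{Var}(B_i)$, obtaining $\varepsilon_i\colon \{x_1,\dots,x_n\} \to \mathsf{Fm}^n$; this extension is still a projective unifier of $B_i$ (hence a mgu of $B_i$ by Lemma \ref{proj}) and, by modus ponens from $\varepsilon_i(B_i) \in \mathsf L$ and $B_i \to A \in \mathsf L$, it is a unifier of $A$ whose codomain is $\mathsf{Fm}^n$---so no new variables.

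The contradiction then falls out from $\preccurlyeq$-juggling. Let $\varepsilon^*$ be a mgu of $A$ from unitary unification. Since $\varepsilon^*$ unifies $A$, projective approximation forces it to unify some $B_{i_0}$, whence $\varepsilon_{i_0} \preccurlyeq \varepsilon^*$ (as $\varepsilon_{i_0}$ is a mgu of $B_{i_0}$); conversely $\varepsilon^* \preccurlyeq \varepsilon_{i_0}$ because $\varepsilon_{i_0}$ unifies $A$ and $\varepsilon^*$ is a mgu of $A$. Hence $\varepsilon^* \sim \varepsilon_{i_0}$, and any substitution $\preccurlyeq$-equivalent to a mgu is itself a mgu, so $\varepsilon_{i_0}$ is a mgu of $A$ into $\mathsf{Fm}^n$, contradicting Theorem \ref{u1}. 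The main conceptual subtlety is exactly this use of $\preccurlyeq$-equivalence: as the paper warns, $\varepsilon \sim \varepsilon'$ is strictly weaker than $\varepsilon =_{\mathsf L} \varepsilon'$, but it still transfers the mgu property, and that is what lets one import the ``no new variables'' feature of the projective unifiers of $\Pi(A)$ into the pool of mgus of $A$.
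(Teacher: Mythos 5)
Your proof is correct and follows essentially the same route as the paper: the paper extracts a one-element projective approximation from unitarity, concludes that every unifiable formula has a variable-preserving mgu, and then invokes Corollary \ref{dichotomy}, which is exactly the combination of Rautenberg's splitting $(\mathsf L(\mathfrak R_2),\mathsf{LC})$ with Theorem \ref{u1} that you re-derive inline. Your extra care with the $\preccurlyeq$-equivalence of $\varepsilon^*$ and $\varepsilon_{i_0}$ is a correct and welcome elaboration of the step the paper leaves implicit.
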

\begin{proof} Suppose that {\sf L} has unitary unification and  projective approximation. Then one notices that each unifiable formula $A$ has a one-elements projective approximation $\Pi(A)$. But it means that any unifiable $A$ has a mgu preserving its variables. By Corollary \ref{dichotomy}, we conclude {\sf L} has projective unification.
\end{proof}
 Corollary \ref{up1} does not hold for transitive modal logics; there are unitary transitive modal logics with  projective approximation that are non-projective. But for reflexive and transitive modal logics one can prove the counterpart of  Corollary \ref{up1}.

Other consequences of  Theorem \ref{u1} are given in  Section \ref{HFU}.

\subsection{Infinitary Unification}\label{IU}

Until now, no   intermediate (nor modal) logic  with infinitary unification has  been found, see  \cite{Ghi5,uni}.
Using Theorem \ref{main} we show that
\begin{theorem}\label{niu3} Any locally tabular intermediate logic does not have infinitary unification.
\end{theorem}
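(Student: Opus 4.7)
I argue by contradiction. Suppose $\mathsf{L}$ is a locally tabular intermediate logic with infinitary unification; I aim to produce an unifiable formula with no minimal complete set, contradicting the very definition of the infinitary type. First, if $\mathsf{KC}\subseteq\mathsf{L}$ then unification in $\mathsf{L}$ is filtering by Theorem \ref{fil}, and the discussion following that theorem observes that filtering unification is unitary or nullary---never infinitary. By the splitting pair in Theorem \ref{kc}, the remaining case is $\mathsf{L}\subseteq\mathsf{L}(\mathfrak{F}_{2})$. Since $\mathsf{L}$ is then neither unitary nor finitary, Theorem \ref{main} furnishes an $n\geq 0$ such that for every $m$ there exist $k$ and a substitution $\sigma_{m}:\{x_{1},\dots,x_{n}\}\to\mathsf{Fm}^{k}$ for which no pair $(G,F)$ realizes the conditions (i)--(v). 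By Corollary \ref{in2} each $\sigma_{m}$ unifies the formula $A_{\sigma_{m}}\in\mathsf{Fm}^{n}$, and local tabularity makes $\mathsf{Fm}^{n}/\!\!=_{\mathsf{L}}$ finite, so by pigeonhole I may pass to a subsequence on which $A_{\sigma_{m}}=_{\mathsf{L}}A^{*}$ for a single unifiable formula $A^{*}$ and every $m$.

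Next, the infinitary assumption supplies a minimal complete set $\Sigma$ of $\mathsf{L}$-unifiers of $A^{*}$, which in particular is a $\preccurlyeq$-antichain. For each $m$, pick $\tau_{m}\in\Sigma$ with $\tau_{m}\preccurlyeq\sigma_{m}$; Lemma \ref{n4i} provides the witnessing mapping $F=H_{\nu}$, and together with $G=H_{\tau_{m}}$ the pair would verify (i)--(v) for $\sigma_{m}$ as soon as $\tau_{m}$ were $=_{\mathsf{L}}$-equivalent to any substitution into $\mathsf{Fm}^{m}$---impossible by the choice of $\sigma_{m}$. Hence $\tau_{m}$ essentially uses more than $m$ variables, the $\tau_{m}$'s are pairwise $=_{\mathsf{L}}$-distinct, and $\{\tau_{m}\}_{m\geq 0}$ forms an infinite antichain inside $\Sigma$. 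Invoking local tabularity a second time, every image $\tau_{m}(\mathbf{M}^{k(\tau_{m})})/\!\!\thicksim$ is a subset of the finite set $\mathbf{M}^{n}/\!\!\thicksim$, so a further pigeonhole yields an infinite subsequence of $\tau_{m}$'s sharing one common image $T\subseteq\mathbf{M}^{n}/\!\!\thicksim$.

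The main obstacle---and the technical heart of the argument---is to upgrade ``same image $T$'' to $\preccurlyeq$-comparability, which would violate the antichain property of $\Sigma$. My plan is to construct, for this common image $T$, a single unifier $\mu$ of $A^{*}$ with image $T$ that is $\preccurlyeq$-more general than every $\tau_{m}$ in the chosen subsequence; completeness of $\Sigma$ then produces some $\rho\in\Sigma$ with $\rho\preccurlyeq\mu\preccurlyeq\tau_{m}$, forcing $\rho=\tau_{m}$ by the antichain property for infinitely many $m$, a contradiction. To build $\mu$, I would work with p-irreducible models and Theorem \ref{nsi}: for each p-irreducible $k$-model $\mathfrak{M}^{k}$, use Theorem \ref{Irr} and Corollary \ref{lf3i} to select the unique p-irreducible representative $H(\mathfrak{M}^{k})\in\mathbf{M}^{n}_{ir}$ of the class in $T$ indexed by $\mathfrak{M}^{k}$. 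The delicate point is verifying the frame-preservation condition (i) of Theorem \ref{nsi}; this is where the reduction $\mathsf{L}\subseteq\mathsf{L}(\mathfrak{F}_{2})$ is decisive, as it bounds the branching of p-irreducible $n$-models and lets one align the frames of $\mathfrak{M}^{k}$ and $H(\mathfrak{M}^{k})$. Once $H$ is shown to satisfy (i)--(iii), Theorem \ref{nsi} delivers the required substitution $\mu$ and the contradiction closes the proof.
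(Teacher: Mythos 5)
Your opening moves are sound and parallel the paper's: the case $\mathsf{KC}\subseteq\mathsf{L}$ is disposed of by filtering, the negation of Theorem \ref{main} yields, for a fixed $n$, substitutions $\sigma_m$ none of whose unifiers into $\mathsf{Fm}^m$ is more general than $\sigma_m$, local tabularity lets you fix $A_{\sigma_m}=_{\mathsf{L}}A^{*}$, and the witnesses $\tau_m\in\Sigma$ are correctly seen to be pairwise inequivalent. The proof collapses, however, at what you yourself call the technical heart. A single unifier $\mu$ with $\mu\preccurlyeq\tau_m$ for infinitely many pairwise $\preccurlyeq$-incomparable members of a minimal complete set cannot be conjured from the fact that the $\tau_m$ share an image $T\subseteq\mathbf{M}^n/\!\!\thicksim$. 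Lemma \ref{n4i} gives only one direction: $\tau\preccurlyeq\sigma$ forces the image inclusion, and the failure of the converse is precisely the phenomenon that makes logics such as $\mathsf{L}(\mathfrak{G}_3)$ nullary (Theorem \ref{F6m} exhibits unifiers whose images match a target yet which cannot be reached by any frame-preserving, submodel-commuting $F$). Your sketch of $H$ does not say into which $\mathsf{Fm}^{K}$ the substitution $\mu$ is to land, does not say which class of $T$ is ``indexed'' by a given $\mathfrak{M}^{K}$, and gives no reason why the resulting $H_\mu$ would factor through each $\tau_m$. The supporting claim that $\mathsf{L}\subseteq\mathsf{L}(\mathfrak{F}_2)$ ``bounds the branching of p-irreducible $n$-models'' is false: that inclusion is a containment of logics and places no restriction on the frames of $\mathsf{L}$, whose branching and depth remain arbitrary.

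The paper's actual route around this obstacle is quite different and is the real content of the proof. It never produces one unifier dominating infinitely many elements of $\Sigma$. Instead it re-engineers the sequence: each $\sigma$ from $(\star)$ is replaced by $\pi(x_j)=(x_{k+j}\leftrightarrow\sigma(x_j))$ so that every conjunction $\sigma_i(x_{i_1})\land\cdots\land\sigma_i(x_{i_s})$ is projective (Lemma \ref{niu1}), and then by a conjunction of variable-shifted copies so that the class of $\sigma_i$-models is closed under pointwise conjunction of valuations. With these two properties it proves the key step $\bigwedge_{j<r}(\nu^j_{m_i}\circ\sigma_i)\preccurlyeq\sigma_{i+1}$, using the extension property of projective formulas (Theorem \ref{niu2}) to repair root valuations when building the witnessing mappings $F_j$. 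Only then does minimality of $\Sigma$ enter: a $\tau\in\Sigma$ with $\tau\preccurlyeq\sigma_i$ and $\tau\not\preccurlyeq\sigma_{i+1}$ is, by minimality, equivalent to the conjunction of its own shifted copies, which the key step makes more general than $\sigma_{i+1}$ --- the contradiction. Without an analogue of this projectivization-and-conjunction machinery, your final paragraph is an assertion of exactly the comparability that the nullary examples show can fail.
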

\begin{proof} Let {\sf L} be a locally tabular intermediate logic and suppose that unification is not finitary (nor unitary) in {\sf L}. Then
$$\exists_{n>0} \forall_{m>0} \exists_{k>0} \exists_{\sigma\colon\{x_1,\dots,x_n\}\to \mathsf{Fm^k}} \forall_{\tau\colon\{x_1,\dots,x_n\}\to \mathsf{Fm^m}} \ (\tau(A_\sigma)\in\mathsf{L} \Rightarrow \tau\not\preccurlyeq_{\sf L}\sigma).\leqno(\star)$$
Thus, $n> 0$ is given. Let us define a sequence of integers $n=m_0<m_1<m_2\cdots$ and substitutions $\sigma_1,\sigma_2,\dots$ such that, for each $i>0$,\\
(1) $\sigma_i\colon\{x_1,\dots,x_n\}\to \mathsf{Fm}^{m_i}$ and $\tau\not\preccurlyeq_\mathsf{L}\sigma_i$ if $\tau\colon\{x_1,\dots,x_n\}\to \mathsf{Fm}^{m_{i-1}}$ and $\tau(A_{\sigma_i})\in\mathsf{L}$; \\
 (2)  $\sigma_i(x_{i_1})\land\cdots\land\sigma_i(x_{i_s})$ is {\sf L}--projective, for any $\{i_1,\dots,i_s\}\subseteq \{1,\dots,n\}$;\\
(3) $\sigma_i$-models  are $\land$-closed: $\mathfrak{M}^n\land\mathfrak{N}^n=(W,R,w_0,\{\mathfrak{f}_w^n\land\mathfrak{g}_w^n\}_{w\in W})$ is a $\sigma_i$-model, where
$(\mathfrak{f}_w^n\land\mathfrak{g}_w^n)(x_j)=\mathfrak{f}_w^n(x_j)\land\mathfrak{g}_w^n(x_j)$ for each $w\in W$ and each $j=1,\dots,n$,  if $\mathfrak{M}^n=(W,R,w_0,\{\mathfrak{f}_w^n\}_{w\in W})$ and  $\mathfrak{N}^n=(W,R,w_0,\{\mathfrak{g}_w^n\}_{w\in W})$ are $\sigma_i$-models.

 Let $\mathfrak{M}^n\leq\mathfrak{N}^n$ mean $\mathfrak{f}_w^n\leq\mathfrak{g}_w^n$ for each $w\in W$, where  the order between the valuations is  the product order. Then $\mathfrak{M}^n\land\mathfrak{N}^n\leq\mathfrak{M}^n$ and $\mathfrak{M}^n\land\mathfrak{N}^n\leq\mathfrak{N}^n$.

Our definition is inductive with respect to $i$. Suppose that $m_{i}$ and $\sigma_{i}$ (if $i>0$) are given. Then we apply $(\star)$, where $m=m_{i}$, to get $k$ and $\sigma$ fulfilling (1). We do not take $m_{i+1}=k$ and $\sigma_{i+1}=\sigma$ as we  need  (2)--(3) to be fulfilled.

For (2), let us  define $\pi\colon\{x_1,\dots,x_n\}\to \mathsf{Fm^{k+n}}$ taking
$$\pi(x_j)=(x_{k+j}\leftrightarrow\sigma(x_j)),\qquad\mbox{for each $j=1,\dots,n$}.$$
Note that $x_{k+j}$ does not occur in $\sigma(x_j)$ and hence, to show (2) for $\sigma_{i+1}=\pi$, Lemma \ref{niu1} applies.
We still have (1) (for $\sigma_{i+1}=\pi$) as taking $x_{k+1}\slash\top\cdots x_{k+n}\slash \top$ we get  $\pi\preccurlyeq\sigma$.
But we do not take  $m_{i+1}=k+n$ and $\sigma_{i+1}=\pi$ as we  need (3).

For (3), let $\nu_r\colon\mathsf{Var}\to\mathsf{Var}$, for any $r>0$, be given by $\nu_r(x_i)=x_{i+r}$ for each $i$. We  define $m_{i+1}=s(k+n)$ and, for each $j=1,\dots,n$, let
$$\sigma_{i+1}(x_j)=\pi(x_j)\land \nu_{n+k}(\pi(x_j))\land\nu_{n+k}^2(\pi(x_j))\land\dots\land \nu_{n+k}^{s-1}(\pi(x_j)), \leqno (\star\star)$$
where $s$ is (sufficiently big and is) specified below. We have
 $\sigma_{i+1}=\bigwedge_{j=0}^{s-1}(\nu^j_{n+k}\circ\pi)$.
Using the inverse mapping $\nu^{-1}$ one shows that $\sigma_{i+1}\preccurlyeq\pi$ and hence (1) holds. For (2), we can use Lemma \ref{niu1}. Let us prove (3).

Suppose there are given $(n+k)$-models $\mathfrak{M}_0^{k+n},\dots,\mathfrak{M}_{s-1}^{k+n}$ having the same frame (and root). Let $\mathfrak{M}_i^{k+n}=(W,R,w_0,\{\mathfrak{f}_{i\ w}^{k+n}\}_{w\in W})$, for $i=0,\dots,s-1$, and suppose that the valuations
$\mathfrak{f}_{0\ w}^{k+n},\dots,\mathfrak{f}_{s-1\ w}^{k+n}$, for each $w\in W$, are given by binary strings (of the same length $k+n$). Take the concatenation of the strings
$$\mathfrak{f}_{w}^{s(k+n)}=\mathfrak{f}_{0\ w}^{k+n}\cdots\mathfrak{f}_{s-1\ w}^{k+n}, \qquad\mbox{for each $w\in W$}.$$
Then we get an $s(k+n)$-model $\mathfrak{M}^{s(k+n)}=(W,R,w_0,\{\mathfrak{f}_{w}^{s(k+n)}\}_{w\in W})$, called {\it the concatenation } of $\mathfrak{M}_0^{k+n},\dots,\mathfrak{M}_{s-1}^{k+n}$
for which
$$\sigma_{i+1}(\mathfrak{M}^{s(k+n)})=\pi(\mathfrak{M}^{k+n}_1)\land \pi(\mathfrak{M}^{k+n}_2)\land\dots\land \pi(\mathfrak{M}^{k+n}_{s-1}).$$
Obviously, each $s(k+n)$-model $\mathfrak{M}^{s(k+n)}$ can be received as a `concatenation' of its $(k+n)$-fragments, and hence each $\sigma_{i+1}$-model is a conjunction of some $\pi$-models. Since there is finitely many $n$-models ($\sigma_{i+1}$- and $\pi$-models are $n$-models), we can get as  $\sigma_{i+1}$-models all conjunction of $\pi$-models if $s$ is sufficiently big. It means that $\sigma_{i+1}$-models are closed under conjunction  if $s$ is  big enough, see $(\star\star)$. We get (3).

We have  substitutions $\sigma_1,\sigma_2,\dots$ (and  integers $m_0,m_1,\dots$)  fulfilling (1)-(3). Let us note that these conditions are also fulfilled by any subsequence of $\sigma_1,\sigma_2,\dots$. Since $\sigma_i$-models, for each $i$, are $n$-models and there is only finitely many $n$-models, we can find a subsequence $\sigma_{i_1},\sigma_{i_2},\dots$ such that each $\sigma_{i_k}$ has the same set of models. Thus,   we can assume that the  sequence $\sigma_1,\sigma_2,\dots$  fulfills   \\ \indent (4) $\sigma_i(\mathbf{M}^{m_i})=\sigma_j(\mathbf{M}^{m_j})$ for each $i,j\geq 1$.\\
 \noindent The next (and  crucial) step in our argument is to show that\\ \indent (5) for each $i>0$, there is a number $r>0$ such that $\bigwedge_{j=0}^{r-1}(\nu^j_{m_{i}}\circ\sigma_{i})\preccurlyeq\sigma_{i+1}.$\\
Let us show there is a mapping $F\colon\mathbf{M}^{m_{i+1}}\to\mathbf{M}^{rm_i}$ fulfilling (see Theorem \ref{nsigmai}): \\
(i) $\mathfrak{M}^{m_{i+1}}$ and  $F(\mathfrak{M}^{m_{i+1}})$ have the same frame, for each $\mathfrak{M}^{m_{i+1}}$;\\
(ii) $F((\mathfrak{M}^{m_{i+1}})_w)\thicksim(F(\mathfrak{M}^{m_{i+1}}))_w$, \ for each $w\in W$ ($W$ is the domain of $\mathfrak{M}^{m_{i+1}}$);\\
 (iii) If $\mathfrak{N}^{m_{i+1}}\thicksim\mathfrak{M}^{m_{i+1}}$, then $F(\mathfrak{N}^{m_{i+1}})\thicksim F(\mathfrak{M}^{m_{i+1}}$);\\
 such that the following diagram commutes (up to $\thicksim$)
 \begin{figure}[H]\label{}
\unitlength1cm
\begin{picture}(5,2)
\thicklines

\put(8,2){\vector(0,-1){1.9}}

\put(8,2){\vector(-2,-1){3.9}}
\put(8,0){\vector(-1,0){3.9}}
\put(8,0){\circle{0.1}}
\put(4,0){\circle{0.1}}
\put(8,2){\circle{0.1}}
\put(8.3,2){\mbox{$\mathbf{M}^{m_{i+1}}$}}
\put(3.2,0){\mbox{$\mathbf{M}^n$}}
\put(8.3,0){\mbox{$\mathbf{M}^{rm_i}$}}
\put(8.1,1){\mbox{$F$}}
\put(5.3,0.2){\mbox{$\bigwedge_{j=0}^{r-1}(\nu^j_{m_{i}}\circ\sigma_{i})$}}
\put(5.7,1.2){\mbox{$\sigma_{i+1}$}}

\end{picture}
\caption{}\label{rr}
\end{figure}

\noindent To specify the number $r$, let us assume that the sequence:
$$\mathfrak{M}_0^{m_{i+1}},\mathfrak{M}_1^{m_{i+1}}\dots,\mathfrak{M}_{r-1}^{m_{i+1}}\leqno (\star\star\star)$$
contains all p-irreducible (!) $m_{i+1}$-models.

Suppose that we have   defined $F_j\colon\mathbf{M}^{m_{i+1}}\to\mathbf{M}^{m_i}$, for any $j=0,1,\dots,r-1$ fulfilling \\(i) $\mathfrak{M}^{m_{i+1}}$ and  $F_j(\mathfrak{M}^{m_{i+1}})$ have the same frame, for each $\mathfrak{M}^{m_{i+1}}$;\\
(ii) $F_j((\mathfrak{M}^{m_{i+1}})_w)=(F_j(\mathfrak{M}^{m_{i+1}}))_w$, \ for each $w\in W$;\\
 (iii) If $\mathfrak{N}^{m_{i+1}}\thicksim\mathfrak{M}^{m_{i+1}}$, then $F_j(\mathfrak{N}^{m_{i+1}})\thicksim F_j(\mathfrak{M}^{m_{i+1}})$\\
 and let $F(\mathfrak{M}^{m_{i+1}})$ be the concatenation of the models $F_0(\mathfrak{M}^{m_{i+1}}),\dots,F_{r-1}(\mathfrak{M}^{m_{i+1}})$. We could not claim the following diagram commutes, for any $j$,

\begin{figure}[H]
\unitlength1cm
\begin{picture}(5,2)
\thicklines

\put(8,2){\vector(0,-1){1.9}}

\put(8,2){\vector(-2,-1){3.9}}
\put(8,0){\vector(-1,0){3.9}}
\put(8,0){\circle{0.1}}
\put(4,0){\circle{0.1}}
\put(8,2){\circle{0.1}}
\put(8.3,2){\mbox{$\mathbf{M}^{m_{i+1}}$}}
\put(3.2,0){\mbox{$\mathbf{M}^n$}}
\put(8.3,0){\mbox{$\mathbf{M}^{m_i}$}}
\put(8.1,1){\mbox{$F_j$}}
\put(6.3,0.2){\mbox{$\sigma_{i}$}}
\put(5.7,1.2){\mbox{$\sigma_{i+1}$}}

\end{picture}
\caption{}
\end{figure}

\noindent as it would give us $\sigma_{i+1}\preccurlyeq\sigma_i$  contradicting (1). But we  have
$$\sigma_i(F_k(\mathfrak{M}_k^{m_{i+1}}))=\sigma_{i+1}(\mathfrak{M}_k^{m_{i+1}})\quad \mbox{and} \quad \sigma_i(F_j(\mathfrak{M}_k^{m_{i+1}}))\geq \sigma_{i+1}(\mathfrak{M}_k^{m_{i+1}}),\leqno(\mbox{iv})$$ for each $k,j\in\{0,\dots,r-1\}$. It  means   the diagram in the Figure \ref{rr} commutes as
$$\bigwedge_{j=0}^{r-1}\nu^j_{m_{i}}\circ\sigma_{i}(F(\mathfrak{M}_k^{m_{i+1}}))=
\bigwedge_{j=0}^{r-1}\sigma_i(F_j(\mathfrak{M}_k^{m_{i+1}}))=\sigma_i(F_k(\mathfrak{M}_k^{m_{i+1}}))=
\sigma_{i+1}(\mathfrak{M}_k^{m_{i+1}});$$
and we know each $\mathfrak{M}^{m_{i+1}}\in \mathbf{M^{m_{i+1}}}$ is equivalent with some $\mathfrak{M}_k^{m_{i+1}}$, so by (iii) we get
$$\bigwedge_{j=0}^{r-1}\nu^j_{m_{i}}\circ\sigma_{i}(F(\mathfrak{M}^{m_{i+1}}))\thicksim
\sigma_{i+1}(\mathfrak{M}^{m_{i+1}}).$$
We could show (i)-(iii) for $F$ using  the fact these conditions  are fulfilled by each $F_j$. But such an argument would be too complicated and we could make it easier by the exact definition of a substitution $\alpha\colon\{x_1,\dots,x_{rm_i}\}\to \mathsf{Fm^{m_{i+1}}}$ such that $H_\alpha\thicksim F$. Then (i)-(iii) follow from Lemma \ref{nsigmai}. For any $F_j$ (where $j=0,\dots,r-1$), we should  have $F_j\thicksim H_{\alpha_j}$ for some $\alpha_j\colon\{x_1,\dots,x_{m_i}\}\to \mathsf{Fm^{m_{i+1}}}$. Then we define $\alpha$ as a disjoint union of the $\alpha_j$'s. More specifically:
$$\alpha(x_{l+jm_i})=\alpha_j(x_l), \quad\mbox{for each $l=1,\dots,m_i$}.$$
Obviously, $\alpha(\mathfrak{M}^{m_{i+1}})$ is the concatenation of  $\alpha_0(\mathfrak{M}^{m_{i+1}}),\dots,\alpha_{r-1}(\mathfrak{M}^{m_{i+1}})$ for each $m_{i+1}$-model $\mathfrak{M}^{m_{i+1}}$.
There remains to define $F_j$, for $j=0,\dots,r-1$.

Let us define any $F_j\colon\mathbf{M}^{m_{i+1}}\to\mathbf{M}^{m_i}$ as a partial mapping; its domain $D(F_j)$ is an up-ward closed subset of $\mathbf{M}^{m_{i+1}}$, which means
$$\mbox{if} \quad \mathsf{Th}(\mathfrak{N}^{m_{i+1}}) \subseteq\mathsf{Th}(\mathfrak{M}^{m_{i+1}})\quad \mbox{and } \quad \mathfrak{N}^{m_{i+1}}\in D(F_j),\quad \mbox{then }\quad \mathfrak{M}^{m_{i+1}}\in D(F_j).$$
Obviously, $F_j$ should also fulfill (i)-(iv), for models in $D(F_j)$. Then, step by step, we extend the domain $D(F_j)$ to the whole set $\mathbf{M}^{m_{i+1}}$; our definition of $F_j(\mathfrak{M}^{m_{i+1}})$ is inductive with respect to depth of $\mathfrak{M}^{m_{i+1}}$.

(A). By (4), $\sigma_{i+1}(\mathfrak{M}_j^{m_{i+1}})=\sigma_{i}(\mathfrak{M}^{m_{i}})$, for some $\mathfrak{M}^{m_{i}}\in\mathbf{M}^{m_{i}}$. Thus, we take
$$F_j(\mathfrak{M}_j^{m_{i+1}})=\mathfrak{M}^{m_{i}}$$
which gives $\sigma_i(F_j(\mathfrak{M}_j^{m_{i+1}}))=\sigma_{i+1}(\mathfrak{M}_j^{m_{i+1}})$ and this guarantee the (first part of the) condition (iv). The models $\sigma_{i+1}(\mathfrak{M}_j^{m_{i+1}})$, $\sigma_{i}(\mathfrak{M}^{m_{i}})$, $\mathfrak{M}_j^{m_{i+1}}$, $\mathfrak{M}^{m_{i}}$ have the same frame $(W,R,w_0)$ and  only valuations could be different. To get (ii) we should take $$F_j((\mathfrak{M}_j^{m_{i+1}})_w)=(\mathfrak{M}^{m_{i}})_w, \quad \mbox{for each } w\in W.$$
According to Theorem \ref{pM6},
$(\mathfrak{M}^{m_{i+1}})_w$, for each $w\in W$, is p-irreducible and hence for each $\mathfrak{N}^{m_{i+1}}$ equivalent with $(\mathfrak{M}_j^{m_{i+1}})_w$ there is exactly one p-morphism (see Theorem \ref{lf3i}) $p\colon\mathfrak{N}^{m_{i+1}}\to(\mathfrak{M}_j^{m_{i+1}})_w$. To satisfy (iii), we should take
$$F_j(\mathfrak{N}^{m_{i+1}})=p^{-1}((\mathfrak{M}^{m_{i}})_w)$$
where $p^{-1}((\mathfrak{M}^{m_{i}})_w)$ is the only $n$-model on $(W,R,w_0)$ such that the mapping $p$ is a p-morphism $p\colon p^{-1}((\mathfrak{M}^{m_{i}})_w)\to (\mathfrak{M}^{m_{i}})_w$  of $n$-models.
   Since the valuations are preserved by p-morphisms,
$\sigma_i(F_j(\mathfrak{N}^{m_{i+1}}))=\sigma_{i+1}(\mathfrak{N}^{m_{i+1}})$ which  guarantees the second part of (iv) if $\mathfrak{N}^{m_{i+1}}=\mathfrak{M}_k^{m_{i+1}}$, for some $k$. Our definition of the (partial) mapping $F_j$ is completed; its domain  is an upset.

(B). We have no problems to  define $F_j(\mathfrak{M}^{m_{i+1}})$ for any $m_{i+1}$-model $\mathfrak{M}^{m_{i+1}}$ over one-element frame (assuming the model does not belong to $D(F_j)$ by (A)). By  (4), it  suffices  to define $F_j(\mathfrak{M}^{m_{i+1}})$ in a such way that $\sigma_i(F_k(\mathfrak{M}^{m_{i+1}}))=\sigma_{i+1}(\mathfrak{M}^{m_{i+1}})$ for each $m_{i+1}$-model over one-element frame. Each $\mathfrak{N}^{m_{i+1}}=(W,R,w_0,\{\mathfrak{f}_w^{m_{i+1}}\}_{w\in W})$ equivalent with a model with one-element frame has  $\mathfrak{f}_w^{m_{i+1}}=\mathfrak{f}_u^{m_{i+1}}$, for each $u,w\in W$, and hence we can define
$F_j(\mathfrak{N}^{m_{i+1}})$ preserving $\sigma_i(F_k(\mathfrak{N}^{m_{i+1}}))=\sigma_{i+1}(\mathfrak{N}^{m_{i+1}})$. The conditions (i)-(iv) are  fulfilled and $D(F_j)$ is an upset by Theorem \ref{pat}.

(C). If all $\mathfrak{M}_k^{m_{i+1}}$'s belong to $D(F_j)$ we have done. Suppose that some $\mathfrak{M}_k^{m_{i+1}}$ does not belong to $D(F_j)$. We can assume that $\mathfrak{M}_k^{m_{i+1}}=(W,R,w_0,\{\mathfrak{f}_w^{m_{i+1}}\}_{w\in W})$ and $(\mathfrak{M}_k^{m_{i+1}})_w\in D(F_j)$ for each  $w\not=w_0$. Thus, we have $F_j((\mathfrak{M}_k^{m_{i+1}})_w)$ for each $w\not=w_0$ and we need to define $F_j(\mathfrak{M}_k^{m_{i+1}})$. In other words, an ${m_{i}}$-model $\mathfrak{M}^{m_{i}}=(W,R,w_0,\{\mathfrak{g}_w^{m_{i}}\}_{w\in W})$ is given such that $(\mathfrak{M}^{m_{i}})_w=F_j((\mathfrak{M}_k^{m_{i+1}})_w)$ for each $w\not=w_0$, what is $\mathfrak{g}_{w_0}^{m_{i}}$ does not matter,  we need its variant $\mathfrak{M}_0^{m_{i}}=(W,R,w_0,\{\mathfrak{f}_w^{m_{i}}\}_{w\in W})$ such that $\sigma_i(\mathfrak{M}_0^{m_{i}})\geq \sigma_{i+1}(\mathfrak{M}_k^{m_{i+1}})$; then we can take $F_j(\mathfrak{M}_k^{m_{i+1}})=\mathfrak{M}_0^{m_{i}}$ fulfilling all requirements (except for (iii)).
Let $x\in\{x_1,\dots,x_{n}\}$. We have $$
 \mathfrak{M}_k^{m_{i+1}}\Vdash_w\sigma_{i+1}(x)\quad\Rightarrow\quad \mathfrak{M}^{m_{i}}\Vdash_w\sigma_i(x), \mbox{for each }\ w\not=w_0$$ and want  a variant $\mathfrak{M}_0^{m_{i}}$ of $\mathfrak{M}^{m_{i}}$ such that:
 $$
 \mathfrak{M}_k^{m_{i+1}}\Vdash_{w_0}\sigma_{i+1}(x)\quad\Rightarrow\quad \mathfrak{M}_0^{m_{i}}\Vdash_{w_0}\sigma_i(x).$$
 If $\mathfrak{M}_k^{m_{i+1}})\not\Vdash_w\sigma_{i+1}(x)$ for some $w\not=w_0$ the implication holds. Thus, it suffices to consider the set $\{j_1,\dots,j_s\}\subseteq \{1,\dots,n\}$ containing all $x$'s such that
  $$\mathfrak{M}^{m_{i}}\Vdash_{w}\sigma_i(x), \mbox{ for each} \ w\not=w_0. $$
  By (2), $\sigma_i(x_{j_1})\land\cdots\land\sigma_i(x_{j_s})$ is {\sf L}--projective and hence, by Theorem \ref{niu2}, there is a variant $\mathfrak{M}_0^{m_{i}}$ of $\mathfrak{M}^{m_{i}}$ such that
  $$\mathfrak{M}_0^{m_{i}}\Vdash_{w_0}\sigma_i(x_{j_1})\land\cdots\land\sigma_i(x_{j_s}).$$
 The definition of $F_j(\mathfrak{M}_k^{m_{i+1}})$ is completed. There remains to add that all $m_{i+1}$-models $\mathfrak{N}^{m_{i+1}}$ with $\mathsf{Th}(\mathfrak{N}^{m_{i+1}}) \subset\mathsf{Th}(\mathfrak{M}_k^{m_{i+1}})$ are included in $D(F_j)$ by Lemma \ref{pat}. If $ \mathfrak{N}^{m_{i+1}}\thicksim\mathfrak{M}_k^{m_{i+1}}$, then there is a p-morphism $p\colon\mathfrak{N}^{m_{i+1}}\to\mathfrak{M}_k^{m_{i+1}}$ and hence we can take $F_j(\mathfrak{N}^{m_{i+1}})=p^{-1}(\mathfrak{M}_0^{m_{i}})$. So, we  claim  $D(F_j)$ remains an upset.\\

 We have shown (5) and can get to the proof of our theorem. Suppose that {\sf L} has infinitary unification and  let $A=A_{\sigma_1}$. It follows from (4) that $A=A_{\sigma_i}$, for each $i$, and hence all $\sigma_i$'s are unifiers for $A$. By (1), $A$ cannot have finitary unification in $\mathsf L$ as there is no unifier of $A$ which would be more general then all $\sigma_i$'s. Let $\Sigma$ be a minimal complete set of unifiers for $A$. Then $\tau\preccurlyeq\sigma_1$
for some $\tau\colon\{x_1,\dots,x_n\}\to \mathsf{Fm^m}$ in $\Sigma$. Thus, there is a number $i$ such that $\tau\preccurlyeq\sigma_i$ and $\tau\not\preccurlyeq\sigma_{i+1}$. By $\tau\preccurlyeq\sigma_i$, we also get
$\nu^j_{m}\circ\tau\preccurlyeq\nu^k_{m_{i}}\circ\sigma_i$, for any $j,k\geq 0$; it does not matter if $m=m_i$ or not. Thus,
$$\bigwedge_{j=0}^{r-1}(\nu^j_{m}\circ\tau)\preccurlyeq\bigwedge_{j=0}^{r-1}(\nu^j_{m_{i}}\circ\sigma_{i})$$
  But $\bigwedge_{j=0}^{r-1}(\nu^j_{m}\circ\tau)\preccurlyeq\tau$ and hence $\tau\preccurlyeq\bigwedge_{j=0}^{r-1}(\nu^j_{m}\circ\tau)$ as $\bigwedge_{j=0}^{r-1}(\nu^j_{m}\circ\tau)$ is a unifier for $A$, by (3), and $\tau\in\Sigma$. Thus, we get $\tau\preccurlyeq\sigma_{i+1}$, by (5), which is a contradiction.
\end{proof}
Let {\sf L} be a locally tabular intermediate logic and suppose we have shown, using Theorem \ref{main} or \ref{main2}, that unification in {\sf L} is not finitary (nor unitary). It would mean, by the above Theorem \ref{niu3}, that {\sf L} has nullary unification. In the following Section, we prove in this way that very many intermediate logics has nullary unification.

\subsection{Nullary Unification}\label{NUni} It is known that $\mathsf L({\mathfrak G_3})$  and $\mathsf L({\mathfrak G_3}+)$ (see Figure  \ref{GF})  have nullary unification, see {\it Introduction}. In \cite{dkw}, we proved that unification in the modal version of these logics  is nullary. Below we present the intuitionistic version of our argument
 \begin{theorem}\label{F6m}
The logics $\mathsf L({\mathfrak G_3})$  and $\mathsf L({\mathfrak G_3}+)$ have nullary unification.\end{theorem}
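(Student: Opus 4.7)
Plan. I will show that unification in both $\mathsf{L}(\mathfrak{G}_3)$ and $\mathsf{L}(\mathfrak{G}_3+)$ fails to be finitary or unitary; since these logics are tabular, hence locally tabular, Theorem~\ref{niu3} then rules out infinitary unification, forcing the type to be $0$. To disprove finitariness I invoke the contrapositive of Theorem~\ref{main2}: it suffices to find some $n\geq 1$ such that for every $m\geq n$ there is a substitution $\sigma\colon\{x_1,\dots,x_n\}\to\mathsf{Fm}^k$ admitting no pair $G\colon\mathbf{M}^m_{ir}\to\mathbf{M}^n$, $F\colon\mathbf{M}^k_{ir}\to\mathbf{M}^m$ verifying clauses (i)--(v). (One could alternatively quote Ghilardi's Theorem~9 of \cite{Ghi5}, as already noted in the Introduction, to rule out finitariness, and then invoke Theorem~\ref{niu3}; below I sketch a self-contained argument using the present machinery.)

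The frame-theoretic obstruction is the asymmetry of $\mathfrak{G}_3$ already exploited in the proof of Theorem~\ref{L8i}: the left-middle vertex is $\leq$-maximal while the right-middle vertex has one proper successor. For the substitution $\sigma(x_1)=\neg\neg x_1\land\bigl(x_2\lor(x_2\to x_1\lor\neg x_1)\bigr)$ of that proof, the enumeration of $\sigma$-models in Figure~\ref{tt} shows that one genuine $\sigma$-model pattern arises only over $\mathfrak{G}_2$ but not over $\mathfrak{G}_3$, so the retraction conditions (iv)--(v) of Theorem~\ref{retraction} already fail at $m=n=1$. For finitariness one needs more: starting from $\sigma$, I amplify by the variable-shift and conjunction device used in the proof of Theorem~\ref{niu3}. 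Writing $\pi(x_j)=x_{k+j}\leftrightarrow\sigma(x_j)$ (projective by Lemma~\ref{niu1}), set, given $m$,
\[
\sigma_m \;=\; \bigwedge_{j=0}^{s-1}\nu_{k+n}^j\circ\pi,
\]
with $s$ chosen large enough (e.g.\ $s>2^m$) so that the class of $\sigma_m$-models is the closure under conjunction of valuations of the class of $\pi$-models; closure under conjunction is exactly property~(3) of that proof.

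Any pair $F,G$ witnessing finitariness for $\sigma_m$ at $m$ variables would, at the right-middle vertex of the $\mathfrak{G}_3$-models in the range of $F$, have to choose a single $m$-valuation compatible with every one of the $s$ independent conjuncts that clause~(v) demands be matched at that vertex. A pigeonhole count once $s>2^m$ shows that any such single $m$-valuation must contradict at least one conjunct, violating~(v). This is parallel to the construction driving Theorem~\ref{niu3}, but here it is used to refute finitariness rather than to rule out infinitariness; combined with Theorem~\ref{niu3} itself it yields nullary unification for $\mathsf{L}(\mathfrak{G}_3)$.

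For $\mathsf{L}(\mathfrak{G}_3+)$ the same plan works verbatim: the added universal final point only rigidifies the asymmetry, in line with Ghilardi's remark that ``attaching a final point everywhere'' yields nullary unification. The main obstacle will be the careful bookkeeping in the amplification step, namely verifying that the conjunction-closure of $\sigma_m$-models does not accidentally produce the missing $\mathfrak{G}_3$-witness whose absence powered the original obstruction, and that the composition $G\circ F$ really is pinned to a single valuation at the right-middle vertex; this requires a case-analysis of the action of $F$ on $\mathfrak{L}_2$-submodels versus on full $\mathfrak{G}_3$-models, parallel to the reasoning surrounding Figures~\ref{tt} and~\ref{M2} in the proof of Theorem~\ref{L8i}.
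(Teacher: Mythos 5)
Your overall skeleton is the paper's: refute the conditions of Theorem~\ref{main}/\ref{main2} and then invoke Theorem~\ref{niu3} to upgrade ``not finitary'' to ``nullary''. The gap is in the heart of the matter, namely actually producing, for \emph{each} $m$, a substitution that defeats every pair $F,G$. Your starting substitution $\sigma(x_1)=\neg\neg x_1\land(x_2\lor(x_2\to x_1\lor\neg x_1))$ only witnesses failure of the \emph{retraction} conditions of Theorem~\ref{retraction} (i.e.\ failure of projective approximation); to kill finitariness you must beat an arbitrary $m$, and your proposed fix --- amplifying by the shift-and-conjoin device of Theorem~\ref{niu3} and then ``a pigeonhole count once $s>2^m$'' --- is not an argument. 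Condition (v) compares whole models, not $s$ separate constraints at the right-middle vertex: after conjoining, there is a single $n$-bit valuation at that vertex, so there are not $s$ independent things for one $m$-valuation to ``contradict''. Moreover you yourself name, and do not resolve, the fatal issue: conjunction-closure of the $\sigma_m$-models may well create exactly the missing $\mathfrak{G}_3$-witness whose absence powered the original obstruction. The paper avoids all of this by a different and much more direct pigeonhole: it takes $\sigma\colon\{x_1\}\to\mathsf{Fm}^k$ with $k>m$ and $\sigma(x_1)=\neg\neg(\bigvee_i x_i)\land\bigwedge_i(\neg\neg x_i\lor\neg x_i)$, so that all $\sigma$-models have depth $\le 2$ and a completely explicit shape; since $k>m$, $F$ must identify two distinct one-point $k$-models, and propagating this collapse up to a suitable $\mathfrak{G}_3$-model forces $G\circ F$ to output a model that is provably not equivalent to any $\sigma$-model, contradicting (iv)--(v).

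The claim that the $\mathfrak{G}_3+$ case ``works verbatim'' is also wrong. Because $\mathfrak{G}_3+$ has a top element, $\mathsf{L}(\mathfrak{G}_3+)$ extends $\mathsf{KC}$ and the purely negative formulas $\neg\neg x_i\lor\neg x_i$ (and your $x_1\lor\neg x_1$) behave degenerately: their value at the root is controlled by the final point. The paper must therefore pass to $n=2$ and relativize negation to a fresh variable, taking $\sigma(x_1)=x_1$ and $\sigma(x_2)$ built from $((x_i\to x_1)\to x_1)\lor(x_i\to x_1)$, so that substituting $x_1/\bot$ recovers the $\mathfrak{G}_3$ argument on the decapitated frame. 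Some such modification is unavoidable, and your proposal does not supply it.
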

\begin{proof} Let $\mathbf{F}=\{\mathfrak L_1,\mathfrak L_2,\mathfrak L_3,\mathfrak F_{2},\mathfrak G_{3}\}$. Then $\mathbf{F}=sm(\mathfrak G_{3})$.
Assume  ${\mathsf L}(\mathbf F)$ has finitary unification.   By Theorem \ref{main}, for every $n\geq 1$  there is a number $m\geq 1$ such that for any $\sigma\colon\{x_1\}\to \mathsf{Fm}^k$  there are mappings  $G:\mathbf{M}^m\to\mathbf{M}^1$ and $F:\mathbf{M}^k\to\mathbf{M}^m$  fulfilling the conditions (i)-(v). Let $n=1\leq m<k$ and $\sigma\colon\{x_1\}\to\mathsf{Fm}^k$ be as follows
 $$\sigma(x_1)=\neg\neg (\bigvee_{i=1}^k x_i)\land\bigwedge_{i=1}^k(\neg\neg x_i\lor\neg x_i).$$
 \unitlength1cm
\begin{picture}(3,2)
\thicklines

\put(0,1){Take any  $\mathfrak{M}^k$ over $\mathfrak G_3$:}
\put(8,1){and notice   $\sigma(\mathfrak{M}^k)$ is:}
\put(6,0){\vector(-1,1){0.9}}
\put(6,0){\vector(1,1){0.9}}
\put(7,1){\vector(0,1){0.9}}
\put(5,1){\circle{0.1}}
\put(7,2){\circle{0.1}}
\put(6,0){\circle{0.1}}
\put(7,1){\circle{0.1}}
\put(6.3,0){\mbox{$\mathfrak{f}_2^k$}}
\put(4.5,1){\mbox{$\mathfrak{f}_{0'}^k$}}
\put(7.3,2){\mbox{$\mathfrak{f}_0^k$}}
\put(7.3,1){\mbox{$\mathfrak{f}_1^k$}}

\end{picture}

\unitlength1cm
\begin{picture}(3,2)
\thicklines

\put(1,1.5){a.}
\put(1,0){\vector(-1,1){0.9}}
\put(1,0){\vector(1,1){0.9}}
\put(2,1){\vector(0,1){0.9}}
\put(0,1){\circle{0.1}}
\put(2,2){\circle{0.1}}
\put(1,0){\circle{0.1}}
\put(2,1){\circle{0.1}}
\put(1.3,0){\mbox{$0$}}
\put(0.2,1){\mbox{$0$}}
\put(1.7,2){\mbox{$0$}}
\put(1.7,1){\mbox{$0$}}

\put(3.5,1.5){b.}
\put(3.5,0){\vector(-1,1){0.9}}
\put(3.5,0){\vector(1,1){0.9}}
\put(4.5,1){\vector(0,1){0.9}}
\put(2.5,1){\circle{0.1}}
\put(4.5,2){\circle{0.1}}
\put(3.5,0){\circle{0.1}}
\put(4.5,1){\circle{0.1}}
\put(3.8,0){\mbox{$0$}}
\put(4.2,1){\mbox{$1$}}
\put(4.2,2){\mbox{$1$}}
\put(2.7,1){\mbox{$0$}}

\put(6,1.5){c.}
\put(6,0){\vector(-1,1){0.9}}
\put(6,0){\vector(1,1){0.9}}
\put(7,1){\vector(0,1){0.9}}
\put(5,1){\circle{0.1}}
\put(7,2){\circle{0.1}}
\put(6,0){\circle{0.1}}
\put(7,1){\circle{0.1}}
\put(6.3,0){\mbox{$0$}}
\put(5.3,1){\mbox{$1$}}
\put(6.7,2){\mbox{$0$}}
\put(6.7,1){\mbox{$0$}}

\put(8.5,1.5){d.}
\put(8.5,0){\vector(-1,1){0.9}}
\put(8.5,0){\vector(1,1){0.9}}
\put(9.5,1){\vector(0,1){0.9}}
\put(7.5,1){\circle{0.1}}
\put(9.5,2){\circle{0.1}}
\put(8.5,0){\circle{0.1}}
\put(9.5,1){\circle{0.1}}
\put(8.8,0){\mbox{$1$}}
\put(7.8,1){\mbox{$1$}}
\put(9.2,2){\mbox{$1$}}
\put(9.2,1){\mbox{$1$}}

\put(11,1.5){e.}
\put(11,0){\vector(-1,1){0.9}}
\put(11,0){\vector(1,1){0.9}}
\put(12,1){\vector(0,1){0.9}}
\put(10,1){\circle{0.1}}
\put(12,2){\circle{0.1}}
\put(11,0){\circle{0.1}}
\put(12,1){\circle{0.1}}
\put(11.3,0){\mbox{$0$}}
\put(10.3,1){\mbox{$1$}}
\put(11.7,2){\mbox{$1$}}
\put(11.7,1){\mbox{$1$}}
\end{picture}

\noindent
a. 	if $\mathfrak{f}_0^k=0\cdots0=\mathfrak{f}_{0'}^k$;\qquad
b.	 if $\mathfrak{f}_{0}^k\not=0\cdots0=\mathfrak{f}_{0'}^k$;\qquad
c.	 if $\mathfrak{f}_{0}^k=0\cdots0\not=\mathfrak{f}_{0'}^k$;\\
d. if $\mathfrak{f}_{0'}^k=\mathfrak{f}_0^k\not=0\cdots0$;\qquad
e.	if $0\cdots0\not=\mathfrak{f}_{0}^k\not=\mathfrak{f}_{0'}^k\not=0\cdots0$.\\
\noindent Thus, each $\sigma$-model is equivalent with a model of the depth $\leq 2$ and there are four p-irreducible $1$-models equivalent with some $\sigma$-models. By (v)
 $$G(F(\circ \ \ 0\cdots0))=G(\circ \ \mathfrak{g}^m)) =\sigma(\circ\ \ 0\cdots0)=\circ\ 0,$$ for some  $\mathfrak{g}^m$.
Since $m<k$,
one can find  $\mathfrak{f}^k\not=\mathfrak{g}^k$ such that $F(\circ\ \mathfrak{f}^k)=\circ \ \mathfrak{f}^m=F(\circ\ \mathfrak{g}^k)$, and $G(\circ \ \mathfrak{f}^m)=\circ  \ 1$, for some $\mathfrak{f}^m$.
By the characterization of all $\sigma$-models\\

\begin{center}
	$G\Bigl($
	\begin{minipage}[c][10mm][b]{10mm}
		\begin{picture}(3,2)
		\thicklines
		\linethickness{0.3mm}
		\put(0.2,0){\circle{0.1}}
		\put(0.5,0){\mbox{$?$}}
		\put(0.2,1){\circle{0.1}}
		\put(0.5,1){\mbox{$\mathfrak{f}^m$}}
		\put(0.2,0){\vector(0,1){0.9}}
		\end{picture}	
	\end{minipage}
	$\Bigr) \quad = \quad$
	\begin{minipage}[c][10mm][b]{10mm}
		\begin{picture}(3,2)
		\thicklines
		\linethickness{0.3mm}
		\put(0.2,0){\circle{0.1}}
		\put(0.5,0){\mbox{$0$}}
		\put(0.2,1){\circle{0.1}}
		\put(0.5,1){\mbox{$1$}}
		\put(0.2,0){\vector(0,1){0.9}}
		\end{picture}	
	\end{minipage}	
\quad or \quad
$G\Bigl($
\begin{minipage}[c][10mm][b]{10mm}
	\begin{picture}(3,2)
	\thicklines
	\linethickness{0.3mm}
	\put(0.2,0){\circle{0.1}}
	\put(0.5,0){\mbox{$?$}}
	\put(0.2,1){\circle{0.1}}
	\put(0.5,1){\mbox{$\mathfrak{f}^m$}}
	\put(0.2,0){\vector(0,1){0.9}}
	\end{picture}	
\end{minipage}
$\Bigr) \quad = \quad$
\begin{minipage}[c][10mm][b]{10mm}
	\begin{picture}(3,2)
	\thicklines
	\linethickness{0.3mm}
	\put(0.2,0){\circle{0.1}}
	\put(0.5,0){\mbox{$1$}}
	\put(0.2,1){\circle{0.1}}
	\put(0.5,1){\mbox{$1$}}
	\put(0.2,0){\vector(0,1){0.9}}
	\end{picture}	
\end{minipage}
\end{center}$ $\\

\noindent for any $m$-valuation $?$. But if the first had happened, we would have \\

\begin{center}
	$G\Bigl($
	\begin{minipage}[c][20mm][b]{25mm}
		\begin{picture}(3,2)
		\thicklines
		\linethickness{0.3mm}
		\put(1,0){\vector(-1,1){0.9}}
		\put(1,0){\vector(1,1){0.9}}
		\put(2,1){\vector(0,1){0.9}}
		\put(0,1){\circle{0.1}}
		\put(2,2){\circle{0.1}}
		\put(1,0){\circle{0.1}}
		\put(2,1){\circle{0.1}}
		\put(1.3,0){$0\cdots0$}
		\put(.5,1){\mbox{$\mathfrak{g}^m$}}
		\put(2.3,2){\mbox{$\mathfrak{f}^m$}}
		\put(2.3,1){?}
		\end{picture}	
	\end{minipage}
	$\Bigr) \quad = \quad$
	\begin{minipage}[c][20mm][b]{25mm}
		\begin{picture}(3,2)
		\thicklines
		\linethickness{0.3mm}
		\put(1,0){\vector(-1,1){0.9}}
		\put(1,0){\vector(1,1){0.9}}
		\put(2,1){\vector(0,1){0.9}}
		\put(0,1){\circle{0.1}}
		\put(2,2){\circle{0.1}}
		\put(1,0){\circle{0.1}}
		\put(2,1){\circle{0.1}}
		\put(1.3,0){$0$}
		\put(.5,1){\mbox{$0$}}
		\put(2.3,2){\mbox{$1$}}
		\put(2.3,1){$0$}
		\end{picture}	
	\end{minipage}	
\end{center}$ $\\

\noindent which would contradict (iv) as no $\sigma$-model is equivalent to the $1$-model on the right hand side of the above equation. We conclude that \\

\begin{center}
	$G\Bigl($
	\begin{minipage}[c][10mm][b]{10mm}
		\begin{picture}(3,2)
		\thicklines
		\linethickness{0.3mm}
		\put(0.2,0){\circle{0.1}}
		\put(0.5,0){\mbox{$?$}}
		\put(0.2,1){\circle{0.1}}
		\put(0.5,1){\mbox{$\mathfrak{f}^m$}}
		\put(0.2,0){\vector(0,1){0.9}}
		\end{picture}	
	\end{minipage}
	$\Bigr) \quad = \quad$
	\begin{minipage}[c][10mm][b]{10mm}
		\begin{picture}(3,2)
		\thicklines
		\linethickness{0.3mm}
		\put(0.2,0){\circle{0.1}}
		\put(0.5,0){\mbox{$1$}}
		\put(0.2,1){\circle{0.1}}
		\put(0.5,1){\mbox{$1$}}
		\put(0.2,0){\vector(0,1){0.9}}
		\end{picture}	
	\end{minipage}	
$\quad \thicksim \quad \circ \ 1$
\end{center}$ $\\
and hence

\begin{center}
	$G\Bigl( F\Bigl($
	\begin{minipage}[c][20mm][b]{27mm}
		\begin{picture}(3,2)
		\thicklines
		\linethickness{0.3mm}
		\put(1,0){\vector(-1,1){0.9}}
		\put(1,0){\vector(1,1){0.9}}
		\put(2,1){\vector(0,1){0.9}}
		\put(0,1){\circle{0.1}}
		\put(2,2){\circle{0.1}}
		\put(1,0){\circle{0.1}}
		\put(2,1){\circle{0.1}}
		\put(1.3,0){$0\cdots0$}
		\put(.5,1){\mbox{$\mathfrak{f}^k$}}
		\put(2.3,2){\mbox{$\mathfrak{g}^k$}}
		\put(2.3,1){$\mathfrak{g}^k$}
		\end{picture}	
	\end{minipage}
	$\Bigr) \Bigr) \quad = \quad G\Bigl($
	\begin{minipage}[c][20mm][b]{27mm}
		\begin{picture}(3,2)
		\thicklines
		\linethickness{0.3mm}
		\put(1,0){\vector(-1,1){0.9}}
		\put(1,0){\vector(1,1){0.9}}
		\put(2,1){\vector(0,1){0.9}}
		\put(0,1){\circle{0.1}}
		\put(2,2){\circle{0.1}}
		\put(1,0){\circle{0.1}}
		\put(2,1){\circle{0.1}}
		\put(1.3,0){?}
		\put(.5,1){\mbox{$\mathfrak{f}^m$}}
		\put(2.3,2){\mbox{$\mathfrak{f}^m$}}
		\put(2.3,1){$\mathfrak{f}^m$}
		\end{picture}	
	\end{minipage}
$\Bigr)\thicksim \circ \ 1$
\end{center}$ $\\

But this is in contradiction with e. \\

\begin{center}
	$\sigma\Bigl($
	\begin{minipage}[c][20mm][b]{27mm}
		\begin{picture}(3,2)
		\thicklines
		\linethickness{0.3mm}
		\put(1,0){\vector(-1,1){0.9}}
		\put(1,0){\vector(1,1){0.9}}
		\put(2,1){\vector(0,1){0.9}}
		\put(0,1){\circle{0.1}}
		\put(2,2){\circle{0.1}}
		\put(1,0){\circle{0.1}}
		\put(2,1){\circle{0.1}}
		\put(1.3,0){$0\cdots0$}
		\put(.5,1){\mbox{$\mathfrak{f}^k$}}
		\put(2.3,2){\mbox{$\mathfrak{g}^k$}}
		\put(2.3,1){$\mathfrak{g}^k$}
		\end{picture}	
	\end{minipage}
	$\Bigr) \quad = \quad $
	\begin{minipage}[c][20mm][b]{27mm}
		\begin{picture}(3,2)
		\thicklines
		\linethickness{0.3mm}
		\put(1,0){\vector(-1,1){0.9}}
		\put(1,0){\vector(1,1){0.9}}
		\put(2,1){\vector(0,1){0.9}}
		\put(0,1){\circle{0.1}}
		\put(2,2){\circle{0.1}}
		\put(1,0){\circle{0.1}}
		\put(2,1){\circle{0.1}}
		\put(1.3,0){$0$}
		\put(.5,1){\mbox{$1$}}
		\put(2.3,2){\mbox{$1$}}
		\put(2.3,1){$1$}
		\end{picture}	
	\end{minipage}
\end{center}
Let $\mathbf{F}=\{\mathfrak L_1,\mathfrak L_2,\mathfrak L_3,\mathfrak L_{4},\mathfrak R_{2},\mathfrak G_{3}+\}$. Then $\mathbf{F}=sm(\mathfrak G_{3}+)$. Suppose that ${\mathsf L}(\mathbf F)$ has unitary unification.  Take $n=2$. By Theorem \ref{main},  there is a number $m\geq 1$ such that for any $\sigma\colon\{x_1,x_2\}\to \mathsf{Fm}^{k+1}$  there are mappings  $G:\mathbf{M}^m\to\mathbf{M}^2$ and $F:\mathbf{M}^{k+1}\to\mathbf{M}^m$  fulfilling the conditions (i)-(v). Let $k>m$ and
$$ \begin{array}{rl}
     \sigma(x_1)=& x_1\\
     \sigma(x_2)= & \Bigl(\bigl(( \bigvee_{i=2}^{k+1}x_i)\to x_1\bigr)\to x_1\Bigr) \ \land \ \bigwedge_{i=2}^{k+1}\Bigl(\bigl((x_i\to x_1)\to x_1\bigr)\lor (x_i\to x_1)\Bigr).
   \end{array}
  $$
If we take $\alpha\circ\sigma$, where $\alpha:x_1\slash\bot$, we  get $\sigma$ as used  for $\mathfrak G_{3}$ (there is only a shift of variables from $x_1\dots x_k$ to $x_2\dots x_{k+1}$).
If $x_1$ is false at the top element of any $2$-model $\mathfrak M^2$ over $\mathfrak G_{3}+$, then $ \sigma(x_2)$ is true at the model and hence $\sigma(\mathfrak M^2)$ reduces to a model over $\mathfrak L_1$. Decapitation  of the top element (and other elements at which $x_1$ is true) give us models over $\mathfrak G_{3}$. Then we can argue as in the case of $\mathsf L(\mathfrak{G}_3).$

We have relatively many $\sigma$-models of the depth $\leq 2$ but there are only two p-irreducible equivalents of $\sigma$-models of the depth $\geq 3$. They are
\begin{figure}[H]
 \unitlength1cm
\begin{picture}(3,2)
\thicklines

\put(2,1){\vector(-1,1){0.9}}
\put(0,1){\vector(1,1){0.9}}
\put(1,0){\vector(-1,1){0.9}}
\put(1,0){\vector(1,1){0.9}}
\put(0,1){\circle{0.1}}
\put(1,2){\circle{0.1}}
\put(1,0){\circle{0.1}}
\put(2,1){\circle{0.1}}
\put(1.3,0){\mbox{$00$}}
\put(0.3,1){\mbox{$00$}}
\put(1.2,2){\mbox{$11$}}
\put(2.3,1){\mbox{$01$}}

\put(3.5,0){\vector(0,1){0.9}}
\put(3.5,0){\circle{0.1}}
\put(3.5,1){\vector(0,1){0.9}}
\put(3.5,1){\circle{0.1}}
\put(3.5,2){\circle{0.1}}
\put(3.8,0){\mbox{$00$}}
\put(3.8,1){\mbox{$01$}}
\put(3.8,2){\mbox{$11$}}

\put(6,2){Cutting off the top element and erasing the}
\put(6,1.5){first variable we get the $\sigma$-models
for $\mathsf L(\mathfrak{G}_3):$}

\put(7,0){\vector(-1,1){0.9}}
\put(7,0){\vector(1,1){0.9}}
\put(6,1){\circle{0.1}}

\put(7,0){\circle{0.1}}
\put(8,1){\circle{0.1}}
\put(7.3,0){\mbox{$0$}}
\put(6.3,1){\mbox{$0$}}

\put(8.3,1){\mbox{$1$}}

\put(9,0){\vector(0,1){0.9}}
\put(9,0){\circle{0.1}}

\put(9,1){\circle{0.1}}

\put(9.2,0){\mbox{$0$}}
\put(9.2,1){\mbox{$1$}}


\end{picture}
\end{figure}

\noindent Let $F(\circ\ 1\cdots1)=\circ \ \mathfrak h^m$. Since $\sigma(\circ\ 1\cdots1)=\circ \  11$, we have $G(\circ\ 1\cdots1)=\circ \ 11$, by (v). We also have $\mathfrak g^m,\mathfrak f^m$ and $\mathfrak g^k\not=\mathfrak f^k$ such that
\begin{figure}[H]
 \unitlength1cm
\begin{picture}(3,1)
\thicklines

\put(0,0.5){\mbox{$F\bigl($}}
\put(0.6,0){\vector(0,1){0.9}}
\put(0.6,0){\circle{0.1}}
\put(0.6,1){\circle{0.1}}
\put(0.8,0){\mbox{$0\cdots0$}}
\put(0.8,1){\mbox{$1\cdots1$}}
\put(1.8,0.5){\mbox{$\bigr) \ =$}}
\put(3,0){\vector(0,1){0.9}}
\put(3,0){\circle{0.1}}
\put(3,1){\circle{0.1}}
\put(3.2,0){\mbox{$\mathfrak g^m$}}
\put(3.2,1){\mbox{$\mathfrak h^m$}}
\put(4,0.5){,}

\put(5,0.5){\mbox{$F\bigl($}}
\put(6,0){\vector(0,1){0.9}}
\put(6,0){\circle{0.1}}
\put(6,1){\circle{0.1}}
\put(6.2,1){\mbox{$1\cdots1$}}
\put(6.2,0){\mbox{$0\mathfrak f^k$}}
\put(7.3,0.5){\mbox{$\bigr) \ =$}}

\put(8.2,0.5){\mbox{$F\bigl($}}
\put(9,0){\vector(0,1){0.9}}
\put(9,0){\circle{0.1}}
\put(9,1){\circle{0.1}}
\put(9.2,0){\mbox{$0\mathfrak g^k$}}
\put(9.2,1){\mbox{$1\cdots1$}}
\put(10.3,0.5){\mbox{$\bigr) \ =$}}
\put(11.5,0){\vector(0,1){0.9}}
\put(11.5,0){\circle{0.1}}
\put(11.5,1){\circle{0.1}}
\put(11.7,0){\mbox{$\mathfrak f^m$}}
\put(11.7,1){\mbox{$\mathfrak h^m$}}

\end{picture}
\end{figure}
\begin{figure}[H]
 \unitlength1cm
\begin{picture}(3,1)
\thicklines

\put(0,1){Then by (v)}

\put(3.8,0.5){\mbox{$\bigr) \ =$}}
\put(3,0){\vector(0,1){0.9}}
\put(3,0){\circle{0.1}}
\put(3,1){\circle{0.1}}
\put(3.2,0){\mbox{$\mathfrak g^m$}}
\put(3.2,1){\mbox{$\mathfrak h^m$}}

\put(2,0.5){ $G\bigl($}
\put(5,0){\vector(0,1){0.9}}
\put(5,0){\circle{0.1}}
\put(5,1){\circle{0.1}}
\put(5.2,1){\mbox{$11$}}
\put(5.2,0){\mbox{$00$}}
\put(7,0.5){,}

\put(8.2,0.5){\mbox{$G\bigl($}}
\put(9,0){\vector(0,1){0.9}}
\put(9,0){\circle{0.1}}
\put(9,1){\circle{0.1}}
\put(9.2,0){\mbox{$0\mathfrak f^m$}}
\put(9.2,1){\mbox{$\mathfrak h^m$}}
\put(10.3,0.5){\mbox{$\bigr) \ =$}}
\put(11.5,0){\vector(0,1){0.9}}
\put(11.5,0){\circle{0.1}}
\put(11.5,1){\circle{0.1}}
\put(11.7,0){\mbox{$01$}}
\put(11.7,1){\mbox{$11$}}

\end{picture}
\end{figure}
\noindent By (i)--(ii), for any $m$-valuation $?$, we have\\
\begin{figure}[H]
 \unitlength1cm
\begin{picture}(3,2)
\thicklines
		\put(1.2,0){\circle{0.1}}
		\put(1.5,0){\mbox{$?$}}
		\put(1.2,1){\circle{0.1}}
        \put(1.2,2){\circle{0.1}}
		\put(1.5,1){\mbox{$\mathfrak{f}^m$}}
		\put(1.2,0){\vector(0,1){0.9}}
       \put(1.2,1){\vector(0,1){0.9}}
       \put(1.5,2){\mbox{$\mathfrak{h}^m$}}
	 \put(2,1){$\Bigr) \ = $}
 \put(0,1){$G\Bigl($}
	
		\put(3.2,0){\circle{0.1}}
		\put(3.5,0){\mbox{$00$}}
		\put(3.2,1){\circle{0.1}}
		\put(3.5,1){\mbox{$01$}}
		\put(3.2,0){\vector(0,1){0.9}}
		\put(3.2,2){\circle{0.1}}
		\put(3.5,2){\mbox{$11$}}
		\put(3.2,1){\vector(0,1){0.9}}
\put(5,1){\quad or \quad}
\put(7,1){$G\Bigl($}

		\put(8.2,0){\circle{0.1}}
		\put(8.5,0){\mbox{$?$}}
		\put(8.2,1){\circle{0.1}}
        \put(8.2,2){\circle{0.1}}
		\put(8.5,1){\mbox{$\mathfrak{f}^m$}}
		\put(8.2,0){\vector(0,1){0.9}}
       \put(8.2,1){\vector(0,1){0.9}}
       \put(8.5,2){\mbox{$\mathfrak{h}^m$}}
	 \put(9,1){$\Bigr) \ = $}
	
		\put(10.2,0){\circle{0.1}}
		\put(10.5,0){\mbox{$01$}}
		\put(10.2,1){\circle{0.1}}
		\put(10.5,1){\mbox{$01$}}
		\put(10.2,0){\vector(0,1){0.9}}
		\put(10.2,2){\circle{0.1}}
		\put(10.5,2){\mbox{$11$}}
		\put(10.2,1){\vector(0,1){0.9}}

\end{picture}
\end{figure}

\begin{figure}[H]
 \unitlength1cm
\begin{picture}(3,2.4)
\thicklines
\put(0,1){Thus, either \qquad	$G\Bigl(\quad $}
	
		\put(5,0){\vector(-1,1){0.9}}
		\put(5,0){\vector(1,1){0.9}}
		\put(6,1){\vector(0,1){0.9}}
        \put(4,1){\vector(1,2){0.9}}
        \put(6,2){\vector(-1,1){0.9}}
		\put(4,1){\circle{0.1}}
		\put(6,2){\circle{0.1}}
        \put(5,3){\circle{0.1}}
		\put(5,0){\circle{0.1}}
		\put(6,1){\circle{0.1}}
		\put(5.3,0){$0\cdots0$}
         \put(5.3,3){$\mathfrak h^m$}
		\put(4.5,1){\mbox{$\mathfrak{g}^m$}}
		\put(6.3,2){\mbox{$\mathfrak{f}^m$}}
		\put(6.3,1){?}
	\put(7,1)
	{$\Bigr) \ =$}
	
         \put(8,1){\vector(1,2){0.9}}
        \put(10,2){\vector(-1,1){0.9}}
		 \put(9,3){\circle{0.1}}
		\put(10,2){\circle{0.1}}
		\put(9,0){\vector(-1,1){0.9}}
		\put(9,0){\vector(1,1){0.9}}
		\put(10,1){\vector(0,1){0.9}}
		\put(9,1){\circle{0.1}}
		\put(10,2){\circle{0.1}}
		\put(9,0){\circle{0.1}}
		\put(10,1){\circle{0.1}}
		\put(9.3,0){$00$}
           \put(9.3,3){$11$}
		\put(8.5,1){\mbox{$00$}}
		\put(10.3,2){\mbox{$01$}}
		\put(10.3,1){$00$}
		
\end{picture}
\end{figure}

\noindent which would contradict (iv) as no $\sigma$-model is equivalent to the $2$-model on the right hand side of the above equation, or we would have a contradiction:

\begin{figure}[H]
 \unitlength1cm
\begin{picture}(0,2)
\thicklines
		\put(0,0){\vector(0,1){0.9}}
		\put(0,1){\vector(0,1){0.9}}
		\put(0,1){\circle{0.1}}
		\put(0,2){\circle{0.1}}
\put(0,0){\circle{0.1}}
		
		\put(0.3,0){$00$}
		\put(0.5,1){\mbox{$01$}}
		\put(0.3,2){\mbox{$11$}}
		\put(1,1){$\thicksim$}

	\put(1.5,1){$G\Bigl( F\Bigl($}
	
		\put(4,0){\vector(-1,1){0.9}}
		\put(4,0){\vector(1,1){0.9}}
		\put(5,1){\vector(-1,1){0.9}}
           \put(3,1){\vector(1,1){0.9}}
		\put(3,1){\circle{0.1}}
		\put(4,2){\circle{0.1}}
		\put(4,0){\circle{0.1}}
		\put(5,1){\circle{0.1}}
		\put(4.3,0){$0\cdots0$}
		\put(3.5,1){\mbox{$\mathfrak{f}^k$}}
		\put(4.3,2){\mbox{$1\cdots1$}}
		\put(5.3,1){$\mathfrak{g}^k$}

	\put(6,1){$\Bigr)\Bigr) \  = \ G\Bigl($}
	
		\put(9,0){\vector(-1,1){0.9}}
		\put(9,0){\vector(1,1){0.9}}
		\put(10,1){\vector(-1,1){0.9}}
        \put(8,1){\vector(1,1){0.9}}
		\put(8,1){\circle{0.1}}
		\put(9,2){\circle{0.1}}
		\put(9,0){\circle{0.1}}
		\put(10,1){\circle{0.1}}
		\put(9.3,0){?}
		\put(8.5,1){\mbox{$\mathfrak{f}^m$}}
		\put(9.3,2){\mbox{$\mathfrak{h}^m$}}
		\put(10.3,1){$\mathfrak{f}^m$}

\put(10.6,1){$\Bigr)\ \thicksim $}

\put(12,0){\vector(0,1){0.9}}
		\put(12,1){\vector(0,1){0.9}}
		\put(12,1){\circle{0.1}}
		\put(12,2){\circle{0.1}}
\put(12,0){\circle{0.1}}
		
		\put(12.3,0){$01$}
		\put(12.5,1){\mbox{$01$}}
		\put(12.3,2){\mbox{$11$}}
\end{picture}
\end{figure}
\end{proof}

By \cite{Ghi5},  there are infinitely many logics in which unification is not finitary, see Figure \ref{NU}, and hence there  are infinitely many  logics with nullary unification. We can  produce other infinite families of logics with nullary unification, see Figure \ref{li}.
 \begin{figure}[H]
\unitlength1cm
\thicklines
\begin{picture}(0,3)

\put(7,0){\vector(-1,1){0.9}}
\put(7,0){\vector(1,1){0.9}}
\put(8,1){\vector(-1,1){0.9}}
\put(6,1){\circle{0.1}}
\put(7,2){\circle{0.1}}
\put(7,0){\circle{0.1}}
\put(8,1){\circle{0.1}}
\put(6,1){\vector(1,1){0.9}}
\put(7,3){\circle{0.1}}
\put(8,3){\circle{0.1}}
\put(7,2){\vector(0,1){0.9}}
\put(7,2){\vector(-1,1){0.9}}
\put(6,3){\circle{0.1}}
\put(9,3){\circle{0.1}}
\put(5,3){\circle{0.1}}
\put(10,3){\circle{0.1}}
\put(4,3){\circle{0.1}}
\put(7,2){\vector(1,1){0.9}}
\put(7,2){\vector(-2,1){1.9}}
\put(7,2){\vector(2,1){1.9}}
\put(7,2){\vector(-3,1){2.9}}
\put(7,2){\vector(3,1){2.9}}
\put(9.5,3){\circle{0.1}}
\put(4.5,3){\circle{0.1}}
\put(10.5,3){\circle{0.1}}
\put(4,3){\circle{0.1}}
\put(3.5,3){\circle{0.1}}
\end{picture}
\caption{$\mathfrak F_2+\mathfrak F_s.$}\label{li}
\end{figure}
\begin{theorem}\label{infty}
The logic $\mathsf L(\mathfrak F_2+\mathfrak F_s)$ has nullary unification, for any $s\geq 1$:
\end{theorem}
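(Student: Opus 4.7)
The plan is to argue by contradiction in the spirit of Theorem~\ref{F6m}. Suppose $\mathsf L(\mathfrak F_2+\mathfrak F_s)$ has finitary (or unitary) unification and set $\mathbf F=sm(\mathfrak F_2+\mathfrak F_s)$. Besides $\mathfrak F_2+\mathfrak F_s$ itself, $\mathbf F$ contains the chains $\mathfrak L_1,\dots,\mathfrak L_4$, the fork $\mathfrak F_2$ (arising as the p-morphic image collapsing the entire upper $\mathfrak F_s$-part to a single point), the generated subframes $\mathfrak F_s$ at $v$ and $+\mathfrak F_s$ at each $a_j$, and the frame $\mathfrak Y_1=\mathfrak F_2+\mathfrak F_1$ obtained as the p-morphic image collapsing $b_1,\dots,b_s$ to one leaf. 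Theorem~\ref{main} applied with $n=1$ gives a bound $m$ such that, for every substitution $\sigma\colon\{x_1\}\to\mathsf{Fm}^k$, there exist mappings $F\colon\mathbf M^k\to\mathbf M^m$ and $G\colon\mathbf M^m\to\mathbf M^1$ satisfying conditions (i)--(v).

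Next I would choose a substitution modelled on the one used for $\mathfrak G_3$ in Theorem~\ref{F6m},
\[\sigma(x_1)=\neg\neg\Bigl(\bigvee_{i=1}^{k}x_i\Bigr)\wedge\bigwedge_{i=1}^{k}(\neg\neg x_i\vee\neg x_i),\]
taking $k$ much larger than $m$. A direct computation shows that at any point $w$ of a model over $\mathbf F$ the formula $\sigma(x_1)$ is true iff every maximal successor of $w$ carries the same non-zero $k$-valuation; in particular, $\sigma$-images over $\mathfrak F_2+\mathfrak F_s$ take value $1$ at the internal nodes $w_0,a_1,a_2,v$ exactly when all leaves are equal and non-zero, and value $1$ at each leaf $b_j$ exactly when its $k$-valuation is non-zero. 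Pigeonhole on $\mathbf M^k(\mathfrak L_1)$, using $k>m$, then yields distinct non-zero $\mathfrak f^k\neq\mathfrak g^k$ with $F(\mathfrak L_1,\mathfrak f^k)=F(\mathfrak L_1,\mathfrak g^k)=(\mathfrak L_1,\mathfrak h^m)$, and by (v) we have $G(\mathfrak L_1,\mathfrak h^m)=(\mathfrak L_1,1)$.

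The decisive step is to construct a pair of $k$-models $\mathfrak M^k,\mathfrak N^k$ over $\mathfrak F_2+\mathfrak F_s$ (or over one of its generated subframes) so that, by (ii)--(iii) applied to their generated submodels and the pigeonhole collapse at the leaves, $F(\mathfrak M^k)\thicksim F(\mathfrak N^k)$, while $\sigma(\mathfrak M^k)\not\thicksim\sigma(\mathfrak N^k)$ at the root of the full frame; condition (v) then delivers the contradiction, since the same $m$-model cannot be mapped by $G$ to two inequivalent $1$-models. The case $s=1$ amounts to $\mathfrak Y_1=\mathfrak R_2+$, already one of the four maximal nullary logics in the paper, and for $s\geq 2$ the extra leaves $b_2,\dots,b_s$ provide only additional room for tuning the $\sigma$-images. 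The main obstacle is the diamond node $v$: because $v$ lies above both $a_1$ and $a_2$, the naive $\mathfrak G_3$-substitution imposes only a symmetric condition on the leaves at the root of $\mathfrak F_2+\mathfrak F_s$ and loses the asymmetry between the two mid-level branches. Overcoming this requires refining the substitution, plausibly by passing to $n=2$ variables along the lines of the $\mathfrak G_3+$ part of Theorem~\ref{F6m}, introducing a pseudo-falsum variable that trivialises the contribution of the upper $\mathfrak F_s$-block, so that a $\mathfrak G_3$-type contradiction can be reproduced within the bottom $\mathfrak F_2$-layer of $\mathfrak F_2+\mathfrak F_s$; alternative refinements encode the $a_1/a_2$ asymmetry directly through $k$-valuations drawn from $\{\mathfrak f^k,\mathfrak g^k\}$ at the mid-level while keeping the leaves under the $F$-collapse.
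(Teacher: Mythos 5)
Your overall strategy --- refute finitarity via Theorem \ref{main}, exploit $k>m$ to get a pigeonhole collapse under $F$, and derive a contradiction from conditions (iv)--(v) --- is the same as the paper's, and your diagnosis of the obstacle is exactly right: since $w_0,a_1,a_2$ and $v$ all see the same set of end elements, the proposed $\sigma(x_1)=\neg\neg(\bigvee_i x_i)\land\bigwedge_i(\neg\neg x_i\lor\neg x_i)$ is constant on the entire interior of $\mathfrak F_2+\mathfrak F_s$ and cannot separate the two mid-level branches. But the proposal stops precisely where the proof has to be done. The ``decisive step'' is stated only as a specification (find $\mathfrak M^k,\mathfrak N^k$ with $F(\mathfrak M^k)\thicksim F(\mathfrak N^k)$ but $\sigma(\mathfrak M^k)\not\thicksim\sigma(\mathfrak N^k)$); no such pair is exhibited, and the repair of the substitution is left at ``plausibly by passing to $n=2$ variables''. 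A conjecture about which refinement will work is not a proof that one does. Moreover, settling $s=1$ by observing that $\mathfrak F_2+\mathfrak F_1=\mathfrak R_2+$ is ``already one of the four maximal nullary logics'' is circular: in this paper the nullarity of $\mathsf L(\mathfrak R_2+)$ is \emph{obtained from} Theorem \ref{infty} (it is the input to Theorem \ref{iun} and Corollary \ref{L11}), so it cannot be quoted as a premise. A smaller slip: collapsing the whole upper $\mathfrak F_s$-block to a single point yields $\mathfrak R_2$, not $\mathfrak F_2$ (though $\mathfrak F_2$ does lie in $sm(\mathfrak F_2+\mathfrak F_s)$ for $s\geq 2$ via a different bisimulation).

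For comparison, the paper's proof takes $n=3$ with $\sigma(x_1)=x_1$, $\sigma(x_2)=x_2$ and $\sigma(x_3)=\bigwedge_{i=3}^{k+2}\bigl((x_i\to x_1)\lor((x_i\to x_1)\to x_1)\bigr)$, i.e.\ your ``stability'' formula relativized to the pseudo-falsum $x_1$, with $x_1,x_2$ retained as position markers. The pigeonhole gives $\mathfrak f^k\not=\mathfrak g^k$ such that $F$ identifies the two-element chains with root valuations $01\mathfrak f^k$ and $01\mathfrak g^k$ and top $1\cdots1$; plugging the $\mathfrak R_2$-model carrying these two valuations at mid-level into condition (v) forces $G$ to send a certain chain of $m$-valuations to the $3$-model over $\mathfrak L_3$ with valuations $010$, $011$, $111$ from root to top, and condition (iv) then requires that pattern to be equivalent to a $\sigma$-model over some $+\mathfrak F_r$. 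That is impossible: if $x_1$ fails at the root and the middle node but holds at every leaf, then any conjunct of $\sigma(x_3)$ whose first disjunct fails at the root has its second disjunct true there (the only nodes not forcing $x_1$ also fail to force $x_i\to x_1$), so $\sigma(x_3)$ is true at the root and cannot realize the value $0$ demanded there. Producing a concrete substitution together with a forcing-pattern impossibility of this kind is exactly the content your proposal is missing.
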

\begin{proof} Let $\mathbf F=sm(\{\mathfrak F_2+\mathfrak F_s\})$ and suppose that $\mathsf L(\mathbf F)$ has finitary unification. Note that $\{\mathfrak L_1,\mathfrak L_2,\mathfrak L_3,\mathfrak L_4,\mathfrak R_2,\mathfrak R_2+\}\subseteq \mathbf F$. Take $n=3$ and let $m\geq 3$ be given by Theorem \ref{main}.
Let $k> m$ and $\sigma\colon\{x_1,x_2,x_3\}\to \mathsf{Fm}^{k+2}$ be a substitution defined as follows

 $\begin{array}{rl}
                       \qquad\qquad     \sigma(x_1)= & x_1\\
                            \sigma(x_2)= & x_2 \\
                            \sigma(x_3)= & \bigwedge_{i=3}^{k+2}\Bigl((x_i\to x_1)\lor((x_i\to x_1)\to x_1)\Bigr).
                          \end{array}$

\noindent There are mappings $G:\mathbf{M}^m\to\mathbf{M}^3$ and $F:\mathbf{M}^{k+2}\to\mathbf{M}^m$  fulfilling  the conditions (i)-(v). Since $k> m$, there are $\mathfrak f^{k}\not=\mathfrak g^{k}$ such that (for some $\mathfrak f^m, \mathfrak g^m$)\\

   \unitlength1cm
\begin{picture}(0,1.1)
\thicklines

\put(0,0.5){\mbox{$F\Bigl($}}
\put(1,0){\circle{0.1}}
\put(1.3,1){\mbox{$1\cdots1$}}
\put(1,0){\vector(0,1){0.9}}
\put(1.3,0){\mbox{$01\mathfrak f^{k}$}}
\put(1,1){\circle{0.1}}
\put(2.5,0.5){\mbox{$\Bigr)= F\Bigl($}}
\put(4,0){\circle{0.1}}
\put(4.3,1){\mbox{$1\cdots1$}}
\put(4,0){\vector(0,1){0.9}}
\put(4.3,0){\mbox{$01\mathfrak g^{k}$}}
\put(4,1){\circle{0.1}}
\put(5.3,0.5){\mbox{$\Bigr)= $}}

\put(6,0){\circle{0.1}}
\put(6.3,1){\mbox{$\mathfrak g^m$}}
\put(6,0){\vector(0,1){0.9}}
\put(6.3,0){\mbox{$\mathfrak f^m$}}
\put(6,1){\circle{0.1}}
\put(7.3,0.5){\mbox{,  }}

\put(8,0.5){\mbox{$G\Bigl($}}
\put(9,0){\circle{0.1}}
\put(9.3,1){\mbox{$\mathfrak g^m$}}
\put(9,0){\vector(0,1){0.9}}
\put(9.3,0){\mbox{$\mathfrak f^m$}}
\put(9,1){\circle{0.1}}
\put(10,0.5){$\Bigr)=$}
\put(11.3,1){\mbox{$111$  }}
\put(11,0){\circle{0.1}}
\put(11.4,0.5){.\qquad But}
\put(11.3,0){\mbox{$011$}}
\put(11,0){\vector(0,1){0.9}}
\put(11,1){\circle{0.1}}

\end{picture} \\

 \unitlength1cm
\begin{picture}(3,2.2)
\thicklines
\put(0,1){$\sigma\Bigl($}
\put(2,0){\vector(-1,1){0.9}}
\put(2,0){\vector(1,1){0.9}}
\put(3,1){\vector(-1,1){0.9}}
\put(1,1){\circle{0.1}}
\put(2,2){\circle{0.1}}
\put(2,0){\circle{0.1}}
\put(3,1){\circle{0.1}}
\put(2.3,0){\mbox{$010\cdots0$}}
\put(0.5,1.3){\mbox{$01\mathfrak f^{k}$}}
\put(2.3,2){\mbox{$1\cdots1$}}
\put(2.9,1.3){\mbox{$01\mathfrak g^{k}$}}
\put(1,1){\vector(1,1){0.9}}

\put(4,1){$\Bigr)$}
\put(4.5,1){$=$}
\put(7,0){\vector(-1,1){0.9}}
\put(7,0){\vector(1,1){0.9}}
\put(8,1){\vector(-1,1){0.9}}
\put(6,1){\circle{0.1}}
\put(7,2){\circle{0.1}}
\put(7,0){\circle{0.1}}
\put(8,1){\circle{0.1}}
\put(7.3,0){\mbox{$010$}}
\put(5.4,1){\mbox{$011$}}
\put(7.3,2){\mbox{$111$}}
\put(8.3,1){\mbox{$011$}}
\put(6,1){\line(1,1){0.9}}
\put(6,1){\vector(1,1){0.9}}

\put(9,1){$\thicksim$}
\put(10.3,2){\mbox{$111$  }}
\put(10,0){\circle{0.1}}
\put(10,1){\circle{0.1}}
\put(10.3,0){\mbox{$010$}}
\put(10,0){\vector(0,1){0.9}}
\put(10,1){\vector(0,1){0.9}}
\put(10,1){\circle{0.1}}
\put(10.3,1){\mbox{$011$. \ Then}}
\end{picture}\\

 \unitlength1cm
\begin{picture}(3,2.2)
\thicklines
\put(0,1){$F\Bigl($}
\put(2,0){\vector(-1,1){0.9}}
\put(2,0){\vector(1,1){0.9}}
\put(3,1){\vector(-1,1){0.9}}
\put(1,1){\circle{0.1}}
\put(2,2){\circle{0.1}}
\put(2,0){\circle{0.1}}
\put(3,1){\circle{0.1}}
\put(2.3,0){\mbox{$010\cdots0$}}
\put(0.6,1.4){\mbox{$01\mathfrak f^{k}$}}
\put(2.3,2){\mbox{$1\cdots1$}}
\put(2.9,1.3){\mbox{$01\mathfrak g^{k}$}}
\put(1,1){\vector(1,1){0.9}}

\put(4,1){$\Bigr)$}
\put(4.5,1){$=$}
\put(7,0){\vector(-1,1){0.9}}
\put(7,0){\vector(1,1){0.9}}
\put(8,1){\vector(-1,1){0.9}}
\put(6,1){\circle{0.1}}
\put(7,2){\circle{0.1}}
\put(7,0){\circle{0.1}}
\put(8,1){\circle{0.1}}
\put(7.3,0){\mbox{$\mathfrak h^m$}}
\put(5.4,1){\mbox{$\mathfrak f^m$}}
\put(7.3,2){\mbox{$\mathfrak g^m$}}
\put(8.3,1){\mbox{$\mathfrak f^m$}}
\put(6,1){\line(1,1){0.9}}
\put(6,1){\vector(1,1){0.9}}

\put(9,1){$\thicksim$}
\put(10.3,2){\mbox{$\mathfrak g^m$  }}
\put(10,0){\circle{0.1}}
\put(10,1){\circle{0.1}}
\put(10.3,0){\mbox{$\mathfrak h^m$}}
\put(10,0){\vector(0,1){0.9}}
\put(10,1){\vector(0,1){0.9}}
\put(10,1){\circle{0.1}}
\put(10.3,1){\mbox{$\mathfrak f^m$, }}
\end{picture}\\

 \unitlength1cm
\begin{picture}(3,2.2)
\thicklines
\put(0,1){for some $\mathfrak h^m$, and hence}

\put(6,1){$G\Bigl($}
\put(7.3,2){\mbox{$\mathfrak g^m$  }}
\put(7,0){\circle{0.1}}
\put(7,1){\circle{0.1}}
\put(7.3,0){\mbox{$\mathfrak h^m$}}
\put(7,0){\vector(0,1){0.9}}
\put(7,1){\vector(0,1){0.9}}
\put(7,1){\circle{0.1}}
\put(7.3,1){\mbox{$\mathfrak f^m\ \Bigr) $}}

\put(9,1){$=$}
\put(10.3,2){\mbox{$111$  }}
\put(10,0){\circle{0.1}}
\put(10,1){\circle{0.1}}
\put(10.3,0){\mbox{$010$}}
\put(10,0){\vector(0,1){0.9}}
\put(10,1){\vector(0,1){0.9}}
\put(10,1){\circle{0.1}}
\put(10.3,1){\mbox{$011$. }}
\end{picture}\\

 \unitlength1cm
\begin{picture}(3,2.2)
\thicklines
\put(0,1){On the other hand, we have }

\put(6,1){$\sigma\Bigl($}
\put(7.3,2){\mbox{$1\cdots1$  }}
\put(7,0){\circle{0.1}}
\put(7,1){\circle{0.1}}
\put(7.3,0){\mbox{$00\mathfrak h^{k}$}}
\put(7,0){\vector(0,1){0.9}}
\put(7,1){\vector(0,1){0.9}}
\put(7,1){\circle{0.1}}
\put(7.3,1){\mbox{$01\mathfrak f^{k}\ \Bigr) $}}

\put(9,1){$=$}
\put(10.3,2){\mbox{$111$  }}
\put(10,0){\circle{0.1}}
\put(10,1){\circle{0.1}}
\put(10.3,0){\mbox{$001$}}
\put(10,0){\vector(0,1){0.9}}
\put(10,1){\vector(0,1){0.9}}
\put(10,1){\circle{0.1}}
\put(10.3,1){\mbox{$011$\ . Thus, }}
\end{picture}\\
 \unitlength1cm
\begin{picture}(3,2.5)
\thicklines
\put(0,1){$F\Bigl($}
\put(1.3,2){\mbox{$1\cdots1$  }}
\put(1,0){\circle{0.1}}
\put(1,1){\circle{0.1}}
\put(1.3,0){\mbox{$00\mathfrak h^{k}$}}
\put(1,0){\vector(0,1){0.9}}
\put(1,1){\vector(0,1){0.9}}
\put(1,1){\circle{0.1}}
\put(1.3,1){\mbox{$01\mathfrak f^{k}\ \Bigr) $}}

\put(3,1){$=$}
\put(4.3,2){\mbox{$\mathfrak g^{m}$  }}
\put(4,0){\circle{0.1}}
\put(4,1){\circle{0.1}}
\put(4.3,0){\mbox{$\mathfrak k^{m}$}}
\put(4,0){\vector(0,1){0.9}}
\put(4,1){\vector(0,1){0.9}}
\put(4,1){\circle{0.1}}
\put(4.3,1){\mbox{$\mathfrak f^{m}$\ , for some $\mathfrak k^{m}$, and }}

\put(8,1){$G\Bigl($}
\put(9.3,2){\mbox{$\mathfrak g^{m}$  }}
\put(9,0){\circle{0.1}}
\put(9,1){\circle{0.1}}
\put(9.3,0){\mbox{$\mathfrak k^{m}$}}
\put(9,0){\vector(0,1){0.9}}
\put(9,1){\vector(0,1){0.9}}
\put(9,1){\circle{0.1}}
\put(9.3,1){\mbox{$\mathfrak f^{m}\ \Bigr) $}}

\put(11,1){$=$}
\put(12.3,2){\mbox{$111$  }}
\put(12,0){\circle{0.1}}
\put(12,1){\circle{0.1}}
\put(12.3,0){\mbox{$001$}}
\put(12,0){\vector(0,1){0.9}}
\put(12,1){\vector(0,1){0.9}}
\put(12,1){\circle{0.1}}
\put(12.3,1){\mbox{$011.$}}

\end{picture}\\

\unitlength1cm
\thicklines
\begin{picture}(0,3.5)

\put(3,0){\vector(-1,1){0.9}}
\put(3,0){\vector(1,1){0.9}}
\put(4,1){\vector(-1,1){0.9}}
\put(2,1){\circle{0.1}}
\put(3,2){\circle{0.1}}
\put(3,0){\circle{0.1}}
\put(4,1){\circle{0.1}}
\put(2,1){\vector(1,1){0.9}}

\put(3,2){\vector(0,1){0.9}}
\put(0,3){We conclude}
\put(1,1){$G\Bigl(\mathfrak h^{m}$}
\put(4.2,1){$\mathfrak k^{m}\ \Bigr)$}
\put(3.3,0){$0\cdots0$}
\put(3.2,2){$\mathfrak f^{m}$}
\put(3.2,3){$\mathfrak g^{m}$}

\put(8,0){\vector(-1,1){0.9}}
\put(8,0){\vector(1,1){0.9}}
\put(9,1){\vector(-1,1){0.9}}
\put(7,1){\circle{0.1}}
\put(8,2){\circle{0.1}}
\put(8,0){\circle{0.1}}
\put(9,1){\circle{0.1}}
\put(7,1){\vector(1,1){0.9}}

\put(8,2){\vector(0,1){0.9}}
\put(5.5,1){$= \quad 010$}
\put(9.2,1){$001$}
\put(8.3,0){$000$}
\put(8.2,2){$011$ \quad \quad and note that}
\put(8.2,3){$111$}

\put(12.3,3){\mbox{$111$  }}
\put(12,1){\circle{0.1}}
\put(12,2){\circle{0.1}}
\put(12.3,1){\mbox{$010$}}
\put(12,1){\vector(0,1){0.9}}
\put(12,2){\vector(0,1){0.9}}
\put(12,2){\circle{0.1}}
\put(12.3,2){\mbox{$011$}}
\end{picture}\\

\noindent is a generated submodel of the above $\mathfrak F_2+\mathfrak L_2$-model. But the whole model is p-irreducible and hence the  submodel on $\mathfrak L_3$ must be equivalent, by (iv), to a $\sigma$-model on $+\mathfrak F_r$, for some $r\geq 1$. Thus, we get a model on $+\mathfrak F_r$ such that

\unitlength1cm
\thicklines
\begin{picture}(0,2.5)

\put(7,1){\circle{0.1}}
\put(7,0){\circle{0.1}}
\put(7,2){\circle{0.1}}
\put(8,2){\circle{0.1}}
\put(9,2){\circle{0.1}}
\put(7,0){\vector(0,1){0.9}}
\put(7,1){\circle{0.1}}
\put(7,1){\vector(0,1){0.9}}
\put(7,1){\vector(-1,1){0.9}}
\put(6,2){\circle{0.1}}
\put(5,2){\circle{0.1}}
\put(4,2){\circle{0.1}}
\put(7,1){\vector(1,1){0.9}}
\put(7,1){\vector(-2,1){1.9}}
\put(7,1){\vector(2,1){1.9}}
\put(7,1){\vector(-3,1){2.9}}
\put(9.2,2){$\Vdash\sigma(x_3)$}
\put(7.5,0.9){$\Vdash\sigma(x_3)$}
\put(7.5,0){$\not\Vdash\sigma(x_3)$}
\put(4.5,2){\circle{0.1}}
\put(4,2){\circle{0.1}}
\put(3.5,2){\circle{0.1}}
\end{picture}\\

\noindent which is impossible.\end{proof}
In the same way one shows that $L(\mathfrak F_r+\mathfrak F_s)$ has nullary unification, for  $r\geq 2$, $s\geq 1$. More than in the number (of logics with nullary unification) we are interested in  their location in the lattice of all intermediate logics. In particular, we would like to know if they can be   put apart from logics with finitary\slash unitary unification; just as logics with finitary  unification are distinguished from unitary logics, by Theorem \ref{kc}.
\begin{theorem}\label{L7} The logic  ${\mathsf L}(\{\mathfrak R_2,\mathfrak F_{2}\})={\mathsf L}(\mathfrak R_2)\cap{\mathsf L}(\mathfrak F_{2})$  has nullary unification.
\end{theorem}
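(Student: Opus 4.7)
The plan is to prove nullary unification by elimination of the other three unification types, relying on the two principal tools developed in the paper: Theorem \ref{niu3} (no locally tabular intermediate logic has infinitary unification) and Theorem \ref{main} (the frame-theoretic characterization of finitary/unitary unification). Since $\mathsf{L}(\mathfrak{R}_2)$ and $\mathsf{L}(\mathfrak{F}_2)$ are tabular (hence locally tabular), by Corollary \ref{fp}(i) their intersection $\mathsf{L}(\{\mathfrak{R}_2,\mathfrak{F}_2\})$ is locally tabular. Theorem \ref{niu3} then rules out the infinitary type, and every unifiable formula is $\mathsf{L}$-unifiable since the logic is consistent, so it suffices to show that finitary (and hence also unitary) unification fails.

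First I would fix the frame class $\mathbf{F}=sm(\{\mathfrak{R}_2,\mathfrak{F}_2\}) = \{\mathfrak{L}_1,\mathfrak{L}_2,\mathfrak{L}_3,\mathfrak{F}_2,\mathfrak{R}_2\}$, using Lemma \ref{lf8} to identify $\mathsf{L}(\{\mathfrak{R}_2,\mathfrak{F}_2\})=\mathsf{L}(\mathbf{F})$. Then I would argue by contradiction: assume finitary unification, and apply Theorem \ref{main} at $n=2$ to obtain an integer $m\geq 2$ witnessing (i)--(v) for every substitution $\sigma\colon\{x_1,x_2\}\to\mathsf{Fm}^k$. I would choose $k>m$ and define a substitution tailored so that $\sigma$-models over $\mathfrak{R}_2$ force a certain ``rhombus-type'' behavior (all four valuations comparable through a common top), whereas $\sigma$-models over $\mathfrak{F}_2$ require $k$ genuinely distinct valuations at the two fork ends. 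A natural candidate, modelled on the substitutions from Theorems \ref{F6m} and \ref{infty}, is
$$\sigma(x_1)=x_1,\qquad \sigma(x_2)=\bigwedge_{i=2}^{k+1}\Bigl(\bigl((x_i\to x_1)\to x_1\bigr)\lor(x_i\to x_1)\Bigr).$$

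Next I would enumerate, up to $\thicksim$, the p-irreducible $\sigma$-models over $\mathbf{F}$ and show: (a) on $\mathfrak{R}_2$ there are strictly more than $m$ non-equivalent $\sigma$-models whose top element has the valuation $11$, and (b) on $\mathfrak{F}_2$ the pair of fork-end valuations propagates through $F$ into the $m$-models in a rigid way (because $F$ preserves frames by (i) and generated subframes by (ii)). A pigeonhole argument then yields two distinct $k$-valuations $\mathfrak f^k\neq \mathfrak g^k$ at a fork-end for which $F$ collapses their $\mathfrak{L}_1$-images to a common $m$-valuation, and I would construct an $\mathfrak{R}_2$-model exploiting this collapse together with a top $\mathfrak{L}_2$-segment; condition (v) would then force $G\circ F$ to coincide with $\sigma$ on this $\mathfrak{R}_2$-model, whereas direct inspection of the allowed $\sigma$-models over $\mathbf{F}$ would show that the required $1$-model pattern at the top is not realized by any $\sigma$-image, violating (iv) or (v).

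The main obstacle will be the pigeonhole-plus-glueing step. The rhombus frame gives a KC-like freedom (a single top) that pushes $F$ to merge incomparable data, while the fork frame refuses any such merge because no top exists to absorb new variables (cf. the remark after Theorem \ref{fil} and the proof of Theorem \ref{u1}). Calibrating $\sigma$ so that these two pressures are genuinely contradictory, and making the collapse explicit without accidentally producing a $\sigma$-model that neutralizes the argument (as happens for $\mathsf{L}(\mathfrak{G}_2)$ in Theorem \ref{L8i}), is the delicate part; I expect the correct $\sigma$ to be essentially the one displayed above, with the formula forcing the tops of $\mathfrak{R}_2$-models to behave like the top of $\mathfrak{G}_3+$ while simultaneously pinning the $\mathfrak{F}_2$-ends to $k$ inequivalent $\mathfrak{L}_1$-patterns, reducing the argument to the same combinatorial core as in Theorems \ref{F6m} and \ref{infty}.
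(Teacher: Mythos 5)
Your overall skeleton is exactly the paper's: local tabularity via Corollary \ref{fp}, Theorem \ref{niu3} to exclude the infinitary type, then a refutation of conditions (i)--(v) of Theorem \ref{main} for $n=2$, $k>m$, using a pigeonhole collapse of $\mathfrak L_1$-models under $F$ and a glued $\mathfrak R_2$-model whose $G$-image cannot be a $\sigma$-model. But the entire content of such a proof lies in the choice of $\sigma$ and the verification that the glued model's $G$-image really avoids all $\sigma$-models, and this is precisely what you leave open; worse, the candidate you propose appears not to work. With $\sigma(x_1)=x_1$ and $\sigma(x_2)=\bigwedge_{i}\bigl(((x_i\to x_1)\to x_1)\lor(x_i\to x_1)\bigr)$, take the pigeonholed pair $\mathfrak f^{k+1}\neq\mathfrak g^{k+1}$ with $x_1$-coordinate $0$ (you may, since $2^{k}>2^{m}$). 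Then $\sigma$ sends the chain $0\cdots0\to\mathfrak f^{k+1}$ to a model equivalent to $\circ\,01$ (the relativized weak excluded middle is true at the root of any chain) and sends the fork with ends $\mathfrak f^{k+1},\mathfrak g^{k+1}$ to $00\to01$. Gluing the corresponding $F$-images into an $\mathfrak R_2$-model over the $m$-valuations and applying (ii) and (v), the $G$-image has root $00$, middles $01$ and $00$, and top $01$ --- but this $2$-model is bisimilar to the chain $00\to01$ (identify the $01$-middle with the top and the root with the $00$-middle), which \emph{is} a $\sigma$-model. So (iv) is not violated and no contradiction arises; this is exactly the ``neutralizing $\sigma$-model'' phenomenon you yourself flag as the danger. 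Your claim (a) is also off: $\sigma$-models are $2$-models, so there are only boundedly many of them up to $\thicksim$, independently of $m$.

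The paper resolves the calibration problem with a different substitution, namely
$$\sigma(x_1)=\bigwedge_{i=1}^k(\neg x_i\lor\neg\neg x_i),\qquad \sigma(x_2)=\bigwedge_{i=1}^k(\neg\neg x_i\to x_i).$$
The first conjunct is valid at the root of every model over a frame with a top, so every $\sigma$-model over $\mathfrak R_2$ has first coordinate $1$ at the root and, up to equivalence, the only $\sigma$-models are $\circ\,11$, the chain $10\to11$, and (only via $\mathfrak F_2$) the chains $01\to11$ and $00\to11$. The pigeonhole then forces the glued $\mathfrak R_2$-model's $G$-image to have top $11$, one middle $10$ and the other middle $01$ or $00$: this is a genuinely p-irreducible $\mathfrak R_2$-pattern that no $\sigma$-model realizes, which is the contradiction with (iv). To repair your proposal you would need to replace your $\sigma$ by one with this ``forbidden rhombus pattern'' property (or carry out a genuinely different verification); as it stands the combinatorial core of the proof is missing.
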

\begin{proof}
Suppose that ${\mathsf L}(\{\mathfrak R_2,\mathfrak F_{2}\})$ has finitary unification and note that
$$sm(\{\mathfrak R_2,\mathfrak F_{2}\})=\{\mathfrak L_1,\mathfrak L_{2},\mathfrak L_3,\mathfrak F_{2},\mathfrak R_2\}.$$
Let $n=2$. According to Theorem \ref{main}, there is a number $m\geq 2$ such that for every $\sigma\colon\{x_1,x_2\}\to\mathsf{Fm}^k$ there are mappings $G:\mathbf{M}^m\to\mathbf{M}^2$ and $F:\mathbf{M}^k\to\mathbf{M}^m$  fulfilling the conditions (i)-(v).
 Take any $k>m$ and let $\sigma\colon\{x_1,x_2\}\to\mathsf{Fm}^k$ be  as follows:
  $$\sigma(x_1)=\bigwedge_{i=1}^k(\neg x_i\lor\neg\neg x_i) \quad \mbox{ and } \quad \sigma(x_2)=\bigwedge_{i=1}^k(\neg\neg x_i\to  x_i).$$
We have the following p-irreducible $\sigma$-models, on   $\mathfrak R_2$ and $\mathfrak F_{2}$, correspondingly:\\

  		\unitlength1cm
\begin{picture}(1,1.5)
\thicklines
\put(0,0){\circle{0.1}}
		\put(0.5,0.1){\mbox{$11$}}
		\put(2,0){\circle{0.1}}
		\put(2.5,0.1){\mbox{$10$}}
		\put(2,1){\circle{0.1}}
		\put(2.5,1){\mbox{$11$}}
       	\put(2,0){\vector(0,1){0.9}}

 \put(3.5,0.5){\mbox{and}}

	\put(5,0){\circle{0.1}}
		\put(5.5,0.1){\mbox{$11$}}
		\put(7,0){\circle{0.1}}
		\put(7.5,0.1){\mbox{$10$}}
		\put(7,1){\circle{0.1}}
		\put(7.5,1){\mbox{$11$}}
       	\put(7,0){\vector(0,1){0.9}}

       \put(9,0){\circle{0.1}}
		\put(9.5,0.1){\mbox{$01$}}
		\put(9,1){\circle{0.1}}
		\put(9.5,1){\mbox{$11$}}
       	\put(9,0){\vector(0,1){0.9}}

       \put(11,0){\circle{0.1}}
		\put(11.5,0.1){\mbox{$00$}}
		\put(11,1){\circle{0.1}}
		\put(11.5,1){\mbox{$11$}}
       	\put(11,0){\vector(0,1){0.9}}
	
	\end{picture}	\\
As $k>m$,  there are  $\mathfrak{f}^k\not=\mathfrak{g}^k$ such that $F_k(\mathfrak L_1, \mathfrak{f}^k)=F_k(\mathfrak L_1, \mathfrak{g}^k)=(\mathfrak L_1, \mathfrak{g}^m)$, for some $\mathfrak{g}^m$. Let $\mathfrak{f}^k\not={0\cdots0}$. Then

		\unitlength1cm
\begin{picture}(1,1.5)
\thicklines
\put(0,0.5){\mbox{$\sigma\Bigl($}}
	\put(0.8,0){\circle{0.1}}
		\put(1,0.1){\mbox{$0\cdots0$}}
		\put(0.8,1){\circle{0.1}}
		\put(1,1){\mbox{$\mathfrak{f}^k$}}
       	\put(0.8,0){\vector(0,1){0.9}}
\put(2,0.5){\mbox{$\Bigr)=$}}
       \put(3,0){\circle{0.1}}
		\put(3.5,0.1){\mbox{$10$}}
		\put(3,1){\circle{0.1}}
		\put(3.5,1){\mbox{$11$}}
       	\put(3,0){\vector(0,1){0.9}}

 \put(4.5,0.5){\mbox{and}}
\put(5.5,0.5){\mbox{$\sigma\bigl($}}
	
		\put(7.5,0){\circle{0.1}}
		\put(8,0.1){\mbox{${0\cdots0}$}}
		\put(6.5,1){\circle{0.1}}
         \put(8.5,1){\circle{0.1}}
		\put(6,1){\mbox{$\mathfrak{f}^k$}}
        \put(8.8,1){\mbox{$\mathfrak{g}^k$}}
		\put(7.5,0){\vector(1,1){0.9}}
         \put(7.5,0){\vector(-1,1){0.9}}
\put(9.2,0.5){\mbox{$\bigr)\thicksim$}}	

   \put(10.5,0){\circle{0.1}}
		\put(11,0.1){\mbox{$00$ \ or \ $01$}}
		\put(10.5,1){\circle{0.1}}
		\put(11,1){\mbox{$11$}}
       	\put(10.5,0){\vector(0,1){0.9}}

	\end{picture}	

		\unitlength1cm
\begin{picture}(1,1.5)
\thicklines
\put(0,1){Let}
\put(0,0.5){\mbox{$F\Bigl($}}
	\put(0.8,0){\circle{0.1}}
		\put(1,0.1){\mbox{$0\cdots0$}}
		\put(0.8,1){\circle{0.1}}
		\put(1,1){\mbox{$\mathfrak{f}^k$}}
       	\put(0.8,0){\vector(0,1){0.9}}
\put(2,0.5){\mbox{$\Bigr)=$}}
       \put(3,0){\circle{0.1}}
		\put(3.5,0.1){\mbox{$\mathfrak{h}^m_1$}}
		\put(3,1){\circle{0.1}}
		\put(3.5,1){\mbox{$\mathfrak{f}^m$}}
       	\put(3,0){\vector(0,1){0.9}}

 \put(4.5,0.5){\mbox{and}}
\put(5.5,0.5){\mbox{$F\bigl($}}
	
		\put(7.5,0){\circle{0.1}}
		\put(8,0.1){\mbox{${0\cdots0}$}}
		\put(6.5,1){\circle{0.1}}
         \put(8.5,1){\circle{0.1}}
		\put(6,1){\mbox{$\mathfrak{f}^k$}}
        \put(8.8,1){\mbox{$\mathfrak{g}^k$}}
		\put(7.5,0){\vector(1,1){0.9}}
         \put(7.5,0){\vector(-1,1){0.9}}
\put(9.2,0.5){\mbox{$\bigr)\thicksim$}}	

   \put(10.5,0){\circle{0.1}}
		\put(11,0.1){\mbox{$\mathfrak{h}^m_2$}}
		\put(10.5,1){\circle{0.1}}
		\put(11,1){\mbox{$\mathfrak{f}^m$}}
       	\put(10.5,0){\vector(0,1){0.9}}

	\end{picture}	\\

\noindent for some $m$-valuations $\mathfrak{h}^m_{1},\mathfrak{h}^m_{2}$. Then, by (v),

		\unitlength1cm
\begin{picture}(1,1.5)
\thicklines
\put(0,0.5){\mbox{$G\Bigl($}}
	\put(0.8,0){\circle{0.1}}
		\put(1,0.1){\mbox{$\mathfrak{h}^m_1$}}
		\put(0.8,1){\circle{0.1}}
		\put(1,1){\mbox{$\mathfrak{f}^m$}}
       	\put(0.8,0){\vector(0,1){0.9}}
\put(2,0.5){\mbox{$\Bigr)=$}}
       \put(3,0){\circle{0.1}}
		\put(3.5,1){\mbox{$11$}}
		\put(3,1){\circle{0.1}}
		\put(3.5,0){\mbox{$10$}}
       	\put(3,0){\vector(0,1){0.9}}

 \put(4.5,0.5){\mbox{and}}
\put(5.5,0.5){\mbox{$G\bigl($}}
	
		  \put(6.5,0){\circle{0.1}}
		\put(6.8,0.1){\mbox{$\mathfrak{h}^m_2$}}
		\put(6.5,1){\circle{0.1}}
		\put(6.8,1){\mbox{$\mathfrak{f}^m$}}
       	\put(6.5,0){\vector(0,1){0.9}}
\put(7.5,0.5){\mbox{$\bigr)=$}}	

   \put(8.5,0){\circle{0.1}}
		\put(9,1){\mbox{$11$}}
		\put(8.5,1){\circle{0.1}}
		\put(9,0){\mbox{$01$ \ or \ $00$}}
       	\put(8.5,0){\vector(0,1){0.9}}

	\end{picture}

$$
\unitlength1cm
\begin{picture}(6,2.1)
\thicklines
\put(0,2){Thus,}
\put(3,2){\circle{0.1}}
\put(2,1){\circle{0.1}}
\put(4,1){\circle{0.1}}
\put(3,0){\circle{0.1}}
\put(4.2,0.9){\mbox{$\mathfrak{h}^m_{2}\quad \Bigr)\quad=$}}
\put(0.5,0.9){\mbox{$G \Bigl(\quad\mathfrak{h}^m_{1}$}}
\put(3.3,0){\mbox{${0\cdots0}$}}
\put(2.2,2){\mbox{$\mathfrak{f}^m$}}
\put(2,1){\vector(1,1){0.9}}
\put(4,1){\vector(-1,1){0.9}}
\put(3,0){\vector(1,1){0.9}}
\put(3,0){\vector(-1,1){0.9}}

\put(8,2){\circle{0.1}}
\put(7,1){\circle{0.1}}
\put(9,1){\circle{0.1}}
\put(8,0){\circle{0.1}}
\put(9.2,0.9){\mbox{$01$ \ or \ $00$}}
\put(6.5,0.9){\mbox{$10$}}
\put(8.3,0){\mbox{$00$}}
\put(8.2,2){$11$ }
\put(7,1){\vector(1,1){0.9}}
\put(9,1){\vector(-1,1){0.9}}
\put(8,0){\vector(1,1){0.9}}
\put(8,0){\vector(-1,1){0.9}}
\end{picture}\qquad\qquad\qquad\qquad\qquad\qquad\qquad\qquad\qquad\qquad\qquad\qquad$$
The above $2$-model on $\mathfrak R_2$  is not equivalent to any $\sigma$-model which contradics (iv).
\end{proof}

Thus, there are  logics with finitary unification the intersection of whose has nullary unification. We could give many  examples  of such logics.
\begin{theorem}\label{L8} The logic  ${\mathsf L}(\mathfrak G_2)\cap{\mathsf L}(\mathfrak F_{3})$ (see Figure \ref{GF} and \ref{8fames}) has nullary unification.
\end{theorem}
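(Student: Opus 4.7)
Since $\mathsf L(\mathfrak G_2)\cap\mathsf L(\mathfrak F_3)$ is tabular and hence locally tabular, Theorem \ref{niu3} rules out infinitary unification, so it suffices to refute finitary (equivalently unitary) unification; the plan is to transplant the pigeonhole contradiction from the proof of Theorem \ref{F6m} and verify that the enlarged frame class causes no trouble.

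First I determine $\mathbf F := sm(\{\mathfrak G_2,\mathfrak F_3\}) = \{\mathfrak L_1,\mathfrak L_2,\mathfrak L_3,\mathfrak F_2,\mathfrak F_3,\mathfrak G_3,\mathfrak G_2\}$; the crucial containment here is $\mathfrak G_3\in sm(\mathfrak G_2)$ (via the p-morphism that identifies the two top elements of $\mathfrak G_2$), so every $\mathfrak G_3$-shaped test model used in the proof of Theorem \ref{F6m} is available to us. Assuming finitary unification, I invoke Theorem \ref{main} with $n=1$ to obtain a bound $m$, pick $k>m$, and take exactly the substitution
\[
\sigma(x_1)=\neg\neg\Bigl(\bigvee_{i=1}^k x_i\Bigr)\land\bigwedge_{i=1}^k\bigl(\neg\neg x_i\lor\neg x_i\bigr)
\]
already used in the $\mathfrak G_3$ case, obtaining mappings $G\colon\mathbf M^m\to\mathbf M^1$ and $F\colon\mathbf M^k\to\mathbf M^m$ satisfying (i)--(v).

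The new step, relative to Theorem \ref{F6m}, is to verify that enlarging the frame class from $sm(\mathfrak G_3)$ to $\mathbf F$ does not enlarge the collection of p-irreducible $\sigma$-models up to $\thicksim$. For $\mathfrak F_3$-based models, the conjunct $\bigwedge_i(\neg\neg x_i\lor\neg x_i)$ forces the three leaves either to agree variable-by-variable (in which case the model collapses under a p-morphism to an $\mathfrak L_2$-model) or to disagree (in which case $\sigma(x_1)=0$ at the root, and at least two leaves share the same $\sigma$-value and may be identified, reducing the model to an $\mathfrak F_2$- or $\mathfrak L_2$-$\sigma$-model). For $\mathfrak G_2$-based models, the same conjunct applied at right-1 forces both tops to agree on every $x_i$ (so they collapse to a single top, yielding a $\mathfrak G_3$-shaped $\sigma$-model) unless $\sigma(x_1)=0$ at right-1, in which case the two tops can again be identified with right-1 after $\sigma$-translation; the analogous constraint at the root handles the left-1 versus right-1 asymmetry. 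In every case the resulting $\sigma$-model is $\thicksim$-equivalent to one of the five classes (a)--(e) exhibited in the proof of Theorem \ref{F6m}.

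With the classification of $\sigma$-models unchanged, the pigeonhole argument of Theorem \ref{F6m} transfers essentially verbatim: since $k>m$ there exist $\mathfrak f^k\neq\mathfrak g^k$ with coinciding $F$-images on $\mathfrak L_1$, and tracing the resulting $\mathfrak G_3$-shaped test model through $G\circ F$ produces a $1$-model that must simultaneously equal $\sigma$ of two distinct $k$-models, one of which is not $\thicksim$-equivalent to any of the five $\sigma$-model classes, contradicting condition (v) of Theorem \ref{main}. The main obstacle is precisely the case analysis of $\mathfrak F_3$- and $\mathfrak G_2$-based $\sigma$-models sketched above, because a stray new equivalence class produced by the extra frames would supply the missing option in the final contradiction diagram and invalidate the argument; the monotonicity of Kripke valuations together with the specific shape of $\sigma(x_1)$ is what makes these reductions go through.
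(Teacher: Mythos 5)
There is a genuine gap, and it sits exactly where you flag "the main obstacle": your claim that passing from $sm(\mathfrak G_3)$ to $sm(\{\mathfrak G_2,\mathfrak F_3\})$ does not enlarge the collection of $\sigma$-models (up to $\thicksim$) for the substitution $\sigma(x_1)=\neg\neg(\bigvee_i x_i)\land\bigwedge_i(\neg\neg x_i\lor\neg x_i)$ is false. At a point $w$ of a finite po-model, $\bigwedge_i(\neg\neg x_i\lor\neg x_i)$ holds iff all maximal points above $w$ carry the same $k$-valuation. Now take a $k$-model over $\mathfrak G_2$ whose two top elements carry distinct nonzero valuations and whose isolated maximal point carries $0\cdots0$. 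Then $\sigma(x_1)$ is true at both tops, false at their common immediate predecessor, false at the isolated maximal point, and false at the root. Reducing, the two tops collapse with each other but \emph{not} with their predecessor (their $\sigma$-values differ), and you obtain precisely the p-irreducible $\mathfrak G_3$-model with values $(0,0,0,1)$ at (root, left leaf, right-middle, right-top). This is exactly the model whose \emph{failure} to be a $\sigma$-model drives the contradiction with condition (iv) in the first branch of the proof of Theorem \ref{F6m}; once it becomes a legitimate $\sigma$-model, that branch is no longer absurd and the pigeonhole argument collapses. (Your sentence "the two tops can again be identified with right-1 after $\sigma$-translation" is where this goes wrong: they cannot, since right-1 gets $\sigma$-value $0$ while the tops get $1$.) There is also a sanity check showing your proof would prove too much: your argument uses only the frames in $sm(\mathfrak G_3)$ together with the alleged invariance of the $\sigma$-model classes, so it would apply verbatim to $\mathsf L(\mathfrak G_2)$ alone — but Theorem \ref{L8i} shows $\mathsf L(\mathfrak G_2)$ has \emph{finitary} unification. (A minor further slip: $+\mathfrak F_2$ is a p-morphic image of $\mathfrak G_2$, so it belongs to $sm(\{\mathfrak G_2,\mathfrak F_3\})$ and is missing from your list of $\mathbf F$.)

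The paper's actual proof is built around avoiding exactly this phenomenon. It replaces the conjuncts $\neg\neg x_i\lor\neg x_i$ by conjuncts $A(x_{2i-1},x_{2i})$, where $A(x,y)=(\neg x\lor\neg\neg x)\lor(\neg y\lor\neg\neg y)\lor\neg(x\leftrightarrow y)\lor\neg\neg(x\leftrightarrow y)$ is valid on $\mathfrak F_2$ and can only be falsified at a point with at least three end-elements above it carrying suitably distinct valuations. Consequently the interior point of $\mathfrak G_2$ (which has only two end-elements above it) can never falsify the conjunction, the new $\sigma$-model class above never appears, and only the roots of $\mathfrak F_3$ and $\mathfrak G_2$ matter. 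The pigeonhole then has to be run on \emph{pairs} of coordinates, producing three valuations $\mathfrak f^k_0,\mathfrak f^k_{0'},\mathfrak f^k_{0''}$ pairwise distinct on a fixed coordinate pair, and the final contradiction is extracted from an $\mathfrak F_3$-model, not a $\mathfrak G_3$-model. So a correct proof requires a genuinely new substitution tailored to the interplay of $\mathfrak G_2$ and $\mathfrak F_3$, not a transplant of the $\mathfrak G_3$ argument.
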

\begin{proof} Let  $\mathsf{L}={\mathsf L}(\mathfrak G_2)\cap{\mathsf L}(\mathfrak F_{3})$ and  $\mathbf F=\{\mathfrak L_1, \mathfrak L_2, \mathfrak L_3,\mathfrak F_2,+\mathfrak F_2, \mathfrak G_3, \mathfrak G_2, \mathfrak F_{3}\}$. Then $\mathbf F= sm(\{\mathfrak G_2,\mathfrak F_3\})$ and $\mathsf{L}={\mathsf L}(\mathbf F)$.  Suppose that ${\mathsf L}$ has finitary unification and $n=1$. By Theorem \ref{main},  there is a number $m\geq 1$ such that for any $\sigma\colon\{x_1,\dots,x_n\}\to \mathsf{Fm}^k$  there are  $G:\mathbf{M}^m\to\mathbf{M}^1$ and $F:\mathbf{M}^k\to\mathbf{M}^m$  fulfilling the conditions (i)-(v). Let

$A(x_1,x_2)= (\neg x_1\lor\neg\neg x_1)\lor (\neg x_2\lor\neg\neg x_2)\lor\neg(x_1\leftrightarrow x_2)\lor \neg\neg(x_1\leftrightarrow x_2).$\\
The formula is valid in  $\mathfrak F_2$; to falsify it one needs three end-elements above the root, labeled with distinct valuation. Thus, $A$ is false in the following models over $\mathfrak F_{3}$
\begin{figure}[H]
\unitlength1cm
\begin{picture}(1,1)
\thicklines
\put(0,1){\circle{0.1}}
\put(1,0){\circle{0.1}}
\put(2,1){\circle{0.1}}
\put(1,1){\circle{0.1}}
\put(1.3,0){\mbox{$00$}}
\put(0.3,1){\mbox{$11$}}
\put(1.3,1){\mbox{$01$}}
\put(2.3,1){\mbox{$10$}}
\put(1,0){\vector(1,1){0.9}}
\put(1,0){\vector(-1,1){0.9}}
\put(1,0){\vector(0,1){0.9}}

\put(3.5,1){\circle{0.1}}
\put(4.5,0){\circle{0.1}}
\put(5.5,1){\circle{0.1}}
\put(4.5,1){\circle{0.1}}
\put(4.8,0){\mbox{$00$}}
\put(3.8,1){\mbox{$11$}}
\put(4.8,1){\mbox{$01$}}
\put(5.8,1){\mbox{$00$}}
\put(4.5,0){\vector(1,1){0.9}}
\put(4.5,0){\vector(-1,1){0.9}}
\put(4.5,0){\vector(0,1){0.9}}

\put(7,1){\circle{0.1}}
\put(8,0){\circle{0.1}}
\put(9,1){\circle{0.1}}
\put(8,1){\circle{0.1}}
\put(8.3,0){\mbox{$00$}}
\put(7.3,1){\mbox{$11$}}
\put(8.3,1){\mbox{$00$}}
\put(9.3,1){\mbox{$10$}}
\put(8,0){\vector(1,1){0.9}}
\put(8,0){\vector(-1,1){0.9}}
\put(8,0){\vector(0,1){0.9}}

\put(10,1){\circle{0.1}}
\put(11,0){\circle{0.1}}
\put(12,1){\circle{0.1}}
\put(11,1){\circle{0.1}}
\put(11.3,0){\mbox{$00$}}
\put(10.3,1){\mbox{$00$}}
\put(11.3,1){\mbox{$01$}}
\put(12.3,1){\mbox{$10$}}
\put(11,0){\vector(1,1){0.9}}
\put(11,0){\vector(-1,1){0.9}}
\put(11,0){\vector(0,1){0.9}}

\end{picture}
\end{figure}
\noindent
Let $ m<k=2l$ and  $\sigma\colon\{x_1\}\to\mathsf{Fm^k}$ be defined by
$$ \sigma(x_1)\quad=\quad\neg\neg\Bigl(\bigvee_{i=1}^kx_i\Bigr)\ \land \ \bigwedge_{i=1}^lA(x_{2i-1},x_{2i}).$$
We have the following p-irreducible  $\sigma$-models (or equivalents of $\sigma$-models):
 $$ \circ \ 0 \qquad \circ 1\qquad\unitlength1cm
\begin{picture}(1,1.1)
\thicklines

\put(0,1){\circle{0.1}}
\put(0.3,1){\mbox{$0$}}
\put(2,1){\circle{0.1}}
\put(2.3,1){\mbox{$1$}}
\put(1,0){\circle{0.1}}
\put(1.3,0){\mbox{$0$}}
\put(1,0){\vector(1,1){0.9}}
\put(1,0){\vector(-1,1){0.9}}

\put(3,0){\circle{0.1}}
\put(3.3,0){\mbox{$0$\ \ \ Note ${\sigma}(\circ \ \mathfrak{f}^k)=\circ \ 0$  iff  $\mathfrak{f}^k=\underbrace{0\cdots 0}_{\mbox{$k$-times}}$.}}
\put(3,0){\line(0,1){0.9}}
\put(3,1){\circle{0.1}}
\put(3.8,0.5){\mbox{but it is equivalent with some $\sigma$-model. }}
\put(3.3,1){\mbox{$1$}\quad The last $1$-model is not any $\sigma$-model}
\put(3,0){\vector(0,1){0.9}}\end{picture}\qquad\qquad\qquad\qquad\qquad\qquad\qquad\qquad\qquad\qquad\qquad\qquad$$

 \noindent
Let $F(\circ \ 0\cdots 0)=\circ  \ \mathfrak{g}^m$. Since $m<k$, there are $\mathfrak{f}^k_0\not=\mathfrak{f}^k_{0'}$ such that\\ $F(\circ \ \mathfrak{f}^k_0)=F(\circ \ \mathfrak{f}^k_{0'})$. Then $\mathfrak{f}^k_0(j)\not=\mathfrak{f}^k_{0'}(j)$ for some $j\leq k$. We do not know if  $j$ is odd or even but for pairs of bits we have \ $\mathfrak{f}^k_0(2i-1)\ \mathfrak{f}^k_0(2i)\ \not=\ \mathfrak{f}^k_{0'}(2i-1)\ \mathfrak{f}^k_{0'}(2i)$ \ for some $i\leq l$ (where $2i-1=j$ or $2i=j$). Moreover, we can find a $k$-valuation $\mathfrak{f}^k_{0''}$ such that
$$\mathfrak{f}^k_0(2i-1)\ \mathfrak{f}^k_{0}(2i)\quad\not=\quad\mathfrak{f}^k_{0''}(2i-1)\ \mathfrak{f}^k_{0''}(2i)\quad\not=\quad\mathfrak{f}^k_{0'}(2i-1)\ \mathfrak{f}^k_{0'}(2i)$$
and $\mathfrak{f}^k_{0''}(2i-1)\ \mathfrak{f}^k_{0''}(2i)\ \not=\ 00$.
Let $F(\circ \ \mathfrak{f}^k_0)=F(\circ \ \mathfrak{f}^k_{0'})=\circ  \ \mathfrak{f}^m_0$ and $F(\circ \ \mathfrak{f}^k_{0''})=\circ  \ \mathfrak{f}^m_{0'}$. We  have $\mathfrak{f}^m_0\not=\mathfrak{g}^m$ and $\mathfrak{f}^m_{0'}\not=\mathfrak{g}^m$ as, by (v), $G(\circ \ \mathfrak{f}^m_0)= G(\mathfrak{f}^m_{0'})=\circ \ 1$ and $G(\circ \ \mathfrak{g}^m)=\circ \ 0$.

By (ii), (iv), and the above characterization of all $\sigma$-models, we  conclude that

		\unitlength1cm
\begin{picture}(1,1.5)
\thicklines
\put(0,0.5){\mbox{$G_{} \Bigl($}}
		\put(2,0){\circle{0.1}}
		\put(1.5,0){\mbox{$?$}}
		\put(1,1){\circle{0.1}}
         \put(3,1){\circle{0.1}}
		\put(1.3,1){\mbox{$\mathfrak{f}^m_{0'}$}}
        \put(2.5,1){\mbox{$\mathfrak{f}^m_0$}}
		\put(2,0){\vector(1,1){0.9}}
         \put(2,0){\vector(-1,1){0.9}}
		\put(3.2,0.5){\mbox{$\Bigr) \quad \thicksim\quad$}}
	
		\put(4.7,0){\circle{0.1}}
		\put(5,0){\mbox{$0$}}
		\put(4.7,1){\circle{0.1}}
		\put(5,1){\mbox{$1$}}
		\put(4.7,0){\vector(0,1){0.9}}
	
 \put(6,0.5){\mbox{or}}

	\put(7,0.5){\mbox{$G \Bigl($}}
		\put(9,0){\circle{0.1}}
		\put(8.5,0){\mbox{$?$}}
		\put(8,1){\circle{0.1}}
         \put(10,1){\circle{0.1}}
		\put(8.3,1){\mbox{$\mathfrak{f}^m_{0'}$}}
        \put(9.5,1){\mbox{$\mathfrak{f}^m_0$}}
		\put(9,0){\vector(1,1){0.9}}
         \put(9,0){\vector(-1,1){0.9}}
	
	\put(10.2,0.5){\mbox{$\Bigr) \quad \thicksim\quad$}}
	
		\put(11.7,0){\circle{0.1}}
		\put(12,0){\mbox{$1$}}
		\put(11.7,1){\circle{0.1}}
		\put(12,1){\mbox{$1$}}
		\put(11.7,0){\vector(0,1){0.9}}
	\end{picture}	\\

\noindent (whatever ? is). If the first had happened, we would have \\

		\begin{picture}(5,2)
		\thicklines
		\linethickness{0.3mm}
        \put(0,1){\mbox{$G_{} \Bigl($}}
		\put(2,0){\vector(-1,1){0.9}}
		\put(2,0){\vector(1,1){0.9}}
		\put(3,1){\vector(1,1){0.9}}
        \put(3,1){\vector(-1,1){0.9}}
		\put(1,1){\circle{0.1}}
		\put(2,2){\circle{0.1}}
        \put(4,2){\circle{0.1}}
		\put(2,0){\circle{0.1}}
		\put(3,1){\circle{0.1}}
		\put(2.3,0){?}
		\put(1.5,1){\mbox{$\mathfrak{g}^m$}}
		\put(3.3,2){\mbox{$\mathfrak{f}^m_0$}}
        \put(2.3,2){\mbox{$\mathfrak{f}^m_{0'}$}}
		\put(3.3,1){?}
        \put(4.5,1){\mbox{$\Bigr) \quad = \quad$}}
		
		\put(7,0){\vector(-1,1){0.9}}
		\put(7,0){\vector(1,1){0.9}}
		\put(8,1){\vector(1,1){0.9}}
        \put(8,1){\vector(-1,1){0.9}}
		\put(6,1){\circle{0.1}}
		\put(7,2){\circle{0.1}}
        \put(9,2){\circle{0.1}}
		\put(7,0){\circle{0.1}}
		\put(8,1){\circle{0.1}}
		\put(7.3,0){$0$}
		\put(6.5,1){\mbox{$0$}}
		\put(9.3,2){\mbox{$1$}}
          \put(7.3,2){\mbox{$1$}}
		\put(8.3,1){$0$}
		\end{picture}

\noindent which would contradict (iv) as no $\sigma$-model is equivalent to the $1$-model on the right hand side of the above equation. We conclude that \\

		\unitlength1cm
\begin{picture}(1,1.5)
\thicklines
\put(0,0.5){\mbox{$G_{} \Bigl($}}
		\put(2,0){\circle{0.1}}
		\put(1.5,0){\mbox{$?$}}
		\put(1,1){\circle{0.1}}
         \put(3,1){\circle{0.1}}
		\put(1.3,1){\mbox{$\mathfrak{f}^m_{0'}$}}
        \put(2.5,1){\mbox{$\mathfrak{f}^m_0$}}
		\put(2,0){\vector(1,1){0.9}}
         \put(2,0){\vector(-1,1){0.9}}
	\put(3.2,0.5){\mbox{$\Bigr) \quad =\quad$}}
	
		\put(5.2,0){\circle{0.1}}
		\put(4.8,0){\mbox{$1$}}
		\put(4.2,1){\circle{0.1}}
        \put(6.2,1){\circle{0.1}}
		\put(6.3,1){\mbox{$1$}}
        \put(4.3,1){\mbox{$1$}}
		\put(5.2,0){\vector(1,1){0.9}}
       \put(5.2,0){\vector(-1,1){0.9}}
	\put(8,0.5){\mbox{and hence}}
	\end{picture}

	\unitlength1cm
\begin{picture}(1,2)

\put(0,0.5){\mbox{$G_{} \Bigl( F_{} \Bigl($}}
\put(1.7,1){\circle{0.1}}
\put(2.7,0){\circle{0.1}}
\put(3.7,1){\circle{0.1}}
\put(2.7,1){\circle{0.1}}
\put(3,0){\mbox{$?$}}
\put(2,1){\mbox{$\mathfrak{f}^k_{0''}$}}
\put(3,1){\mbox{$\mathfrak{f}^k_{0'}$}}
\put(4,1){\mbox{$\mathfrak{f}^k_{0}$}}
\put(2.7,0){\vector(1,1){0.9}}
\put(2.7,0){\vector(-1,1){0.9}}
\put(2.7,0){\vector(0,1){0.9}}
\put(4.3,0.5){\mbox{$ \Bigr)\Bigr) =\ G\Bigl($}}

\put(6,1){\circle{0.1}}
\put(7,0){\circle{0.1}}
\put(8,1){\circle{0.1}}
\put(7,1){\circle{0.1}}
\put(7.3,0){\mbox{$?$}}
\put(6.3,1){\mbox{$\mathfrak{f}^m_{0'}$}}
\put(7.3,1){\mbox{$\mathfrak{f}^m_{0}$}}
\put(8.3,1){\mbox{$\mathfrak{f}^m_{0}$}}
\put(7,0){\vector(1,1){0.9}}
\put(7,0){\vector(-1,1){0.9}}
\put(7,0){\vector(0,1){0.9}}
\put(8.8,0.5){\mbox{$\Bigr) \ =$}}

\put(10,1){\circle{0.1}}
\put(11,0){\circle{0.1}}
\put(12,1){\circle{0.1}}
\put(11,1){\circle{0.1}}
\put(11.3,0){\mbox{$1$}}
\put(10.3,1){\mbox{$1$}}
\put(11.3,1){\mbox{$1$}}
\put(12.3,1){\mbox{$1$}}
\put(11,0){\vector(1,1){0.9}}
\put(11,0){\vector(-1,1){0.9}}
\put(11,0){\vector(0,1){0.9}}

\end{picture}\\

But this is in contradiction with (v) as

	\unitlength1cm
\begin{picture}(1,2)

\put(0.5,0.5){\mbox{$H_{\sigma} \Bigl($}}
\put(1.7,1){\circle{0.1}}
\put(2.7,0){\circle{0.1}}
\put(3.7,1){\circle{0.1}}
\put(2.7,1){\circle{0.1}}
\put(3,0){\mbox{$?$}}
\put(2,1){\mbox{$\mathfrak{f}^k_{0''}$}}
\put(3,1){\mbox{$\mathfrak{f}^k_{0'}$}}
\put(4,1){\mbox{$\mathfrak{f}^k_{0}$}}
\put(2.7,0){\vector(1,1){0.9}}
\put(2.7,0){\vector(-1,1){0.9}}
\put(2.7,0){\vector(0,1){0.9}}
\put(4.3,0.5){\mbox{$ \Bigr)\quad = $ }}

\put(6,1){\circle{0.1}}
\put(7,0){\circle{0.1}}
\put(8,1){\circle{0.1}}
\put(7,1){\circle{0.1}}
\put(7.3,0){\mbox{$0$}}
\put(6.3,1){\mbox{$1$}}
\put(7.3,1){\mbox{$1$}}
\put(8.3,1){\mbox{$1$}}
\put(7,0){\vector(1,1){0.9}}
\put(7,0){\vector(-1,1){0.9}}
\put(7,0){\vector(0,1){0.9}}
\end{picture}
\end{proof}
\noindent
 There are, as well, families of  logics with finitary unification closed under intersections. For instance,  ${\mathsf L}(\mathfrak G_2)\cap{\mathsf L}(\mathfrak C_{5})$ has finitary unification, see Theorem \ref{L8i}. The frames $\mathfrak C_{5}$ and $\mathfrak F_{3}$ are quite the same; the logics ${\mathsf L}(\mathfrak C_{5})$ and ${\mathsf L}(\mathfrak F_{3})$ have finitary unification (the second one  even has  projective approximation, see Theorem \ref{lmk}) but it does not mean they generate the same unification types in combination with other frames.

\newpage

\begin{lemma}\label{pil}  If $d\geq 1$, a class  {\bf F} of  finite frames and $\mathfrak F\in\mathbf F$ are such that\\
 \indent (1) $d(\mathfrak F)=d$ \ and \ $d(\mathfrak G)\leq d+1$, for each  $\mathfrak G\in \mathbf F$;\\
 \indent (2) $\mathfrak G\equiv +\mathfrak F$ \ or \ $\mathfrak G\in \mathbf F$, for each p-morphic image $\mathfrak G$ of $+\mathfrak F$;\\
 \indent (3) $\mathsf L({\mathbf F})$  is locally tabular and has finitary\slash unitary unification,\\ then $\mathsf L({\mathbf F}\cup \{+\mathfrak F\})$ is locally tabular and has  finitary\slash unitary unification, as well.\end{lemma}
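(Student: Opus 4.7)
The local tabularity of $\mathsf{L}(\mathbf{F} \cup \{+\mathfrak{F}\}) = \mathsf{L}(\mathbf{F}) \cap \mathsf{L}(+\mathfrak{F})$ is immediate from Corollary \ref{fp}(i), since $\mathsf{L}(\mathbf{F})$ is locally tabular by hypothesis and $\mathsf{L}(+\mathfrak{F})$ is even tabular. The unification claim I would approach through Theorem \ref{main} (or its p-irreducible variant \ref{main2}), bootstrapping from the witnesses already supplied for $\mathsf{L}(\mathbf{F})$.

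Fix $n$, let $m'$ be the constant afforded by hypothesis (3) at level $n$, and set $m = m' + n + 1$. For each $\sigma\colon\{x_1,\ldots,x_n\}\to\mathsf{Fm}^k$ the hypothesis provides mappings $F'\colon\mathbf{M}^{k}(\mathbf{F})\to\mathbf{M}^{m'}(\mathbf{F})$ and $G'\colon\mathbf{M}^{m'}(\mathbf{F})\to\mathbf{M}^{n}(\mathbf{F})$ satisfying conditions (i)--(v). The plan is to extend $F',G'$ to $F,G$ on $\mathbf{M}^{\bullet}(\mathbf{F}\cup\{+\mathfrak{F}\})$ by the coding technique used in the proofs of Theorems \ref{wpl} and \ref{c5}. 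On models over frames in $\mathbf{F}$, define $F,G$ essentially as $F',G'$, padding every $m'$-valuation on the right with $n+1$ copies of $1$. Given a $k$-model $\mathfrak{M}^k = (\mathfrak{L}_1,\mathfrak{f}^k_0) + \mathfrak{N}^k$ over $+\mathfrak{F}$, I first compute the $n$-valuation $\mathfrak{f}^n_0$ forced by $\sigma$ at the new root (namely $\mathfrak{f}^n_0(x_i)=1$ iff $\mathfrak{M}^k\Vdash\sigma(x_i)$), and then let the body of $F(\mathfrak{M}^k)$ be the $(n+1)$-padded $F'(\mathfrak{N}^k)$, while I give the new root the $m$-valuation consisting of $m'$ zeros, followed by $\mathfrak{f}^n_0$, followed by a single terminal $0$. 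The final bit flags the new root (bit $0$) against the body (bit $1$), and the middle $n$ bits \emph{encode} the $n$-valuation that $G$ must reproduce at the new root. The map $G$ is defined symmetrically: whenever it meets an $m$-valuation whose trailing bit is $0$, it reads the preceding $n$ bits as the $n$-valuation; otherwise it applies the extended $G'$ to the $m'$-prefix. Monotonicity is automatic: the root's $m'$-prefix $0\cdots0$ is below any body $m'$-prefix, $\mathfrak{f}^n_0 \leq 1\cdots1$, and the trailing bit $0$ is below $1$.

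The verification of (i)--(v) then splits according to the ambient frame. On models over frames in $\mathbf{F}$ the conditions are inherited directly from $F', G'$; on models over $+\mathfrak{F}$ they follow from the coding together with an inductive argument on depth in the style of cases B1--B4 of the proof of Theorem \ref{wpl}. Hypothesis (1) keeps this induction well-founded, since no frame of depth exceeding $d+1$ enters $\mathbf{F}\cup\{+\mathfrak{F}\}$. The main obstacle will be condition (iv) for arbitrary $m$-models $\mathfrak{N}^m$ over $+\mathfrak{F}$ whose root does not exhibit the prescribed marker-and-code pattern: for each such $\mathfrak{N}^m$ I must exhibit a $k$-model whose $\sigma$-image $\sim$-matches $G(\mathfrak{N}^m)$, and this is precisely where hypothesis (2) becomes decisive, because any p-morphic collapse applied to the model sits inside the class of frames of $+\mathfrak{F}$ or $\mathbf{F}$, so the required fallback $k$-model may be chosen over a frame already handled by $F'$. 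A related subtlety is condition (iii), requiring that the encoding respect $\sim$ and, in the p-irreducible framework, the isomorphism $\equiv$; this is handled as in the final step of Theorem \ref{nsi}, by factoring $F, G$ through the $\equiv$-classes before extending by Theorem \ref{nsigmai} to the full class of $n$-models.
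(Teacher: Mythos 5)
Your architecture is essentially the paper's own: extend the witnessing maps $F',G'$ supplied by hypothesis (3) by padding all valuations, plant the $n$-valuation forced by $\sigma$ at the new root into the fresh coordinates together with a marker bit, and verify (i)--(v) by cases according to whether the underlying frame lies in $\mathbf F$ or is $+\mathfrak F$. The local-tabularity part is correct and is exactly the paper's. But the proposal stops short of the step where all the work lies. You correctly flag condition (iv), for $m$-models over $+\mathfrak F$ whose root does not carry the marker-and-code pattern, as ``the main obstacle,'' yet the sentence you offer in its place does not resolve it: hypothesis (2) locates the p-morphic images of $+\mathfrak F$, but it does not tell you which $n$-valuation to assign to the new root so that $G(\mathfrak N^m)$ becomes equivalent to a $\sigma$-model. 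The paper's resolution is a specific device you do not mention: give the root of $G(\mathfrak N^m)$ the \emph{same} $n$-valuation as its unique immediate successor $u$, so that $G(\mathfrak N^m)\thicksim G((\mathfrak N^m)_u)$, and the latter is already equivalent to some $\sigma(\mathfrak M^k)$ by the case of frames in $\mathbf F$. Without this (or an equivalent idea), (iv) genuinely fails, since a generic $n$-model over $+\mathfrak F$ need not be equivalent to any $\sigma$-model. Relatedly, you never treat the p-irreducible $m$-models over $+\mathfrak F$ that are equivalent to, but not literally equal to, some $F(\mathfrak M^k)$: there one must invoke the p-morphism $F(\mathfrak M^k)\to\mathfrak N^m$ (which exists by p-irreducibility and Corollary \ref{lf3i}) and the fact that p-morphisms preserve valuations, to see that the marker pattern --- and hence the encoded $n$-valuation --- is forced at the root; this is what makes your ``read off the $n$ bits'' clause well defined and consistent across all witnesses $\mathfrak M^k$, and it is what delivers (v).

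A second, smaller discrepancy: you set the new constant to $m'+n+1$ and pad body valuations with a constant block of $1$'s, whereas the paper uses $m+n+d+1$ and appends a suffix that additionally records the depth $d_{\mathfrak F}(w)$ of each body element in unary. That depth code is what prevents p-morphisms between padded models from identifying body points of different depths, a fact used tacitly when analysing models that are merely equivalent to $F$-images in the case just described. Your constant padding might survive that analysis, but you give no argument that it does, and the omission is not obviously harmless.
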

\begin{proof}  $\mathsf L({\mathbf F}\cup \{+\mathfrak F\})$ is locally tabular, by Corollary \ref{fp}. Assume  ${\mathbf F}={ sm({\mathbf F})}$ and  $+\mathfrak F\not\in \mathbf F$. Take $\mathbf{ M}^k=\mathbf{ M}^k({\mathbf F})$ and $\mathbf{ N}^k=\mathbf{ M}^k(sm({\mathbf F}\cup \{+\mathfrak F\}))$, for each $k\geq 0$. By (2), $sm({\mathbf F}\cup \{+\mathfrak F\})\setminus\mathbf F$ is not empty but it contains  only  isomorphic copies of $+\mathfrak F$.

Let $n\geq 1$ and $\sigma\colon\{x_1,\dots,x_n\}\to \mathsf{Fm^k}$.
There is a number $m\geq 1$ such that  there are  mappings $F\colon \mathbf{ M}^k \to \mathbf{ M}^m$ and $G\colon \mathbf{ M}^m\to \mathbf{ M}^n$ fulfilling the conditions (i)-(v) (of Theorem \ref{main}). We take $m'=m+n+d+1$ and would like to find  mappings  $F'\colon \mathbf{ N}^k_{ir}\to \mathbf{ N}^{m'}$  and  $G'\colon \mathbf{ N}^{m'}_{ir}\to \mathbf{ N}^n$  fulfilling the conditions (i)-(v) of Theorem \ref{main2}.

Let $\mathfrak{M}^k=(W,R,w_0,\{\mathfrak{f}^k_w\}_{w\in W})\in \mathbf{M}^k_{ir}$ and
 $F(\mathfrak{M}^k)=(W,R,w_0,\{\mathfrak{f}^m_w\}_{w\in W})$.  We take $F'(\mathfrak{M}^k)=(W,R,w_0,\{\mathfrak{g}^{m'}_w\}_{w\in W})$, where
$ \mathfrak{g}^{m'}_w=\mathfrak{f}^m_w1\dots1\underbrace{0\dots0}_{d(w)-1}1$.

\noindent We need to define $F'(\mathfrak{M}^k)$ for $\mathfrak{M}^k\in\mathbf{ N}^k_{ir}\setminus\mathbf{M}^k$. Let $\mathfrak{M}^k\in\mathbf{ N}^k_{ir}\setminus\mathbf{M}^k$. Then $\mathfrak{M}^k$ is a $k$-model over  $+\mathfrak{F}$. Let $w_0$ be the root of $+\mathfrak{F}$ and $u$ its immediate successor (the root of $\mathfrak{F}$). Then $(\mathfrak{M}^k)_u$ is a p-irreducible model in $\mathbf{ M}^k$, see Theorem \ref{pM6}, and hence we have  $(F'(\mathfrak{M}^k))_u$. Thus, we only need the valuation  $\mathfrak g^{m'}_{w_0}$ at $w_0$ (to get $F'(\mathfrak{M}^k)$).  Let
$\mathfrak h^{n}_{w_0}$ be the valuation at $w_0$ in $\sigma(\mathfrak{M}^k)$. We take $\mathfrak{g}^{m'}_{w_0}=\underbrace{0\dots0}_{m}\mathfrak{h}^n_{w_0}1\underbrace{0\dots0}_{d}.$

The definition of $F'\colon {\mathbf N}^k_{ir}\to \mathbf{N}^{m+n+d+1}$ is  completed and the conditions (i)-(ii) of Theorem \ref{main2} are obviously fulfilled, as far as only $F'$ is concerned. As concerns (iii), if $i\colon \mathfrak{M}^k\to\mathfrak{N}^k$ is an isomorphism of $k$-models in $\mathbf{ M^k}$, then $i(w)$ and $w$ (for any $w$ in the domain of $\mathfrak{M}^k$) have the same depth and hence $ \mathfrak{g}^{m'}_w= \mathfrak{g}^{m'}_{i(w)}$ which shows that $i$ is also an isomorphism  $F'(\mathfrak{M}^k)$ and $F'(\mathfrak{N}^k)$.

The definition of  $G'\colon \mathbf{ N}^{m'}_{ir}\to \mathbf{ N}^{n}$ involves three cases.\\
(A) Suppose that  $\mathfrak{M}^{m'}\in\mathbf{M}^{m'}_{ir}$ is not equivalent with $F'(\mathfrak{N}^{k})$, for any $\mathfrak{N}^k\in\mathbf{ N}^k_{ir}\setminus\mathbf{M}^k$. Then we take
$G'(\mathfrak{M}^{m'})=G(\mathfrak{M}^{m'}\!\!\!\upharpoonright m)$. The mapping $F'$ preserves the depth of the frame.  Thus, $d(F'(\mathfrak{N}^{k}))=d+1$ for any $\mathfrak{N}^k\in\mathbf{ N}^k_{ir}\setminus\mathbf{M}^k$ and it  means that any generated submodel  of $\mathfrak{M}^{m'}$ is not equivalent with $F'(\mathfrak{N}^{k})$, for any $\mathfrak{N}^k\in\mathbf{ N}^k_{ir}\setminus\mathbf{M}^k$, either. So, we can  show (ii). The conditions (i) and (iii) are  obvious.
We also have $G'(\mathfrak{M}^{m'})\thicksim
\sigma(\mathfrak{M}^{k})$, for some $\mathfrak{M}^{k}$, as $G$ fulfills (iv).

To show the condition  (v) (of Theorem \ref{main2}), let us suppose  $\mathfrak{M}^{m'}\thicksim F'(\mathfrak{M}^{k})$, for some $\mathfrak{M}^k\in\mathbf{M}^k$. Then $\mathfrak{M}^{m'}\!\!\!\!\upharpoonright m\thicksim F'(\mathfrak{M}^{k})\!\!\!\!\upharpoonright m=F(\mathfrak{M}^{k})$ and hence we have $G'(\mathfrak{M}^{m'})=G(\mathfrak{M}^{m'}\!\!\!\!\upharpoonright m)=G(F(\mathfrak{M}^{k}))\thicksim\sigma(\mathfrak{M}^{k}).$\\
(B) Suppose $\mathfrak{N}^{m'}\in\mathbf{N}^{m'}_{ir}\setminus\mathbf{M}^{m'}$ is not equivalent with $F'(\mathfrak{N}^{k})$, for any $\mathfrak{N}^k\in\mathbf{ N}^k_{ir}\setminus\mathbf{M}^k$. Then $\mathfrak{N}^{m'}$ is a model over $+\mathfrak F$. Let $w_0$ be the root of $\mathfrak{F}$ and $u$ its immediate successor; that is  $u$ is the root of $\mathfrak F$. By (A), we have $G'((\mathfrak{N}^{m'})_u)=G((\mathfrak{N}^{m'}\!\!\!\upharpoonright m)_u)$. Let $\mathfrak f^{n}_{u}$ be the valuation at the root $u$ of $G'((\mathfrak{N}^{m'})_u)$. To extend $G'((\mathfrak{N}^{m'})_u)$ to a model over $+\mathfrak F$ , it suffices to define  the valuation $\mathfrak f^{n}_{w_0}$ at $w_0$. Regardless of the choice,   the conditions (i)-(iii) of Theorem \ref{main2} are always fulfilled. We take $\mathfrak f^{n}_{w_0}=\mathfrak f^{n}_{u}$. Then
$G'(\mathfrak{N}^{m'})\thicksim G'((\mathfrak{N}^{m'})_u)\thicksim\sigma(\mathfrak{M}^{k})$, for some $\mathfrak{M}^{k}$, which means the condition (iv) of Theorem \ref{main2} is also fulfilled and (v) is irrelevant in this case.\\
(C)
Suppose that $\mathfrak{M}^{m'}$ is p-irreducible and \  $\mathfrak{M}^{m'}\thicksim F'(\mathfrak{N}^{k})$, for some $\mathfrak{N}^k\in\mathbf{ N}^k_{ir}\setminus\mathbf{M}^k$. Then there is a p-morphism $p\colon F'(\mathfrak{N}^{k})\to \mathfrak{M}^{m'}$. We can assume $\mathfrak{N}^{k}$ is p-irreducible and $\mathfrak{N}^{k}$ is a model over $+\mathfrak F$.
Let $w_0$ be the root of $\mathfrak{N}^{k}$ and $u$ its immediate successor. Then $p(w_0)$ is the root of $\mathfrak{M}^{m'}$ and $p(u)$ its only immediate successor (in $\mathfrak{M}^{m'}$).  By the definition of $F'$, we have  $F'((\mathfrak{N}^{k})_u)\!\!\upharpoonright m=F((\mathfrak{N}^{k})_u)$ and hence
$G'((\mathfrak{M}^{m'})_{p(u)})=G(((\mathfrak{M}^{m'})_{p(u)})\!\!\upharpoonright m) \thicksim G((F'((\mathfrak{N}^{k})_u))\!\!\upharpoonright m)=G(F((\mathfrak{N}^{k})_u))\thicksim( \sigma(\mathfrak{N}^{k}))_u.$

To get $G'(\mathfrak{M}^{m'})$ we need the valuation $\mathfrak f^{n}_{p(w_0)}$ at $p(w_0)$ (in $G'(\mathfrak{M}^{m'})$). Since p-morphisms preserve the valuations we must have (in $\mathfrak{M}^{m'}$ at $p(w_0)$) the valuation
$\mathfrak{g}^{m'}_{w_0}={0\dots0}\mathfrak{h}^n_{w_0}1{0\dots0},$ where $\mathfrak h^{n}_{w_0}$ is the valuation at $w_0$ in $\sigma(\mathfrak{N}^k)$. It means, in particular, that for each $\mathfrak{N}^{k}$ such that $\mathfrak{M}^{m'}\thicksim F'(\mathfrak{N}^{k})$ we have the same valuation $\mathfrak h^{n}_{w_0}$  at $w_0$ in $\sigma(\mathfrak{N}^k)$. Then we take $\mathfrak f^{n}_{p(w_0)}=\mathfrak h^{n}_{w_0}$ to complete our definition of $G'(\mathfrak{M}^{m'})$. Since the valuation $\mathfrak f^{n}_{p(u)}$ at $p(u)$ (in $G'(\mathfrak{M}^{m'})$) must be $\mathfrak h^{n}_{u}$  (the valuation at $u$ in $\sigma(\mathfrak{M}^k)$), the monotonicity condition is fulfilled and our definition is correct. One checks (i)-(iii) of Theorem \ref{main2} and $G'(\mathfrak{M}^{m'})\thicksim\sigma(\mathfrak{N}^k)$ which gives (iv)-(v).
\end{proof}
Our lemma is not sufficient to show  $\mathsf L(+\mathfrak F )$ has finitary\slash unitary unification if $\mathsf L(\mathfrak F)$ does; it applies only to some  frames $\mathfrak F$. But using it, we can show that $\mathsf L(\{\mathfrak F,\mathfrak L_n\} )$ has finitary\slash unitary unification, for any $n\geq 1$, if $\mathsf L(\mathfrak F)$ does. We have $\mathsf L(\{\mathfrak F,\mathfrak L_n\})=\mathsf L(\mathfrak F)\cap\mathsf L(\mathfrak L_n)$ and $\mathsf L(\mathfrak L_n)$ has projective unification. Extending our argument we get:
\begin{theorem}\label{itpr} If {\sf L} is a locally tabular intermediate logics with finitary\slash unitary unification and {\sf L'} is projective, then their intersection $\mathsf L\cap\mathsf L'$ has finitary\slash unitary unification.
\end{theorem}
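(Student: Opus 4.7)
\emph{Plan.} By Theorem \ref{projj}, the projective logic $\mathsf L'$ extends $\mathsf{LC}$, so $\mathsf L' = \mathsf L(\mathbf C)$ for a family $\mathbf C$ of finite chains. Writing $\mathsf L = \mathsf L(\mathbf F)$ with $\mathbf F = sm(\mathbf F)$, we have $\mathsf L \cap \mathsf L' = \mathsf L(\mathbf F \cup \mathbf C)$; this intersection is locally tabular by Corollary \ref{fp}, and since p-morphic images of chains are chains, $sm(\mathbf F \cup \mathbf C) = \mathbf F \cup \mathbf C$. Put $\mathbf N^j = \mathbf M^j(\mathbf F \cup \mathbf C)$; the plan is to verify Theorem \ref{main2} for $\mathsf L \cap \mathsf L'$ by combining the semantic data available from $\mathsf L$ and from the projective $\mathsf L'$.

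Fix $n \geq 1$ and $\sigma : \{x_1,\dots,x_n\} \to \mathsf{Fm}^k$. The hypothesis on $\mathsf L$ yields, for a fixed $m_0$, mappings $F_0 : \mathbf M^k_{ir}(\mathbf F) \to \mathbf M^{m_0}(\mathbf F)$ and $G_0 : \mathbf M^{m_0}_{ir}(\mathbf F) \to \mathbf M^n(\mathbf F)$ obeying (i)--(v). On the chain side, $\sigma$ is an $\mathsf L'$-unifier of the corresponding strongest formula (Corollary \ref{in2}), and projectivity of $\mathsf L'$ supplies a projective unifier $\varepsilon : \{x_1,\dots,x_n\} \to \mathsf{Fm}^n$; by Theorem \ref{n5}, $\varepsilon(\sigma(\mathfrak M^k)) \thicksim \sigma(\mathfrak M^k)$ for every chain $k$-model $\mathfrak M^k$, and every chain $n$-model $\mathfrak N^n$ satisfies $\varepsilon(\mathfrak N^n) \thicksim \sigma(\mathfrak M^k)$ for some chain $\mathfrak M^k$. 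I would then take $m = m_0 + n + 2$ and define $F' : \mathbf N^k_{ir} \to \mathbf N^m$ and $G' : \mathbf N^m_{ir} \to \mathbf N^n$ by the two-branch, flag-bit padding scheme of Lemma \ref{pil}: on $k$-models whose frame lies in $\mathbf F$, the valuations of $F_0$ are padded by the flag $0\cdots 0\,1\,1$; on $k$-models over a frame in $\mathbf C \setminus \mathbf F$, the valuations are set to $0\cdots 0$ followed by the $n$-valuations of $\sigma(\mathfrak M^k)$ and the flag $1\,0$. The mapping $G'$ reads the flag bits to decide whether to invoke $G_0$ on the $m_0$-prefix ($\mathbf F$-branch) or simply read off the $n$-block ($\mathbf C$-branch), and on $m$-models not in the image of $F'$ it is defined inductively on depth as in Lemma \ref{pil}.

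The main obstacle will be consistency at the overlap $\mathbf F \cap \mathbf C$ (minimally the one-point chain $\mathfrak L_1$, plus any further chains already in $\mathbf F$): the two branches of $F'$ must produce $n$-projected models that are mutually $\thicksim$-equivalent so that (ii)--(iii) of Theorem \ref{main2} are preserved, while the flag bits must keep the p-irreducible $m$-images distinct in the extended coordinates. The flag scheme is chosen precisely to separate the two branches in the $m$-block while forcing them to agree on the $n$-block, the agreement on chains being supported by the fact that both $G_0 \circ F_0$ and $\varepsilon \circ \sigma$ recover $\sigma$ there. Verification of (v) then splits into the $\mathbf F$-case (inherited from $F_0, G_0$) and the $\mathbf C$-case (the projective identity $\varepsilon(\sigma(\mathfrak M^k)) \thicksim \sigma(\mathfrak M^k)$), and (iv) holds by construction. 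Finally, if $\mathsf L$ is unitary then $G_0$ can be chosen so as to yield an mgu, and since the chain branch is projective, the combined $G'$ is again an mgu for the corresponding formula, giving unitary unification in $\mathsf L \cap \mathsf L'$; otherwise finitary unification follows directly from Theorem \ref{main2}.
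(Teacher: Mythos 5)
Your high-level strategy --- extra coordinates carrying the $\mathbf F$-side data $F_0,G_0$ on one branch and the projective unifier $\varepsilon$ of $\mathsf L'$ on the chain branch --- is the same idea that drives the paper's Lemma \ref{pil}, and your reduction of the unitary/finitary dichotomy is actually simpler than you make it: once good unification is established, Theorem \ref{kc} decides unitary versus finitary by whether $\mathsf L\cap\mathsf L'\supseteq\mathsf{KC}$, so there is no need to arrange that $G'$ "is again an mgu." The real divergence is that the paper proceeds \emph{incrementally}: Lemma \ref{pil} adjoins a single frame $+\mathfrak F$ with $\mathfrak F\in\mathbf F$, so that every proper generated subframe of the new frame already lies in the old class; $F'$ on a model over $+\mathfrak F$ is then literally the already-defined $F'$ on the tail, extended by one new root valuation. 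Iterating this adds the chains $\mathfrak L_2=+\mathfrak L_1$, $\mathfrak L_3=+\mathfrak L_2,\dots$ one at a time (this is the "extending our argument," which also requires loosening the depth hypothesis (1) of the lemma). Your construction instead runs the two branches in parallel over all of $\mathbf F\cup\mathbf C$ at once.

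That parallel version has a genuine gap, and it is exactly the obstacle you name but do not resolve. Condition (ii) of Theorem \ref{main2} for $F'$ demands $F'((\mathfrak M^k)_w)\thicksim(F'(\mathfrak M^k))_w$ as \emph{$m$-models}, not merely that their $n$-projections agree. Take $\mathfrak M^k$ over a chain in $\mathbf C\setminus\mathbf F$ and let $w$ be its top element: $(\mathfrak M^k)_w$ is a model over $\mathfrak L_1$, which always lies in $\mathbf F$ (as $sm(\mathbf F)=\mathbf F$), so your $F'$ assigns it the $\mathbf F$-branch padding with flag $\cdots 11$; but $(F'(\mathfrak M^k))_w$ carries the $\mathbf C$-branch padding with flag $\cdots 10$. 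These $m$-valuations differ, so the two one-point $m$-models are inequivalent and (ii) fails --- and the flags cannot be made to agree, since their whole purpose is to let $G'$ distinguish the branches. The observation that "both $G_0\circ F_0$ and $\varepsilon\circ\sigma$ recover $\sigma$" on chains only gives agreement after applying $G'$, which is too late. The fix is the paper's: define $F'$ on a new chain by reusing the already-constructed value on the generated submodel at the immediate successor of the root and prescribing only the new root's valuation, so that consistency at the overlap is built in rather than imposed afterwards.
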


It follows from Theorem \ref{L8} that we cannot replace the assumption '$\mathsf{L'}$ is projective` with '$\mathsf{L'}$ has projective approximation`.This does not exclude that the intersection of two logics with projective approximation is a logic with finitary unification.

\begin{theorem}\label{L10}    ${\mathsf L}(\mathfrak Y_{2})$, \ ${\mathsf L}(\mathfrak Y_{2}+)$, \ ${\mathsf L}(\mathfrak Y_{3})$, and ${\mathsf L}(\mathfrak Y_{3}+)$ (see Figure \ref{nu}) have nullary unification.
\end{theorem}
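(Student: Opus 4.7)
The plan is to run, for each of the four frames, the obstruction argument of Theorem \ref{F6m}, suitably adapted. Since each logic $\mathsf{L}(\mathfrak{Y}_{i})$ is locally tabular (all frames are finite with bounded depth and width), Theorems \ref{main2} and \ref{niu3} apply, so it is enough to rule out finitary\slash unitary unification: any failure forces nullary. The key structural observation I would rely on is that each $\mathfrak{Y}_{i}$ (and its $+$-version) contains generated subframes isomorphic to $\mathfrak{F}_{2}$ and, via suitable p-morphic images, the critical $\mathfrak{G}_{3}$-like obstruction: collapsing the two middle nodes of $\mathfrak{Y}_{3}$ yields $+\mathfrak{F}_{2}$, while $\mathfrak{Y}_{2}$ inherits its obstruction from its $\mathfrak{F}_{2}$-subframe under the right middle node, viewed against the left leg $\mathfrak{L}_{2}$. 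Hence the p-irreducible $\sigma$-models over each $\mathfrak{Y}_{i}$ reduce to a small, explicit catalogue, analogous to the five-case table drawn up in the proof of Theorem \ref{F6m}.

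For $\mathsf{L}(\mathfrak{Y}_{3})$ I would take $n=1$ and reuse the substitution
$$\sigma(x_{1})\;=\;\neg\neg\Bigl(\bigvee_{i=1}^{k} x_{i}\Bigr)\;\land\;\bigwedge_{i=1}^{k}(\neg\neg x_{i}\lor\neg x_{i}),$$
with $k>m$. The double negation and the $\neg\neg x_{i}\lor \neg x_{i}$ conjuncts force the valuations at the root to be $0\cdots 0$, and they collapse the two tops of $\mathfrak{Y}_{3}$ to equivalent $1$-bit behaviour, so every p-irreducible $\sigma$-model over $\mathfrak{Y}_{3}$ is equivalent to one of the same four $1$-models enumerated in the $\mathfrak{G}_{3}$ case. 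Pigeonhole in $F$ on $\mathfrak{L}_{1}$-models (using $k>m$) then yields distinct $\mathfrak{f}^{k}\neq\mathfrak{g}^{k}$ with a common image $\mathfrak{f}^{m}$, one chosen off $0\cdots 0$; one verifies as in Theorem \ref{F6m} that $G(\circ\,\mathfrak{f}^{m})\thicksim\circ\,1$ for every witness, and then the $\mathfrak{Y}_{3}$-model built by placing $0\cdots 0$ at the root with $\mathfrak{f}^{k}$ and $\mathfrak{g}^{k}$ at the two middle nodes (and $1\cdots 1$ on top) produces, under $G\circ F$, a $1$-model on $\mathfrak{Y}_{3}$ labelled $1$ everywhere except at the root, which is not equivalent to any $\sigma$-model, contradicting (v).

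For $\mathsf{L}(\mathfrak{Y}_{2})$ I would run exactly the same argument; the asymmetry — only the right middle sees both tops — changes the list of $\sigma$-models but leaves the contradicting configuration intact, because the pigeonhole-collapsed pair sits on the right leg where the $\mathfrak{F}_{2}$-structure lives. For the $+$-variants $\mathsf{L}(\mathfrak{Y}_{2}{+})$ and $\mathsf{L}(\mathfrak{Y}_{3}{+})$ I would switch to $n=2$ and use the $\mathfrak{G}_{3}{+}$-style substitution of Theorem \ref{F6m},
$$\sigma(x_{1})\;=\;x_{1},\qquad \sigma(x_{2})\;=\;\Bigl(\bigl((\textstyle\bigvee_{i=2}^{k+1}x_{i})\to x_{1}\bigr)\to x_{1}\Bigr)\land\bigwedge_{i=2}^{k+1}\Bigl(\bigl((x_{i}\to x_{1})\to x_{1}\bigr)\lor(x_{i}\to x_{1})\Bigr),$$
so that whenever $x_{1}$ is forced at the added final point, $\sigma(x_{2})$ reduces the upper part of any model to the non-plus case; the contradiction then devolves on the corresponding $\mathfrak{Y}_{i}$ argument just established.

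The main obstacle will be the bookkeeping in step two: explicitly listing the p-irreducible $\sigma$-models over each $\mathfrak{Y}_{i}$ and checking that the particular $\mathfrak{Y}_{i}$-model constructed from the pigeonhole pair is genuinely not equivalent to any $\sigma$-image. In $\mathfrak{Y}_{3}$ this is slightly more delicate than in $\mathfrak{G}_{3}$ because two distinct tops must be carefully tracked under the $\neg\neg$-collapses; one checks that no assignment at the root and at the tops that could arise from applying $\sigma$ to an $\mathfrak{Y}_{3}$-model yields the fatal labelling produced by $G\circ F$, exactly because $\sigma(x_{1})$ at the root demands $\neg\neg\bigvee x_{i}$ which, by the $\mathfrak{F}_{2}$-sub-configuration, cannot be reconciled with the two middles carrying the pigeonholed pair.
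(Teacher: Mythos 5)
Your global strategy is sound and matches the paper's: local tabularity plus Theorem \ref{niu3} reduces everything to refuting the conditions of Theorem \ref{main}, and the $+$-variants are handled by prefixing $\sigma(x_1)=x_1$ and ``decapitating'' models where $x_1$ holds at the top. The outline for $\mathsf L(\mathfrak Y_2)$ is also of the right shape. But the core of your argument for $\mathsf L(\mathfrak Y_3)$ fails, and it fails for a structural reason, not just a bookkeeping one. Take your contradiction model $M$: root $0\cdots0$, middles $\mathfrak f^k,\mathfrak g^k$, both tops $1\cdots1$. Since both tops are $1\cdots1$, every $\neg\neg x_i$ holds at both middles \emph{and} at the root, so $\sigma(M)\thicksim\circ\,1$; to contradict (v) you would have to show $G(F(M))\not\thicksim\circ\,1$, and nothing in your pigeonhole (which only controls $F$ on one-point models, not on the chains $\mathfrak f^k\to1\cdots1$ sitting above the middles) forces any node of $G(F(M))$ to carry $0$. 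Worse, the labelling you claim to obtain --- $1$ everywhere except at the root --- \emph{is} equivalent to a $\sigma$-model: it p-reduces to the chain $0\to1$, which is $\thicksim\sigma(\mathfrak F_2(0\cdots0;\mathfrak f^k,\mathfrak g^k))$. So there is no contradiction with (iv) either.

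The deeper obstacle is the symmetry of $\mathfrak Y_3$: both middles see both tops, so with a single variable any ``bad'' configuration you can manufacture (e.g.\ middles valued $0$ and $1$ under the same pair of tops) collapses by a p-morphism onto a chain, which is a legitimate $\sigma$-image. The asymmetry you exploit in $\mathfrak Y_2$ (a middle seeing only one top, where $\bigwedge(\neg\neg x_i\lor\neg x_i)$ holds automatically) simply is not available. The paper's proof for $\mathfrak Y_3$ therefore switches to two variables, taking $\sigma(x_1)=\bigwedge(\neg\neg x_i\lor\neg x_i)$ and $\sigma(x_2)=\bigwedge(\neg\neg x_i\lor\neg x_i)\to(\bigvee x_i\lor\neg\bigvee x_i)$, observing that $\sigma$ unifies $x_1\lor x_2$, and then building an $m$-model over $\mathfrak Y_3$ directly (not as $F$ of anything) whose two middles are forced by (v) to receive the \emph{distinct} values $10$ and $01$; monotonicity then forces $00$ at the root, refuting $x_1\lor x_2$ and hence (iv), and the distinct middle-values prevent the collapse onto a chain. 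You need some device of this kind; reusing the one-variable $\mathfrak G_3$ substitution cannot work. Consequently your plan for $\mathfrak Y_3+$ also needs revision: since the base case already consumes two variables, the relativization requires $n=3$, not $n=2$.
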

\begin{figure}[H]
\unitlength1cm
\thicklines
\begin{picture}(0,2.5)

\put(0,0){$\mathfrak{Y}_2$:}
\put(1,0){\vector(-1,1){0.9}}
\put(1,0){\vector(1,1){0.9}}
\put(2,1){\vector(1,1){0.9}}
\put(0,1){\circle{0.1}}
\put(1,2){\circle{0.1}}
\put(1,0){\circle{0.1}}
\put(2,1){\circle{0.1}}
\put(3,2){\circle{0.1}}
\put(0,1){\vector(1,1){0.9}}
\put(2,1){\vector(-1,1){0.9}}

\put(2.8,0){$\mathfrak{Y}_2+$:}
\put(4.3,0){\vector(-1,1){0.9}}
\put(4.3,0){\vector(1,1){0.9}}
\put(5.3,1){\vector(1,1){0.9}}
\put(3.3,1){\circle{0.1}}
\put(4.3,2){\circle{0.1}}
\put(4.3,0){\circle{0.1}}
\put(5.3,1){\circle{0.1}}
\put(6.3,2){\circle{0.1}}
\put(3.3,1){\vector(1,1){0.9}}
\put(5.3,1){\vector(-1,1){0.9}}
\put(5.3,3){\circle{0.1}}
\put(4.3,2){\vector(1,1){0.9}}
\put(6.3,2){\vector(-1,1){0.9}}

\put(6.2,0){$\mathfrak{Y}_3$:}
\put(6.7,2){\circle{0.1}}
\put(8.7,2){\circle{0.1}}
\put(6.7,1){\circle{0.1}}
\put(8.7,1){\circle{0.1}}
\put(7.7,0){\circle{0.1}}
\put(6.7,1){\vector(0,1){0.9}}
\put(8.7,1){\vector(0,1){0.9}}
\put(6.7,1){\vector(2,1){1.9}}
\put(8.7,1){\vector(-2,1){1.9}}
\put(7.7,0){\vector(1,1){0.9}}
\put(7.7,0){\vector(-1,1){0.9}}

\put(9.5,0){${\mathfrak{Y}_3}+$:}
\put(10,2){\circle{0.1}}
\put(12,2){\circle{0.1}}
\put(10,1){\circle{0.1}}
\put(12,1){\circle{0.1}}
\put(11,0){\circle{0.1}}
\put(11,3){\circle{0.1}}
\put(10,1){\vector(0,1){0.9}}
\put(12,1){\vector(0,1){0.9}}
\put(10,1){\vector(2,1){1.9}}
\put(12,1){\vector(-2,1){1.9}}
\put(11,0){\vector(1,1){0.9}}
\put(11,0){\vector(-1,1){0.9}}
\put(10,2){\vector(1,1){0.9}}
\put(12,2){\vector(-1,1){0.9}}
\end{picture}
\caption{Frames of Logics with Nullary Unification.}\label{nu}
\end{figure}
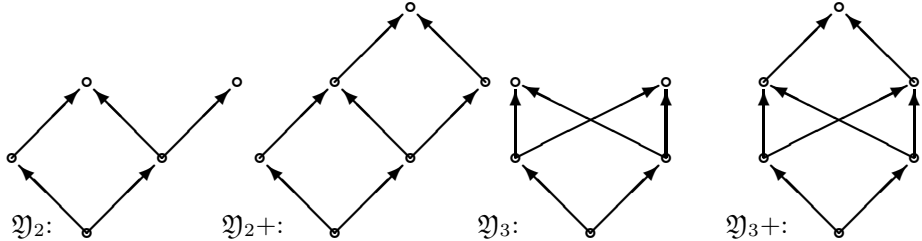\vspace{-0.5cm}
\begin{proof} Let $\mathbf{F}=\{\mathfrak L_1,\mathfrak L_2,\mathfrak L_3,\mathfrak F_{2},\mathfrak R_{2},+\mathfrak F_{2},\mathfrak Y_{2}\}$. Then $\mathbf{F}=sm(\mathfrak Y_{2})$.
Assume  ${\mathsf L}(\mathbf F)$ has finitary unification.  Let $n=1$. By Theorem \ref{main},  there is a number $m\geq 1$ such that for any $\sigma\colon\{x_1\}\to \mathsf{Fm}^k$  there are mappings  $G:\mathbf{M}^m\to\mathbf{M}^1$ and $F:\mathbf{M}^k\to\mathbf{M}^m$  fulfilling the conditions (i)-(v). Take any $k>m$ and let $$ \sigma(x_1)=\bigl(\neg\neg \bigvee_{i+1}^kx_i\bigr) \ \land \ \bigwedge_{i=1}^k(\neg\neg x_i\lor\neg x_i). $$
There are only four p-irreducible $1$-models equivalent with some $\sigma$-models. They are:
 $$ \circ \ 0 \qquad \circ 1\qquad\unitlength1cm
\begin{picture}(1,1.1)
\thicklines

\put(0,1){\circle{0.1}}
\put(0.3,1){\mbox{$0$}}
\put(2,1){\circle{0.1}}
\put(2.3,1){\mbox{$1$}}
\put(1,0){\circle{0.1}}
\put(1.3,0){\mbox{$0$}}
\put(1,0){\vector(1,1){0.9}}
\put(1,0){\vector(-1,1){0.9}}

\put(3,0){\circle{0.1}}
\put(3.3,0){\mbox{$0$\ \ \ Note $\sigma(\circ \ \mathfrak{f}^k)=\circ \ 0$  iff  $\mathfrak{f}^k=\overbrace{0\cdots 0}^{\mbox{$k$-times}}$.}}
\put(3,1){\circle{0.1}}
\put(3.8,0.5){\mbox{ some $\sigma$-model over $\mathfrak{F}_2$. }}
\put(3.3,1){\mbox{$1$}\quad The last $1$-model is equivalent with}
\put(3,0){\vector(0,1){0.9}}\end{picture}\qquad\qquad\qquad\qquad\qquad\qquad\qquad\qquad\qquad\qquad\qquad\qquad$$
Let $F(\circ \ 0\cdots0)=\circ \ \mathfrak g^m$, for some $\mathfrak g^m$. Since $k>m$, there are $k$-valuations  $\mathfrak{f}^k\not=\mathfrak{g}^k$ such that $F(\circ \ \mathfrak{f}^k)=F(\circ \ \mathfrak{g}^k)=\circ\ \mathfrak{f}^m$, for some $\mathfrak{f}^m$, and $\mathfrak{f}^k\not={0\cdots0}\not=\mathfrak{g}^k$. By (v),  $G(\circ \ \mathfrak g^m)=\circ \ 0$ and $G(\circ \ \mathfrak f^m)=\circ \ 1$.
Thus, by (ii) and (iv), for any $m$-valuation $?$

		\unitlength1cm
\begin{picture}(1,1.5)
\thicklines
\put(0,0.5){\mbox{either}}
\put(1.8,0.5){\mbox{$G_{} \Bigl($}}
		\put(3,0){\circle{0.1}}
		\put(2.5,0){\mbox{$?$}}
		         \put(3,1){\circle{0.1}}
		\put(2.5,1){\mbox{$\mathfrak{f}^m$}}
        		\put(3,0){\vector(0,1){0.9}}

	\put(3.5,0.5){\mbox{$\Bigr) \quad =\quad$}}
	
		\put(4.7,0){\circle{0.1}}
		\put(5,0){\mbox{$0$}}
		\put(4.7,1){\circle{0.1}}
		\put(5,1){\mbox{$1$}}
		\put(4.7,0){\vector(0,1){0.9}}
	 \put(6,0.5){\mbox{, or}}

	\put(7.8,0.5){\mbox{$G \Bigl($}}
		\put(9,0){\circle{0.1}}
		\put(8.5,0){\mbox{$?$}}
		\put(9,1){\circle{0.1}}
         		\put(8.5,1){\mbox{$\mathfrak{f}^m$}}
		\put(9,0){\vector(0,1){0.9}}

	\put(9.5,0.5){\mbox{$\Bigr) \quad =\quad$}}
	
		\put(10.7,0){\circle{0.1}}
		\put(11,0){\mbox{$1$}}
		\put(10.7,1){\circle{0.1}}
		\put(11,1){\mbox{$1$}}
		\put(10.7,0){\vector(0,1){0.9}}
	\end{picture}

\noindent However, if the first happened, we would have \\

		\begin{picture}(5,2)
		\thicklines
		\linethickness{0.3mm}
        \put(0,1){\mbox{$G_{} \Bigl($}}
		\put(2,0){\vector(-1,1){0.9}}
		\put(2,0){\vector(1,1){0.9}}
		\put(3,1){\vector(1,1){0.9}}
        \put(3,1){\vector(-1,1){0.9}}
         \put(1,1){\vector(1,1){0.9}}
		\put(1,1){\circle{0.1}}
		\put(2,2){\circle{0.1}}
        \put(4,2){\circle{0.1}}
		\put(2,0){\circle{0.1}}
		\put(3,1){\circle{0.1}}
		\put(0.9,1.3){\mbox{$?$}}
		\put(3.3,2){\mbox{$\mathfrak{g}^m$}}
        \put(1.4,2){\mbox{$\mathfrak{f}^m$}}
        \put(4.5,1){\mbox{$\Bigr) \quad = \quad$}}
		
		\put(7,0){\vector(-1,1){0.9}}
		\put(7,0){\vector(1,1){0.9}}
        \put(6,1){\vector(1,1){0.9}}
		\put(8,1){\vector(1,1){0.9}}
        \put(8,1){\vector(-1,1){0.9}}
		\put(6,1){\circle{0.1}}
		\put(7,2){\circle{0.1}}
        \put(9,2){\circle{0.1}}
		\put(7,0){\circle{0.1}}
		\put(8,1){\circle{0.1}}
		\put(7.3,0){$0$}
		\put(6.5,1){\mbox{$0$}}
		\put(9.3,2){\mbox{$0$}}
          \put(7.3,2){\mbox{$1$}}
		\put(8.3,1){$0$}
		\end{picture}

\noindent which would contradict (iv) as the received $1$-model is  not equivalent to any $\sigma$-model. We conclude the second must happen and then we get the contradiction:\\

		\unitlength1cm
\begin{picture}(1,1)
\thicklines
\put(0,0){\circle{0.1}}
		\put(0.2,0){\mbox{$0$}}
		\put(0,1){\circle{0.1}}
		\put(0.2,1){\mbox{$1$}}
		\put(0,0){\vector(0,1){0.9}}

\put(0.5,0.5){\mbox{$\thicksim \ G \Bigl(F \Bigl($}}
		\put(3.1,0){\circle{0.1}}
		\put(3.5,0){\mbox{$0\cdots0$}}
		\put(2.1,1){\circle{0.1}}
         \put(4.1,1){\circle{0.1}}
		\put(2.4,1){\mbox{$\mathfrak{f}^k$}}
        \put(3.6,1){\mbox{$\mathfrak{g}^k$}}
		\put(3.1,0){\vector(1,1){0.9}}
         \put(3.1,0){\vector(-1,1){0.9}}
	
	\put(4.4,0.5){\mbox{$\Bigr)\Bigr) \ =\ $}}

	\put(5.4,0.5){\mbox{$G \Bigl($}}
		\put(7,0){\circle{0.1}}
		\put(6.5,0){\mbox{$?$}}
		\put(6,1){\circle{0.1}}
         \put(8,1){\circle{0.1}}
		\put(6.3,1){\mbox{$\mathfrak{f}^m$}}
        \put(7.5,1){\mbox{$\mathfrak{f}^m$}}
		\put(7,0){\vector(1,1){0.9}}
         \put(7,0){\vector(-1,1){0.9}}
	
	\put(8.2,0.5){\mbox{$\Bigr)  \thicksim $}}
	\put(9,0.5){\mbox{$G \Bigl($}}
		\put(9.7,0){\circle{0.1}}
		\put(10,0){\mbox{$?$}}
		\put(9.7,1){\circle{0.1}}
		\put(10,1){\mbox{$\mathfrak{f}^m$}}
		\put(9.7,0){\vector(0,1){0.9}}
\put(10.6,0.5){\mbox{$\Bigr) \ =\ $}}

		\put(11.7,0){\circle{0.1}}
		\put(12,0){\mbox{$1$}}
		\put(11.7,1){\circle{0.1}}
		\put(12,1){\mbox{$1$}}
		\put(11.7,0){\vector(0,1){0.9}}
	\end{picture}	\\

If $\mathbf{F}=\{\mathfrak L_1,\mathfrak L_2,\mathfrak L_3,\mathfrak F_{2},\mathfrak R_{2},+\mathfrak F_{2},\mathfrak Y_{3}\}$, then $\mathbf{F}=sm(\mathfrak Y_{3})$. Suppose that ${\mathsf L}(\mathbf F)$ has finitary unification.  Take $n=2$. By Theorem \ref{main},  there is a number $m\geq 1$ such that for any $\sigma\colon\{x_1,x_2\}\to \mathsf{Fm}^k$  there are mappings  $G:\mathbf{M}^m\to\mathbf{M}^2$ and $F:\mathbf{M}^k\to\mathbf{M}^m$  fulfilling the conditions (i)-(v). Take any $k>m$ and let
$$ \sigma(x_1)=\bigwedge_{i=1}^k(\neg\neg x_i\lor\neg x_i)\quad\mbox{and}\quad \sigma(x_2)=\bigwedge_{i=1}^k(\neg\neg x_i\lor\neg x_i)\rightarrow\Bigl(\bigvee_{i=1}^kx_i\lor\neg \bigvee_{i=1}^kx_i\Bigr). $$Note that $ \sigma(x_1)\land\sigma(x_2)$ is valid in  $\mathfrak L_{1}$. Let us prove $ \sigma$ is a unifier for $x_1\lor x_2$ (in ${\mathsf L}(\mathfrak Y_{3})$). Namely, let be given any $k$-model over $\mathfrak Y_{3}$:
$$
\unitlength1cm
\begin{picture}(6,2.1)
\thicklines

\put(1,2){\circle{0.1}}
\put(3,2){\circle{0.1}}
\put(1,1){\circle{0.1}}
\put(3,1){\circle{0.1}}
\put(2,0){\circle{0.1}}
\put(3.2,0.9){\mbox{$\mathfrak{f}^k_{1}$}\quad\ satisfied only at the end nodes $0$ and $0'$. But it means that}
\put(0.6,0.9){\mbox{$\mathfrak{f}^k_{1'}$}}
\put(2.3,0){\mbox{$\mathfrak{f}^k_{2}$}}
\put(0.6,2){\mbox{$\mathfrak{f}^k_{0'}$}}
\put(3.2,2){\mbox{$\mathfrak{f}^k_{0}$} \quad There are two cases to consider. If $\mathfrak{f}^k_{0'}=\mathfrak{f}^k_{0'}$, then $\sigma(x_1)$ is  }
\put(4,1.5){\mbox{true in the model. Suppose that $\mathfrak{f}^k_{0'}\not=\mathfrak{f}^k_{0'}$. Then $\sigma(x_1)$ is  }}
\put(3.9,0.4){\mbox{ $\sigma(x_2)$ is true in the model. In any case, $\sigma(x_1\lor x_2)$ is }}
\put(3.9,-0.1){\mbox{ satisfied at any node of the model.}}

\put(1,1){\vector(0,1){0.9}}
\put(3,1){\vector(0,1){0.9}}
\put(1,1){\vector(2,1){1.9}}
\put(3,1){\vector(-2,1){1.9}}
\put(2,0){\vector(1,1){0.9}}
\put(2,0){\vector(-1,1){0.9}}
\end{picture}\qquad\qquad\qquad\qquad\qquad\qquad\qquad\qquad\qquad\qquad\qquad\qquad$$
As $m<k$, there are $k$-valuations  $\mathfrak{f}^k\not=\mathfrak{g}^k$ such that $F(\circ \ \mathfrak{f}^k)=F(\circ \ \mathfrak{g}^k)=\circ\ \mathfrak{g}^m$, for some $\mathfrak{g}^m$. Let $\mathfrak{f}^k\not=\underbrace{0\cdots0}_{\mbox{$k$-times}}$ and  consider the following $k$-models over $\mathfrak{F}_2$:

		\unitlength1cm
\begin{picture}(1,1.5)
\thicklines

		\put(2,0){\circle{0.1}}
		\put(2.5,0){\mbox{${0\cdots0}$}}
		\put(1,1){\circle{0.1}}
         \put(3,1){\circle{0.1}}
		\put(0.5,1){\mbox{$\mathfrak{f}^k$}}
        \put(3.3,1){\mbox{$\mathfrak{g}^k$}}
		\put(2,0){\vector(1,1){0.9}}
         \put(2,0){\vector(-1,1){0.9}}

 \put(5.5,0.5){\mbox{and}}

		\put(9,0){\circle{0.1}}
		\put(9.5,0){\mbox{${0\cdots0}$}}
		\put(8,1){\circle{0.1}}
         \put(10,1){\circle{0.1}}
		\put(7.5,1){\mbox{$\mathfrak{f}^k$}}
        \put(10.3,1){\mbox{$\mathfrak{f}^k$}}
		\put(9,0){\vector(1,1){0.9}}
         \put(9,0){\vector(-1,1){0.9}}
	
	\end{picture}	

\noindent Note that $\sigma(x_1)$ is false in the first model and $\sigma(x_2)$ in the second. Let

		\unitlength1cm
\begin{picture}(1,1.5)
\thicklines
\put(-0.3,0.5){\mbox{$F \Bigl($}}
		\put(1.5,0){\circle{0.1}}
		\put(1.8,0){\mbox{${0\cdots0}$}}
		\put(0.5,1){\circle{0.1}}
         \put(2.5,1){\circle{0.1}}
		\put(0.8,1){\mbox{$\mathfrak{f}^k$}}
        \put(2,1){\mbox{$\mathfrak{g}^k$}}
		\put(1.5,0){\vector(1,1){0.9}}
         \put(1.5,0){\vector(-1,1){0.9}}
	\put(2.8,0.5){\mbox{$\Bigr)=$}}
	\put(4.5,0){\circle{0.1}}
		\put(4.9,0){\mbox{$\mathfrak{f}^m_{1}$}}
		\put(3.5,1){\circle{0.1}}
         \put(5.5,1){\circle{0.1}}
		\put(3.8,1){\mbox{$\mathfrak{g}^m$}}
        \put(5,1){\mbox{$\mathfrak{g}^m$}}
		\put(4.5,0){\vector(1,1){0.9}}
         \put(4.5,0){\vector(-1,1){0.9}}
 \put(5.7,0.5){\mbox{;}}

	\put(6.2,0.5){\mbox{$F \Bigl($}}
		\put(8,0){\circle{0.1}}
		\put(8.3,0){\mbox{${0\cdots0}$}}
		\put(7,1){\circle{0.1}}
         \put(9,1){\circle{0.1}}
		\put(7.3,1){\mbox{$\mathfrak{f}^k$}}
        \put(8.5,1){\mbox{$\mathfrak{f}^k$}}
		\put(8,0){\vector(1,1){0.9}}
         \put(8,0){\vector(-1,1){0.9}}
		\put(9.2,0.5){\mbox{$\Bigr)=$}}
	\put(11,0){\circle{0.1}}
		\put(11.3,0){\mbox{$\mathfrak{f}^m_{1'}$}}
		\put(10,1){\circle{0.1}}
         \put(12,1){\circle{0.1}}
		\put(10.3,1){\mbox{$\mathfrak{g}^m$}}
        \put(11.5,1){\mbox{$\mathfrak{g}^m$}}
		\put(11,0){\vector(1,1){0.9}}
         \put(11,0){\vector(-1,1){0.9}}
	\end{picture}	\\

\noindent for some $m$-valuations $\mathfrak{f}^m_{1},\mathfrak{f}^m_{1'}$. Then, by (v), we get

		\unitlength1cm
\begin{picture}(1,1.5)
\thicklines
\put(-0.3,0.5){\mbox{$G \Bigl($}}
		\put(1.5,0){\circle{0.1}}
		\put(1.9,0.1){\mbox{$\mathfrak{f}^m_{1}$}}
		\put(0.5,1){\circle{0.1}}
         \put(2.5,1){\circle{0.1}}
		\put(0.8,1){\mbox{$\mathfrak{g}^m$}}
        \put(2,1){\mbox{$\mathfrak{g}^m$}}
		\put(1.5,0){\vector(1,1){0.9}}
         \put(1.5,0){\vector(-1,1){0.9}}
	\put(2.8,0.5){\mbox{$\Bigr)=$}}
	\put(4.5,0){\circle{0.1}}
		\put(4.9,0){\mbox{$01$}}
		\put(3.5,1){\circle{0.1}}
         \put(5.5,1){\circle{0.1}}
		\put(3.8,1){\mbox{$11$}}
        \put(5,1){\mbox{$11$}}
		\put(4.5,0){\vector(1,1){0.9}}
         \put(4.5,0){\vector(-1,1){0.9}}
 \put(5.7,0.5){\mbox{;}}

	\put(6,0.5){\mbox{$G \Bigl($}}
		\put(8,0){\circle{0.1}}
		\put(8.4,0.1){\mbox{$\mathfrak{f}^m_{1'}$}}
		\put(7,1){\circle{0.1}}
         \put(9,1){\circle{0.1}}
		\put(7.3,1){\mbox{$\mathfrak{g}^m$}}
        \put(8.5,1){\mbox{$\mathfrak{g}^m$}}
		\put(8,0){\vector(1,1){0.9}}
         \put(8,0){\vector(-1,1){0.9}}
		\put(9.2,0.5){\mbox{$\Bigr)=$}}
	\put(11,0){\circle{0.1}}
		\put(11.3,0){\mbox{$10$}}
		\put(10,1){\circle{0.1}}
         \put(12,1){\circle{0.1}}
		\put(10.3,1){\mbox{$11$}}
        \put(11.5,1){\mbox{$11$}}
		\put(11,0){\vector(1,1){0.9}}
         \put(11,0){\vector(-1,1){0.9}}
	\end{picture}	\\

Thus, we have
$$
\unitlength1cm
\begin{picture}(6,2.1)
\thicklines

\put(2,2){\circle{0.1}}
\put(4,2){\circle{0.1}}
\put(2,1){\circle{0.1}}
\put(4,1){\circle{0.1}}
\put(3,0){\circle{0.1}}
\put(4.2,0.9){\mbox{$\mathfrak{f}^m_{1}\quad \Bigr)\quad=$}}
\put(0.5,0.9){\mbox{$G \Bigl(\quad\mathfrak{f}^m_{1'}$}}
\put(3.3,0){\mbox{${0\cdots0}$}}
\put(1.6,2){\mbox{$\mathfrak{g}^m$}}
\put(4.2,2){$\mathfrak{g}^m$ }
\put(2,1){\vector(0,1){0.9}}
\put(4,1){\vector(0,1){0.9}}
\put(2,1){\vector(2,1){1.9}}
\put(4,1){\vector(-2,1){1.9}}
\put(3,0){\vector(1,1){0.9}}
\put(3,0){\vector(-1,1){0.9}}

\put(7,2){\circle{0.1}}
\put(9,2){\circle{0.1}}
\put(7,1){\circle{0.1}}
\put(9,1){\circle{0.1}}
\put(8,0){\circle{0.1}}
\put(9.2,0.9){\mbox{$01$}}
\put(6.5,0.9){\mbox{$10$}}
\put(8.3,0){\mbox{$00$}}
\put(6.6,2){\mbox{$11$}}
\put(9.2,2){$11$ }
\put(7,1){\vector(0,1){0.9}}
\put(9,1){\vector(0,1){0.9}}
\put(7,1){\vector(2,1){1.9}}
\put(9,1){\vector(-2,1){1.9}}
\put(8,0){\vector(1,1){0.9}}
\put(8,0){\vector(-1,1){0.9}}
\end{picture}\qquad\qquad\qquad\qquad\qquad\qquad\qquad\qquad\qquad\qquad\qquad\qquad$$
which  contradicts (iv). Indeed, according to (iv), $G(\mathfrak{M}^m)$ should be equivalent to a $\sigma$-model, for any $m$-model $\mathfrak{M}^m$.
But $\sigma$ is a unifier for $x_1\lor x_2$ and hence $x_1\lor x_2$ should be true in the received $2$-model which is not the case.\\

Let us consider $\mathsf L(\mathfrak{Y}_3+)$. We have $sm(\mathfrak Y_{3}+)=\{\mathfrak L_1,\mathfrak L_2,\mathfrak L_3,\mathfrak L_4,\mathfrak R_{2},+\mathfrak R_{2},\mathfrak R_{2}+,\mathfrak Y_{3}+\}$.  Assume that $\mathsf L(\mathfrak{Y}_3+)$ has finitary unification. Take $n=3$ and use Theorem \ref{main} to get a number $m\geq 1$, mappings  $G:\mathbf{M}^m\to\mathbf{M}^2$ and $F:\mathbf{M}^k\to\mathbf{M}^m$  fulfilling the conditions (i)-(v), where $k>m$ and  $\sigma\colon\{x_1,x_2,x_3\}\to \mathsf{Fm}^k$ is as follows
$$ \sigma(x_1)=x_1 \qquad \mbox{and} \qquad \sigma(x_2)=\bigwedge_{i=2}^k\Bigl(\bigl(((x_i\to x_1)\to x_1)\bigr)\lor (x_i\to x_1)\Bigr)\qquad \mbox{and} $$ $$ \sigma(x_3)=\bigwedge_{i=2}^k\Bigl(\bigl(((x_i\to x_1)\to x_1)\bigr)\lor (x_i\to x_1)\Bigr)\rightarrow\Bigl(\bigvee_{i=2}^kx_i\lor\bigl((\bigvee_{i=2}^kx_i)\to x_1\bigr)\Bigr). $$
Note that if we take $\alpha\circ\sigma$, where $\alpha:x_1\slash\bot$ we get a slightly modified substitution $\sigma$ as used above for $\mathfrak Y_{3}$ (we would have $x_1=\bot$ and $x_2,x_3$ substituted instead of $x_1,x_2$). If $\sigma(x_1)$ (that is $x_1$) is true at any node of any $k$-model $\mathfrak M_k$ over $\mathfrak{Y}_3+$, then so are $\sigma(x_2)$ and $\sigma(x_3)$. Thus, if we characterize all p-irreducible $k$-models, we get  $\circ \ 111$ and $\circ \ 011$, as models over $\mathfrak L_1$, and all $\sigma$-models as we got for $\mathfrak Y_{3}$ (where valuations take the form $0ij$ instead of $ij$) with additional top node where the valuation is $111$. It means that $\sigma$ is a unifier for $x_2\lor x_3$ (instead of $x_1\lor x_2$ as we had in $\mathfrak Y_{3}$). Moreover, repeating the above argument we get two $m$-models over $\mathfrak R_2$ such that\\

		\unitlength1cm
\begin{picture}(1,2)
\thicklines
\put(-0.3,1){\mbox{$G \Bigl($}}
        \put(1.5,2){\circle{0.1}}
		\put(1.5,0){\circle{0.1}}
		\put(1.9,0.1){\mbox{$0\mathfrak{f}^m_{1}$}}
		\put(0.5,1){\circle{0.1}}
         \put(2.5,1){\circle{0.1}}
		\put(0.8,1){\mbox{$0\mathfrak{g}^m$}}
        \put(1.7,1){\mbox{$0\mathfrak{g}^m$}}
		\put(1.5,0){\vector(1,1){0.9}}
         \put(1.5,0){\vector(-1,1){0.9}}
         \put(0.5,1){\vector(1,1){0.9}}
         \put(2.5,1){\vector(-1,1){0.9}}
         \put(1.6,2){\mbox{$1\cdots1$}}

	\put(2.8,1){\mbox{$\Bigr)=$}}
	\put(4.5,0){\circle{0.1}}
		\put(4.9,0){\mbox{$001$}}
		\put(3.5,1){\circle{0.1}}
         \put(5.5,1){\circle{0.1}}
		\put(3.8,1){\mbox{$011$}}
        \put(4.8,1){\mbox{$011$}}
		\put(4.5,0){\vector(1,1){0.9}}
         \put(4.5,0){\vector(-1,1){0.9}}
         	\put(3.5,1){\vector(1,1){0.9}}
         \put(5.5,1){\vector(-1,1){0.9}}
         \put(4.5,2){\circle{0.1}}
         \put(4.9,2){\mbox{$111$}}
 \put(5.7,1){\mbox{;}}

	\put(6.2,1){\mbox{$G \Bigl($}}
       \put(8,2){\circle{0.1}}
		\put(8,0){\circle{0.1}}
        \put(8.4,2.1){\mbox{$1\cdots1$}}
		\put(8.4,0.1){\mbox{$0\mathfrak{f}^m_{1'}$}}
		\put(7,1){\circle{0.1}}
         \put(9,1){\circle{0.1}}
		\put(7.3,1){\mbox{$0\mathfrak{g}^m$}}
        \put(8.1,1){\mbox{$0\mathfrak{g}^m$}}
		\put(8,0){\vector(1,1){0.9}}
         \put(8,0){\vector(-1,1){0.9}}
         \put(7,1){\vector(1,1){0.9}}
         \put(9,1){\vector(-1,1){0.9}}
		\put(9.2,1){\mbox{$\Bigr)=$}}

    \put(11,2){\circle{0.1}}
	\put(11,0){\circle{0.1}}
       \put(11.3,2){\mbox{$111$}}
		\put(11.3,0){\mbox{$010$}}
		\put(10,1){\circle{0.1}}
         \put(12,1){\circle{0.1}}
		\put(10.3,1){\mbox{$011$}}
        \put(11.3,1){\mbox{$011$}}
		\put(11,0){\vector(1,1){0.9}}
         \put(11,0){\vector(-1,1){0.9}}
         \put(10,1){\vector(1,1){0.9}}
         \put(12,1){\vector(-1,1){0.9}}
	\end{picture}	

$$\unitlength1cm
\begin{picture}(6,2.8)
\thicklines
\put(0,3){Thus,}
\put(2,2){\circle{0.1}}
\put(4,2){\circle{0.1}}
\put(2,1){\circle{0.1}}
\put(4,1){\circle{0.1}}
\put(3,0){\circle{0.1}}
\put(3,3){\circle{0.1}}
\put(4.2,0.9){\mbox{$0\mathfrak{f}^m_{1}$}}
\put(4.2,1.4){\mbox{$\qquad \Bigr)\quad=$}}
\put(1.4,0.9){\mbox{$0\mathfrak{f}^m_{1'}$}}
\put(0.7,1.4){\mbox{$G \Bigl($}}
\put(3.3,0){\mbox{${0\cdots0}$}}
\put(3.3,3){\mbox{${1\cdots1}$}}
\put(1.4,2){\mbox{$0\mathfrak{g}^m$}}
\put(4.2,2){$0\mathfrak{g}^m$ }
\put(2,1){\vector(0,1){0.9}}
\put(4,1){\vector(0,1){0.9}}
\put(2,1){\vector(2,1){1.9}}
\put(4,1){\vector(-2,1){1.9}}
\put(3,0){\vector(1,1){0.9}}
\put(3,0){\vector(-1,1){0.9}}
\put(2,2){\vector(1,1){0.9}}
\put(4,2){\vector(-1,1){0.9}}

\put(8,3){\circle{0.1}}
\put(7,2){\circle{0.1}}
\put(9,2){\circle{0.1}}
\put(7,1){\circle{0.1}}
\put(9,1){\circle{0.1}}
\put(8,0){\circle{0.1}}
\put(9.2,0.9){\mbox{$001$}}
\put(6.2,0.9){\mbox{$010$}}
\put(8.3,0){\mbox{$000$}}
\put(6.2,2){\mbox{$011$}}
\put(9.2,2){$011$ }
\put(8.3,3){\mbox{$111$}}
\put(7,1){\vector(0,1){0.9}}
\put(9,1){\vector(0,1){0.9}}
\put(7,1){\vector(2,1){1.9}}
\put(9,1){\vector(-2,1){1.9}}
\put(8,0){\vector(1,1){0.9}}
\put(8,0){\vector(-1,1){0.9}}
\put(7,2){\vector(1,1){0.9}}
\put(9,2){\vector(-1,1){0.9}}
\end{picture}\qquad\qquad\qquad\qquad\qquad\qquad\qquad\qquad\qquad\qquad\qquad\qquad$$
which  would mean that $\sigma$ is not a unifier for $x_2\lor x_3$, a contradiction.\\

In the case of  $\mathsf L(\mathfrak{Y}_2+)$, we have $sm(\mathfrak Y_{2}+)=\{\mathfrak L_1,\mathfrak L_2,\mathfrak L_3,\mathfrak L_4,\mathfrak R_{2},+\mathfrak R_{2},\mathfrak R_{2}+,\mathfrak G_{3}+\}$. We take $n=2$ and consider
the substitution $\sigma\colon\{x_1,x_2\}\to \mathsf{Fm}^k$ defined as follows
$$ \begin{array}{rl}
     \sigma(x_1)=& x_1\\
     \sigma(x_2)= & \Bigl(\bigl(( \bigvee_{i=2}^kx_i)\to x_1\bigr)\to x_1\Bigr) \ \land \ \bigwedge_{i=2}^k\Bigl(\bigl((x_i\to x_1)\to x_1\bigr)\lor (x_i\to x_1)\Bigr).
   \end{array}
  $$
If we take $\alpha\circ\sigma$, where $\alpha:x_1\slash\bot$ we (almost) get the substitution $\sigma$ as used  for $\mathfrak Y_{2}$.
If $x_1$ is false at the top element of any $2$-model $\mathfrak M^2$ over $\mathfrak Y_{2}+$, then $ \sigma(x_2)$ is true at the model and hence $\sigma(\mathfrak M^2)$ reduces to a model over $\mathfrak L_1$. Then we can  repeat our argument we have used for $\mathsf L(\mathfrak{Y}_2)$ (similarly as we did in the case of $ \mathfrak{Y}_3+$ where we reduced our argument to $ \mathfrak Y_3$).
\footnote{
We think that  one can show in this way that adding top element to $\mathfrak F$, for any finite frame $\mathfrak F$, does not improve the unification type   if $\mathsf L(\mathfrak{F})$ has nullary unification.}
\end{proof}
Nodes of the graph in Figure \ref{ki} represent  p-morphic images of  $\mathfrak G_1$, see Figure \ref{GF} (or \ref{NU} for $\mathfrak G_{3\mathfrak L_2}$ and $\mathfrak G_{3\mathfrak F_2}$), and edges p-morphisms between the  frames.   We omit, as usual, p-morphisms that are compositions of  other p-morphisms. The graph represents also all (consistent) $H$-complete extensions of $\mathsf H_3\mathsf B_2$; each node is  the logic $\mathsf L(\mathfrak{F})$ of the  frame and the edges mean inclusions. Logics with nullary unification are denoted by a black square. There are 14 logics and 5 of them have nullary unification.

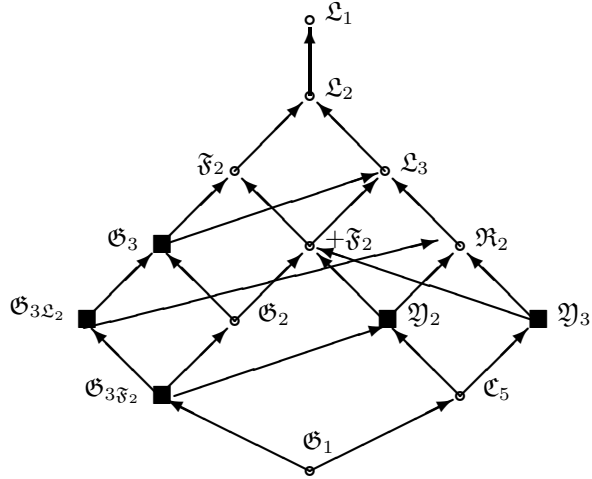
\begin{figure}[H]
\unitlength1cm
\thicklines
\begin{picture}(6,6)
\put(6,0){\circle{0.1}}
\put(5.9,0.3){$\mathfrak G_1$}
\put(3.9,0.9){$\blacksquare$}
\put(3,1){$\mathfrak G_{3\mathfrak F_2}$}
\put(8,1){\circle{0.1}}
\put(8.3,1){$\mathfrak C_{5}$}
\put(2.9,1.9){$\blacksquare$}
\put(2,2.1){$\mathfrak G_{3\mathfrak L_2}$}
\put(5,2){\circle{0.1}}
\put(5.3,2){$\mathfrak G_{2}$}
\put(6.9,1.9){$\blacksquare$}
\put(7.3,2){$\mathfrak Y_{2}$}
\put(8.9,1.9){$\blacksquare$}
\put(9.3,2){$\mathfrak Y_{3}$}
\put(3.9,2.9){$\blacksquare$}
\put(3.3,3){$\mathfrak G_{3}$}
\put(6,3){\circle{0.1}}
\put(6.2,3){$+\mathfrak  F_2$}
\put(8,3){\circle{0.1}}
\put(8.2,3){$\mathfrak R_{2}$}
\put(5,4){\circle{0.1}}
\put(4.5,4){$\mathfrak F_2$}
\put(7,4){\circle{0.1}}
\put(7.2,4){$\mathfrak L_{3}$}
\put(6,5){\circle{0.1}}
\put(6.2,5){$\mathfrak L_{2}$}
\put(6,6){\circle{0.1}}
\put(6.2,6){$\mathfrak L_{1}$}
\put(6,0){\vector(-2,1){1.9}}
\put(6,0){\vector(2,1){1.9}}
\put(4,1){\vector(-1,1){0.9}}
\put(4,1){\vector(1,1){0.9}}
\put(8,1){\vector(-1,1){0.9}}
\put(8,1){\vector(1,1){0.9}}
\put(3,2){\vector(1,1){0.9}}
\put(3,1.9){\vector(4,1){4.7}}
\put(5,2){\vector(1,1){0.9}}
\put(5,2){\vector(-1,1){0.9}}
\put(7,2){\vector(1,1){0.9}}
\put(7,2){\vector(-1,1){0.9}}
\put(9,2){\vector(-1,1){0.9}}
\put(9,2){\vector(-3,1){2.9}}
\put(4,3){\vector(1,1){0.9}}
\put(4,3){\vector(3,1){2.9}}
\put(6,3){\vector(1,1){0.9}}
\put(6,3){\vector(-1,1){0.9}}
\put(8,3){\vector(-1,1){0.9}}
\put(5,4){\vector(1,1){0.9}}
\put(7,4){\vector(-1,1){0.9}}
\put(6,5){\vector(0,1){0.9}}

\put(4.2,1){\vector(3,1){2.8}}

\end{picture}
\caption{Reducts of $\mathfrak G_1$. }\label{ki}
\end{figure}

\noindent Let us agree that there is a chaos in the picture. Extensions of finitar\slash unitary logics may have nullary unification; intersections of some finitary logics are nullary. The chaos   increases  if we add intersections of the logics, see Figure  \ref{ti}. We can only try to identify maximal logics with nullary unification.

\subsection{Hereditary Finitary Unification}\label{HFU}

Let us say that unification in a logic {\sf L} is {\it hereditary finitary} if any extension of {\sf L}, including {\sf L} itself, has finitary (or unitary) unification. Unification in  {\sf L} is {\it hereditary unitary} if any extension of {\sf L} has unitary unification and {\sf L} has {\it hereditary projective approximation} if any its extension has projective approximation.
By Citkin \cite{Tsitkin},
\begin{theorem}\label{Tsit} $(\mathsf L(\mathfrak C_4),\mathsf L(\chi (\mathfrak C_4))$ (where  \ $\mathfrak C_4={\mathfrak G_3}+$, \ see Figure \ref{GF} and \ref{TF}) is a splitting pair in the lattice of intermediate logics (see Theorem \ref{Jankov}) and the logic $\mathsf L(\chi (\mathfrak C_4))$ is locally tabular. Thus, if an intermediate logic {\sf L} omits ${\mathfrak G_3}+$, then {\sf L} is locally tabular.\end{theorem}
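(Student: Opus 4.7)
The plan is to decompose the statement into three parts and dispatch each in order. The splitting claim is an immediate application of Theorem~\ref{Jankov} to the finite rooted frame $\mathfrak C_4 = \mathfrak G_3+$, which gives $(\mathsf L(\mathfrak C_4), \mathsf L(\chi(\mathfrak C_4)))$ as a splitting pair in the lattice of intermediate logics; in particular, for any intermediate logic $\mathsf L$, omitting $\mathfrak G_3+$ is equivalent to $\mathsf L(\chi(\mathfrak C_4)) \subseteq \mathsf L$. This part requires no real work beyond quoting the splitting result.

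The substantive content is the local tabularity of $\mathsf L(\chi(\mathfrak C_4))$, which is Citkin's theorem and the main obstacle. The strategy I would follow is, by Theorem~\ref{lf6}, to take $\mathbf F$ to be the class of all finite rooted po-frames that omit $\mathfrak G_3+$ (as a p-morphic image of a generated subframe) and to verify both that $\mathsf L(\chi(\mathfrak C_4)) = \mathsf L(\mathbf F)$ and that $\mathbf M^n(\mathbf F)/\!\sim$ is finite for each $n$. The first equality comes from a Jankov/Zakharyaschev-type FMP argument for the splitting logic. The second, finiteness of $\mathbf M^n(\mathbf F)/\!\sim$, would be attacked through Corollary~\ref{FMbis} and Theorem~\ref{GB}: equivalence classes of $n$-models correspond to the finite sets of theories $\{\mathsf{Th}((\mathfrak M^n)_w)\}_{w\in W}$ of their generated submodels, and for p-irreducible models distinct points carry distinct such theories. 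One would then bound inductively, on depth, the possible theory-sets that can arise in a p-irreducible $n$-model over a frame in $\mathbf F$: at depth one, at most $2^n$ theories; at deeper nodes, the asymmetric meet structure of $\mathfrak G_3+$ (a root, two incomparable successors of which one carries a length-$2$ chain and the other is maximal, meeting at a common top) being forbidden severely restricts how the theory at a node may extend the multiset of theories of its strict successors. This combinatorial restriction is the heart of Citkin's argument, and writing it out in full is where the real work lies; the short route is simply to invoke \cite{Tsitkin}.

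Once these two parts are in hand, the concluding implication is immediate. If $\mathsf L$ omits $\mathfrak G_3+$, then by the splitting $\mathsf L(\chi(\mathfrak C_4)) \subseteq \mathsf L$, so $\mathsf L$ is an extension of the locally tabular logic $\mathsf L(\chi(\mathfrak C_4))$, and Corollary~\ref{fp}(ii) yields that $\mathsf L$ itself is locally tabular.
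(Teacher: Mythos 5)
Your proposal matches the paper exactly: the paper offers no proof of this theorem, simply attributing the splitting pair and the local tabularity of $\mathsf L(\chi(\mathfrak C_4))$ to Citkin \cite{Tsitkin}, and the final implication is drawn, as you do, from the splitting property together with Corollary \ref{fp}(ii). Your sketched depth-induction for local tabularity is not carried far enough to stand on its own (frames omitting $\mathfrak G_3+$ have unbounded depth, so a depth-by-depth bound on theory-sets does not by itself yield finiteness of $\mathbf M^n\slash\!\!\thicksim$), but since you ultimately defer to \cite{Tsitkin} for that part, your argument is complete in the same sense as the paper's.
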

We know that $(\mathsf L (\mathfrak C_3),\mathsf {SL})$ (where $\mathsf {SL}$ is Scott logic, see Figure \ref{ILs}, and ${\mathfrak C_3}={\mathfrak G_3}$)   is a splitting pair in extensions of  {\sf INT}.  It means that logics omitting ${\mathfrak C_3}$  may be not locally tabular, like $\mathsf{SL}$ (the unification type of $\mathsf{SL}$ is not known, we conjecture it is $\omega$).

 By Theorem \ref{lmk},  all logics determined by frames $\mathbf H_{pa}$ in Figure \ref{hpa} (which are  $\mathfrak L_d+\mathfrak F_m$,  with $m,d\geq 0$, and we agree that $\mathfrak L_0+\mathfrak F_m=\mathfrak F_m$, and $\mathfrak F_1=\mathfrak L_2$ and $\mathfrak F_0=\mathfrak L_1$) have  projective approximation.
\begin{theorem}\label{ipa} For any intermediate logic {\sf L}, the following conditions are equivalent:\\ (i) {\sf L} has hereditary projective approximation;\\(ii) $\mathsf L(\mathbf H_{pa})\subseteq \mathsf L$;\\
(iii) $\mathsf L$  omits $\mathfrak G_3$ and $\mathfrak R_2$;\\
(iv)  $ \chi ({\mathfrak  G_3}),  \chi ({\mathfrak R_2}) \in  \mathsf L$.
\end{theorem}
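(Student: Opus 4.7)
The plan is to establish the cycle (ii)$\Rightarrow$(i), (i)$\Rightarrow$(iii), (iii)$\Leftrightarrow$(iv), and (iii)$\Rightarrow$(ii), of which the last is the substantive step. The equivalence (iii)$\Leftrightarrow$(iv) is immediate from Theorem~\ref{Jankov}. For (ii)$\Rightarrow$(i), I use that $\mathsf{L}(\mathbf{H}_{pa})$ extends $\mathsf{PWL}$ (remark following Figure~\ref{hpa}) and is therefore locally tabular; by Corollary~\ref{fp}(ii) every extension $\mathsf{L}$ of it remains locally tabular, and by Theorem~\ref{lf7} is of the form $\mathsf{L}(\mathbf{G})$ for some $\mathbf{G} \subseteq sm(\mathbf{H}_{pa})$. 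A direct check shows $\mathbf{H}_{pa}$ is closed under generated subframes and p-morphic images (the non-maximal elements of $\mathfrak{L}_d + \mathfrak{F}_m$ form a chain, a property preserved under both operations), so $\mathbf{G} \subseteq \mathbf{H}_{pa}$ and Theorem~\ref{lmk} yields projective approximation for $\mathsf{L}$.

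For (i)$\Rightarrow$(iii), I argue contrapositively. If $\mathsf{L}$ does not omit $\mathfrak{G}_3$ then $\mathsf{L} \subseteq \mathsf{L}(\mathfrak{G}_3)$, which has nullary unification by Theorem~\ref{F6m}, hence no projective approximation by Theorem~\ref{praprox} --- contradicting (i). If $\mathsf{L}$ does not omit $\mathfrak{R}_2$ then $\mathsf{L} \subseteq \mathsf{L}(\mathfrak{R}_2)$; this extension is unitary by Theorem~\ref{wpl}, and if it also had projective approximation, Corollary~\ref{up1} would force it to be projective, hence by Theorem~\ref{projj} to contain $\mathsf{LC}$ --- impossible since $\mathfrak{R}_2$ is not a chain.

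The main direction (iii)$\Rightarrow$(ii) applies Corollary~\ref{Jankov2} to the splitting pairs $(\mathsf{L}(\mathfrak{G}_3), \mathsf{L}(\chi(\mathfrak{G}_3)))$ and $(\mathsf{L}(\mathfrak{R}_2), \mathsf{L}(\chi(\mathfrak{R}_2)))$: (iii) gives $\mathsf{L} \not\subseteq \mathsf{L}(\mathfrak{G}_3)$ and $\mathsf{L} \not\subseteq \mathsf{L}(\mathfrak{R}_2)$, so the union splitting $\mathsf{L}^{\star} := \mathsf{L}(\{\chi(\mathfrak{G}_3), \chi(\mathfrak{R}_2)\})$ satisfies $\mathsf{L}^{\star} \subseteq \mathsf{L}$. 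It then suffices to prove $\mathsf{L}(\mathbf{H}_{pa}) = \mathsf{L}^{\star}$. The inclusion $\mathsf{L}^{\star} \subseteq \mathsf{L}(\mathbf{H}_{pa})$ is immediate: each $\mathfrak{L}_d + \mathfrak{F}_m \in \mathbf{H}_{pa}$ omits both forbidden frames, because its non-maximal elements lie in a chain. For the reverse, using the finite model property of $\mathsf{L}^{\star}$ (standard for logics axiomatized by Jankov formulas of finite frames), it suffices to prove the combinatorial claim: \emph{every finite rooted po-frame that omits $\mathfrak{G}_3$ and $\mathfrak{R}_2$ is isomorphic to some $\mathfrak{L}_d + \mathfrak{F}_m$}.

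I would prove the combinatorial claim by induction on $|W|$, with case analysis on the root $w_0$. If $w_0$ has a unique immediate successor $u$, then $(W)_u$ still omits both forbidden frames and by induction equals $\mathfrak{L}_{d'} + \mathfrak{F}_m$, yielding $\mathfrak{F} = \mathfrak{L}_{d'+1} + \mathfrak{F}_m$. If all immediate successors of $w_0$ are maximal, $\mathfrak{F} = \mathfrak{F}_m$. Otherwise, pick two distinct immediate successors $a, b$ of $w_0$ with $a$ non-maximal; in the remaining two sub-cases an explicit p-morphism will contradict the omission hypotheses. When $(W)_a \cap (W)_b = \emptyset$, the map sending $w_0$ to the root of $\mathfrak{G}_3$, $a$ to its midpoint-with-successor, each $z > a$ to the top, and every other element of $\mathfrak{F}$ to the leaf midpoint is a p-morphism onto $\mathfrak{G}_3$. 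When $(W)_a \cap (W)_b \neq \emptyset$, the characteristic map $p \colon (W)_{w_0} \to \{0,1\}^2 \cong \mathfrak{R}_2$ defined by $p(z) = (1,1)$ if $z \geq a, b$; $(1,0)$ if $z \geq a, z \not\geq b$; $(0,1)$ if $z \geq b, z \not\geq a$; and $(0,0)$ otherwise is monotone and surjective. The main obstacle is verifying the back condition when $w_0$ has further branches outside $(W)_a \cup (W)_b$; my plan is to pre-reduce by choosing $a, b$ as immediate successors of a \emph{maximal} common lower bound of some incomparable pair with a common upper bound (so no element lies strictly between $w_0$ and $a$ or $b$), to send every maximal element of $\mathfrak{F}$ to $(1,1)$, and to reassign non-maximal elements on extra branches to $(1,0)$ or $(0,1)$ so that their maximal successors serve as back-witnesses for the top of $\mathfrak{R}_2$; any persistent obstruction produces a new incomparable pair in a strictly smaller generated subframe, handled by the induction hypothesis.
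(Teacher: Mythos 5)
Your skeleton --- the cycle of implications, Theorem \ref{Jankov} for (iii)$\Leftrightarrow$(iv), Theorem \ref{lf7} together with Theorem \ref{lmk} for (ii)$\Rightarrow$(i), and the contrapositive via Theorems \ref{F6m}, \ref{praprox} and Corollary \ref{up1} for (i)$\Rightarrow$(iii) --- matches the paper and is sound. The detour through $\mathsf L^{\star}=\mathsf{L}(\{\chi(\mathfrak G_3),\chi(\mathfrak R_2)\})$ in (iii)$\Rightarrow$(ii) is harmless, except that your appeal to a ``standard'' finite model property for Jankov-axiomatized logics is not something the paper provides; the available route is that omitting $\mathfrak R_2$ forces omission of $\mathfrak G_3+$ (which reduces onto $\mathfrak R_2$ by a p-morphism), whence local tabularity follows from Theorem \ref{Tsit}. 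Either way, everything reduces to the combinatorial claim you isolate: every finite rooted po-frame omitting $\mathfrak G_3$ and $\mathfrak R_2$ is isomorphic to some $\mathfrak L_d+\mathfrak F_m$.

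It is in that claim that your argument has a genuine gap. In the sub-case $(W)_a\cap(W)_b=\emptyset$ the map you write down need not be monotone: disjointness of the two chosen cones does not prevent an element $u\notin(W)_a$ --- lying, say, over a third immediate successor $c$ of $w_0$ with $(W)_c\cap(W)_a\neq\emptyset$ --- from having an upper bound $v\in(W)_a$; your map sends $u$ to the maximal leaf of $\mathfrak G_3$ and $v$ to a point not above that leaf. The $\mathfrak R_2$ sub-case fares no better: the ``reassignment'' of extra branches and the closing appeal to a persistent obstruction yielding a smaller incomparable pair is a plan, not a proof, and it is exactly where the difficulty lives. The paper's proof of the same claim sidesteps the case analysis entirely: identify all maximal elements of $\mathfrak F$ (this relation is a frame bisimulation), obtaining a rooted p-morphic image $\mathfrak F^{\star}$ with a top element, hence a {\sf KC}-frame, which still omits $\mathfrak R_2$; by Rautenberg's splitting $(\mathsf L(\mathfrak R_2),\mathsf{LC})$ of the lattice of extensions of {\sf KC}, $\mathfrak F^{\star}$ must be a chain, so the non-maximal elements of $\mathfrak F$ are linearly ordered; finally, if some maximal element sat above only a proper initial segment of that chain, collapsing the cone above the next chain element onto a two-element chain would exhibit $\mathfrak G_3$ as a p-morphic image of a generated subframe, contradicting (iii). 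You should either adopt that argument or repair your induction so that each candidate p-morphism is defined and checked on all of $W$, against every branch rather than the two selected ones.
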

\begin{proof} (ii)$\Rightarrow$(i).  Since $sm(\mathbf H_{pa})\subseteq\mathbf H_{pa}$, then all extensions of $\mathsf L(\mathbf H_{pa})$ have projective approximation, by Theorem \ref{lf7}. Thus, $\mathsf L(\mathbf H_{pa})$ and any its extension  enjoy hereditary projective approximation.\\ (i)$\Rightarrow$(iii). We know that  $\mathsf L(\mathfrak G_3)$ has  nullary unification (see Theorem \ref{F6m}). Thus, if $\mathfrak G_3$ were a frame of {\sf L}, the logic {\sf L} would have a nullary extension. Therefore {\sf L} must omit $\mathfrak G_3$. It must also omit  $\mathfrak R_2$ as $\mathsf L(\mathfrak R_2)$ is unitary and non-projective, hence it cannot have projective approximation, by Corollary \ref{up1}.\\
  (iii)$\Rightarrow$(ii). Assume that $\mathsf L$ omits $\mathfrak G_3$ and $\mathfrak R_2$. If {\sf L} omits $\mathfrak R_2$, then it also  omits
 ${\mathfrak G_3}+$ and hence {\sf L} is locally tabular, by Theorem \ref{Tsit}. Thus, we can assume {\sf L}={\sf L}({\bf F}) for some family {\bf F} of finite frames.

For each $\mathfrak F\in\mathbf F$, let $\mathfrak F^\star$ denote its p-morphic image resulting by gluing all its end elements (the relation of gluing end elements is obviously a bisimulation of ${\mathfrak F}$). Thus, $\mathsf L(\{\mathfrak F^\star\colon\mathfrak F\in \mathbf F\})$ is an extension of $\mathsf L$ and it must be an extension of {\sf KC}.  As {\sf{L}(\bf F)} omits $\mathfrak R_2$, we infer $\mathsf L(\{\mathfrak F^\star\colon\mathfrak F\in \mathbf F\})$ is a logic with projective unification\footnote{We use again the known fact (see \cite{Rautenberg}) that $(\mathsf L (\mathfrak R_2),\mathsf{LC})$ is a splitting pair for all extensions of $\mathsf{KC}$.}. It means $\mathfrak F^\star$, for each $\mathfrak F\in \mathbf F$, must be a chain. Since {\sf{L}(\bf F)} omits $\mathfrak G_3$, we conclude each element of {\bf F} must be of the form $\mathfrak L_d+\mathfrak F_m$ for some $m,d\geq 0$.\\
(iii)$\Leftrightarrow$(iv) by Theorem \ref{Jankov}.\end{proof}
Projective approximation is not hereditary,  intuitionistic logic {\sf INT} enjoys this property, see \cite{Ghi2}. We cojecture that projective approximation is  hereditary in locally tabular logics. The above theorem does not settle the question if there are locally tabular logics with projective approximation which are not extensions of   $\mathsf L(\mathbf H_{pa})$.
\begin{corollary}\label{ipa2}
$\mathsf L(\mathbf H_{pa})$ is the least intermediate logic with hereditary projective approximation and   $\mathsf L(\mathbf H_{pa}) = \mathsf{L}(\{\chi ({\mathfrak G_3}),  \chi ({\mathfrak R_2}) \})$.
\end{corollary}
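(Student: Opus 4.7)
The corollary is essentially a bookkeeping consequence of Theorem~\ref{ipa} and the splitting machinery of Theorem~\ref{Jankov}, so the plan is to extract each of the two assertions from the equivalences already proved.

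First, to establish minimality, I would apply the equivalence (i)$\Leftrightarrow$(ii) of Theorem~\ref{ipa} to the logic $\mathsf L(\mathbf H_{pa})$ itself. Taking $\mathsf L=\mathsf L(\mathbf H_{pa})$ in (ii), the inclusion $\mathsf L(\mathbf H_{pa})\subseteq \mathsf L(\mathbf H_{pa})$ is trivial, so (i) yields that $\mathsf L(\mathbf H_{pa})$ has hereditary projective approximation. Conversely, if $\mathsf L'$ is any intermediate logic with hereditary projective approximation, then (i)$\Rightarrow$(ii) forces $\mathsf L(\mathbf H_{pa})\subseteq \mathsf L'$. So $\mathsf L(\mathbf H_{pa})$ is indeed the least such logic.

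For the identification $\mathsf L(\mathbf H_{pa})=\mathsf L(\{\chi(\mathfrak G_3),\chi(\mathfrak R_2)\})$, I would combine the equivalence (ii)$\Leftrightarrow$(iv) of Theorem~\ref{ipa} with the splitting property of Jankov's characteristic formula. By Theorem~\ref{Jankov}, for any intermediate logic $\mathsf L$ and any finite frame $\mathfrak F$, we have $\chi(\mathfrak F)\in \mathsf L$ iff $\mathsf L$ omits $\mathfrak F$; equivalently, $\mathsf L(\chi(\mathfrak F))\subseteq \mathsf L$ iff $\mathsf L$ omits $\mathfrak F$. Hence $\mathsf L(\{\chi(\mathfrak G_3),\chi(\mathfrak R_2)\})\subseteq \mathsf L$ iff both $\chi(\mathfrak G_3),\chi(\mathfrak R_2)\in\mathsf L$, which by (iv)$\Leftrightarrow$(ii) of Theorem~\ref{ipa} holds iff $\mathsf L(\mathbf H_{pa})\subseteq \mathsf L$. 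Since these two containment conditions are equivalent for every intermediate logic $\mathsf L$, applying them once with $\mathsf L=\mathsf L(\mathbf H_{pa})$ and once with $\mathsf L=\mathsf L(\{\chi(\mathfrak G_3),\chi(\mathfrak R_2)\})$ gives the two inclusions, and hence equality.

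There is no genuine obstacle here: both parts are immediate repackagings of Theorem~\ref{ipa}. The only point that needs a moment of care is that the equivalences (i)--(iv) of Theorem~\ref{ipa} hold for \emph{arbitrary} intermediate logics (not only locally tabular ones); this is already built into the statement of that theorem, so no extra work is required.
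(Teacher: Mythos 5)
Your proposal is correct and matches the intended derivation: the paper states the corollary without a separate proof precisely because it is the immediate repackaging of Theorem~\ref{ipa} (instantiating the equivalences (i)$\Leftrightarrow$(ii) and (ii)$\Leftrightarrow$(iv), together with the fact from Theorem~\ref{Jankov} that $\mathsf L(\{\chi(\mathfrak G_3),\chi(\mathfrak R_2)\})\subseteq\mathsf L$ iff both characteristic formulas belong to $\mathsf L$) that you carry out. The two-sided instantiation argument you use to turn the equivalence of containment conditions into the equality of the two logics is exactly what is needed.
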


By Theorem \ref{wpl}, we know that all logics determined by frames $\mathbf H_{un}$ in Figure \ref{hpa} (which are  $\mathfrak L_{d}+\mathfrak R_{m}$,  with $d,m\geq 0$, and we agree that $\mathfrak L_{0}+\mathfrak R_{m}=\mathfrak R_{m}$ and $\mathfrak R_{0}=\mathfrak L_{1}$) have hereditary unitary unification.
\begin{theorem}\label{iun} For any intermediate logic {\sf L} the following conditions are equivalent:\\(i) {\sf L} has hereditary unitary unification;\\ (ii) $\mathsf L(\mathbf H_{un})\subseteq \mathsf L$;\\
(iii)  $\mathsf L$ omits  the frames $\mathfrak R_2+$, $\mathfrak G_3+$ and $\mathfrak F_2$;\\
(iv)  $ \chi ({\mathfrak  G_3+}),  \chi ({\mathfrak R_2+}),  \chi ({\mathfrak F_2}) \in \mathsf L$.
\end{theorem}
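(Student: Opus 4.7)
The plan is to mimic the structure of the proof of Theorem \ref{ipa}, establishing the cyclic chain (ii)$\Rightarrow$(i)$\Rightarrow$(iii)$\Rightarrow$(ii), with (iii)$\Leftrightarrow$(iv) following at once from Theorem \ref{Jankov}. For (ii)$\Rightarrow$(i), observe that $sm(\mathbf H_{un})=\mathbf H_{un}$ (generated subframes and p-morphic images of $\mathfrak L_d+\mathfrak R_m$ are again of this form, up to isomorphism), so by Theorem \ref{lf7} any extension of $\mathsf L(\mathbf H_{un})$ equals $\mathsf L(\mathbf G)$ for some $\mathbf G\subseteq \mathbf H_{un}$, and by Theorem \ref{wpl} such a logic has unitary unification. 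Hence $\mathsf L(\mathbf H_{un})$ enjoys hereditary unitary unification, and so does every extension.

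For (i)$\Rightarrow$(iii), I would argue frame-by-frame. If $\mathfrak F_2$ were a frame of {\sf L}, then $\mathsf L\subseteq\mathsf L(\mathfrak F_2)$, but $\mathsf L(\mathfrak F_2)\not\supseteq\mathsf{KC}$, so by Theorem \ref{kc} the extension $\mathsf L(\mathfrak F_2)$ is finitary non-unitary, contradicting hereditary unitary unification. If $\mathfrak G_3+$ were a frame of {\sf L}, the extension $\mathsf L(\mathfrak G_3+)$ is nullary by Theorem \ref{F6m}, again a contradiction. Finally, if $\mathfrak R_2+$ were a frame of {\sf L}, then $\mathsf L(\mathfrak R_2+)$ is nullary (this is the frame $\mathfrak Y_1$ listed among the maximal nullary logics in the Introduction; the argument is analogous to the one given in the proof of Theorem \ref{L10} for $\mathfrak Y_2+$ and $\mathfrak Y_3+$, where adding a top to a known nullary frame preserves nullarity).

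The main work is in (iii)$\Rightarrow$(ii). Omission of $\mathfrak G_3+$ makes {\sf L} locally tabular by Theorem \ref{Tsit}, so write $\mathsf L=\mathsf L(\mathbf F)$ for some class $\mathbf F$ of finite po-frames which we may assume consists of p-irreducible frames (by Theorem \ref{Irr}). Omission of $\mathfrak F_2$ means that {\sf L} contains the Jankov formula $\chi(\mathfrak F_2)$, i.e.\ $\mathsf{KC}\subseteq \mathsf L$; equivalently, every $\mathfrak F\in\mathbf F$ has a unique maximal element. By Theorem \ref{lf7} it suffices to show $\mathbf F\subseteq \mathbf H_{un}$, i.e.\ that every such $\mathfrak F$ is isomorphic to $\mathfrak L_d+\mathfrak R_m$ for some $d,m\geq 0$. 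I would proceed by induction on the depth $n=d(\mathfrak F)$: for $n\leq 3$ the classification is immediate (unique top plus incomparability of elements at the same depth forces $\mathfrak F\in\{\mathfrak L_1,\mathfrak L_2,\mathfrak R_m\}$), and for $n\geq 4$ the generated submodels $(\mathfrak F)_u$ at immediate successors $u$ of the root are p-irreducible (Theorem \ref{pM6}) and fall under the inductive hypothesis.

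The main obstacle is the inductive step for $n\geq 4$: I must show that the root $w_0$ has a unique immediate successor, so that $\mathfrak F=\mathfrak L_1+(\mathfrak F)_{u_1}=\mathfrak L_{d_1+1}+\mathfrak R_{m_1}$. Suppose for contradiction $w_0$ has two distinct immediate successors $u_1,u_2$. Because $d(\mathfrak F)\geq 4$, some $u_i$ (say $u_1$) has depth $\geq 3$, so by the inductive hypothesis $(\mathfrak F)_{u_1}$ contains a chain $u_1<v<w^\star$. If $u_2$ has depth $2$, then $\{w_0,u_1,u_2,v,w^\star\}$ with the induced order is already a copy of $\mathfrak G_3+$, contradicting omission. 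Otherwise $u_2$ also has a strict predecessor $v'$ of $w^\star$; if $v=v'$ we directly obtain $\mathfrak R_2+$, while if $v\neq v'$ one observes that $v$ and $v'$ have the same set of successors $\{w^\star\}$ in the appropriate generated subframe, so the equivalence relation identifying $v$ with $v'$ is a bisimulation (Lemma \ref{pM4}) and by Theorem \ref{pM7} yields a p-morphism onto a copy of $\mathfrak R_2+$, again contradicting (iii). Therefore $k=1$, completing the induction. The corollary $\mathsf L(\mathbf H_{un})=\mathsf L(\{\chi(\mathfrak F_2),\chi(\mathfrak R_2+),\chi(\mathfrak G_3+)\})$ then follows, and $\mathsf L(\mathbf H_{un})$ is identified as the least locally tabular intermediate logic with hereditary unitary unification.
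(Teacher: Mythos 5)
Your skeleton is sound and directions (ii)$\Rightarrow$(i), (iii)$\Leftrightarrow$(iv) match the paper; your (iii)$\Rightarrow$(ii) takes a genuinely different route (induction on depth, using the inductive hypothesis to constrain the structure above the root, versus the paper's level-by-level analysis of an arbitrary frame). But there is a real gap in the hard direction. You repeatedly derive a contradiction from the observation that $\{w_0,u_1,u_2,v,w^\star\}$ \emph{with the induced order} is a copy of $\mathfrak G_3+$ (or $\mathfrak R_2+$). That is not enough: by Theorem \ref{Jankov}, {\sf L} fails to omit $\mathfrak G_3+$ only if $\mathfrak G_3+$ is a p-morphic image of a generated subframe of some frame of $\mathbf F$, and an induced subposet is in general neither. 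This is exactly where the paper's proof does its work: it explicitly builds a bisimulation collapsing \emph{all} remaining elements of $\mathfrak F$ onto the five designated points, and it must handle the obstruction that arises when some element sees two of the designated points in the wrong pattern (the element $w_5$ in Figure \ref{ppp}), in which case one lands on $\mathfrak Y_2+$ or $\mathfrak Y_3+$ and must separately check that $\mathfrak G_3+$, respectively $\mathfrak R_2+$, is a reduct of those. Your one bisimulation (identifying $v$ with $v'$) is a step in this direction but only merges two points; the quotient still contains everything else in $\mathfrak F$, so it is not ``a p-morphism onto a copy of $\mathfrak R_2+$'' as claimed. Either carry out the full collapse (your inductive hypothesis, which forces each $(\mathfrak F)_{u_i}$ to be $\mathfrak L_{d_i}+\mathfrak R_{m_i}$, should make this feasible), or fall back on the paper's configuration analysis.

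A secondary point: in (i)$\Rightarrow$(iii) your justification that $\mathsf L(\mathfrak R_2+)$ is nullary is not a proof. The heuristic ``adding a top to a known nullary frame preserves nullarity'' does not apply here, since $\mathfrak R_2$ is \emph{unitary}; and being listed in the Introduction is not an argument. The correct reference is Theorem \ref{infty} with $s=1$: $\mathfrak R_2+=\mathfrak F_2+\mathfrak F_1=\mathfrak F_2+\mathfrak L_2$, whose logic is there shown to be nullary. With that citation this direction is fine.
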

\begin{proof} (i)$\Leftarrow$(ii)  follows from Theorem \ref{wpl}; note that the class $\mathbf H_{un}$ is closed under p-morphic images and generated subframes, and then apply Theorem \ref{lf7}.\\  (i)$\Rightarrow$(iii). Since $\mathfrak F_2+\mathfrak F_1=\mathfrak F_2+\mathfrak L_2=\mathfrak R_2+$ and $\mathsf L(\mathfrak F_2+\mathfrak F_1)$ has nullary unification (by Theorem \ref{infty}), then {\sf L} omits $\mathfrak R_2+$. We know that $\mathsf L(\mathfrak G_3+)$ has  nullary unification (see \cite{Ghi5}) and $\mathsf L(\mathfrak F_2)$ is not unitary, hence {\sf L} must omit $\mathfrak G_3+$ and $\mathfrak F_2$.\\ (ii)$\Leftarrow$(iii).  Assume that  $\mathsf L$ omits $\mathfrak R_2+$, $\mathfrak G_3+$ and $\mathfrak F_2$. Since  {\sf L} omits
 ${\mathfrak G_3}+$, then  {\sf L} is locally tabular, by Theorem \ref{Tsit}. Thus,  {\sf L}={\sf L}({\bf F}) for some family {\bf F} of finite frames such that {\it sm}({\bf F})={\bf F}. Since {\sf L} omits
 ${\mathfrak F_2}$, all frames in {\bf F} are {\sf KC}-frames.

Let $\mathfrak F=(W,R,w_0)\in\mathbf F$ and $d(\mathfrak F)=d$. We define a p-morphism $p\colon\mathfrak F\to\mathfrak L_d$ by taking $p(w)=d_{\mathfrak F}(w)$, for any $w\in W$. There is only one element in $W$ of the depth 1, let it be denoted by $w_1$, and there is only one element of the depth $d$ (which is $w_0$). Every $w\in W$ sees $w_1$, that is $wRw_1$.

Let $d(w_2)=i$ and $d(w_3)=i-1$, for some $w_2,w_3\in W$ and $i>1$. We prove  $w_2$ sees $w_3$. Suppose it does not. By the definition of the depth function, there is an element $w_4\in W$ such that $d(w_4)=i-1$ and $w_2$ sees $w_4$. One can take a bisimulation gluing all elements of the depth $<i-1$ (with $w_1$). Since {\bf F} is closed under p-morphic images, we could assume that the received (after gluing all elements of the depth $<i-1$) frame is our  $\mathfrak F$. Thus, we have $w_1,w_2,w_3\in W$ which could give (as we shall see) $\mathfrak G_3+$ (as a p-morphic image of $\mathfrak F$) if we add the root $w_0$, see the first frame in Figure \ref{ppp}.

\begin{figure}
\unitlength1cm
\thicklines
\begin{picture}(0,3)
\put(5.5,0){$w_0$}
\put(6.5,3){$w_1$}
\put(4.5,1){$w_2$}
\put(7.5,2){$w_3$}
\put(5.5,2){$w_4$}
\put(6.5,1){$w_5$}
\put(6,0){\vector(-1,1){0.9}}
\put(6,0){\vector(1,1){0.9}}
\put(7,1){\vector(1,1){0.9}}
\put(5,1){\circle{0.1}}
\put(8,2){\circle{0.1}}
\put(6,0){\circle{0.1}}
\put(7,1){\circle{0.1}}
\put(6,2){\circle{0.1}}
\put(5,1){\vector(1,1){0.9}}
\put(7,1){\vector(-1,1){0.9}}
\put(7,3){\circle{0.1}}
\put(6,2){\vector(1,1){0.9}}
\put(8,2){\vector(-1,1){0.9}}

\put(10.5,0){$w_0$}
\put(10.5,3){$w_1$}
\put(9.5,1){$w_2$}
\put(11.5,2){$w_3$}
\put(9.5,2){$w_4$}
\put(11.5,1){$w_5$}
\put(10,2){\circle{0.1}}
\put(12,2){\circle{0.1}}
\put(10,1){\circle{0.1}}
\put(12,1){\circle{0.1}}
\put(11,0){\circle{0.1}}
\put(11,3){\circle{0.1}}
\put(10,1){\vector(0,1){0.9}}
\put(12,1){\vector(0,1){0.9}}
\put(10,1){\vector(2,1){1.9}}
\put(12,1){\vector(-2,1){1.9}}
\put(11,0){\vector(1,1){0.9}}
\put(11,0){\vector(-1,1){0.9}}
\put(10,2){\vector(1,1){0.9}}
\put(12,2){\vector(-1,1){0.9}}

\put(0.4,2){$w_4$}
\put(2.4,2){$w_3$}
\put(0.4,1){$w_2$}
\put(1.3,3){$w_1$}
\put(1.3,0){$w_0$}
\put(2,0){\vector(-1,1){0.9}}
\put(2,0){\vector(1,2){0.9}}
\put(1,1){\vector(0,1){0.9}}
\put(1,2){\circle{0.1}}
\put(3,2){\circle{0.1}}
\put(2,0){\circle{0.1}}
\put(1,1){\circle{0.1}}
\put(2,3){\circle{0.1}}
\put(3,2){\vector(-1,1){0.9}}
\put(1,2){\vector(1,1){0.9}}
\end{picture}
\caption{}\label{ppp}
\end{figure}
\noindent We can use {\it sm}({\bf F})={\bf F} again  to assume no other element of $W$ (only $w_0$) sees both $w_2$ and $w_3$; if not we would replace $\mathfrak F$ with its  subframe generated by an $R$-maximal element that sees both $w_2$ and $w_3$. There could be other elements in $\mathfrak F$ but they all could be identified (by a bisimulation) with one of  $\{w_0,w_1,w_2,w_3,w_4\}$. For instance, all elements $w\in W$ that see only $w_1$ (and do no see $\{w_0,w_2,w_3,w_4\}$) could be identified with $w_1$, etc. This identification  gives us $\mathfrak G_3+$ as a p-morphic image of $\mathfrak F$ (the first frame in Figure \ref{ppp}) provided that there is no element $w_5\in W$ which sees both $w_3$ and $w_4$  but does not see $w_2$. If there is such an element we would get the second frame in Figure \ref{ppp}. But this is $\mathfrak Y_2+$ and  $\mathfrak G_3+$ is its p-morphic image. Thus, in each case, $\mathfrak G_3+$ would belong to $\mathbf F$ which could not be the case.

Thus, we have shown that any element of the depth $i$ sees any element of the depth $i-1$ in $\mathfrak F$. Suppose that we have two (or more) elements of the depth $i$ (let it be $w_2$ and $w_5$) and two (or more) elements of the depth $i-1$ (denoted by $w_3$ and $w_4$). Then we would get (as a generated subframe of a p-morphic image of $\mathfrak F$) the third frame in Figure \ref{ppp}. Thus, $\mathfrak Y_2+$ would belong to {\bf F} which is impossible as {\sf L} omits $\mathfrak R_2+$.

We conclude that each frame $\mathfrak F\in \mathbf F$ must be of the form $\mathfrak F_{n_1}+\cdots+\mathfrak F_{n_s}+$, for some $n_1,\dots,n_s\geq 0$. But  {\sf L} omits $\mathfrak R_2+$ and hence $n_1=n_2=\cdots=n_{s-1}=1$ and, consequently,  each  $\mathfrak F\in \mathbf F$ must be of the form $\mathfrak L_{d}+\mathfrak R_{s}$, for some $d,s\geq 0$. Thus, we have $\mathbf F \subseteq\mathbf H_{un}$ and $\mathsf L(\mathbf H_{un})\subseteq \mathsf L$ as required.\end{proof}

\begin{corollary}\label{iun2}
$\mathsf L(\mathbf H_{un})$ is the least intermediate logic with hereditary unitary unification and we have $\mathsf L(\mathbf H_{un}) = \mathsf{L}(\{\chi ({\mathfrak G_3+}),  \chi ({\mathfrak R_2+}),  \chi ({\mathfrak F_2}) \}) =\mathsf L( \mathsf{KC} \cup \{\chi ({\mathfrak G_3+}), \chi ({\mathfrak R_2+}) \})$.
\end{corollary}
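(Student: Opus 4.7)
The plan is to derive this corollary as a compact repackaging of Theorem \ref{iun} together with the fact (stated just after Theorem \ref{Jankov}) that $\mathsf{KC}=\mathsf L(\{\chi(\mathfrak F_2)\})$. Essentially no new computation is needed: all three assertions amount to reading off the four-way equivalence of Theorem \ref{iun} in particular instances.

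First I would establish the minimality claim. Apply Theorem \ref{iun} to $\mathsf L:=\mathsf L(\mathbf H_{un})$: condition (ii) holds trivially, so implication (ii)$\Rightarrow$(i) gives that $\mathsf L(\mathbf H_{un})$ itself has hereditary unitary unification. Conversely, for any intermediate logic $\mathsf L$ with hereditary unitary unification, the implication (i)$\Rightarrow$(ii) of the same theorem gives $\mathsf L(\mathbf H_{un})\subseteq \mathsf L$. Together these say $\mathsf L(\mathbf H_{un})$ is the least such logic.

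Next I would prove the first equality using the equivalence (ii)$\Leftrightarrow$(iv) of Theorem \ref{iun}. Applied to $\mathsf L:=\mathsf L(\mathbf H_{un})$, (iv) yields $\chi(\mathfrak G_3+),\chi(\mathfrak R_2+),\chi(\mathfrak F_2)\in \mathsf L(\mathbf H_{un})$, so $\mathsf L(\{\chi(\mathfrak G_3+),\chi(\mathfrak R_2+),\chi(\mathfrak F_2)\})\subseteq \mathsf L(\mathbf H_{un})$. For the reverse inclusion, set $\mathsf L':=\mathsf L(\{\chi(\mathfrak G_3+),\chi(\mathfrak R_2+),\chi(\mathfrak F_2)\})$; by construction $\mathsf L'$ satisfies (iv), so (iv)$\Rightarrow$(ii) gives $\mathsf L(\mathbf H_{un})\subseteq \mathsf L'$. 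Alternatively one could invoke Corollary \ref{Jankov2} directly: $(\mathsf L(\mathfrak F_2),\mathsf{KC})$, $(\mathsf L(\mathfrak R_2+),\mathsf L(\chi(\mathfrak R_2+)))$ and $(\mathsf L(\mathfrak G_3+),\mathsf L(\chi(\mathfrak G_3+)))$ are splitting pairs whose union splitting coincides with $\mathsf L(\mathbf H_{un})$ by Theorem \ref{iun}.

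For the second equality I would simply observe that $\mathsf{KC}=\mathsf L(\{\chi(\mathfrak F_2)\})$, so
\[
\mathsf L\bigl(\mathsf{KC}\cup\{\chi(\mathfrak G_3+),\chi(\mathfrak R_2+)\}\bigr)
= \mathsf L\bigl(\{\chi(\mathfrak F_2),\chi(\mathfrak G_3+),\chi(\mathfrak R_2+)\}\bigr),
\]
which equals $\mathsf L(\mathbf H_{un})$ by the first equality. There is really no obstacle in this corollary: Theorem \ref{iun} absorbs all of the hard semantic work (the characterisation of hereditary unitary unification by the three omitted frames), while Theorem \ref{Jankov} translates frame-omission into containment of Jankov formulas. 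The only mild point worth flagging in the write-up is to note, for completeness, that each of $\chi(\mathfrak F_2),\chi(\mathfrak G_3+),\chi(\mathfrak R_2+)$ lies in $\mathsf L(\mathbf H_{un})$ because $\mathbf H_{un}$ is closed under generated subframes and p-morphic images and none of $\mathfrak F_2,\mathfrak G_3+,\mathfrak R_2+$ appears in it — but this is already encoded in (ii)$\Rightarrow$(iii) of Theorem \ref{iun}.
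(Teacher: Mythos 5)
Your proposal is correct and matches the paper's intended derivation: the corollary is stated without proof precisely because it is the direct repackaging of Theorem \ref{iun} (minimality from (i)$\Leftrightarrow$(ii), the first equality from (ii)$\Leftrightarrow$(iv) via Theorem \ref{Jankov}, and the second from $\mathsf{KC}=\mathsf L(\{\chi(\mathfrak F_2)\})$) that you describe. No gaps.
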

$\mathbf H_{pa}\cap\mathbf H_{un}$  consists of chains only. This is in accordance with Corollary \ref{up1} saying that unitary logics with projective approximation are projective.
\begin{theorem}\label{ihf} For any intermediate logic {\sf L}, the following conditions are equivalent:\\ (i) {\sf L} has hereditary finitary unification;\\ (ii) {\sf L} has hereditary projective approximation or {\sf L} has hereditary unitary unification;\\
(iii) $\mathsf L$ omits $\mathfrak R_{2}+, \mathfrak G_3, \mathfrak G_{3}+$ and one of the frames $\{\mathfrak F_2,\mathfrak R_2\}$;\\
(iv) $\chi ({\mathfrak G_3}), \chi ({\mathfrak G_3+}),  \chi ({\mathfrak R_2+})\in L$ and, either  $\chi ({\mathfrak R_2}) \in L$,  or  $\chi ({\mathfrak F_2}) \in L$.
\end{theorem}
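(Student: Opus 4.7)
The plan is to prove the equivalences by reducing to Theorems \ref{ipa} and \ref{iun} together with the known nullary results established in Section \ref{NUni}. The route I will take is (ii) $\Rightarrow$ (i), (i) $\Rightarrow$ (iii), (iii) $\Rightarrow$ (ii) and (iii) $\Leftrightarrow$ (iv).

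First I would handle the easy direction (ii) $\Rightarrow$ (i): if \textsf{L} has hereditary projective approximation, then every extension of \textsf{L} has projective approximation and hence finitary (or unitary) unification by Theorem \ref{praprox}; if \textsf{L} has hereditary unitary unification, every extension is unitary and hence finitary. Either way \textsf{L} has hereditary finitary unification. The equivalence (iii) $\Leftrightarrow$ (iv) is immediate from Jankov's Theorem \ref{Jankov}, since omitting a finite frame $\mathfrak{F}$ is tantamount to $\chi(\mathfrak{F}) \in \textsf{L}$.

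Next, for (i) $\Rightarrow$ (iii), I would argue by contraposition using the catalogue of nullary logics. If $\mathfrak{G}_3$ (resp.\ $\mathfrak{G}_3+$) is a frame of \textsf{L}, then $\textsf{L} \subseteq \mathsf{L}(\mathfrak{G}_3)$ (resp.\ $\mathsf{L}(\mathfrak{G}_3+)$), so by Theorem \ref{F6m} \textsf{L} has a nullary extension, contradicting hereditary finitary unification. Since $\mathfrak{R}_2+ = \mathfrak{F}_2 + \mathfrak{F}_1$, the same argument with Theorem \ref{infty} rules out $\mathfrak{R}_2+$ as a frame of \textsf{L}. Finally, if both $\mathfrak{F}_2$ and $\mathfrak{R}_2$ were frames of \textsf{L}, then $\textsf{L} \subseteq \mathsf{L}(\mathfrak{R}_2) \cap \mathsf{L}(\mathfrak{F}_2) = \mathsf{L}(\{\mathfrak{R}_2,\mathfrak{F}_2\})$, which has nullary unification by Theorem \ref{L7}; again a contradiction. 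Hence \textsf{L} omits $\mathfrak{R}_2+, \mathfrak{G}_3, \mathfrak{G}_3+$ and at least one of $\mathfrak{F}_2, \mathfrak{R}_2$.

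For (iii) $\Rightarrow$ (ii), split on which of $\{\mathfrak{F}_2, \mathfrak{R}_2\}$ is omitted. If \textsf{L} omits $\mathfrak{R}_2$, then by hypothesis it also omits $\mathfrak{G}_3$, so Theorem \ref{ipa} gives $\mathsf{L}(\mathbf{H}_{pa}) \subseteq \textsf{L}$ and \textsf{L} has hereditary projective approximation. If \textsf{L} omits $\mathfrak{F}_2$, then by hypothesis it also omits $\mathfrak{R}_2+$ and $\mathfrak{G}_3+$, so Theorem \ref{iun} yields $\mathsf{L}(\mathbf{H}_{un}) \subseteq \textsf{L}$ and \textsf{L} has hereditary unitary unification.

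There is essentially no real obstacle here: the whole statement is a bookkeeping consequence of Theorems \ref{ipa}, \ref{iun}, \ref{F6m}, \ref{infty}, \ref{L7} and \ref{Jankov}. The only point to verify carefully is the identification $\mathfrak{Y}_1 \equiv \mathfrak{R}_2+$ (so that the list of four maximal nullary logics from the introduction matches the four frames $\mathfrak{R}_2+, \mathfrak{G}_3, \mathfrak{G}_3+$ and $\mathsf{L}(\mathfrak{R}_2)\cap\mathsf{L}(\mathfrak{F}_2)$ used here), and the observation that $\mathfrak{R}_2+$ is really $\mathfrak{F}_2+\mathfrak{F}_1$ so that Theorem \ref{infty} applies.
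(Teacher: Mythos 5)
Your proposal is correct and follows essentially the same route as the paper: the published proof establishes (i)$\Rightarrow$(iii) via Theorems \ref{infty}, \ref{F6m} and \ref{L7}, then gets (iii)$\Rightarrow$(ii) from Theorems \ref{ipa} and \ref{iun}, leaving (ii)$\Rightarrow$(i) and (iii)$\Leftrightarrow$(iv) implicit, which you simply spell out. Your side remarks on $\mathfrak Y_1\equiv\mathfrak R_2+$ and $\mathfrak R_2+=\mathfrak F_2+\mathfrak F_1$ also match the identifications the paper itself makes.
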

\begin{proof} If $\mathsf L$ has hereditary finitary unification, then $\mathsf L$  omits $\mathfrak R_{2}+$ (by Theorem \ref{infty}), $\mathsf L$ omits $\mathfrak G_3$ and $\mathfrak G_{3}+$ (as $\mathsf L(\mathfrak G_3)$ and $\mathsf L(\mathfrak G_{3}+)$ are  nullary, by Theorem \ref{F6m}) and it omits one of the frames $\{\mathfrak F_2,\mathfrak R_2\}$, by Theorem  \ref{L7}. Thus, {\sf L} has hereditary projective approximation or  hereditary unitary unification (depending on whether {\sf L} omits $\mathfrak F_2$ or $\mathfrak R_2$), by Theorem \ref{ipa} and \ref{iun}.
\end{proof}

All pretabular intermediate logics have   hereditary finitary unification. More specifically,   {\sf LC} enjoys projective unification, {\sf LJ} has hereditary projective approximation and {\sf LH} has hereditary unitary unification.

\begin{corollary}\label{ihf2}
An intermediate logic {\sf L} has hereditary finitary unification iff either $\mathsf L(\mathbf H_{un})\subseteq \mathsf L$ or $\mathsf L(\mathbf H_{pa})\subseteq \mathsf L$. Thus, there are exactly two minimal intermediate logics in the family of logics enjoying hereditary finitary unification.
\end{corollary}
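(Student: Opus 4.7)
The plan is to obtain this corollary by directly assembling the three previous results: Theorem \ref{ihf}, Corollary \ref{ipa2}, and Corollary \ref{iun2}. The equivalence stated in the first sentence is essentially a restatement of Theorem \ref{ihf}(ii) after replacing ``hereditary projective approximation'' and ``hereditary unitary unification'' by their minimality characterizations.

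For the first ``iff'', I would argue as follows. For the forward direction, suppose {\sf L} has hereditary finitary unification. By Theorem \ref{ihf}, either {\sf L} has hereditary projective approximation or {\sf L} has hereditary unitary unification. In the first case, Corollary \ref{ipa2} (itself a direct consequence of Theorem \ref{ipa}) gives $\mathsf L(\mathbf H_{pa})\subseteq \mathsf L$; in the second case, Corollary \ref{iun2} gives $\mathsf L(\mathbf H_{un})\subseteq \mathsf L$. For the backward direction, observe that by Theorem \ref{lmk} (and the fact that $sm(\mathbf H_{pa})=\mathbf H_{pa}$, which combined with Theorem \ref{lf7} propagates projective approximation to all extensions) the logic $\mathsf L(\mathbf H_{pa})$ has hereditary projective approximation and thus hereditary finitary unification by Theorem \ref{praprox}; similarly $\mathsf L(\mathbf H_{un})$ has hereditary unitary unification by Theorem \ref{wpl}. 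Hence any extension of either logic has hereditary finitary unification.

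For the second sentence, ``exactly two minimal logics'', I need to verify three things. First, both $\mathsf L(\mathbf H_{pa})$ and $\mathsf L(\mathbf H_{un})$ themselves enjoy hereditary finitary unification, which I just established. Second, any logic with hereditary finitary unification extends one of them, which is the first sentence. Third, and this is the one substantive point, neither logic extends the other, so both are genuinely minimal. For incomparability I would use Theorem \ref{Jankov}: $\mathfrak F_2\in \mathbf H_{pa}$ witnesses $\chi(\mathfrak F_2)\notin \mathsf L(\mathbf H_{pa})$, whereas $\chi(\mathfrak F_2)\in \mathsf L(\mathbf H_{un})$ by Corollary \ref{iun2}, so $\mathsf L(\mathbf H_{un})\not\subseteq \mathsf L(\mathbf H_{pa})$; dually $\mathfrak R_2\in \mathbf H_{un}$ gives $\chi(\mathfrak R_2)\notin \mathsf L(\mathbf H_{un})$ while $\chi(\mathfrak R_2)\in \mathsf L(\mathbf H_{pa})$ by Corollary \ref{ipa2}, so $\mathsf L(\mathbf H_{pa})\not\subseteq \mathsf L(\mathbf H_{un})$.

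The argument is essentially bookkeeping on top of the prior results; no new obstacle arises, and the only spot requiring a moment's care is justifying incomparability, which I would do by exhibiting concrete frames separating the two splitting characterizations, as above. I should also note (as remarked after Corollary \ref{iun2}) that $\mathbf H_{pa}\cap \mathbf H_{un}$ consists only of chains, which matches the fact from Corollary \ref{up1} that unitary logics with projective approximation are projective---so the ``meet'' of the two minimal logics lies inside $\mathsf{LC}$, which is consistent with the picture.
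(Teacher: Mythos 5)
Your proposal is correct and follows exactly the route the paper intends: the corollary is stated without proof precisely because it is the conjunction of Theorem \ref{ihf} with the characterizations in Theorems \ref{ipa} and \ref{iun} (equivalently Corollaries \ref{ipa2} and \ref{iun2}). The only content beyond bookkeeping is the incomparability of $\mathsf L(\mathbf H_{pa})$ and $\mathsf L(\mathbf H_{un})$, and your Jankov-formula argument via $\chi(\mathfrak F_2)$ and $\chi(\mathfrak R_2)$ settles it correctly.
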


\begin{corollary}\label{L11} There are exactly four maximal intermediate logics with nullary unification. They are: \ ${\mathsf L}(\mathfrak R_{2}+)$, \ ${\mathsf L}(\mathfrak R_{2})\cap{\mathsf L}(\mathfrak F_{2})$, \ ${\mathsf L}(\mathfrak G_{3})$ \ ,\ ${\mathsf L}(\mathfrak G_{3}+)$; see Figure \ref{ci}\footnote{The second graph represents an unrooted frame.}.
\end{corollary}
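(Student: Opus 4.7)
The plan is to combine Theorem~\ref{ihf} with Jankov's splitting theorem (Theorem~\ref{Jankov}) and then verify pairwise incomparability of the four candidates. First I would record that each of the four listed logics is itself nullary: $\mathsf L(\mathfrak R_2+)$ equals $\mathsf L(\mathfrak F_2+\mathfrak F_1)$ and is nullary by Theorem~\ref{infty}; $\mathsf L(\mathfrak R_2)\cap\mathsf L(\mathfrak F_2)$ is nullary by Theorem~\ref{L7}; and $\mathsf L(\mathfrak G_3)$, $\mathsf L(\mathfrak G_3+)$ are nullary by Theorem~\ref{F6m}. So the only substantive task is to show that every nullary intermediate logic is contained in one of these four, and that none of the four is properly contained in another.

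For the containment half, let $\mathsf L$ be any intermediate logic with nullary unification. Then $\mathsf L$ itself is not finitary or unitary, so $\mathsf L$ cannot enjoy hereditary finitary unification. The contrapositive of condition (iii) of Theorem~\ref{ihf} says that $\mathsf L$ fails to omit at least one of the frames $\mathfrak R_2+$, $\mathfrak G_3$, $\mathfrak G_3+$, or fails to omit both of $\{\mathfrak F_2,\mathfrak R_2\}$. By the splitting pair $(\mathsf L(\mathfrak F),\mathsf L(\chi(\mathfrak F)))$ of Theorem~\ref{Jankov}, "$\mathsf L$ does not omit $\mathfrak F$" is the same as $\mathsf L\subseteq\mathsf L(\mathfrak F)$. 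Consequently $\mathsf L$ is contained in one of $\mathsf L(\mathfrak R_2+)$, $\mathsf L(\mathfrak G_3)$, $\mathsf L(\mathfrak G_3+)$, or $\mathsf L(\mathfrak F_2)\cap\mathsf L(\mathfrak R_2)$.

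To conclude that each of the four is \emph{maximal} among nullary logics, I would check that they are pairwise incomparable. By Theorem~\ref{lf7}, any inclusion between two logics on our list reduces to membership of the characterising frame(s) of one in the closure $sm$ of the characterising frame(s) of the other; each such check is a small finite computation. Cardinality and depth dispose of most cases: $\mathfrak G_3$ and $\mathfrak G_3+$ have two incomparable maximal elements whereas $\mathfrak R_2+$ has a unique top, so $\mathfrak G_3,\mathfrak G_3+\notin sm(\mathfrak R_2+)$; conversely $\mathfrak R_2+$ is not a generated subframe of $\mathfrak G_3$ (too few nodes) nor a p-morphic image of $\mathfrak G_3+$ (both have five nodes, so a surjection would be an isomorphism, which fails). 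The pair $\mathsf L(\mathfrak R_2)\cap\mathsf L(\mathfrak F_2)$ is handled by noting that $sm(\{\mathfrak F_2,\mathfrak R_2\})$ contains only $\mathfrak L_1,\mathfrak L_2,\mathfrak L_3,\mathfrak F_2,\mathfrak R_2$, none of which is $\mathfrak R_2+$, $\mathfrak G_3$ or $\mathfrak G_3+$, and in the reverse direction that neither $\mathfrak F_2$ nor $\mathfrak R_2$ lies in $sm(\mathfrak R_2+)$, $sm(\mathfrak G_3)$, or $sm(\mathfrak G_3+)$ (the rhombus $\mathfrak R_2$ is not a subreduct of any frame with depth $\geq 3$ branching like $\mathfrak G_3$ or $\mathfrak G_3+$; the fork $\mathfrak F_2$ is not a subreduct of $\mathfrak R_2+$ because all subreducts of $\mathfrak R_2+$ are chains or the rhombus itself). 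Incomparability then yields maximality: if $\mathsf L'$ properly extends one of the four, say $\mathsf L(\mathfrak G_3)\subsetneq\mathsf L'$, and $\mathsf L'$ were nullary, the preceding paragraph would place $\mathsf L'$ inside one of the four, forcing a strict inclusion among them and contradicting pairwise incomparability. The heavy lifting has already been done in Theorem~\ref{ihf} and the individual nullarity results; the only genuine obstacle here is this pairwise-incomparability bookkeeping, which is routine but must be carried out exhaustively for all six pairs.
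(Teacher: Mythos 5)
Your overall strategy is exactly the one the paper intends: the four candidates are nullary by Theorems \ref{infty}, \ref{L7} and \ref{F6m}; a nullary logic cannot be hereditarily finitary, so by the negation of Theorem \ref{ihf}(iii) it fails to omit one of $\mathfrak R_2+$, $\mathfrak G_3$, $\mathfrak G_3+$ or fails to omit both of $\{\mathfrak F_2,\mathfrak R_2\}$, i.e.\ it is contained in one of the four logics; and maximality then reduces to pairwise incomparability. That is the right decomposition and it is how the corollary follows from the preceding theorems.

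However, two of your concrete claims in the incomparability bookkeeping are false, even though the conclusions they are meant to support still hold. First, $\mathfrak G_3+$ does \emph{not} have two incomparable maximal elements: it is $\mathfrak G_3$ with a final point adjoined, so it has a unique top (that is precisely why $\mathsf L(\mathfrak G_3+)$ extends $\mathsf{KC}$). The claim $\mathfrak G_3+\notin sm(\mathfrak R_2+)$ must instead be argued as you do in the reverse direction, by cardinality: the only $5$-element frame in $sm(\mathfrak R_2+)$ is $\mathfrak R_2+$ itself, and a surjective p-morphism between two $5$-element frames is an isomorphism. Second, $\mathfrak R_2$ \emph{is} a p-morphic image of $\mathfrak G_3+$ (glue the top two elements); the paper itself records $sm(\mathfrak G_3+)=\{\mathfrak L_1,\mathfrak L_2,\mathfrak L_3,\mathfrak L_4,\mathfrak R_2,\mathfrak G_3+\}$ in the proof of Theorem \ref{F6m}, so $\mathsf L(\mathfrak G_3+)\subseteq\mathsf L(\mathfrak R_2)$. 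What saves the incomparability with $\mathsf L(\mathfrak R_2)\cap\mathsf L(\mathfrak F_2)$ is that $\mathfrak F_2\notin sm(\mathfrak G_3+)$ (every member of $sm(\mathfrak G_3+)$ has a unique top), whereas for $\mathfrak G_3$ the blocking frame is $\mathfrak R_2\notin sm(\mathfrak G_3)=\{\mathfrak L_1,\mathfrak L_2,\mathfrak L_3,\mathfrak F_2,\mathfrak G_3\}$; the two cases need different witnesses, not the single blanket statement you give. You also never check the pair $(\mathfrak G_3,\mathfrak G_3+)$ explicitly (it is fine: $\mathfrak G_3+$ is too large to lie in $sm(\mathfrak G_3)$, and $\mathfrak G_3$ has two maximal points while every frame in $sm(\mathfrak G_3+)$ has one). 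With these repairs the argument is complete.
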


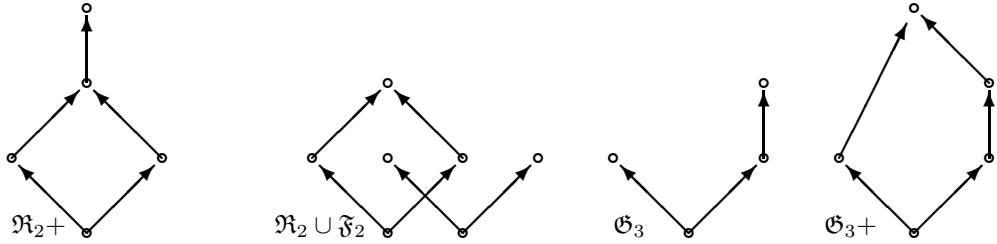
\begin{figure}[H]
\unitlength1cm
\thicklines
\begin{picture}(0,2.5)
\put(0,0){$ \mathfrak R_{2}+$}
\put(1,0){\vector(-1,1){0.9}}
\put(1,0){\vector(1,1){0.9}}
\put(2,1){\vector(-1,1){0.9}}
\put(0,1){\circle{0.1}}
\put(1,2){\circle{0.1}}
\put(1,0){\circle{0.1}}
\put(2,1){\circle{0.1}}
\put(0,1){\vector(1,1){0.9}}
\put(1,3){\circle{0.1}}
\put(1,2){\vector(0,1){0.9}}

\put(3.5,0){$ \mathfrak R_{2}\cup\mathfrak F_{2}$}
\put(4,1){\circle{0.1}}
\put(5,0){\circle{0.1}}
\put(5,1){\circle{0.1}}
\put(5,2){\circle{0.1}}
\put(6,1){\circle{0.1}}
\put(6,0){\circle{0.1}}
\put(7,1){\circle{0.1}}
\put(5,0){\vector(-1,1){0.9}}
\put(5,0){\vector(1,1){0.9}}
\put(6,0){\vector(-1,1){0.9}}
\put(6,0){\vector(1,1){0.9}}
\put(4,1){\vector(1,1){0.9}}
\put(6,1){\vector(-1,1){0.9}}

\put(8,0){$\mathfrak{G}_3$}
\put(9,0){\vector(-1,1){0.9}}
\put(9,0){\vector(1,1){0.9}}
\put(10,1){\vector(0,1){0.9}}
\put(8,1){\circle{0.1}}
\put(10,2){\circle{0.1}}
\put(9,0){\circle{0.1}}
\put(10,1){\circle{0.1}}

\put(10.8,0){${\mathfrak{G}_3}+$}
\put(12,0){\vector(-1,1){0.9}}
\put(12,0){\vector(1,1){0.9}}
\put(13,1){\vector(0,1){0.9}}
\put(11,1){\circle{0.1}}
\put(13,2){\circle{0.1}}
\put(12,0){\circle{0.1}}
\put(13,1){\circle{0.1}}
\put(12,3){\circle{0.1}}
\put(13,2){\vector(-1,1){0.9}}
\put(11,1){\vector(1,2){0.9}}
\end{picture}
\caption{Frames of Maximal Logics with Nullary Unification.}\label{ci}
\end{figure}
\noindent Similarly as in Theorem \ref{kc}, we can use a particular para-splitting\footnote{A `join' of two join-splittings given by Theorems  \ref{ipa} and \ref{iun}, that is $(\{{\mathsf L}(\mathfrak R_{2}+), {\mathsf L}(\mathfrak F_{2}), {\mathsf L}(\mathfrak G_{3}+)\},\mathsf L(\mathbf H_{un}))$ and $( \{ {\mathsf L}(\mathfrak R_{2}), {\mathsf L}(\mathfrak G_{3})\},\mathsf L(\mathbf H_{pr}))$.}of the lattice of extensions of {\sf INT}, given by the pair $$(\{{\mathsf L}(\mathfrak R_{2}+), {\mathsf L}(\mathfrak R_{2})\cap{\mathsf L}(\mathfrak F_{2}), {\mathsf L}(\mathfrak G_{3}),{\mathsf L}(\mathfrak G_{3}+)\}\ ,\ \{\mathsf L(\mathbf H_{un}),\mathsf L(\mathbf H_{pr})\})$$ to summon up our results.
\begin{corollary} For each intermediate logic {\sf L},  either {\sf L} includes $\mathsf L(\mathbf H_{un})$, or  $\mathsf L(\mathbf H_{pa})$, or {\sf L} is included in one of the logics $\{{\mathsf L}(\mathfrak R_{2}+), {\mathsf L}(\mathfrak R_{2})\cap{\mathsf L}(\mathfrak F_{2}), {\mathsf L}(\mathfrak G_{3}),{\mathsf L}(\mathfrak G_{3}+)\}$. If {\sf L} includes $\mathsf L(\mathbf H_{un})$ its unification is unitary, if  {\sf L} includes $\mathsf L(\mathbf H_{pa})$ it has projective approximation. If {\sf L} is included in one of the logics  $\{{\mathsf L}(\mathfrak R_{2}+), {\mathsf L}(\mathfrak R_{2})\cap{\mathsf L}(\mathfrak F_{2}), {\mathsf L}(\mathfrak G_{3}),{\mathsf L}(\mathfrak G_{3}+)\}$ its unification type is not determined but {\sf L} has an extension with nullary unification.\end{corollary}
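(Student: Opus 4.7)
The statement is a consolidation of Theorems~\ref{ihf}, \ref{ipa}, \ref{iun} together with Corollary~\ref{L11}, so my plan is not to develop new machinery but to assemble the trichotomy and then read off the three conclusions. The main observation is that the characterisation in Theorem~\ref{ihf}(iii) contains exactly two ``branches'': the hereditary projective approximation branch (omit $\mathfrak R_2$) and the hereditary unitary branch (omit $\mathfrak F_2$), and in each branch one must additionally omit $\mathfrak R_2+$, $\mathfrak G_3$ and $\mathfrak G_3+$. The dichotomy in the statement should be obtained by negating this characterisation.

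First I would establish the trichotomy. Suppose {\sf L} does not include $\mathsf L(\mathbf H_{un})$ nor $\mathsf L(\mathbf H_{pa})$. By Theorems~\ref{iun} and \ref{ipa} this means {\sf L} fails at least one of the omission conditions of each group. Write the combined failure as follows: either {\sf L} does not omit $\mathfrak R_2+$, or {\sf L} does not omit $\mathfrak G_3+$, or {\sf L} does not omit $\mathfrak G_3$, or {\sf L} fails simultaneously to omit $\mathfrak F_2$ and $\mathfrak R_2$ (the only way to fail both groups while still omitting $\mathfrak R_2+$, $\mathfrak G_3$, $\mathfrak G_3+$ one at a time). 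By Theorem~\ref{Jankov} (splitting), failing to omit a finite frame $\mathfrak F$ is equivalent to $\mathsf L \subseteq \mathsf L(\mathfrak F)$; in the fourth case this becomes $\mathsf L\subseteq \mathsf L(\mathfrak F_2)\cap \mathsf L(\mathfrak R_2)$. Hence {\sf L} is contained in one of the four logics listed.

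Next I would verify the two positive conclusions. If $\mathsf L(\mathbf H_{un})\subseteq \mathsf L$, then Theorem~\ref{wpl} (applied to $\mathbf F=\mathbf H_{un}$) together with the discussion around Theorem~\ref{lf7} shows $\mathsf L(\mathbf H_{un})$ has hereditary unitary unification, whence {\sf L} is unitary. Similarly, if $\mathsf L(\mathbf H_{pa})\subseteq \mathsf L$, Theorem~\ref{lmk} combined with Theorem~\ref{lf7} gives that every extension has projective approximation, so {\sf L} itself does.

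Finally, the nullary-extension claim is immediate: each of the four listed logics has nullary unification by Theorem~\ref{F6m} ($\mathfrak G_3$ and $\mathfrak G_3+$), Theorem~\ref{L7} ($\mathsf L(\mathfrak R_2)\cap\mathsf L(\mathfrak F_2)$), and Theorem~\ref{infty} with $\mathfrak R_2+=\mathfrak F_2+\mathfrak F_1$ (for $\mathsf L(\mathfrak R_2+)$); since {\sf L} is contained in that logic, the logic itself is the required nullary extension. I expect no serious obstacle: the only delicate point is bookkeeping the combinatorics of the negation of Theorem~\ref{ihf}(iii), in particular noticing that ``omit one of $\{\mathfrak F_2,\mathfrak R_2\}$'' negates to ``fail to omit both'', yielding precisely the intersection $\mathsf L(\mathfrak R_2)\cap \mathsf L(\mathfrak F_2)$ rather than a union.
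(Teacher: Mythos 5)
Your proposal is correct and follows essentially the same route as the paper, which obtains the corollary by combining the two union-splittings underlying Theorems \ref{ipa} and \ref{iun} (the "para-splitting" pair) with the nullarity results of Theorems \ref{F6m}, \ref{L7} and \ref{infty}; your explicit case analysis of the omission conditions is exactly what that combination amounts to. The only cosmetic remark is that "L does not omit $\mathfrak F$" already means $\mathsf L\subseteq\mathsf L(\mathfrak F)$ by the paper's definition of "frame of {\sf L}", so the appeal to Theorem \ref{Jankov} at that point is redundant rather than wrong.
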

\begin{corollary}\label{pafiun}
It is decidable whether a recursive intermediate logic enjoys hereditary projective approximation, hereditary unitary unification  or hereditary finitary unification.
\end{corollary}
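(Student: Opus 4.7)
The plan is to reduce each of the three properties to a finite Boolean combination of membership tests for specific, computable formulas in $\mathsf L$, and then to invoke the assumption that $\mathsf L$ is recursive (i.e.\ that membership in $\mathsf L$ is decidable).

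First, I would write down explicitly the Jankov characteristic formulas $\chi(\mathfrak G_3)$, $\chi(\mathfrak G_3+)$, $\chi(\mathfrak R_2)$, $\chi(\mathfrak R_2+)$ and $\chi(\mathfrak F_2)$. Each of the frames involved is finite, so each characteristic formula is a concrete, effectively computable element of $\mathsf{Fm}$. In particular, the list of formulas needed for the three decision procedures is finite and can be produced in advance, independently of $\mathsf L$.

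Next I would invoke the clause (iv) of Theorems \ref{ipa}, \ref{iun} and \ref{ihf}. These give, respectively:
\begin{itemize}
\item $\mathsf L$ has hereditary projective approximation iff $\chi(\mathfrak G_3) \in \mathsf L$ and $\chi(\mathfrak R_2) \in \mathsf L$;
\item $\mathsf L$ has hereditary unitary unification iff $\chi(\mathfrak G_3+),\,\chi(\mathfrak R_2+),\,\chi(\mathfrak F_2) \in \mathsf L$;
\item $\mathsf L$ has hereditary finitary unification iff $\chi(\mathfrak G_3),\,\chi(\mathfrak G_3+),\,\chi(\mathfrak R_2+) \in \mathsf L$ and at least one of $\chi(\mathfrak R_2),\,\chi(\mathfrak F_2)$ is in $\mathsf L$.
\end{itemize}
Thus each of the three properties is equivalent to a fixed, finite Boolean combination of membership statements ``$A \in \mathsf L$'' for specific formulas $A$.

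Finally, since $\mathsf L$ is recursive, the characteristic function of $\mathsf L$ on $\mathsf{Fm}$ is computable; in particular, for each of the finitely many formulas $A$ listed above, we can effectively decide whether $A \in \mathsf L$. Evaluating the corresponding Boolean combination yields a decision procedure for each of the three properties. There is no real obstacle: the entire content of the corollary is the combination of the frame‑theoretic characterizations already proved and the hypothesis of recursiveness; the proof is essentially one paragraph invoking Theorems \ref{ipa}, \ref{iun}, \ref{ihf} and the definition of a recursive logic.
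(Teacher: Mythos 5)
Your proposal is correct and is exactly the argument the paper intends: the corollary is an immediate consequence of clause (iv) of Theorems \ref{ipa}, \ref{iun} and \ref{ihf}, which reduce each hereditary property to membership of the finitely many explicitly computable Jankov formulas $\chi(\mathfrak G_3)$, $\chi(\mathfrak G_3+)$, $\chi(\mathfrak R_2)$, $\chi(\mathfrak R_2+)$, $\chi(\mathfrak F_2)$ in $\mathsf L$, which is decidable since $\mathsf L$ is recursive.
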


We claim that most of intermediate logics has nullary unification. To give some evidence supporting our claim let us consider the lattice of all extensions of $\mathsf H_3\mathsf B_2$ ($=\mathsf L(\mathfrak G_1)$).
 Figure \ref{ki} contains all H-complete extensions of $\mathsf H_3\mathsf B_2$.
Adding all intersections of the logics (included in Figure \ref{ki}) we get the lattice in  Figure \ref{ti} below. There are 42 logics and   31 of them have nullary unification. Logics with hereditary finitary unification (there are 7 such logics) are located at the top of the picture. The logic $\mathsf L(\mathbf H_{un})$ is represented by $\mathsf L(\mathfrak R_2)$, and $\mathsf L(\mathbf H_{pa})$  by $\mathsf L(+\mathfrak F_2)$. We have  two (out of four) maximal logics with nullary unification in the picture: $\mathsf L({\mathfrak{G}_3})$ and $\mathsf L( \mathfrak R_{2})\cap\mathsf L(\mathfrak F_{2})$.  The appearance of 4 logics with finitary unification, that is  $\mathsf L(\mathfrak G_{1}),\mathsf L(\mathfrak G_{3}$), $\mathsf L(\mathfrak C_5)$ and $\mathsf L(\mathfrak G_{3})\cap \mathsf L(\mathfrak C_5)$ (see Theorem \ref{c5}, Theorem \ref{L8i} and comments after the theorem), is quite  mysterious.  They cannot be put apart from    nullary ones by their location in the lattice of all logics and they are, rather, isolated points among the majority of those with nullary unification. Unexpectedly, there are no logics with unitary unification, but the extensions of $\mathsf L(\mathfrak R_2)$, which is due (we think) to the fact that our approach is not enough representative. It would be much better if we had considered extensions of $\mathsf H_4\mathsf B_2$ instead, but this was quite beyond our capacity.
\begin{figure}[H]
\unitlength1cm
\thicklines
\begin{picture}(6,12)
\put(6.1,11){\circle{0.1}}
\put(6.1,10.5){\circle{0.1}}
\put(6.1,10){\circle{0.1}}
\put(7.1,10){\circle{0.1}}
\put(7.1,9){\circle{0.1}}
\put(8.1,9){\circle{0.1}}
\put(7.1,8.1){\circle{0.1}}
\put(4,6){\circle{0.1}}
\put(10,4){\circle{0.1}}
\put(6,0){\circle{0.1}}
\put(5.5,11){$\mathfrak L_1$}
\put(5.5,10.5){$\mathfrak L_2$}
\put(5.5,10){$\mathfrak F_2$}
\put(7.3,10){$\mathfrak L_3$}
\put(8.3,9){$\mathfrak R_2$}
\put(7.2,8){$+\mathfrak F_2$}
\put(4.5,8){$\mathfrak G_3$}
\put(5.4,6){$\mathfrak G_{3\mathfrak L_2}$}
\put(3.5,6){$\mathfrak G_2$}
\put(8.4,6){$\mathfrak Y_2$}
\put(9.4,6){$\mathfrak Y_3$}
\put(10.2,4){$\mathfrak C_5$}
\put(3.2,2){$\mathfrak G_{3\mathfrak F_2}$}
\put(6.2,0){$\mathfrak G_{1}$}
\put(5.9,0.4){$\blacksquare$}
\put(4.8,1){$\blacksquare$}
\put(7,1){$\blacksquare$}
\put(4.9,2){$\blacksquare$}
\put(3.9,2){$\blacksquare$}
\put(8,2){$\blacksquare$}
\put(7.1,2.1){\circle{0.1}}
\put(7,3){$\blacksquare$}
\put(4.9,3){$\blacksquare$}
\put(3.9,3){$\blacksquare$}
\put(8,3){$\blacksquare$}
\put(9,3){$\blacksquare$}
\put(8,4){$\blacksquare$}
\put(3,4){$\blacksquare$}
\put(7,4){$\blacksquare$}
\put(4.9,4){$\blacksquare$}
\put(3.9,4){$\blacksquare$}
\put(9,4){$\blacksquare$}
\put(8,5){$\blacksquare$}
\put(7,5){$\blacksquare$}
\put(5,5){$\blacksquare$}
\put(3.9,5){$\blacksquare$}
\put(9,5){$\blacksquare$}
\put(8,6){$\blacksquare$}
\put(7,6){$\blacksquare$}
\put(5,6){$\blacksquare$}
\put(9,6){$\blacksquare$}
\put(8,7){$\blacksquare$}
\put(7,7){$\blacksquare$}
\put(5,7){$\blacksquare$}
\put(8,8){$\blacksquare$}
\put(5,8){$\blacksquare$}
\put(6,0){\vector(0,1){0.4}}
\put(6,0.5){\vector(-2,1){0.9}}
\put(6,0.5){\vector(2,1){0.9}}
\put(5,1){\vector(-1,1){0.9}}
\put(7.1,1.1){\vector(1,1){0.9}}
\put(5,1){\vector(0,1){0.9}}
\put(7.1,1.1){\vector(0,1){0.9}}
\put(7.1,1.1){\vector(-2,1){1.9}}
\put(4,2){\vector(0,1){0.9}}
\put(5,2.1){\vector(-1,1){0.9}}
\put(5,2.1){\vector(0,1){0.9}}
\put(5,2.1){\vector(2,1){1.9}}
\put(5,2.1){\vector(3,1){2.9}}
\put(7.1,2.1){\vector(0,1){0.9}}
\put(7.1,2.1){\vector(2,1){1.9}}
\put(8.1,2.1){\vector(0,1){0.9}}
\put(8.1,2.1){\vector(1,1){0.9}}
\put(4,3.1){\vector(3,1){2.9}}
\put(4,3.1){\vector(0,1){0.9}}
\put(4,3.1){\vector(-1,1){0.9}}
\put(5,3.1){\vector(3,1){2.9}}
\put(5,3.1){\vector(0,1){0.9}}
\put(5,3.1){\vector(-2,1){1.9}}
\put(7,3.1){\vector(2,1){1.9}}
\put(7,3.1){\vector(-2,1){1.9}}
\put(7,3.1){\vector(-3,1){2.9}}
\put(8.1,3.1){\vector(1,1){0.9}}
\put(8.1,3.1){\vector(0,1){0.9}}
\put(8.1,3.1){\vector(-1,1){0.9}}
\put(9.1,3.1){\vector(1,1){0.9}}
\put(9.1,3.1){\vector(0,1){0.9}}
\put(3.1,4.1){\vector(1,1){0.9}}
\put(3.1,4.1){\vector(2,1){1.9}}
\put(4.1,4.1){\vector(0,1){0.9}}
\put(4.1,4.1){\vector(3,1){2.9}}
\put(5.1,4.1){\vector(-1,1){0.9}}
\put(5.1,4.1){\vector(3,1){2.9}}
\put(7.1,4.1){\vector(0,1){0.9}}
\put(7.1,4.1){\vector(-2,1){1.9}}
\put(8.1,4.1){\vector(0,1){0.9}}
\put(8.1,4.1){\vector(-3,1){2.9}}
\put(9.1,4.1){\vector(0,1){0.9}}
\put(9.1,4.1){\vector(-1,1){0.9}}
\put(9.1,4.1){\vector(-2,1){1.9}}
\put(10,4.1){\vector(-1,1){0.9}}
\put(4,5.1){\vector(0,1){0.9}}
\put(4,5.1){\vector(3,1){2.9}}
\put(5.1,5.1){\vector(0,1){0.9}}
\put(5.1,5.1){\vector(2,1){1.9}}
\put(7.1,5.1){\vector(0,1){0.9}}
\put(7.1,5.1){\vector(1,1){0.9}}
\put(8.1,5.1){\vector(-1,1){0.9}}
\put(8.1,5.1){\vector(1,1){0.9}}
\put(9.1,5.1){\vector(0,1){0.9}}
\put(9.1,5.1){\vector(-1,1){0.9}}
\put(4.1,6.1){\vector(1,1){0.9}}
\put(5.1,6.1){\vector(2,1){1.9}}
\put(7.1,6.1){\vector(-2,1){1.9}}
\put(7.1,6.1){\vector(1,1){0.9}}
\put(7.1,6.1){\vector(0,1){0.9}}
\put(8.1,6.1){\vector(0,1){0.9}}
\put(9.1,6.1){\vector(-1,1){0.9}}

\put(5.1,7.1){\vector(0,1){0.9}}
\put(5.1,7.1){\vector(2,1){1.9}}
\put(7.1,7.1){\vector(1,1){0.9}}
\put(7.1,7.1){\vector(-2,1){1.9}}
\put(8.1,7.1){\vector(0,1){0.9}}
\put(8,7.1){\vector(-1,1){0.9}}
\put(5.1,8.1){\vector(2,1){1.9}}
\put(7.1,8.1){\vector(0,1){0.9}}
\put(8.1,8.1){\vector(-1,1){0.9}}
\put(8.1,8.1){\vector(0,1){0.9}}
\put(7.1,9.1){\vector(-1,1){0.9}}
\put(7.1,9.1){\vector(0,1){0.9}}
\put(8.1,9.1){\vector(-1,1){0.9}}

\put(6.1,10.1){\vector(0,1){0.4}}
\put(7.1,10.1){\vector(-2,1){0.9}}
\put(6.1,10.6){\vector(0,1){0.4}}

\end{picture}

\caption{Extensions of  $\mathsf L(\mathfrak G_1)=\mathsf H_3\mathsf B_2$. }\label{ti}
\end{figure}
\noindent To show that some of the  logics $\mathsf L(\mathbf F)$ (assuming that {\bf F} is a family of frames for $\mathsf H_3\mathsf B_2$ and {\bf F}={\it sm}({\bf F}))  have nullary unification, we need results of the present paper (and these of \cite{Ghi5, dkw}), as well as their proofs. Thus, if $\mathfrak G_3\in \mathbf F$ but neither $\mathfrak G_2$ nor $\mathfrak G_{\mathfrak F_2}$ are  in {\bf F}, we can argue as in the proof of Theorem \ref{F6m}, showing that $\mathsf L(\mathfrak G_3)$ has nullary unification. If $\mathfrak Y_2\in \mathbf F$ but $\mathfrak C_5\not\in \mathbf F$, we can use the proof of Theorem \ref{F6m} concerning $\mathsf L(\mathfrak Y_2)$. If  $\mathfrak R_2$ and $\mathfrak F_2$ are in $\mathbf F$ but $\mathfrak Y_2$ is not in $\mathbf F$, we can use fragments of the proof of Theorem  \ref{L7} concerning  $\mathsf L(\{\mathfrak F_2,\mathfrak R_2\})$.

There are, however, some cases for which it happens that no our  proof fits.
 These are  $\mathsf L(\{\mathfrak G_2,\mathfrak C_5,\mathfrak G_{}\})$ and $\mathsf L(\{\mathfrak C_5,\mathfrak G_{3\mathfrak F_2}\})$. Then we   define $\sigma\colon\{x_1,x_2\}\to \mathsf{Fm}^k$ as follows
$$
   \sigma(x_1)=  \neg\neg x_1\land\bigwedge_{i=1}^k(\neg x_{2i}\lor\neg\neg x_{2i}) \quad \mbox{and}\quad
  \sigma(x_2)=  \neg x_1\land\bigwedge_{i=2}^{k+1}(\neg x_{2i-1}\lor\neg\neg x_{2i-1});
$$
we characterize all $\sigma$-models and easily get to a contradiction assuming that (any of the logics) has  finitary unification.

We claim that similar results given for intermediate logics in the present paper, can be reproven for transitive modal logics.

\bibliographystyle{plain}

\begin{thebibliography}{1}


\bibitem{BaGhi} {Baader, F.},{Ghilardi, S.,}
 {\it Unification in modal and description logics}, {\bf Logic Journal of the IGPL} 19(5) (2011),\ 705--730.

\bibitem{BaSny} {Baader, F.}, {Snyder, W.,} {\it Unification theory}, In: Robinson, A., Voronkov, A.
 (eds.) {\bf Handbook of Automated Reasoning}, {Elsevier Science Publ., MIT}  (2001), {445--533}.




\bibitem{Balb1} {Balbiani, P.}, {Gencer, C.}, {Rostamigiv, M.}, {Tinchev, T.}
{\it About the unification types of the modal logics determined by classes of deterministic frames},  {\bf arXiv preprint arXiv:2004.07904} (2020).



\bibitem{CZ} {Chagrow, A.,} {Zakharyaschev, M}. {\it Modal Logic.} {\bf Oxford Logic Guides 35}. (1997)


\bibitem{dzSpl}{Dzik, W.,} {\it Splittings of Lattices of Theories and Unification Types},{\bf  Proceedings
of the Workshop on General Algebra} 70, Verlag Johannes Heyn, 2006, p.71-81.

\bibitem{uni} Dzik, W., {\it Unification types in logic}, {\bf Silesian University Press}, Katowice (2007).


\bibitem{dw1} Dzik, W., Wojtylak, P.,  {\it Projective unification in modal logic}, {\bf The Logic Journal of the
 IGPL}  20(1) (2012),\ 121--153.



\bibitem{dw2} Dzik, W., Wojtylak, P., {\it Modal consequence relations extending S4.3}, {\bf Notre Dame Journal of Formal Logic} 57(4) (2016),\ 523--549.


\bibitem{dw4} Dzik, W., Wojtylak, P.,  {\it Unification in superintuitionistic predicate logics and its applications}, to appear in {\bf The Review of Symbolic Logic} 12(1) (2019), \ 37--61.

\bibitem{dkw} Dzik, W., Kost, S., Wojtylak, P.,  {\it Finitary unification in locally tabular modal logics characterized}, to appear in {\bf Annals of Pure and Applied Logic.}


\bibitem{Esakia} Esakia, L., { \it Intuitionistic logic and weak law of Perice  }, in
{\bf Heyting algebras: Duality theory } eds. G. Bezhanishvili, W.Holliday, (1985).





\bibitem{fine} Fine, K., {\it Propositional quantiﬁers in modal logic},
{\bf Theoria } 36 (1970), 331-–346.




\bibitem{Ghi1}  Ghilardi, S.,
 {\it Unification through projectivity},
 {\bf Journal of Symbolic Computation}, 7(1997), \ 733--752.

\bibitem{Ghi2} Ghilardi, S., {\it Unification in intuitionistic
logic}, {\bf Journal of Symbolic Logic} 64(2) (1999), \ 859--880.

\bibitem{Ghi3}  Ghilardi, S.,{\it Best solving modal equations},
 {\bf Annals of Pure and Applied Logic} 102 (2000), \ 183--198.

\bibitem{Ghi4} Ghilardi, S., {\it A resolution tabloux algorithm for projective
approximation}, {\bf Logic Journal of the IGPL} 10(3) (2002), \ 227--241.

\bibitem{Ghi5} Ghilardi, S., {\it Unification, finite duality and projectivity in varieties of Heyting algebras},
 {\bf  Annals of Pure and Applied Logic}, 127 (2004), \ 99--115.

\bibitem{Ghisac} Ghilardi, S., Sacchetti, L., {\it Filtering unification and most general unifiers in modal logic}, {\bf The Journal of Symbolic Logic}  69(3) (2004),\ {879--906}.


\bibitem{IemRoz} Iemhoff, R., Rozierre {\it Unification in intermediate logics}, {\bf  Journal of Symbolic Logic},  80(3) (2015), 713 - 729


\bibitem{Jankov} Jankov, V.A., {\it The relationship between deducibility in the intuitionistic
propositional calculus and finite implicational structures}. {\bf Doklady Acad. Nauk USSR}  151(1963), 1293--1294.

\bibitem{Jer} Jerabek, E., {\it Blending margins: the modal logic {\sf K} has nullary unification type,} {\bf Journal of Logic and Computation} 25(5) (2015), 231--240.


 \bibitem{Kost}  Kost, S.,{\it Projective unification in transitive modal logics},
 {\bf Logic Journal of the IGPL} 26(5) (2018), \ 548--566.

\bibitem{Maks72} {Maksimowa, L.}, {\it Pretabular superintuitionist logic}, {\bf Algebra and Logic} 11(1972), 308--314.




\bibitem{Pat} {Patterson, A.},  {\it Bisimulation and Propositional Intuitionistic Logic}, {\bf Stanford University Press} (2001).





\bibitem{PW} Pogorzelski W.A., Wojtylak P.,  {\it  Completeness theory for propositional logics},{\bf  Studies in Universal Logic}, Birkh$\ddot{\rm a}$user,  Bassel-Boston-Berlin (2008).


\bibitem{Rautenberg} Rautenberg, W., {\it Splitting lattices of logics}, {\bf  Archiv f\"{u}r Mathematische Logik} 20(1980), 155-159.


\bibitem{Tsitkin} Tsitkin, A., {\it On structurally complete superintuitionistic logics}, {\bf  Doklady Nauk CCCP} 241(1978), 40--43.




\bibitem{rw} W\'{o}jcicki, R.,  {\it Theory of logical
calculi}; {\bf Synthese Library}  199, Kluwer Academic Press 1988.


\bibitem{Wro1} Wro$\acute{\rm n}$ski, A., {\it Transparent unification
problem},{\bf Reports on Mathematical Logic} 29 (1995), 105--107.

\bibitem{Wro2} Wro$\acute{\rm n}$ski, A., {\it Transparent verifiers in
intermediate logics}, {\bf Abstracts of the 54-th Conference in History
of Mathematics}, Cracow (2008).

\bibitem{ZWC}  Zakharyaschev, M., Wolter, F.,  Chagrov, A., {\it Advanced modal logic},
in D. Gabbay and F. Guenthner (eds.),  {\bf Handbook of Philosophical	Logic,
vol.3}, 83--266. Kluwer Academic Publishers, 2nd edition,
2001.

\end{thebibliography}


 \end{paper}
\end{document}